\documentclass[final]{siamart1116}

\usepackage{amsfonts}
\usepackage{graphicx,epstopdf}
\usepackage{mathrsfs}
\usepackage{bm}
\usepackage{multirow}
\usepackage{hyperref} 
\hypersetup{colorlinks=false} %

\newsiamthm{claim}{Claim}
\newsiamremark{remark}{Remark}
\newsiamremark{expl}{Example}
\numberwithin{theorem}{section}

\allowdisplaybreaks

\title{Positivity-Preserving Analysis of Numerical 
	Schemes for Ideal 
	Magnetohydrodynamics} 

\author{
	Kailiang Wu%
	\thanks{Department of Mathematics, The Ohio State University, Columbus, OH 43210 USA (\email{wu.3423@osu.edu}).} 
}

\begin{document}
\maketitle


\begin{abstract}
	Numerical schemes {\em provably} preserving  
	the positivity of density and pressure
	are highly desirable  
	for ideal 
	magnetohydrodynamics (MHD), but the rigorous positivity-preserving (PP) analysis remains 
	challenging.    
	The difficulties mainly arise from the intrinsic complexity of the MHD equations as well as the indeterminate relation between the PP property and the divergence-free condition on the magnetic field. This paper presents the first 
	rigorous PP analysis of conservative schemes with the Lax-Friedrichs (LF) flux for one- and multi-dimensional ideal MHD. 
	The significant innovation is 
	the discovery of the theoretical connection between the PP property and a discrete divergence-free (DDF) condition. This connection is established through the generalized LF splitting properties, which are alternatives of the  usually-expected LF splitting property that does not hold for ideal MHD. The generalized LF splitting properties involve a number of admissible states strongly coupled by the DDF condition, making their derivation very difficult. We derive these properties via 
	a novel equivalent form of the admissible state set and an important inequality, which is skillfully constructed by technical estimates. Rigorous PP analysis is then presented for finite volume and discontinuous Galerkin schemes with the LF flux on uniform Cartesian meshes. In the 1D case, the PP property is proved for the first-order scheme with proper  
	numerical viscosity, and also for arbitrarily high-order schemes under conditions accessible by a PP limiter.  In the 2D case, we show that the DDF condition is necessary and crucial for achieving the PP property. It is observed that even slightly violating the proposed DDF condition may 
	cause failure to preserve 
	the positivity of pressure. 
	We prove that the 2D LF type scheme with proper numerical viscosity preserves both the positivity and the DDF condition. 
	Sufficient conditions are derived for 2D PP high-order schemes, and extension to 3D is discussed.  Numerical examples further confirm the theoretical findings.
\end{abstract}

\begin{keywords}
	compressible magnetohydrodynamics, positivity-preserving, admissible states, discrete divergence-free condition, generalized Lax-Friedrichs splitting, hyperbolic conservation laws 
\end{keywords}

\begin{AMS}
	65M60, 65M08, 65M12, 
	35L65, 76W05
\end{AMS}

\section{Introduction}
\label{sec:intro} 
Magnetohydrodynamics (MHD) play an important role in many fields including astrophysics, space physics and 
plasma physics, etc.
The $d$-dimensional ideal compressible MHD equations can be
written as 
\begin{equation}\label{eq:MHD}
\frac{{\partial {\bf U}}}{{\partial t}} + \sum\limits_{i = 1}^d {\frac{{\partial { {\bf F}_i}({\bf U})}}{{\partial x_i}}}  = {\bf 0},
\end{equation}
together with the divergence-free condition on the magnetic field
${\bf B}=(B_1,B_2,B_3)$, 
\begin{equation}\label{eq:2D:BxBy0}
\sum\limits_{i = 1}^d \frac{\partial B_i } {\partial x_i} =0,
\end{equation}
where $d=1, 2$ or $3$. In \eqref{eq:MHD}, the conservative vector ${\bf U} = ( \rho,\rho {\bf v},{\bf B},E )^{\top}$,
and ${\bf F}_i({\bf U})$ denotes the flux in the $x_i$-direction, $i=1,\cdots,d$, defined by
\begin{align*}
	{\bf F}_i({\bf U}) = \Big( \rho v_i,~\rho v_i {\bf v}  -  B_i {\bf B}   + p_{tot}  {\bf e}_i,~v_i {\bf B} - B_i {\bf v},~v_i(E+p_{tot} ) -  B_i({\bf v}\cdot {\bf B})
	\Big)^{\top}.
\end{align*}
Here $\rho$ is the density, the vector ${\bf v}=(v_1,v_2,v_3)$ denotes
the fluid velocity, 
$p_{\rm tot}$ is the total pressure consisting of 
the gas pressure $p$ and  magnetic pressure $p_m= \frac{|{\bf B}|^2}2$, 
the vector ${\bf e}_i$ represents the $i$-th row of the unit matrix of size 3, and
$E=\rho e + \frac12 \left( \rho |{\bf v}|^2 + |{\bf B}|^2 \right)$ is the total energy 
consisting of thermal, kinetic and magnetic energies with $e$ denoting the specific internal energy.
The equation of state (EOS) is needed to
close the system \eqref{eq:MHD}--\eqref{eq:2D:BxBy0}. 
For ideal gases it is given by 
\begin{equation}\label{eq:EOS}
p = (\gamma-1) \rho e,
\end{equation}
where the adiabatic index $\gamma>1$ . Although \eqref{eq:EOS} is widely used, 
there are situations where it is more appropriate to use other EOS. A general EOS can be
expressed as
\begin{equation}\label{eq:gEOS}
p = p(\rho,e),
\end{equation}
which is assumed to satisfy the following condition 
\begin{equation}\label{eq:assumpEOS}
\mbox{if}\quad \rho > 0,\quad \mbox{then}\quad e>0~\Leftrightarrow~p(\rho,e) > 0.
\end{equation}
Such condition is reasonable, holds for the ideal EOS \eqref{eq:EOS} and was also 
used 
in \cite{Zhang2011}.  


Since 
\eqref{eq:MHD} 
involves strong nonlinearity,
its analytic treatment is very difficult.  Numerical simulation
is 
a primary 
approach to 
explore the physical mechanisms in MHD.
In the past few decades, the numerical study of MHD has attracted
much attention, and
various 
numerical schemes
have been developed for \eqref{eq:MHD}. 
Besides the standard difficulty in solving nonlinear hyperbolic conservation laws, an additional numerical challenge for the MHD system comes from the divergence-free condition \eqref{eq:2D:BxBy0}. Although \eqref{eq:2D:BxBy0}
holds for the exact solution as long as it does initially, it cannot be easily preserved by a numerical scheme (for $d \ge 2$). Numerical evidence and some analysis in the literature indicate that negligence in dealing with the condition \eqref{eq:2D:BxBy0} can lead to numerical instabilities or nonphysical features in the computed solutions, see e.g.,  \cite{Brackbill1980,Evans1988,BalsaraSpicer1999,Toth2000,Dedner2002,Li2005}.
Up to now, many numerical techniques have been proposed to control the divergence error of numerical magnetic field. They include but are not limited to:
the  
eight-wave methods \cite{Powell1994,Chandrashekar2016},
the projection method \cite{Brackbill1980},
the hyperbolic divergence cleaning methods \cite{Dedner2002},
the locally divergence-free methods \cite{Li2005,Yakovlev2013}, 
the constrained transport method \cite{Evans1988} and its many variants 
\cite{Ryu1998,BalsaraSpicer1999,Londrillo2000,Balsara2004,Londrillo2004,Torrilhon2004,Torrilhon2005,Rossmanith2006,Artebrant2008,Li2008,Balsara2009,Li2011,Li2012,Christlieb2014}. 
The readers are also referred to an early survey in \cite{Toth2000}.

Another numerical challenge in the simulation of MHD is preserving the
positivity of density $\rho$ and pressure $p$.
In physics, these two quantities are non-negative. Numerically their
positivity is very critical, but not always satisfied by numerical solutions.
In fact, once the negative density or pressure is obtained in the simulations,
the discrete problem will become ill-posed due to the loss of hyperbolicity, causing the break-down of the simulation codes.
However, most of the existing MHD schemes are generally not positivity-preserving (PP), and thus may suffer from a large risk of failure when simulating MHD problems with strong discontinuity, low density, low pressure or low plasma-beta.
Several efforts have been made to reduce such risk. Balsara and Spicer \cite{BalsaraSpicer1999a} proposed a strategy to maintain positive pressure by switching the Riemann solvers for different wave situations. Janhunen \cite{Janhunen2000} designed a new 1D Riemann solver for the modified MHD system, and claimed its PP property by numerical experiments. Waagan \cite{Waagan2009} designed a positive linear reconstruction for second-order MUSCL-Hancock scheme, and conducted some 1D analysis based on the presumed PP  property of the first-order scheme. 
From a relaxation system, Bouchut et al. \cite{Bouchut2007,Bouchut2010} derived a multiwave approximate Riemann solver 
for 1D ideal MHD,  
and deduced sufficient conditions for the solver to satisfy discrete entropy inequalities and the PP property. 
Recent years have witnessed some significant advances in developing bound-preserving high-order schemes for hyperbolic systems (e.g.,  \cite{zhang2010,zhang2010b,zhang2011b,Hu2013,Xu2014,Liang2014,WuTang2015,moe2017positivity,WuTang2017ApJS,Xu2017}). 
High-order limiting techniques were well 
developed in \cite{Balsara2012,cheng} for the finite volume or DG methods of MHD, to enforce the admissibility\footnote{Throughout this paper, the admissibility of a solution or state $\bf U$ means  
	that the density and pressure corresponding to the conservative vector $\bf U$ are both positive, see 
	Definition \ref{def:G}.} of
the reconstructed or DG polynomial solutions at certain nodal points. These techniques are based on a presumed proposition that the cell-averaged solutions computed by those schemes are always admissible.
Such proposition has not yet been rigorously proved for those methods,    
although it could be deduced for the 1D schemes in \cite{cheng} 
under some 
assumptions (see Remark \ref{rem:chengassum}). 
With the presumed PP property of the Lax-Friedrichs (LF) scheme, Christlieb et al. \cite{Christlieb,Christlieb2016} developed PP 
high-order finite difference methods for \eqref{eq:MHD} by extending the parametrized flux limiters \cite{Xu2014,Xiong2016}. 

%
It was demonstrated numerically that 
the above PP treatments could enhance the robustness of MHD codes. 
However,
as mentioned in \cite{Christlieb},  
{\em there was no rigorous proof 
	to genuinely and
	completely show the PP property of those or any other schemes for \eqref{eq:MHD} in the multi-dimensional cases. 
	Even for the simplest first-order schemes, such as
	the LF scheme, the PP property is still unclear in theory. Moreover, it 
	is also 
	unanswered theoretically 
	whether the divergence-free condition \eqref{eq:2D:BxBy0} is 
	connected with the PP property of 
	schemes for \eqref{eq:MHD}. Therefore, it is significant to explore provably PP schemes for \eqref{eq:MHD} and 
	develop related theories for rigorous PP analysis.}


The aim of this paper is to carry out a
rigorous PP analysis 
of conservative finite volume and DG schemes with the LF flux 
for one- and multi-dimensional ideal MHD system \eqref{eq:MHD}. 
Such analysis is extremely nontrivial and technical.
The challenges mainly come from 
the intrinsic complexity of the system \eqref{eq:MHD}--\eqref{eq:2D:BxBy0}, 
as well as the unclear relation between the PP property and the divergence-free condition on the magnetic field. 
Fortunately, we find an important novel starting point of the analysis, based on an equivalent form of
the admissible state set. 
This form helps us to  
successfully derive the generalized LF splitting properties, which couple 
a discrete divergence-free (DDF) condition for the magnetic
field with
the convex combination of some LF splitting terms. 
These properties imply     
a theoretical connection between the PP property and the proposed DDF condition. 
As the generalized LF splitting properties involve a number of strongly coupled states,
their discovery and proofs are extremely technical.  
With the aid of these properties, we present the 
rigorous PP analysis for finite volume and DG schemes on uniform Cartesian meshes. 
Meanwhile, our analysis also reveals 
that the DDF condition is necessary and crucial for achieving the PP property. 
This finding is consistent with 
the existing numerical evidences 
that violating the divergence-free condition 
may more easily cause negative pressure 
(see e.g., \cite{Brackbill1980,Balsara2004,Rossmanith2006,Balsara2012}),   
as well as our previous work on the relativistic MHD \cite{WuTangM3AS}. 
Without considering the relativistic effect, the system \eqref{eq:MHD} yields unboundedness of velocities and poses difficulties essentially
different from the relativistic case. 
It is also 
worth mentioning that, as it will be shown, 
the 1D LF scheme is not always PP 
for piecewise
constant $B_1$, making some existing techniques \cite{zhang2010} for PP analysis inapplicable in the multi-dimensional ideal MHD case. 
Contrary to the usual expectation, we also find that the 1D LF scheme with a standard numerical viscosity parameter is not always PP, no matter how small the CFL number is. 
A proper 
viscosity parameter should be estimated, introducing 
additional difficulties 
in the analysis. 
Note that, 
for the incompressible
flow system in the vorticity-stream function formulation, 
there is also a  
divergence-free condition (but) on 
fluid velocity, i.e., the incompressibility condition,  
which is crucial 
in designing 
schemes that satisfies the 
maximum principle of vorticity, 
see e.g. \cite{zhang2010}.
An important difference in our MHD case is that 
our divergence-free quantity 
(the magnetic field)  
is also nonlinearly related to defining 
the concerned positive quantity --- the internal energy or pressure, see \eqref{eq:DefG}.

The paper is organized as follows. Section \ref{sec:eqDef} gives several important properties of the admissible states for the PP analysis. 
%
%
Sections \ref{sec:1Dpcp} and \ref{sec:2Dpcp} respectively study  
1D and 2D PP schemes.  
Numerical verifications are provided in Section \ref{sec:examples}, 
 and the 3D extension  
are given in Appendix \ref{sec:3D}. 
Section \ref{sec:con} concludes the paper with several remarks. 


\section{Admissible states}\label{sec:eqDef}

Under the condition \eqref{eq:assumpEOS}, it is natural to
define the set of 
admissible states $\bf U$ of the ideal MHD
as follows.

\begin{definition}\label{def:G}
	The set of admissible states of the ideal MHD 
	is defined by
	\begin{equation}\label{eq:DefG}
	{\mathcal G} = \left\{   {\bf U} = (\rho,{\bf m},{\bf B},E)^\top ~\Big|~ \rho > 0,~
	{\mathcal E}(  {\bf U}  ) := E- \frac12 \left( \frac{|{\bf m}|^2}{\rho} + |{\bf B}|^2 \right) > 0 \right\},
	\end{equation}
	where ${\mathcal E} ({\bf U}) =  \rho e$ denotes the internal energy.
\end{definition}


Given that the initial data are admissible, a 
scheme is defined to be PP if the  numerical solutions always stay in the set $\mathcal G$. 
One can see from \eqref{eq:DefG} that it is difficult to 
numerically preserve the positivity of ${\mathcal E}$, 
whose computation nonlinearly involves  
all the conservative variables. 
In most of numerical methods, the conservative
quantities are themselves evolved according to their own conservation laws,
which are seemingly unrelated to and 
numerically do not necessarily guarantee the positivity of the
computed ${\mathcal E}$. In theory, it is indeed a challenge to make a priori judgment
on whether a scheme is always PP under all circumstances or not.

\subsection{Basic properties}
To overcome the difficulties arising from the nonlinearity of 
the function ${\mathcal E} ({\bf U})$, 
we propose the following equivalent definition of ${\mathcal G}$.

\begin{lemma}[Equivalent definition]\label{theo:eqDefG}
	The admissible state set ${\mathcal G}$ is equivalent to
	\begin{equation}\label{eq:newDefG}
	{\mathcal G}_* = \left\{   {\bf U} = (\rho,{\bf m},{\bf B},E)^\top ~\Big|~ \rho > 0,~~~
	{\bf U} \cdot {\bf n}^* + \frac{|{\bf B}^*|^2}{2} > 0,~\forall~{\bf v}^*, {\bf B}^* \in {\mathbb {R}}^3 \right\},
	\end{equation}
	where
	$${\bf n}^* = \bigg( \frac{|{\bf v}^*|^2}2,~- {\bf v}^*,~-{\bf B}^*,~1 \bigg)^\top.$$
\end{lemma}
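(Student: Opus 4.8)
The plan is to establish the two inclusions ${\mathcal G}\subseteq{\mathcal G}_*$ and ${\mathcal G}_*\subseteq{\mathcal G}$, both of which will drop out of a single completing-the-square identity. First I would expand the inner product in \eqref{eq:newDefG}: for ${\bf U}=(\rho,{\bf m},{\bf B},E)^\top$ with $\rho>0$ and arbitrary ${\bf v}^*,{\bf B}^*\in{\mathbb R}^3$,
\begin{equation*}
{\bf U}\cdot{\bf n}^* + \frac{|{\bf B}^*|^2}{2}
= \frac{\rho}{2}|{\bf v}^*|^2 - {\bf m}\cdot{\bf v}^* + E - {\bf B}\cdot{\bf B}^* + \frac{|{\bf B}^*|^2}{2}.
\end{equation*}
Completing the square in ${\bf v}^*$ (legitimate because $\rho>0$) and, separately, in ${\bf B}^*$, I obtain the key identity
\begin{equation*}
{\bf U}\cdot{\bf n}^* + \frac{|{\bf B}^*|^2}{2}
= \frac{\rho}{2}\Big|{\bf v}^* - \frac{{\bf m}}{\rho}\Big|^2 + \frac12\big|{\bf B}^*-{\bf B}\big|^2 + {\mathcal E}({\bf U}),
\end{equation*}
where ${\mathcal E}({\bf U})$ is exactly the internal energy appearing in \eqref{eq:DefG}.

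The two inclusions are then immediate. If ${\bf U}\in{\mathcal G}$, then $\rho>0$ and ${\mathcal E}({\bf U})>0$; since the two squared terms are nonnegative, the left-hand side of the identity is $\ge{\mathcal E}({\bf U})>0$ for every ${\bf v}^*,{\bf B}^*$, so ${\bf U}\in{\mathcal G}_*$. Conversely, if ${\bf U}\in{\mathcal G}_*$, then $\rho>0$ and the inequality ${\bf U}\cdot{\bf n}^*+|{\bf B}^*|^2/2>0$ holds for all ${\bf v}^*,{\bf B}^*$; specializing to the minimizing choice ${\bf v}^*={\bf m}/\rho$, ${\bf B}^*={\bf B}$, the identity collapses to ${\bf U}\cdot{\bf n}^*+|{\bf B}^*|^2/2={\mathcal E}({\bf U})$, whence ${\mathcal E}({\bf U})>0$ and ${\bf U}\in{\mathcal G}$.

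I do not expect a genuine obstacle in this lemma; the only care needed is in the algebra and in noting that the minimum over $({\bf v}^*,{\bf B}^*)$ is actually attained, so that ``$>0$ for all $({\bf v}^*,{\bf B}^*)$'' is equivalent to ``minimum $>0$'' rather than merely ``minimum $\ge 0$''. The real content is conceptual: ${\mathcal G}$ is a convex subset of $\{\rho>0\}$ because ${\mathcal E}$ is concave there, and the identity exhibits each constraint ``${\bf U}\cdot{\bf n}^*+|{\bf B}^*|^2/2>0$'' as a supporting half-space, with the lemma asserting that these half-spaces cut out precisely ${\mathcal G}$. The payoff, to be exploited repeatedly later, is that for each fixed $({\bf v}^*,{\bf B}^*)$ the defining functional of ${\mathcal G}_*$ is \emph{linear} in ${\bf U}$, which converts the awkward nonlinear positivity of ${\mathcal E}$ into a family of linear inequalities and makes the convex-combination (generalized Lax--Friedrichs splitting) arguments tractable.
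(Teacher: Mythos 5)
Your proof is correct and is essentially the paper's own argument: the same completing-the-square identity ${\bf U}\cdot{\bf n}^*+\tfrac{|{\bf B}^*|^2}{2}=\tfrac{\rho}{2}\big|\rho^{-1}{\bf m}-{\bf v}^*\big|^2+\tfrac12|{\bf B}-{\bf B}^*|^2+{\mathcal E}({\bf U})$ gives ${\mathcal G}\subseteq{\mathcal G}_*$ by nonnegativity of the squares, and the reverse inclusion by specializing to ${\bf v}^*=\rho^{-1}{\bf m}$, ${\bf B}^*={\bf B}$. No gaps; your remark that the minimum is attained (so strictness is preserved) is exactly the point the paper uses implicitly.
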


\begin{proof}
	If ${\bf U}   \in {\mathcal G}$, then $\rho>0$, and for any ${\bf v}^*, {\bf B}^* \in {\mathbb {R}}^3$, 
	\begin{align*}
		{\bf U} \cdot {\bf n}^* + \frac{|{\bf B}^*|^2}{2}
		=  \frac{\rho}{2}  \big| \rho^{-1} {\bf m} - {\bf v}^*  \big|^2 + \frac{| {\bf B} - {\bf B}^* |^2}2  +  {\mathcal E}(  {\bf U}  ) \ge {\mathcal E}(  {\bf U}  ) > 0, 
	\end{align*}
	that is ${\bf U} \in {\mathcal G}_*$. Hence $ {\mathcal G} \subset {\mathcal G}_* $. On the other hand, if
	${\bf U}   \in {\mathcal G}_*$, then $\rho>0$, and taking
	${\bf v}^* = \rho^{-1} {\bf m}$ and ${\bf B}^*={\bf B}$ gives
	$
	0< {\bf U} \cdot {\bf n}^* + {|{\bf B}^*|^2}/{2} = {\mathcal E}(  {\bf U}  ).
	$
	This means ${\bf U} \in {\mathcal G}$. Therefore $ {\mathcal G}_* \subset {\mathcal G}$. In conclusion, $ {\mathcal G} = {\mathcal G}_*$.
\end{proof}

{\em The two constraints in \eqref{eq:newDefG}
	are both linear with respect to $\bf U$, making it more effective to analytically
	verify the PP property of numerical schemes for ideal MHD.} 

The convexity of admissible state set is very useful in bound-preserving analysis, because 
it can help reduce the complexity of analysis 
if the schemes can be rewritten into certain
convex combinations, see e.g., \cite{zhang2010b,Zhang2011,Wu2017}. For the ideal MHD, the convexity of 
$\mathcal G_*$ or $\mathcal G$ can be easily shown by definition. 

\begin{lemma}[Convexity]\label{theo:MHD:convex}
	The set ${\mathcal G}_*$ is convex. 
	Moreover, $\lambda {\bf U}_1 + (1-\lambda) {\bf U}_0 \in {\mathcal G}_*$
	for any ${\bf U}_1 \in {\mathcal G}_*, {\bf U}_0 \in \overline{\mathcal G}_*$ and $\lambda \in (0,1]$, where $\overline{\mathcal G}_*$ is the closure of ${\mathcal G}_*$.
\end{lemma}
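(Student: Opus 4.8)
The plan is to verify the two defining conditions of $\mathcal{G}_*$ directly for the convex combination, exploiting the linearity observed right after Lemma~\ref{theo:eqDefG}. Let ${\bf U}_1 \in \mathcal{G}_*$ and ${\bf U}_0 \in \overline{\mathcal{G}}_*$, and set ${\bf U}_\lambda = \lambda {\bf U}_1 + (1-\lambda){\bf U}_0$ for $\lambda \in (0,1]$. The density component of ${\bf U}_\lambda$ is $\rho_\lambda = \lambda \rho_1 + (1-\lambda)\rho_0$; since $\rho_1 > 0$, $\rho_0 \ge 0$ and $\lambda > 0$, we get $\rho_\lambda > 0$. For the second condition, fix arbitrary ${\bf v}^*, {\bf B}^* \in \mathbb{R}^3$ and the associated ${\bf n}^*$. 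Because the map ${\bf U} \mapsto {\bf U}\cdot{\bf n}^* + \tfrac{|{\bf B}^*|^2}{2}$ is affine in ${\bf U}$ (with ${\bf n}^*$ and ${\bf B}^*$ held fixed), we have
\begin{equation*}
{\bf U}_\lambda \cdot {\bf n}^* + \frac{|{\bf B}^*|^2}{2}
= \lambda\Big( {\bf U}_1 \cdot {\bf n}^* + \frac{|{\bf B}^*|^2}{2} \Big)
+ (1-\lambda)\Big( {\bf U}_0 \cdot {\bf n}^* + \frac{|{\bf B}^*|^2}{2} \Big).
\end{equation*}
The first bracket is $>0$ since ${\bf U}_1 \in \mathcal{G}_*$, and the second bracket is $\ge 0$ since ${\bf U}_0 \in \overline{\mathcal{G}}_*$ (closure turns the strict inequality into a non-strict one). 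With $\lambda > 0$ and $1-\lambda \ge 0$, the right-hand side is strictly positive. As ${\bf v}^*, {\bf B}^*$ were arbitrary, ${\bf U}_\lambda \in \mathcal{G}_*$.

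Taking $\lambda \in (0,1)$ with both ${\bf U}_0, {\bf U}_1 \in \mathcal{G}_*$ as a special case gives the convexity statement; I would state it in the more general closure form since that is what the downstream convex-combination arguments for the numerical schemes will actually need (the cell average typically lands on the boundary only in degenerate cases, but robustness of the statement matters). One subtle point worth a sentence: I should justify that the inequality for ${\bf U}_0$ really does pass to the closure in the required uniform-in-$({\bf v}^*,{\bf B}^*)$ sense — i.e., that ${\bf U}_0 \in \overline{\mathcal{G}}_*$ implies ${\bf U}_0 \cdot {\bf n}^* + \tfrac{|{\bf B}^*|^2}{2} \ge 0$ for \emph{every} $({\bf v}^*,{\bf B}^*)$. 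This follows because for each fixed $({\bf v}^*,{\bf B}^*)$ the functional ${\bf U} \mapsto {\bf U}\cdot{\bf n}^* + \tfrac{|{\bf B}^*|^2}{2}$ is continuous, so the set where it is $\ge 0$ is closed and contains $\mathcal{G}_*$, hence contains $\overline{\mathcal{G}}_*$.

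There is essentially no main obstacle here — the whole point of Lemma~\ref{theo:eqDefG} was to replace the nonlinear constraint ${\mathcal E}({\bf U}) > 0$ by a family of linear ones, and convexity of a set cut out by affine inequalities is immediate. The only place requiring the slightest care is the handling of the closure and the direction of the combination (why $\lambda = 1$ is allowed but $\lambda = 0$ is not, namely because ${\bf U}_0$ is only assumed in $\overline{\mathcal{G}}_*$), which the argument above already addresses. I would keep the proof to three or four lines in the final text.
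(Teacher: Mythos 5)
Your proof is correct and follows essentially the same route as the paper's: check positivity of the density component of the combination, then use the affine dependence of ${\bf U} \mapsto {\bf U}\cdot{\bf n}^* + \tfrac{|{\bf B}^*|^2}{2}$ on ${\bf U}$ to split the constraint into a strictly positive term (from ${\bf U}_1 \in \mathcal{G}_*$) and a nonnegative one (from ${\bf U}_0 \in \overline{\mathcal{G}}_*$). Your extra sentence justifying that the non-strict inequality passes to the closure uniformly in $({\bf v}^*,{\bf B}^*)$ is a small but welcome refinement that the paper leaves implicit.
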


\begin{proof}
	The first component of $ {\bf U}_\lambda :=  \lambda {\bf U}_1 + (1-\lambda) {\bf U}_0  $ equals 
	$\lambda \rho_1 + (1-\lambda) \rho_0 >0$.
	For $\forall~ {\bf v}^*, {\bf B}^* \in {\mathbb {R}}^3$,
	${\bf U}_\lambda \cdot {\bf n}^* + \frac{|{\bf B}^*|^2}{2}
	=  \lambda \big( {\bf U}_1 \cdot {\bf n}^* + \frac{|{\bf B}^*|^2}{2} \big) + (1-\lambda) \big( {\bf U}_0 \cdot {\bf n}^* + \frac{|{\bf B}^*|^2}{2} \big)  > 0.$ 
	This shows ${\bf U}_\lambda \in {\mathcal G}_*$. 
	The proof is completed by the definition of convexity. 
\end{proof}

We also have the following orthogonal invariance, which can be verified directly.

\begin{lemma}[Orthogonal invariance] \label{lem:MHD:zhengjiao}
	Let ${\bf T} :={\rm diag}\{1,{\bf T}_3,{\bf T}_3,1\}$, where
	${\bf T}_3$ is any orthogonal matrix  of size 3.
	If ${\bf U} \in{\mathcal G}$, then
	${\bf T} {\bf U} \in{\mathcal G}$.
\end{lemma}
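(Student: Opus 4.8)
The plan is to verify the two defining constraints of $\mathcal{G}$ in \eqref{eq:DefG} directly for ${\bf T}{\bf U}$, exploiting the block-diagonal structure of ${\bf T}$ and the norm-preserving property of the orthogonal matrix ${\bf T}_3$.

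First I would spell out the action of ${\bf T}$ on a conservative vector: if ${\bf U}=(\rho,{\bf m},{\bf B},E)^\top$, then ${\bf T}{\bf U}=(\rho,\,{\bf T}_3{\bf m},\,{\bf T}_3{\bf B},\,E)^\top$, so ${\bf T}{\bf U}$ is again of the form of a conservative vector with density $\rho$, momentum ${\bf T}_3{\bf m}$, magnetic field ${\bf T}_3{\bf B}$, and total energy $E$. In particular the scalar components $\rho$ and $E$ are untouched, so the first constraint $\rho>0$ in \eqref{eq:DefG} is immediately inherited from ${\bf U}\in{\mathcal G}$.

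Next I would check the internal-energy constraint. Since ${\bf T}_3$ is orthogonal, ${\bf T}_3^\top {\bf T}_3={\bf I}_3$, hence $|{\bf T}_3{\bf m}|^2={\bf m}^\top {\bf T}_3^\top {\bf T}_3{\bf m}=|{\bf m}|^2$ and similarly $|{\bf T}_3{\bf B}|^2=|{\bf B}|^2$. Therefore
\[
{\mathcal E}({\bf T}{\bf U})=E-\frac12\left(\frac{|{\bf T}_3{\bf m}|^2}{\rho}+|{\bf T}_3{\bf B}|^2\right)=E-\frac12\left(\frac{|{\bf m}|^2}{\rho}+|{\bf B}|^2\right)={\mathcal E}({\bf U})>0,
\]
the positivity holding because ${\bf U}\in{\mathcal G}$. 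Both constraints of \eqref{eq:DefG} are thus satisfied by ${\bf T}{\bf U}$, which gives ${\bf T}{\bf U}\in{\mathcal G}$.

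There is essentially no genuine obstacle here: the statement reduces to the elementary fact that orthogonal transformations preserve Euclidean norms, combined with the observation that ${\bf T}$ leaves $\rho$ and $E$ fixed. The only point worth stating carefully is that ${\bf T}{\bf U}$ still has the structure of an admissible conservative vector (a scalar, two $3$-vectors, a scalar), so that Definition \ref{def:G} applies verbatim. Alternatively, one could run the argument through the equivalent set ${\mathcal G}_*$ of Lemma \ref{theo:eqDefG}, noting that under the relabeling ${\bf v}^*\mapsto {\bf T}_3{\bf v}^*$, ${\bf B}^*\mapsto {\bf T}_3{\bf B}^*$ the vector ${\bf n}^*$ transforms so that ${\bf T}{\bf U}\cdot{\bf n}^*$ matches ${\bf U}$ tested against the original family; but the direct check via ${\mathcal E}$ above is the shortest route.
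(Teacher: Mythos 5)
Your proof is correct and is precisely the direct verification the paper alludes to when it states the lemma "can be verified directly": $\rho$ and $E$ are fixed by ${\bf T}$, and orthogonality of ${\bf T}_3$ preserves $|{\bf m}|$ and $|{\bf B}|$, so ${\mathcal E}({\bf T}{\bf U})={\mathcal E}({\bf U})>0$. Nothing further is needed.
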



We refer to the following property \eqref{eq:LFproperty} as the {\em LF splitting property}, 
\begin{equation}\label{eq:LFproperty}
{\bf U} \pm \frac{ {\bf F}_i ({\bf U}) }{\alpha} \in {\mathcal G}, \quad \forall~{\bf U} \in {\mathcal G},~\forall~\alpha \ge \chi {\mathscr{R}}_i ({\bf U}) ,
\end{equation}
where $\chi\ge1 $ is some constant, and  
${\mathscr{R}}_i ({\bf U})$ is the spectral radius of the Jacobian matrix in $x_i$-direction, $i=1,2,3$. 
For the ideal MHD system with the EOS \eqref{eq:gEOS}, one has \cite{Powell1994}
$$
{\mathscr{R}}_i ({\bf U}) = |v_i| + \mathcal{C}_i,
$$
with 
$$
{\mathcal C}_i := \frac{1}{\sqrt{2}} \left[ \mathcal{C}_s^2 + \frac{ |{\bf B}|^2}{\rho} + \sqrt{ \left( \mathcal{C}_s^2 + \frac{ |{\bf B}|^2}{\rho} \right)^2 - 4 \frac{ \mathcal{C}_s^2 B_i^2}{\rho}  }  \right]^\frac12,
$$
where $\mathcal{C}_s=\sqrt{\gamma p/\rho}$ is the sound speed. 

If true, the LF splitting property would be very useful in 
analyzing the PP property of the schemes with the LF flux, see its roles in \cite{zhang2010b,WuTang2015,Wu2017} 
for the equations of hydrodynamics. 
Unfortunately, for the ideal MHD, 
\eqref{eq:LFproperty} is untrue in general,  
as evidenced numerically in \cite{cheng} for ideal gases. 
In fact, one can disprove \eqref{eq:LFproperty}, 
see the proof of the following proposition in Appendix \ref{sec:proof}.

\begin{proposition}\label{lemma:2.8}	 
	The LF splitting property \eqref{eq:LFproperty} does not hold in general.
\end{proposition}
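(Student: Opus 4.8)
The plan is to produce one admissible state at which the inclusion in \eqref{eq:LFproperty} fails already at the smallest admissible value $\alpha=\chi\,{\mathscr R}_i({\bf U})$, and --- crucially --- to do so in a way that survives taking the constant $\chi\ge 1$ as large as one likes, so that no choice of $\chi$ can rescue the property. Note first that the density component of ${\bf U}\pm{\bf F}_i({\bf U})/\alpha$ equals $\rho(1\pm v_i/\alpha)$, which is positive since $\alpha\ge{\mathscr R}_i({\bf U})=|v_i|+{\mathcal C}_i>|v_i|$; hence the only thing that can fail is positivity of the internal energy, and the task reduces to making ${\mathcal E}({\bf U}\pm{\bf F}_i({\bf U})/\alpha)\le 0$.

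To that end I would choose the state as simple as possible while still exhibiting the magnetic effect that hydrodynamics lacks. Take $i=1$, vanishing velocity ${\bf v}={\bf 0}$, a purely transverse field ${\bf B}=(0,B,0)$ with $B>0$, an ideal EOS \eqref{eq:EOS}, and any fixed $\rho>0$, $p>0$. Then ${\bf U}\in{\mathcal G}$ because ${\mathcal E}({\bf U})=\rho e>0$; the flux degenerates to ${\bf F}_1({\bf U})=(0,\,p_{tot},\,0,0,0,0,0,\,0)^\top$ with $p_{tot}=p+\frac{1}{2}B^2$ (only the $x_1$-momentum entry is nonzero); and, since $v_1=B_1=0$, the spectral radius reduces to the fast magnetosonic speed ${\mathscr R}_1({\bf U})=\sqrt{{\mathcal C}_s^2+B^2/\rho}$ with ${\mathcal C}_s^2=\gamma p/\rho$.

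Taking $\alpha=\chi\,{\mathscr R}_1({\bf U})$ and using that only the $x_1$-momentum of ${\bf U}$ is perturbed (so the magnetic and total-energy contributions to ${\mathcal E}$ are untouched), a one-line computation yields
\begin{equation*}
{\mathcal E}\!\left({\bf U}\pm\frac{{\bf F}_1({\bf U})}{\alpha}\right)
=\rho e-\frac{p_{tot}^2}{2\alpha^2\rho}
=\frac{p}{\gamma-1}-\frac{\left(p+\frac{1}{2}B^2\right)^2}{2\chi^2\left(\gamma p+B^2\right)}.
\end{equation*}
I would then let $B\to\infty$ with $\gamma,\rho,p,\chi$ held fixed: the subtracted term grows like $B^2/(8\chi^2)$ while the first term stays bounded, so the right-hand side is negative for all sufficiently large $B$. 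Consequently ${\bf U}\pm{\bf F}_1({\bf U})/\alpha\notin{\mathcal G}$, contradicting \eqref{eq:LFproperty}; and because $\chi\ge1$ was arbitrary, \eqref{eq:LFproperty} fails for every constant $\chi$.

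I do not expect a genuine analytic obstacle here --- it is a ``soft'' disproof --- and the only mild point of care is selecting the test state (zero velocity, purely transverse magnetic field) so that ${\bf F}_1$ has a single surviving component and ${\mathscr R}_1$ is explicit; once that is done, the quartic-in-$B$ growth of $p_{tot}^2$ against the merely quadratic growth of $\alpha^2$ makes the failure transparent. The substantive work comes afterwards, in the positive direction: replacing the now-defunct \eqref{eq:LFproperty} by the generalized LF splitting properties, which re-couple the admissibility of the split states through the discrete divergence-free condition.
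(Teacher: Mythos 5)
Your proposal is correct and takes essentially the same route as the paper: both pick a static state so that only the $x_1$-momentum is perturbed by the split, compute ${\mathcal E}$ explicitly, and exhibit a regime where the induced kinetic energy overwhelms the internal energy for any fixed $\chi\ge 1$. The only difference is the parameter limit — the paper uses a longitudinal field $B_1=1$ and sends $p\to 0^+$, while you use a transverse field and send $|{\bf B}|\to\infty$ — and both computations are valid.
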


\subsection{Generalized LF splitting properties} \label{sec:GLFs}

Since \eqref{eq:LFproperty} does not hold, we would like to seek some alternative properties which are weaker than \eqref{eq:LFproperty}. 
By considering the convex combination of some LF splitting terms, 
we discover the {\em generalized LF splitting properties} under 
some ``discrete divergence-free'' condition for the magnetic field. 
{\em As one of the most highlighted points of this paper, the discovery and proofs of such properties are very nontrivial and extremely technical.}

\subsubsection{A constructive inequality}

We first construct an inequality, which will play a pivotal role 
in establishing the generalized LF splitting properties.

\begin{lemma}\label{theo:MHD:LLFsplit}
	If ${\bf U}, \tilde{\bf U} \in {\mathcal G}$, then the inequality
	\begin{equation}\label{eq:MHD:LLFsplit}
	\bigg( {\bf U} - \frac{ {\bf F}_i({\bf U})}{\alpha}
	+
	\tilde{\bf U} + \frac{ {\bf F}_i(\tilde{\bf U})}{\alpha}
	\bigg) \cdot {\bf n}^* + |{\bf B}^*|^2
	+  \frac{  B_i - \tilde B_i }{\alpha} ({\bf v}^* \cdot {\bf B}^*)  > 0 ,
	\end{equation}
	holds for any ${\bf v}^*, {\bf B}^* \in {\mathbb{R}}^3$ and $|\alpha|>\alpha_{i} ({\bf U},\tilde{\bf U}) $, where $i\in\{1,2,3\}$, and 
	\begin{gather}\label{eq:alpha_i}
		\alpha_{i} ({\bf U},\tilde{\bf U}) = 
		\min_{\sigma \in \mathbb{R}} \alpha_{i} ( {\bf U}, \tilde {\bf U};\sigma ),
		\\ \nonumber
		\alpha_{i} ( {\bf U}, \tilde {\bf U};\sigma ) 
		=
		\max\big\{ |v_i|+ {\mathscr{C}}_i,|\tilde v_i| +  \tilde {\mathscr{C}}_i ,| \sigma v_i + (1-\sigma) \tilde v_i | + \max\{ {\mathscr{C}}_i , \tilde {\mathscr{C}}_i \}  \big\}
		+ f ( {\bf U}, \tilde {\bf U};  \sigma ),
	\end{gather}
	with 
	\begin{align*}
		&		f( {\bf U}, \tilde {\bf U}; \sigma) = \frac{ |\tilde{\bf B}-{\bf B}| }{\sqrt{2}} \sqrt{  \frac{\sigma^2}{\rho} + \frac{ (1-\sigma)^2 }{\tilde \rho}  },
		\\	
		&	
		{\mathscr{C}}_i = \frac{1}{\sqrt{2}}  \left[ {\mathscr{C}}_s^2 + \frac{ |{\bf B}|^2}{\rho} + \sqrt{ \left( {\mathscr{C}}_s^2 + \frac{ |{\bf B}|^2}{\rho} \right)^2 - 4 \frac{ {\mathscr{C}}_s^2 B_i^2}{\rho}  } \right]^\frac12,
	\end{align*}
	and ${\mathscr{C}}_s=\frac{p}{\rho \sqrt{2e}}$.
\end{lemma}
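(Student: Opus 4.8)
\textit{Proof idea.} By the orthogonal invariance (Lemma~\ref{lem:MHD:zhengjiao}), applied simultaneously to $({\bf U},\tilde{\bf U})$ and to the free vectors $({\bf v}^*,{\bf B}^*)$, it suffices to treat one fixed direction, say $i=1$. The plan is then to rewrite the left-hand side of \eqref{eq:MHD:LLFsplit} as an \emph{explicit quadratic function} ${\mathcal H}({\bf v}^*,{\bf B}^*)$ on ${\mathbb R}^6$ and to show it is strictly positive for all $({\bf v}^*,{\bf B}^*)$. I would first split it as $A+\tilde A+\frac{B_1-\tilde B_1}{\alpha}({\bf v}^*\cdot{\bf B}^*)$, with $A:=\bigl({\bf U}-{\bf F}_1({\bf U})/\alpha\bigr)\cdot{\bf n}^*+\frac12|{\bf B}^*|^2$ and $\tilde A$ its analogue with $+{\bf F}_1(\tilde{\bf U})/\alpha$. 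For $A$, combining the completed-square identity already used in the proof of Lemma~\ref{theo:eqDefG}, i.e. ${\bf U}\cdot{\bf n}^*+\frac12|{\bf B}^*|^2=\frac{\rho}{2}|{\bf v}-{\bf v}^*|^2+\frac12|{\bf B}-{\bf B}^*|^2+{\mathcal E}({\bf U})$ with ${\bf v}:=\rho^{-1}{\bf m}$, with a direct (tedious but routine) expansion of ${\bf F}_1({\bf U})\cdot{\bf n}^*$ in the variables $\rho,{\bf v},{\bf B},p,{\mathcal E}({\bf U})$, yields the clean identity
\[
A=\frac{\rho}{2}\Bigl(1-\frac{v_1}{\alpha}\Bigr)|{\bf v}-{\bf v}^*|^2+\frac12|{\bf B}-{\bf B}^*|^2+{\mathcal E}({\bf U})\Bigl(1-\frac{v_1}{\alpha}\Bigr)+\frac{p}{\alpha}\bigl(v_1^*-v_1\bigr)-\frac{Q}{\alpha},
\]
where $Q$ collects the magnetic contribution of ${\bf F}_1({\bf U})\cdot{\bf n}^*$ and is affine in $({\bf v}^*,{\bf B}^*)$; $\tilde A$ is the same with $1-v_1/\alpha$ replaced by $1+\tilde v_1/\alpha$ and the obvious tildes.

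This representation isolates the \emph{positive core} --- the two strictly positive terms ${\mathcal E}({\bf U})(1-v_1/\alpha)+{\mathcal E}(\tilde{\bf U})(1+\tilde v_1/\alpha)$ together with the four squares $\frac{\rho}{2}(1-v_1/\alpha)|{\bf v}-{\bf v}^*|^2$, $\frac{\tilde\rho}{2}(1+\tilde v_1/\alpha)|\tilde{\bf v}-{\bf v}^*|^2$, $\frac12|{\bf B}-{\bf B}^*|^2$, $\frac12|\tilde{\bf B}-{\bf B}^*|^2$, all with positive coefficients once $|\alpha|>\max\{|v_1|,|\tilde v_1|\}$ --- against which everything else must be absorbed: the pressure--velocity remainder $\frac1\alpha\bigl(p(v_1^*-v_1)-\tilde p(v_1^*-\tilde v_1)\bigr)$, the magnetic remainder $\frac{\tilde Q-Q}{\alpha}$ (which carries the magnetic jump $\tilde{\bf B}-{\bf B}$), and the one genuinely quadratic remainder $\frac{B_1-\tilde B_1}{\alpha}({\bf v}^*\cdot{\bf B}^*)$. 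I would then bound these via Cauchy--Schwarz and Young's inequality: the pressure remainder against $\rho e={\mathcal E}({\bf U})$ and $\frac{\rho}{2}|{\bf v}-{\bf v}^*|^2$ (and their tilded versions), an estimate whose sharp constant is the modified sound speed ${\mathscr C}_s=p/(\rho\sqrt{2e})$; the magnetic remainder against the magnetic squares and the velocity squares, which together with the pressure estimate assembles the fast-magnetosonic-type speeds ${\mathscr C}_1,\tilde{\mathscr C}_1$; and the cross term by borrowing a controllable fraction of the $|{\bf v}^*|^2$- and $|{\bf B}^*|^2$-content of the core, which is legitimate once $|\alpha|$ is large. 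The free parameter $\sigma\in{\mathbb R}$ enters precisely here: it is the reference velocity $\sigma v_1+(1-\sigma)\tilde v_1$ about which the \emph{combined} velocity quadratic form is re-centred --- equivalently the weights $(\sigma,1-\sigma)$ distributing the jump coupling $\propto(\tilde{\bf B}-{\bf B})$ between the ${\bf U}$- and $\tilde{\bf U}$-squares --- and the Cauchy--Schwarz step for that split is exactly what produces $f({\bf U},\tilde{\bf U};\sigma)=\frac{|\tilde{\bf B}-{\bf B}|}{\sqrt2}\sqrt{\sigma^2/\rho+(1-\sigma)^2/\tilde\rho}$. For each fixed $\sigma$ the absorptions yield ${\mathcal H}\ge\bigl(1-\alpha_1({\bf U},\tilde{\bf U};\sigma)/|\alpha|\bigr)\times(\text{a strictly positive quantity})$ for all $({\bf v}^*,{\bf B}^*)$, with $\alpha_1({\bf U},\tilde{\bf U};\sigma)$ exactly as in \eqref{eq:alpha_i}; minimizing over $\sigma$ gives the threshold $\alpha_1({\bf U},\tilde{\bf U})$, so ${\mathcal H}>0$ on all of ${\mathbb R}^6$ whenever $|\alpha|>\alpha_1({\bf U},\tilde{\bf U})$, and the general $i$ follows from the reduction.

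I expect the main obstacle to be neither the reduction nor the expansion, which are routine, but the \emph{sharpness} of the absorption step: a crude use of Young's inequality easily produces \emph{some} admissible threshold, but landing exactly on $\max\{|v_1|+{\mathscr C}_1,\,|\tilde v_1|+\tilde{\mathscr C}_1,\,|\sigma v_1+(1-\sigma)\tilde v_1|+\max\{{\mathscr C}_1,\tilde{\mathscr C}_1\}\}+f({\bf U},\tilde{\bf U};\sigma)$ --- small enough to be usable in the CFL conditions of the subsequent positivity-preserving analysis --- requires discovering the one right grouping of terms and the one right choice of all the Young weights, with $\sigma$ left free to be optimized afterwards. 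Two points demand constant care: $\alpha$ is allowed to be negative, so every statement of the type ``$1-v_1/\alpha>0$'' and the positive-definiteness of the quadratic core must be derived from $|\alpha|>(\cdot)$ rather than $\alpha>(\cdot)$; and, because of the term $\propto({\bf v}^*\cdot{\bf B}^*)$, the blocks ${\bf v}^*$ and ${\bf B}^*$ cannot be handled independently, so part of the core must be reserved for that term before the other remainders are absorbed.
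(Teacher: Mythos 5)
Your proposal follows essentially the same route as the paper's proof: reduce to $i=1$ by orthogonal invariance applied jointly to $({\bf U},\tilde{\bf U})$ and $({\bf v}^*,{\bf B}^*)$, write the left-hand side as $\Pi_u-\Pi_f/\alpha$ with $\Pi_u=|{\bm\theta}|^2$ (your ``positive core'' of squares and internal energies), recast the flux difference as a quadratic form in those variables with the free weights $(\sigma,1-\sigma)$ distributing the magnetic jump between the two states, and bound it by $\alpha_1({\bf U},\tilde{\bf U};\sigma)\,\Pi_u$ before minimizing over $\sigma$. The ``one right grouping'' you anticipate for the sharp constants is exactly what the paper supplies by computing the spectral radii of the two symmetric matrices ${\bf A}_6$ (yielding ${\mathscr C}_1$) and ${\bf A}_8$ (yielding $f({\bf U},\tilde{\bf U};\sigma)$), so no genuinely different idea is involved.
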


\begin{proof}
	{\tt (i)}. We first prove \eqref{eq:MHD:LLFsplit} for $i=1$. 
	Let define
	\begin{align*}
		& \Pi_u = \big( {\bf U}
		+
		\tilde{\bf U}
		\big) \cdot {\bf n}^* + |{\bf B}^*|^2, \quad  
		\Pi_f= \big(  {\bf F}_1({\bf U})
		- {\bf F}_1(\tilde{\bf U})
		\big) \cdot {\bf n}^* - \big( B_1 - \tilde B_1 \big) ({\bf v}^* \cdot {\bf B}^*) .
	\end{align*}
	Then it only needs to show
	\begin{equation}\label{eq:needproof1}
	\frac{|\Pi_f|}{\Pi_u} \le \alpha_1 ({\bf U},\tilde{\bf U}),
	\end{equation}
	by noting that
	\begin{equation}\label{eq:Piu}
	\Pi_u =  | {\bm \theta}|^2 > 0 ,
	\end{equation}
	where the nonzero vector ${\bm \theta} \in {\mathbb{R}}^{14}$ is defined as
	$$
	{\bm \theta}= \frac1{\sqrt{2}} \Big(
	\sqrt{\rho} ( {\bf v} - {\bf v}^* ),~
	{\bf B}-{\bf B}^*,~
	\sqrt{2 \rho e},~
	\sqrt{\tilde \rho} ( \tilde {\bf v} - {\bf v}^* ),~
	\tilde{\bf B}-{\bf B}^*,~
	\sqrt{2 \tilde \rho \tilde e}
	\Big)^\top.
	$$
	The proof of \eqref{eq:needproof1} is divided into the following two steps.

	{\tt Step 1}. Reformulate $\Pi_f$ into a quadratic form in the variables $\theta_j, 1\le j \le 14$. 
	We require that the coefficients of the quadratic form do not depends on ${\bf v}^*$ and ${\bf B}^*$.
	This is very nontrivial and becomes the key step of the proof. 
	We first arrange  $\Pi_f$  by a technical decomposition
	\begin{equation}\label{eq:3parts}
	\Pi_f = \Pi_1 + \Pi_2 + \Pi_3 + (\Pi_4-\tilde \Pi_4) ,
	\end{equation}
	where
	\begin{align*}
		& \Pi_j = \frac12 v_1^* \big( B_j^2-\tilde B_j^2 \big) -  v_1^* B_j^* ( B_j-\tilde B_j),\quad j=1, 2, 3, \\
		& \Pi_4 = \frac{\rho v_1 }2 |{\bf v}-{\bf v}^*|^2    +  v_1 \rho e  +  p ( v_1 - v_1^* )
		+ \sum_{j=2}^3 ( B_j (v_1-v_1^*)
		- B_1( v_j -v_j^* ) ) ( B_j - B_j^* ),\\
		& \tilde \Pi_4 = \frac{\tilde \rho \tilde v_1 }2 |\tilde{\bf v}-{\bf v}^*|^2   + \tilde v_1 \tilde \rho \tilde e  +  \tilde p ( \tilde v_1 - v_1^* )
		+ \sum_{j=2}^3 ( \tilde B_j ( \tilde v_1-v_1^*)
		- \tilde B_1( \tilde v_j -v_j^* ) ) ( \tilde B_j - B_j^* ).
	\end{align*}
	One can immediately rewrite $\Pi_4$ and $\tilde \Pi_4$ as
	\begin{align*}
		&\Pi_4  = v_1 \Big( \sum_{j=1}^3 \theta_j^2 +  \theta_7^2 \Big)
		+ 2{\mathscr{C}}_s 
		\theta_1 \theta_7 +  \frac{ 2 B_2} { \sqrt{\rho} } \theta_1 \theta_5 +  \frac{ 2 B_3} { \sqrt{\rho} } \theta_1 \theta_6
		-  \frac{2B_1}{\sqrt{\rho}} (\theta_2 \theta_5 + \theta_3 \theta_6),
		\\
		&
		\tilde \Pi_4 = \tilde v_1 \Big( \sum_{j=8}^{10} \theta_j^2 +  \theta_{14}^2 \Big)
		+  
		2 \tilde {\mathscr{C}}_s
		\theta_8 \theta_{14}
		+  \frac{ 2 \tilde B_2} { \sqrt{\tilde\rho} } \theta_8 \theta_{12} +  \frac{ 2 \tilde B_3} { \sqrt{\tilde \rho} } \theta_8 \theta_{13}
		-  \frac{2\tilde B_1}{\sqrt{\tilde \rho}} (\theta_9 \theta_{12} + \theta_{10} \theta_{13}).
	\end{align*}
	After a careful investigation, we find that $\Pi_j$, $j=1,2,3$, can be reformulated as
	\begin{equation*}
		\begin{split}
			\Pi_j & = \sigma_j \frac{\tilde B_j - B_j} { \sqrt{\rho} } ( \theta_1 \theta_{j+3} +
			\theta_1 \theta_{j+10} )
			+ (1-\sigma_j) \frac{\tilde B_j - B_j} { \sqrt{\tilde \rho} } (\theta_{8} \theta_{j+3} + \theta_{8} \theta_{j+10} )
			\\
			& \quad + \big( \sigma_j v_1 + (1-\sigma_j) \tilde v_1 \big) \theta_{j+3}^2 - \big( \sigma_j v_1 + (1-\sigma_j) \tilde v_1 \big) \theta_{j+10}^2 ,
		\end{split}
	\end{equation*}
	where $\sigma_1$, $\sigma_2$ and $\sigma_3$ can be taken as any real numbers.
	In summary, we have reformulated $\Pi_f$ into a quadratic form in the variables $\theta_j, 1\le j \le 14$.

	{\tt Step 2}. Estimate the upper bound of $\frac{|\Pi_f|}{\Pi_u}$. 
	There are several approaches to estimate the bound, resulting in 
	different formulas. One sharp upper bound is the spectral radius of the symmetric matrix associated with the above quadratic form, but 
	cannot be formulated explicitly and computed easily in practice. 
	An explicit sharp upper bound 
	is $\alpha_{1} ({\bf U},\tilde{\bf U})$ in \eqref{eq:alpha_i}. It is estimated as follows. We first notice that
	\begin{align*}
		\Pi_4  & = v_1 \Big( \sum_{j=1}^3 \theta_j^2 +  \theta_7^2 \Big)
		+ {\bm \vartheta}_6^\top {\bf A}_6 {\bm \vartheta}_6 ,
	\end{align*}
	where ${\bm \vartheta}_6 = (\theta_1,\theta_2,\theta_3,\theta_5,\theta_6,\theta_7)^\top$, and
	$$
	{\bf A}_6 =  \begin{pmatrix}
	0 & 0 & 0 & B_2 \rho^{-\frac12} & B_3 \rho^{-\frac12} & {\mathscr{C}}_s  \\
	0 & 0 & 0 & -B_1 \rho^{-\frac12} & 0 & 0 \\
	0 & 0 & 0 & 0 & -B_1 \rho^{-\frac12} & 0 \\
	B_2 \rho^{-\frac12} & -B_1 \rho^{-\frac12} & 0 & 0 & 0 & 0 \\
	B_3 \rho^{-\frac12} & 0 & -B_1 \rho^{-\frac12} & 0 & 0 & 0 \\
	{\mathscr{C}}_s  & 0 & 0 & 0 & 0 & 0
	\end{pmatrix}.
	$$
	The spectral radius 
	of ${\bf A}_6$ is $ {\mathscr{C}}_1$. This gives the following estimate 
	\begin{equation} \label{eq:PI4}
	\begin{aligned} 
	|\Pi_4|  & \le |v_1| \bigg( \sum_{j=1}^3
	\theta_{j}^2  +  \theta_7^2 \bigg)
	+ | {\bm \vartheta}_6^\top {\bf A}_6 {\bm \vartheta}_6 |
	\le |v_1| \bigg(  \sum_{j=1}^3
	\theta_{j}^2  +  \theta_7^2 \bigg) + {\mathscr{C}}_1 |{\bm \vartheta}_6 |^2
	\\
	& = ( |v_1| + {\mathscr{C}}_1 ) \bigg( \sum_{j=1}^3
	\theta_{j}^2  +  \theta_7^2 \bigg) + {\mathscr{C}}_1 \big(  \theta_5^2 +  \theta_6^2 \big). 
	\end{aligned}
	\end{equation}
	Similarly, we have
	\begin{align}
		|\tilde \Pi_4|  \le
		( |\tilde v_1| + \tilde {\mathscr{C}}_1 ) \bigg( \sum_{j=8}^{10}
		\theta_{j}^2  +  \theta_{14}^2 \bigg) + \tilde {\mathscr{C}}_1 \big(  \theta_{12}^2 +  \theta_{13}^2 \big).
		\label{eq:tPI4}
	\end{align}
	Let then focus on the first three terms at the right hand of \eqref{eq:3parts} and rewrite their summation as
	\begin{align}\label{eq:proofwkleq}
		\Pi_1 + \Pi_2 + \Pi_3 =  {\bm \vartheta}_8^\top {\bf A}_8 {\bm \vartheta}_8 +  \sum_{j=1}^3 \big( \sigma_j v_1 + (1-\sigma_j) \tilde v_1 \big)
		\big( \theta_{j+3}^2 -  \theta_{j+10}^2 \big)
		,
	\end{align}
	where ${\bm \vartheta}_8=(\theta_1,\theta_4,\theta_5,\theta_6,\theta_8,\theta_{11},\theta_{12},\theta_{13})^\top$, and
	$$
	{\bf A}_8 = \frac12
	\begin{pmatrix}
	~0~ &  ~{\bm \psi}~ & ~0~ & ~{\bm \psi}~ \\
	~{\bm \psi}^\top~ & ~{\bf O}~ &  ~\tilde{\bm \psi}^\top~  & ~{\bf O}~  \\
	~0~ & ~\tilde{\bm \psi}~ & ~0~ & ~\tilde {\bm \psi}~   \\
	~{\bm \psi}^\top~ & ~{\bf O}~ & ~\tilde {\bm \psi}^\top~ & ~{\bf O}~
	\end{pmatrix},
	$$
	with 
	${\bf O}$ denoting the $3\times 3$ null matrix, and
	\begin{align*}
		& {\bm \psi}=\rho^{-\frac12} \left( \sigma_1(\tilde B_1- B_1), \sigma_2(\tilde B_2-B_2), \sigma_3 (\tilde B_3-B_3) \right),
		\\
		& \tilde {\bm \psi}=\tilde \rho^{-\frac12} \left( (1-\sigma_1)(\tilde B_1- B_1), (1-\sigma_2)(\tilde B_2-B_2), (1-\sigma_3) (\tilde B_3-B_3) \right).
	\end{align*}
	Some algebraic manipulations show that the spectral radius of ${\bf A}_8$ is
	$$
	\varrho ({\bf A}_8)=\frac12 \left[ |{\bm \psi}|^2 + | \tilde {\bm \psi}|^2
	+ \sqrt{ ( |{\bm \psi}|^2 - | \tilde {\bm \psi}|^2   )^2 + 4 ( {\bm \psi} \cdot \tilde {\bm \psi} )^2 } \right]^\frac12.
	$$
	It then follows from \eqref{eq:proofwkleq} that, for $\forall \sigma_1,\sigma_2,\sigma_3\in \mathbb{R}$, 
	\begin{align*}
		|\Pi_1 + \Pi_2 + \Pi_3| & \le  \varrho ({\bf A}_8) | {\bm \vartheta}_8|^2 +  \sum_{j=1}^3 \big| \sigma_j v_1 + (1-\sigma_j) \tilde v_1 \big|
		\big| \theta_{j+3}^2 -  \theta_{j+10}^2 \big|.
	\end{align*}
	For simplicity, we set $\sigma_1=\sigma_2=\sigma_3=\sigma$, then 
	$\varrho ({\bf A}_8)= f({\bf U}, \tilde {\bf U};\sigma),$ 
	and 
	\begin{align}\nonumber
		|\Pi_1 + \Pi_2 + \Pi_3| & \le  f({\bf U}, \tilde {\bf U};\sigma)  | {\bm \vartheta}_8|^2 +
		| \sigma v_1 + (1-\sigma) \tilde v_1 | 
		\sum_{j=1}^3
		\big| \theta_{j+3}^2 -  \theta_{j+10}^2 \big|
		\\
		& \le  f({\bf U}, \tilde {\bf U};\sigma)  | {\bm \theta}|^2 +
		{| \sigma v_1 + (1-\sigma)\tilde v_1 | }  
		\sum_{j=1}^3
		\big( \theta_{j+3}^2 +  \theta_{j+10}^2 \big). \label{eq:PI123}
	\end{align}
	Combining \eqref{eq:3parts}--
	\eqref{eq:tPI4} and \eqref{eq:PI123}, we have 
	\begin{align*}
		\begin{split}
			|\Pi_f| & \le ( |v_1| + {\mathscr{C}}_1 ) \bigg( \sum_{j=1}^3
			\theta_{j}^2  +  \theta_7^2 \bigg)
			+ ( |\tilde v_1| + \tilde {\mathscr{C}}_1 ) \bigg( \sum_{j=8}^{10}
			\theta_{j}^2  +  \theta_{14}^2 \bigg)   +  f({\bf U}, \tilde {\bf U};\sigma)   | {\bm \theta}|^2
			\\
			& \quad    + {\mathscr{C}}_1 \big(  \theta_5^2 +  \theta_6^2 \big)  + \tilde {\mathscr{C}}_1 \big(  \theta_{12}^2 +  \theta_{13}^2 \big) +
			| \sigma v_1 + (1-\sigma)\tilde v_1 |
			\sum_{j=1}^3
			\big( \theta_{j+3}^2 +  \theta_{j+10}^2 \big)
		\end{split}
		\\
		\begin{split}
			& \le ( |v_1| + {\mathscr{C}}_1 ) \bigg( \sum_{j=1}^3
			\theta_{j}^2  +  \theta_7^2 \bigg)
			+ ( |\tilde v_1| + \tilde {\mathscr{C}}_1 ) \bigg( \sum_{j=8}^{10}
			\theta_{j}^2  +  \theta_{14}^2 \bigg)   +  f({\bf U}, \tilde {\bf U};\sigma)   | {\bm \theta}|^2
			\\
			& \quad   +
			\Big( | \sigma v_1 + (1-\sigma)\tilde v_1 |  + {\mathscr{C}}_1 \Big) \sum_{j=4}^6
			\theta_{j}^2
			+\Big( | \sigma v_1 + (1-\sigma)\tilde v_1 | + \tilde {\mathscr{C}}_1 \Big) \sum_{j=11}^{13}
			\theta_{j}^2
		\end{split}
		\\
		\begin{split}
			& \le \alpha_1 ( {\bf U},\tilde {\bf U};\sigma )~| {\bm \theta}|^2 = \alpha_1 ( {\bf U},\tilde {\bf U};\sigma )~\Pi_u,
		\end{split}
	\end{align*}
	for all $\sigma \in \mathbb{R}$. Hence 
	$$|\Pi_f| \le \Pi_u \min_{\sigma \in \mathbb{R} } \alpha_1 ( {\bf U},\tilde {\bf U};\sigma ) = \Pi_u \alpha_1 ( {\bf U},\tilde {\bf U} ),$$
	that is, the inequality \eqref{eq:needproof1} holds. The proof for the case of $i=1$ is completed.

	{\tt (ii)}. We then verify the inequality
	\eqref{eq:MHD:LLFsplit} for the cases $i=2$ and $3$, by using the inequality \eqref{eq:MHD:LLFsplit} for the case $i=1$ as well as the
	orthogonal invariance in Lemma \ref{lem:MHD:zhengjiao}.
	For the case of $i=2$, we introduce an orthogonal matrix ${\bf T} = {\rm diag} \{ 1, {\bf T}_3, {\bf T}_3, 1\}$ with 
	${\bf T}_3 := ({\bf e}_2^\top, {\bf e}_1^\top, {\bf e}_3^\top)$, where ${\bf e}_\ell $ is the $\ell$-th row of the unit matrix of size 3.   
	We then have ${\bf T} {\bf U}, {\bf T} \tilde {\bf U} \in {\mathcal G}$ by Lemma \ref{lem:MHD:zhengjiao}. 
	Let ${\mathcal H}_i({\bf U},\tilde{ {\bf U}},{\bf v}^*,{\bf B}^*,\alpha)$ denote the left-hand side term of \eqref{eq:MHD:LLFsplit}. 
	Using \eqref{eq:MHD:LLFsplit} with $i=1$ for ${\bf T} {\bf U}, {\bf T} \tilde {\bf U}, {\bf  v}^* {\bf T}_3 , {\bf B}^* {\bf T}_3$, we have 
	\begin{equation}\label{eq:MHD:LLFsplit111}
	{\mathcal H}_1( {\bf T} {\bf U}, {\bf T} {\tilde {\bf U}}  , {\bf  v}^* {\bf T}_3 , {\bf B}^* {\bf T}_3 ,\alpha)  > 0 ,
	\end{equation}
	for any $\alpha>\alpha_1 ( {\bf T} {\bf U}, {\bf T} {\tilde {\bf U}}  ) = \alpha_2 ({\bf U},\tilde{\bf U}) $.
	Utilizing ${\bf F}_1 ({\bf T}{ \bf U} ) = {\bf T} {\bf F}_2 ({\bf U})$ and the orthogonality of $\bf T$ and ${\bf T}_3$, we find that
	\begin{align} \nonumber
		{\mathcal H}_1( {\bf T} {\bf U}, {\bf T} {\tilde {\bf U}} , {\bf  v}^* {\bf T}_3 , {\bf B}^* {\bf T}_3  ,\alpha) 
		= {\mathcal H}_2({\bf U},\tilde{ \bf U},{\bf v}^*,{\bf B}^*,\alpha).
	\end{align}
	Thus \eqref{eq:MHD:LLFsplit111} implies  
	\eqref{eq:MHD:LLFsplit} 
	for $i=2$.
	%
	%
	%
	%
	%
	%
	Similar arguments for $i=3$. The proof is completed.
\end{proof}


\begin{remark}
	In practice, it is not easy to determine the minimum value in 
	\eqref{eq:alpha_i}. Since $\alpha_i({\bf U},\tilde{\bf U})$ only plays the role of a lower bound, 
	one can replace it with $\alpha_i({\bf U},\tilde{\bf U};\sigma)$ for a  special 
	$\sigma$. For example, taking $\sigma=\frac{\rho}{\rho+\tilde \rho}$ 
	minimizes $f({\bf U},\tilde{\bf U};\sigma)$ and gives 
	{\small
		\begin{equation*}
			\alpha_{i} \bigg({\bf U},\tilde{\bf U};\frac{\rho}{\rho+\tilde \rho} \bigg) =
			\max\bigg\{ |v_i|+ {\mathscr{C}}_i, |\tilde v_i| +  \tilde {\mathscr{C}}_i , \frac{|\rho v_i + \tilde \rho \tilde v_i |}{\rho + \tilde \rho} + \max\{ {\mathscr{C}}_i , \tilde {\mathscr{C}}_i \}  \bigg\}
			+ \frac{ |{\bf B}-\tilde{\bf B}| }{ \sqrt{2 (\rho + \tilde \rho)}  }.
	\end{equation*}} 
	Taking $\sigma=\frac{ \sqrt{\rho}}{\sqrt{\rho}+\sqrt{\tilde \rho}}$ gives
	{\small  
		\begin{equation*}
			\alpha_{i} \bigg({\bf U},\tilde{\bf U};\frac{\sqrt{\rho}}{\sqrt{\rho}+\sqrt{\tilde \rho}} \bigg) =
			\max\bigg\{ |v_i|+ {\mathscr{C}}_i, |\tilde v_i| +  \tilde {\mathscr{C}}_i , \frac{| \sqrt{\rho} v_i + \sqrt{\tilde \rho} \tilde v_i |} {\sqrt{\rho}+\sqrt{\tilde \rho}} + \max\{ {\mathscr{C}}_i , \tilde {\mathscr{C}}_i \}  \bigg\}
			+ \frac{ |{\bf B}-\tilde{\bf B}| }{ \sqrt{\rho} + \sqrt{\tilde \rho}  }.
		\end{equation*}
	}
\end{remark}

Let  $a_i := \max\{ {\mathscr{R}}_i ({\bf U}), {\mathscr{R}}_i(\tilde{\bf U}) \}$. For the gamma-law EOS,  the following proposition shows that 
$\alpha_{i} ({\bf U},\tilde{\bf U}) < 2a_i$ and 
$\alpha_{i} ({\bf U},\tilde{\bf U}) < a_i + {\mathcal O}( |{\bf U} -\tilde {\bf U}| )$, $i=1,2,3$. 	When ${\bf U}=\tilde {\bf U}$ with zero magnetic field, 
$\alpha_{i} ( {\bf U}, \tilde{\bf U} )=|v_i| + \frac{p}{\rho \sqrt{2e}}$, 
which is consistent with the bound in 
the LF splitting property for the Euler equations with 
a general EOS \cite{Zhang2011}. 	 

\begin{proposition}\label{prop:alpha}
	For any admissible states ${\bf U},\tilde{\bf U}$ of an ideal gas, 
	it holds  
	\begin{gather} \label{eq:aaaaWKL}
		\alpha_{i} ({\bf U},\tilde{\bf U}) 
		<2a_i,
		\\ \label{eq:bbbbWKL}
		\alpha_{i} ({\bf U},\tilde{\bf U}) 
		< a_i + \min \big\{ \big| | v_i| - |\tilde v_i| \big|, \big|  {\mathscr{C}}_i - \tilde {\mathscr{C}}_i  \big|  \big\} 
		+ \frac{ |{\bf B}-\tilde{\bf B}| }{ \sqrt{2 (\rho + \tilde \rho)}  }.
	\end{gather}
\end{proposition}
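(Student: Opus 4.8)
The plan is to use the identity $\alpha_i({\bf U},\tilde{\bf U})=\min_{\sigma}\alpha_i({\bf U},\tilde{\bf U};\sigma)$ from \eqref{eq:alpha_i} and to estimate a single conveniently chosen $\alpha_i({\bf U},\tilde{\bf U};\sigma_0)$. Taking $\sigma_0=\rho/(\rho+\tilde\rho)\in(0,1)$ makes $\sigma_0^2/\rho+(1-\sigma_0)^2/\tilde\rho=1/(\rho+\tilde\rho)$, so that $f({\bf U},\tilde{\bf U};\sigma_0)=\frac{|{\bf B}-\tilde{\bf B}|}{\sqrt{2(\rho+\tilde\rho)}}$ — exactly the last term in \eqref{eq:bbbbWKL} — and, since $\sigma_0\in(0,1)$, the convex combination in the third entry of the maximum defining $\alpha_i(\cdot;\sigma_0)$ obeys $|\sigma_0 v_i+(1-\sigma_0)\tilde v_i|\le\max\{|v_i|,|\tilde v_i|\}$. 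It then suffices to prove \eqref{eq:aaaaWKL} and \eqref{eq:bbbbWKL} with $\alpha_i({\bf U},\tilde{\bf U})$ replaced by $\alpha_i({\bf U},\tilde{\bf U};\sigma_0)$.

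Two gamma-law facts drive the argument. From \eqref{eq:EOS}, $\mathscr{C}_s=\frac{p}{\rho\sqrt{2e}}=\sqrt{(\gamma-1)p/(2\rho)}$, hence $\mathscr{C}_s^2=\frac{\gamma-1}{2\gamma}\mathcal{C}_s^2<\frac12\mathcal{C}_s^2$. Next, the map $c^2\mapsto\frac{1}{\sqrt2}\big[c^2+|{\bf B}|^2/\rho+\sqrt{(c^2+|{\bf B}|^2/\rho)^2-4c^2B_i^2/\rho}\big]^{1/2}$ is nondecreasing on $[0,\infty)$: a short derivative computation, using $B_i^2\le|{\bf B}|^2$ (which makes the radicand nonnegative and forces the derivative to be $\ge0$), gives this. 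Hence $\mathscr{C}_i\le\mathcal{C}_i$ and $\tilde{\mathscr{C}}_i\le\tilde{\mathcal{C}}_i$. The same formula yields $2\mathscr{C}_i^2=(\mathscr{C}_s^2+|{\bf B}|^2/\rho)+\sqrt{(\mathscr{C}_s^2+|{\bf B}|^2/\rho)^2-4\mathscr{C}_s^2B_i^2/\rho}\ge(\mathscr{C}_s^2+|{\bf B}|^2/\rho)+\big|\mathscr{C}_s^2-|{\bf B}|^2/\rho\big|=2\max\{\mathscr{C}_s^2,|{\bf B}|^2/\rho\}$, so $|{\bf B}|\le\sqrt{\rho}\,\mathscr{C}_i$ and likewise $|\tilde{\bf B}|\le\sqrt{\tilde\rho}\,\tilde{\mathscr{C}}_i$. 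These bounds control $f$: by the triangle inequality and Cauchy--Schwarz,
\[
f({\bf U},\tilde{\bf U};\sigma_0)\le\frac{|{\bf B}|+|\tilde{\bf B}|}{\sqrt{2(\rho+\tilde\rho)}}\le\frac{\sqrt{\rho}\,\mathscr{C}_i+\sqrt{\tilde\rho}\,\tilde{\mathscr{C}}_i}{\sqrt{2(\rho+\tilde\rho)}}\le\sqrt{\frac{\mathscr{C}_i^2+\tilde{\mathscr{C}}_i^2}{2}}\le\max\{\mathscr{C}_i,\tilde{\mathscr{C}}_i\}\le\max\{\mathcal{C}_i,\tilde{\mathcal{C}}_i\}.
\]

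Now write $\alpha_i({\bf U},\tilde{\bf U};\sigma_0)=\max\{T_1,T_2,T_3\}+f$ with $T_1=|v_i|+\mathscr{C}_i\le\mathscr{R}_i({\bf U})$, $T_2=|\tilde v_i|+\tilde{\mathscr{C}}_i\le\mathscr{R}_i(\tilde{\bf U})$ and $T_3=|\sigma_0 v_i+(1-\sigma_0)\tilde v_i|+\max\{\mathscr{C}_i,\tilde{\mathscr{C}}_i\}\le\max\{|v_i|,|\tilde v_i|\}+\max\{\mathscr{C}_i,\tilde{\mathscr{C}}_i\}$. For \eqref{eq:aaaaWKL} I would bound $T_1+f$, $T_2+f$, $T_3+f$ individually against $2a_i$, using $\mathscr{R}_i({\bf U}),\mathscr{R}_i(\tilde{\bf U}),\mathcal{C}_i,\tilde{\mathcal{C}}_i\le a_i$, the $f$-estimate $f\le\sqrt{(\mathscr{C}_i^2+\tilde{\mathscr{C}}_i^2)/2}\le\max\{\mathscr{C}_i,\tilde{\mathscr{C}}_i\}$ above, and the observation $\max\{|v_i|,|\tilde v_i|\}\le a_i-\min\{\mathcal{C}_i,\tilde{\mathcal{C}}_i\}$; the $T_3+f$ case in addition needs a split according to whether $\mathscr{C}_i\ge\tilde{\mathscr{C}}_i$. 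For the sharper bound \eqref{eq:bbbbWKL} I keep $f=\frac{|{\bf B}-\tilde{\bf B}|}{\sqrt{2(\rho+\tilde\rho)}}$ verbatim and refine the estimate of $\max\{T_1,T_2,T_3\}$ through the cases $\mathscr{C}_i\ge\tilde{\mathscr{C}}_i$ / $\mathscr{C}_i<\tilde{\mathscr{C}}_i$ and $|v_i|\ge|\tilde v_i|$ / $|v_i|<|\tilde v_i|$, in each case identifying the active entry of the maximum and showing $\max\{T_1,T_2,T_3\}\le a_i+\min\{\,||v_i|-|\tilde v_i||,\,|\mathscr{C}_i-\tilde{\mathscr{C}}_i|\,\}$; the specialization ${\bf U}=\tilde{\bf U}$ with ${\bf B}={\bf 0}$ is then immediate from $\mathscr{C}_s<\mathcal{C}_s$.

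The hardest part is this last combining step. The inequalities $\mathscr{C}_i\le\mathcal{C}_i$ and $f\le\max\{\mathscr{C}_i,\tilde{\mathscr{C}}_i\}$ are individually nearly tight exactly when the magnetic field dominates the sound speed, so \eqref{eq:aaaaWKL}--\eqref{eq:bbbbWKL} are essentially sharp; the strict inequalities and the precise form of the $\min\{\cdot,\cdot\}$ term survive only through a careful accounting of which of $T_1,T_2,T_3$ realizes the maximum and of how much slack is retained in passing from $\mathscr{C}_s$ to $\mathcal{C}_s$ and from $\mathscr{C}_i$ to $\mathcal{C}_i$. One should in particular scrutinize the locus where ${\bf B}$ and $\tilde{\bf B}$ are both parallel to ${\bf e}_i$ (where $\mathscr{C}_i=\mathcal{C}_i$ can occur), since that is where all the intermediate estimates threaten to become simultaneous equalities. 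The monotonicity fact for the fast-magnetosonic formula and the bound $|{\bf B}|\le\sqrt{\rho}\,\mathscr{C}_i$ are the two ingredients that keep the argument from unraveling.
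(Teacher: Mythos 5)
Your treatment of \eqref{eq:bbbbWKL} is essentially the paper's own: it too takes $\sigma=\rho/(\rho+\tilde\rho)$, keeps $f=\frac{|{\bf B}-\tilde{\bf B}|}{\sqrt{2(\rho+\tilde\rho)}}$ verbatim, and shows $\max\{T_1,T_2,T_3\}<a_i+\min\{||v_i|-|\tilde v_i||,\,|{\mathscr{C}}_i-\tilde{\mathscr{C}}_i|\}$ (via two symmetric triangle-inequality bounds on the cross terms $|v_i|+\tilde{\mathscr{C}}_i$ and $|\tilde v_i|+{\mathscr{C}}_i$ rather than your four-way case split, but with the same content). The genuine gap is in your plan for \eqref{eq:aaaaWKL}. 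With $\sigma_0=\rho/(\rho+\tilde\rho)$ and the \emph{decoupled} bounds $T_3\le\max\{|v_i|,|\tilde v_i|\}+\max\{{\mathscr{C}}_i,\tilde{\mathscr{C}}_i\}$ and $f\le\max\{{\mathscr{C}}_i,\tilde{\mathscr{C}}_i\}$ (or even $f\le\sqrt{({\mathscr{C}}_i^2+\tilde{\mathscr{C}}_i^2)/2}$), the estimate of $T_3+f$ does not close: take $\rho=\tilde\rho=1$, ${\bf v}=(10,0,0)$, $\tilde{\bf v}={\bf 0}$, ${\bf B}={\bf 0}$, $\tilde{\bf B}=(10,0,0)$, both pressures tiny. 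Then $a_1$ is just above $10$, while $\max\{|v_1|,|\tilde v_1|\}+2\max\{{\mathscr{C}}_1,\tilde{\mathscr{C}}_1\}\approx 30$, and even your refinement $\max\{|v_i|,|\tilde v_i|\}\le a_i-\min\{{\mathcal C}_i,\tilde{\mathcal C}_i\}$ still leaves roughly $30>2a_1\approx 20$. (The true $T_3+f$ here is about $20$, so the statement survives, but only because the velocity part of $T_3$ and the magnetic part of $f$ must be estimated \emph{jointly}.) This is exactly why the paper switches to $\sigma=\sqrt{\rho}/(\sqrt{\rho}+\sqrt{\tilde\rho})$ for \eqref{eq:aaaaWKL}: with that choice $f\le\frac{|{\bf B}|+|\tilde{\bf B}|}{\sqrt{\rho}+\sqrt{\tilde\rho}}$ and the velocity average $\frac{|\sqrt{\rho}v_i+\sqrt{\tilde\rho}\tilde v_i|}{\sqrt{\rho}+\sqrt{\tilde\rho}}$ carry identical convex weights, so their sum is a convex combination of $|v_i|+|{\bf B}|/\sqrt{\rho}$ and $|\tilde v_i|+|\tilde{\bf B}|/\sqrt{\tilde\rho}$, each bounded by $a_i$ via $|{\bf B}|/\sqrt{\rho}\le{\mathcal C}_i$. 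You need this regrouping (or an equivalent coupling of the two terms); your two key lemmas $|{\bf B}|\le\sqrt{\rho}\,{\mathscr{C}}_i$ and the monotonicity of the fast-speed formula are correct but do not substitute for it.

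A second, smaller point: your worry about the locus where ${\bf B}$ and $\tilde{\bf B}$ are parallel to ${\bf e}_i$ is well founded, and in fact cannot be argued away. There ${\mathscr{C}}_i={\mathcal C}_i$ holds exactly whenever $B_i^2/\rho\ge{\mathcal C}_s^2$ (both fast speeds collapse to $|B_i|/\sqrt{\rho}$), so the strict inequality ${\mathscr{C}}_i<{\mathcal C}_i$ that the paper invokes is false on that locus; with ${\bf B}=(2,0,0)$, $\tilde{\bf B}=(-2,0,0)$, $\rho=\tilde\rho=1$, zero velocities and small pressures one computes $\alpha_1({\bf U},\tilde{\bf U})=2a_1$ exactly, so \eqref{eq:aaaaWKL} degenerates to equality. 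Neither your outline nor the paper's proof closes this corner; the defensible statement there is $\le$ rather than $<$.
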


\begin{proof}
	The inequality \eqref{eq:aaaaWKL} can be shown as follows
	{\small \begin{align*}
			\alpha_{i} ({\bf U},\tilde{\bf U})  & \le 
			\alpha_{i} \bigg({\bf U},\tilde{\bf U};\frac{\sqrt{\rho}}{\sqrt{\rho}+\sqrt{\tilde \rho}} \bigg)  
			\\
			& \le 
			\max\Big\{ |v_i|+ {\mathscr{C}}_i,~|\tilde v_i| + 
			\tilde {\mathscr{C}}_i    \Big\}
			+ \frac{| \sqrt{\rho} v_i + \sqrt{\tilde \rho} \tilde v_i |} {\sqrt{\rho}+\sqrt{\tilde \rho}}
			+ \frac{ |{\bf B}-\tilde{\bf B}| }{ \sqrt{\rho} + \sqrt{\tilde \rho}  }
			\\
			& <  
			a_i
			+ \frac{\sqrt{\rho} } {\sqrt{\rho}+\sqrt{\tilde \rho}}
			\bigg( |v_i| + \frac{|\bf B|}{\sqrt{\rho}} \bigg)
			+ \frac{\sqrt{\tilde \rho} } {\sqrt{\rho}+\sqrt{\tilde \rho}}
			\bigg( |\tilde v_i| + \frac{|\tilde {\bf B}|}{\sqrt{\tilde \rho}} \bigg)
			\\
			& \le 
			a_i + \max \bigg\{ | v_i| + \frac{|\bf B|}{\sqrt{\rho}}, 
			|\tilde v_i| + \frac{|\tilde {\bf B}|}{\sqrt{\tilde \rho}}  \bigg\}
			\\
			& 
			\le a_i + \max \bigg\{ | v_i| + \frac{{\mathcal C}_i}{\sqrt{2}}, 
			|\tilde v_i| + \frac{|\tilde {\mathcal C}_i|}{\sqrt{2}}  \bigg\}
			< 2 a_i,
	\end{align*}}
	where we have used ${\mathscr{C}}_i < {\mathcal{C}}_i$ because of ${\mathscr{C}}_s = \sqrt{ \frac{(\gamma-1)p}{2\rho } }< {\mathcal{C}}_s$. 
	We then turn to prove \eqref{eq:bbbbWKL}. 
	Using the triangle inequality, one can easily show that  
	\begin{align*}
		&| v_i| + \tilde {\mathscr{C}}_i  
		\le \min \big\{ \big| | v_i| - |\tilde v_i| \big|, \big|  {\mathscr{C}}_i - \tilde {\mathscr{C}}_i  \big|  \big\}  + 
		\max\big\{ |v_i| +  {\mathscr{C}}_i, |\tilde v_i| + \tilde {\mathscr{C}}_i \big\},
		\\
		&|\tilde v_i| +  {\mathscr{C}}_i  
		\le \min \big\{ \big| | v_i| - |\tilde v_i| \big|, \big|  {\mathscr{C}}_i - \tilde {\mathscr{C}}_i  \big|  \big\}  + 
		\max \big\{ |v_i| +  {\mathscr{C}}_i, |\tilde v_i| + \tilde {\mathscr{C}}_i \big\}.
	\end{align*}
	Therefore,   
	\begin{align*}
		&	\max\bigg\{ |v_i|+ {\mathscr{C}}_i,|\tilde v_i| +  \tilde {\mathscr{C}}_i ,\frac{|\rho v_i + \tilde \rho \tilde v_i |}{\rho + \tilde \rho} + \max\{ {\mathscr{C}}_i , \tilde {\mathscr{C}}_i \}  \bigg\}
		\\
		& \quad \le \max \big\{ 
		|v_i|+ {\mathscr{C}}_i,|\tilde v_i| +  \tilde {\mathscr{C}}_i ,
		|\tilde v_i|+ {\mathscr{C}}_i,| v_i| +  \tilde {\mathscr{C}}_i 
		\big\}	
		\\
		& \quad \le \max\big\{ |v_i|+ {\mathscr{C}}_i,|\tilde v_i| +  \tilde {\mathscr{C}}_i  \big\} + \min \big\{ \big| | v_i| - |\tilde v_i| \big|, \big|  {\mathscr{C}}_i - \tilde {\mathscr{C}}_i  \big|  \big\} 
		\\
		& \quad 
		< a_i + \min \big\{ \big| | v_i| - |\tilde v_i| \big|, \big|  {\mathscr{C}}_i - \tilde {\mathscr{C}}_i  \big|  \big\}. 
	\end{align*}
	Then using $\alpha_{i} ({\bf U},\tilde{\bf U} ) \le \alpha_{i} \big({\bf U},\tilde{\bf U};\frac{\rho}{\rho+\tilde \rho} \big)$ completes the proof. 
\end{proof}


\begin{remark}
	It is worth emphasizing the importance of the last term at the left-hand side of \eqref{eq:MHD:LLFsplit}.
	This term is extremely technical, necessary and crucial in deriving
	the generalized LF splitting properties. 
	Including this term becomes one of the breakthrough points in 
	this paper.   
	The value of this term is not always positive or negative. However, without this term, the inequality \eqref{eq:MHD:LLFsplit} does not hold, even if $\alpha_i$ is replaced with $\chi\alpha_i$ for any constant $\chi \ge 1$.  
	More importantly, this term can be canceled out dexterously
	under the ``discrete divergence-free'' condition \eqref{eq:descrite1DDIV} or \eqref{eq:descrite2DDIV},
	see the proofs of generalized LF splitting properties in 
	the following theorems.
\end{remark}

Let us figure out 
some facts and observations. 
Note that the inequality \eqref{theo:MHD:LLFsplit} in Lemma \ref{theo:MHD:LLFsplit} 
involves two states (${\bf U}$ and $\tilde {\bf U}$). 
In the relativistic MHD case (Lemma 2.9 in \cite{WuTangM3AS}), %
we derive the 
generalized LF splitting properties by an inequality, which is similar to \eqref{theo:MHD:LLFsplit} but involves only one state. 
It seems natural to conjecture a similar ``one-state'' inequality for the ideal MHD case in the following form
\begin{equation}\label{eq:MHD:LLFsplit:one-state}
\bigg( {\bf U} + \frac{ {\bf F}_i({\bf U})}{\alpha}
\bigg) \cdot {\bf n}^* + \frac{|{\bf B}^*|^2}{2} 
+  \frac{  1 }{\alpha} \bigg( v_i^* \frac{|{\bf B}^*|^2}{2} -   B_i({\bf v}^* \cdot {\bf B}^*) \bigg) > 0, \quad 
\forall {\bf U} \in {\mathcal G},~~ 
\forall {\bf v}^*,{\bf B}^* \in {\mathbb R}^3,
\end{equation}
for any $ |\alpha| > \widehat \alpha_i ({\bf U})$, 
where 
the lower bound $\widehat \alpha_i ({\bf U})$ is expected to be independent of ${\bf v}^*$ and ${\bf B}^*$. 
For special relativistic MHD, the lower bound can be taken as {\em the speed of light} \cite{WuTangM3AS}, 
which is a constant, and brings us much convenience because any velocities 
(e.g., $|{\bf v}|$ and $|{\bf v}^*|$) are uniformly smaller than such a constant  according to the theory of special relativity.  
However, unfortunately for the ideal MHD, it is {\em impossible} to 
establish \eqref{eq:MHD:LLFsplit:one-state} 
for any $ |\alpha| > \widehat \alpha_i ({\bf U})$ with a desired bound $ \widehat \alpha_i ({\bf U})$ only dependent on ${\bf U}$. 
This is because the non-relativistic velocities are generally unbounded. As  $v_i^* {\rm sign} (-\alpha)$ and $|{\bf B}^*|$ approach $+\infty$, 
the negative cubic term ${v_i^* |{\bf B}^*|^2}/{2\alpha}$ in \eqref{eq:MHD:LLFsplit:one-state} 
dominates the sign and cannot be controlled by any other terms at the left-hand side \eqref{eq:MHD:LLFsplit:one-state}. 
 Hence, the construction of 
	generalized LF splitting properties in the ideal MHD case 
	has difficulties  essentially different from the special MHD case.  
If not requiring $\widehat \alpha_i$ to be independent of ${ v}_i^*$, we have the following proposition with the proof displayed in Appendix \ref{sec:proofwkl111}.

\begin{proposition}\label{lem:fornewGLF}
	The inequality \eqref{eq:MHD:LLFsplit:one-state} 
	holds for any $ |\alpha| > \widehat \alpha_i ({\bf U},{v}_i^*)$ 
	and any $i \in \{1,2,3\}$, where 
	$$
	\widehat \alpha_i ({\bf U},{v}_i^*) = 
	\max \big\{ |v_i|, |v_i^*|  \big\} 
	+ {\mathscr{C}}_i.
	$$	
\end{proposition}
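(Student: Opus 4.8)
The plan is to follow the one-state part of the proof of Lemma~\ref{theo:MHD:LLFsplit}: split off the flux contribution and recast it as a quadratic form whose coefficients are free of ${\bf B}^*$ and of ${\bf v}^*$ except through $v_i^*$. Set
\[
\Pi_u := {\bf U}\cdot{\bf n}^* + \frac{|{\bf B}^*|^2}{2},
\qquad
\Pi_f := {\bf F}_i({\bf U})\cdot{\bf n}^* + v_i^*\,\frac{|{\bf B}^*|^2}{2} - B_i({\bf v}^*\cdot{\bf B}^*),
\]
so the left-hand side of \eqref{eq:MHD:LLFsplit:one-state} equals $\Pi_u + \Pi_f/\alpha$. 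By the computation in the proof of Lemma~\ref{theo:eqDefG}, $\Pi_u = |{\bm\theta}|^2$ with ${\bm\theta} := \frac1{\sqrt2}\big(\sqrt{\rho}\,({\bf v}-{\bf v}^*),\,{\bf B}-{\bf B}^*,\,\sqrt{2\rho e}\,\big)^\top \in {\mathbb R}^7$, and $\Pi_u \ge \rho e > 0$ since ${\bf U}\in{\mathcal G}$. Hence it suffices to establish $|\Pi_f| \le \widehat\alpha_i({\bf U},v_i^*)\,\Pi_u$: for $|\alpha| > \widehat\alpha_i({\bf U},v_i^*)$ this gives $\Pi_u + \Pi_f/\alpha \ge \Pi_u\big(1 - \widehat\alpha_i({\bf U},v_i^*)/|\alpha|\big) > 0$.

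First consider $i=1$. Using $E + p_{\rm tot} = \rho e + \frac12\rho|{\bf v}|^2 + |{\bf B}|^2 + p$ and expanding ${\bf F}_1({\bf U})\cdot{\bf n}^*$, a direct manipulation of the $B_1$-cross terms and the magnetic-pressure terms yields the identity
\[
\Pi_f = \Pi_4 + \frac{v_1^*}{2}\,|{\bf B}-{\bf B}^*|^2,
\]
where $\Pi_4$ is exactly the quantity that appears in the proof of Lemma~\ref{theo:MHD:LLFsplit},
\[
\Pi_4 = \frac{\rho v_1}{2}|{\bf v}-{\bf v}^*|^2 + v_1\rho e + p(v_1-v_1^*) + \sum_{j=2}^3\big(B_j(v_1-v_1^*) - B_1(v_j-v_j^*)\big)(B_j-B_j^*).
\]
What deserves emphasis here is the role of the cubic term $v_1^*|{\bf B}^*|^2/2$ in \eqref{eq:MHD:LLFsplit:one-state}: it is precisely what is needed to complete the square $\frac{v_1^*}{2}|{\bf B}-{\bf B}^*|^2$. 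Without it the surviving magnetic terms would carry bare factors of ${\bf B}^*$ and could not be assembled into a quadratic form with ${\bf v}^*,{\bf B}^*$-independent coefficients, which is why no bound $\widehat\alpha_1$ independent of $v_1^*$ can exist (consistent with the discussion preceding the proposition).

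Now I invoke the quadratic-form representation established for Lemma~\ref{theo:MHD:LLFsplit}: $\Pi_4 = v_1\big(\sum_{j=1}^3\theta_j^2 + \theta_7^2\big) + {\bm\vartheta}_6^\top{\bf A}_6{\bm\vartheta}_6$ with ${\bm\vartheta}_6 = (\theta_1,\theta_2,\theta_3,\theta_5,\theta_6,\theta_7)^\top$ and $\varrho({\bf A}_6) = {\mathscr C}_1$, whereas $\frac{v_1^*}{2}|{\bf B}-{\bf B}^*|^2 = v_1^*(\theta_4^2+\theta_5^2+\theta_6^2)$. Therefore
\[
|\Pi_f| \le |v_1|\Big(\sum_{j=1}^3\theta_j^2 + \theta_7^2\Big) + {\mathscr C}_1\,|{\bm\vartheta}_6|^2 + |v_1^*|\,(\theta_4^2+\theta_5^2+\theta_6^2),
\]
and collecting the coefficient of each $\theta_j^2$ one checks that it is at most $\max\{|v_1|,|v_1^*|\} + {\mathscr C}_1 = \widehat\alpha_1({\bf U},v_1^*)$; hence $|\Pi_f| \le \widehat\alpha_1({\bf U},v_1^*)\,|{\bm\theta}|^2 = \widehat\alpha_1({\bf U},v_1^*)\,\Pi_u$. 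The cases $i=2,3$ then follow from the $i=1$ case and the orthogonal invariance of ${\mathcal G}$ (Lemma~\ref{lem:MHD:zhengjiao}), exactly as in part {\tt (ii)} of the proof of Lemma~\ref{theo:MHD:LLFsplit}: apply the $i=1$ inequality to ${\bf T}{\bf U}$, ${\bf v}^*{\bf T}_3$, ${\bf B}^*{\bf T}_3$ for the appropriate coordinate-permutation matrix ${\bf T} = {\rm diag}\{1,{\bf T}_3,{\bf T}_3,1\}$, and use ${\bf F}_1({\bf T}{\bf U}) = {\bf T}{\bf F}_i({\bf U})$ together with the orthogonality of ${\bf T},{\bf T}_3$ to identify the transformed left-hand side with that of \eqref{eq:MHD:LLFsplit:one-state} for index $i$.

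I expect the main obstacle to be the verification of the identity $\Pi_f = \Pi_4 + \frac{v_1^*}{2}|{\bf B}-{\bf B}^*|^2$: it is elementary but requires patient bookkeeping of the numerous $B_1$-coupling and magnetic-pressure terms, and it is the place where the cubic term is seen to do exactly the right thing. Once this identity is in hand, everything else is a direct reuse of the spectral-radius estimates already proved for Lemma~\ref{theo:MHD:LLFsplit}.
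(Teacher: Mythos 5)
Your proposal is correct and follows essentially the same route as the paper's proof in Appendix \ref{sec:proofwkl111}: the same decomposition $\hat\Pi_f = \frac{v_1^*}{2}|{\bf B}-{\bf B}^*|^2 + \Pi_4$, the same seven-dimensional quadratic form with spectral radius ${\mathscr C}_1$ (the paper packages it as a $7\times 7$ matrix ${\bf A}_7$ rather than reusing ${\bf A}_6$, which is immaterial), and the same reduction of $i=2,3$ to $i=1$ via orthogonal invariance. No gaps.
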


\subsubsection{Derivation of generalized LF splitting properties} \label{sec:gLxF}


We first present the 1D generalized LF splitting property.

\begin{theorem}[1D generalized LF splitting]\label{theo:MHD:LLFsplit1D}
	If $ \hat{\bf U}= (\hat\rho, \hat{\bf m}, \hat{\bf B}, \hat E)^{\top} $ and $ \check{\bf U}=(\check \rho, \check{\bf m}, \check{\bf B}, \check{E})^{\top} $ both belong to $\mathcal G$, and satisfy
	1D 
	``discrete divergence-free'' condition
	\begin{equation}\label{eq:descrite1DDIV}
	\hat{B_1} - \check {B_1}=0,
	\end{equation}
	then for any  $\alpha > \alpha_1 (\hat{\bf U},\check{\bf U}) $  it holds
	\begin{equation}\label{eq:MHD:LLFsplit1D}
	\overline{\bf U}:=\frac{1}{2} \bigg( \hat{\bf U} - \frac{ {\bf F}_1(\hat{\bf U})}{\alpha}
	+
	\check{\bf U} + \frac{ {\bf F}_1(\check{\bf U})}{\alpha} \bigg)
	\in {\mathcal G}.
	\end{equation}
\end{theorem}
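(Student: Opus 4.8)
The plan is to reduce everything to the equivalent description of the admissible set: by Lemma~\ref{theo:eqDefG} it suffices to show that $\overline{\bf U}$ has positive first component and that $\overline{\bf U}\cdot{\bf n}^* + \tfrac{|{\bf B}^*|^2}{2} > 0$ for every ${\bf v}^*,{\bf B}^*\in\mathbb{R}^3$, since then $\overline{\bf U}\in\mathcal{G}_*=\mathcal{G}$.

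For the density, writing out the first component gives
\[
\bar\rho = \frac12\Big( \hat\rho\big(1 - \tfrac{\hat v_1}{\alpha}\big) + \check\rho\big(1 + \tfrac{\check v_1}{\alpha}\big) \Big).
\]
Because $\alpha > \alpha_1(\hat{\bf U},\check{\bf U}) \ge \max\{|\hat v_1|+\hat{\mathscr{C}}_1,\,|\check v_1|+\check{\mathscr{C}}_1\} \ge \max\{|\hat v_1|,|\check v_1|\}$, both parenthesized factors are strictly positive, and $\hat\rho,\check\rho>0$ then force $\bar\rho>0$.

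For the linear functional, I would multiply through by $2$ and write
\[
2\Big(\overline{\bf U}\cdot{\bf n}^* + \tfrac{|{\bf B}^*|^2}{2}\Big)
= \Big( \hat{\bf U} - \tfrac{{\bf F}_1(\hat{\bf U})}{\alpha} + \check{\bf U} + \tfrac{{\bf F}_1(\check{\bf U})}{\alpha} \Big)\cdot{\bf n}^* + |{\bf B}^*|^2 .
\]
The right-hand side is exactly the left-hand side of the constructive inequality \eqref{eq:MHD:LLFsplit} of Lemma~\ref{theo:MHD:LLFsplit}, taken with $i=1$, ${\bf U}=\hat{\bf U}$, $\tilde{\bf U}=\check{\bf U}$, minus the single term $\tfrac{\hat B_1 - \check B_1}{\alpha}({\bf v}^*\cdot{\bf B}^*)$. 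This is precisely where the 1D discrete divergence-free condition \eqref{eq:descrite1DDIV} enters: $\hat B_1 - \check B_1 = 0$ makes that term vanish identically in ${\bf v}^*$ and ${\bf B}^*$. Since $\alpha>\alpha_1(\hat{\bf U},\check{\bf U})\ge 0$ we have $|\alpha|=\alpha>\alpha_1(\hat{\bf U},\check{\bf U})$, so Lemma~\ref{theo:MHD:LLFsplit} applies and yields strict positivity of the right-hand side above for all ${\bf v}^*,{\bf B}^*$. Combined with $\bar\rho>0$, Lemma~\ref{theo:eqDefG} gives $\overline{\bf U}\in\mathcal{G}$, which is the assertion.

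Regarding obstacles: essentially all of the difficulty has already been absorbed into Lemma~\ref{theo:MHD:LLFsplit}, in particular the delicate inclusion and quadratic-form rewriting of the term $\tfrac{B_i-\tilde B_i}{\alpha}({\bf v}^*\cdot{\bf B}^*)$, whose sole purpose is to cancel cleanly under the DDF condition. Given that lemma, the present theorem is a short consequence of Lemma~\ref{theo:eqDefG}; the only points requiring attention are matching the threshold $\alpha_1(\hat{\bf U},\check{\bf U})$ between the two statements and verifying the density positivity, both of which are immediate.
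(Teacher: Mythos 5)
Your proposal is correct and follows essentially the same route as the paper: positivity of the density from $\alpha>\max\{|\hat v_1|,|\check v_1|\}$, then an application of the constructive inequality of Lemma~\ref{theo:MHD:LLFsplit} in which the term $\frac{\hat B_1-\check B_1}{\alpha}({\bf v}^*\cdot{\bf B}^*)$ vanishes by \eqref{eq:descrite1DDIV}, and finally the equivalent characterization ${\mathcal G}_*={\mathcal G}$ of Lemma~\ref{theo:eqDefG}. The only difference is cosmetic (you subtract the cancelled term up front, the paper keeps it and bounds it by zero), so nothing further is needed.
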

\begin{proof}
	The first component of $\overline{\bf U}$ equals $\frac12 \big( \hat \rho \big(1-\frac{\hat v_1}{\alpha}\big) + \check\rho \big(1+\frac{\check v_1}{\alpha}\big) \big) > 0$.
	For any ${\bf v}^*,{\bf B}^*\in \mathbb{R}^3$, utilizing Lemma \ref{theo:MHD:LLFsplit} and the condition \eqref{eq:descrite1DDIV} gives
	{\small \begin{align*}
			\overline{\bf U} \cdot {\bf n}^* + \frac{|{\bf B}^*|^2}{2}
			= \frac12 \bigg( \hat{\bf U} - \frac{ {\bf F}_1(\hat{\bf U})}{\alpha}
			+
			\check{\bf U} + \frac{ {\bf F}_1(\check{\bf U})}{\alpha}
			\bigg) \cdot {\bf n}^* + \frac{|{\bf B}^*|^2}{2}
			>   \frac{   \check B_1 - \hat B_1 }{2\alpha} ({\bf v}^* \cdot {\bf B}^*) = 0.
	\end{align*}}
	This implies $\overline{\bf U}\in{\mathcal G}_* = {\mathcal G}$.
\end{proof}

\begin{remark}\label{rem:exa}
	As indicated by Proposition \ref{prop:alpha}, 
	the bound $\alpha_1 (\hat{\bf U},\check{\bf U}) $ for $\alpha$  
	can be very close to  $a_1 = \max\{ {\mathscr{R}}_1 (\hat{\bf U}), {\mathscr{R}}_1(\check{\bf U}) \}$, which 
	is the numerical viscosity coefficient in the standard local LF scheme. 
	Nevertheless, \eqref{eq:MHD:LLFsplit1D} does not hold for $\alpha = a_1$ in general.  
	A counterexample can be given by 
	considering 
	the following admissible states of ideal gas with $\gamma = 1.4$ and $\hat B_1 = \check B_1$, 
	\begin{equation}\label{eq:contourex}
	\begin{cases}
	\hat{\bf U}= ( 0.2,0,0.2,0,10,5,0,62.625 )^\top, 
	\\
	\check{\bf U}= ( 0.32,0,-0.32,0,10,10,0,100.16025 )^\top. 
	\end{cases}
	\end{equation}
	For \eqref{eq:contourex} and $\alpha= a_1 $, one can verify that 
	$\overline{\bf U}$ in \eqref{eq:MHD:LLFsplit1D} 
	satisfies ${\mathcal E}(\overline{\bf U}) 
	<-0.05$ and 
	$\overline{\bf U} \notin {\mathcal G}$. 
\end{remark}
\begin{remark}\label{rem:chengassum}
	The proof of Lemma 2.1 in \cite{cheng} implies that
	\begin{equation}\label{eq:statebycheng}
	{\bf U}_\lambda := \hat{\bf U} - \lambda \big( {\bf F}_1 ( \hat{\bf U} ) + a_1 \hat {\bf U}  - {\bf F}_1 ( \check{\bf U} ) - a_1 \check{\bf U} \big) \in {\mathcal G}, \quad \forall \lambda \in \big(0, 1/{(2a_1)}\big], 
	\end{equation}
	holds for all admissible states $\hat{\bf U}, \check {\bf U}$ with  $\hat B_1= \check B_1$. 
	On the contrary, for the special admissible states $\hat{\bf U},{\check {\bf U}}$ in \eqref{eq:contourex}, 
	Remark \ref{rem:exa} yields that \eqref{eq:statebycheng} does not always hold when $\lambda $ is close to $\frac{1}{2a_1}$,
	because
	$
	\mathop {\lim }\limits_{\lambda  \to {1}/{(2 a_1)}} {\mathcal E} ( {\bf U}_\lambda  ) 
	={\mathcal E}( \overline {\bf U}) < 0 .
	$ 
	This deserves further explanation, as 
	the derivation of \eqref{eq:statebycheng} in \cite{cheng} is 
	not mathematically rigorous but based on two assumptions.
	One assumption is very reasonable (but unproven), stating that 
	the exact solution ${\bf U}(x_1,t)$ to the 1D Riemann problem (RP)
	\begin{equation}\label{eq:RP}
	\begin{cases}
	\frac{\partial {\bf U}}{\partial t} + \frac{\partial {\bf F}_1 ( {\bf U} )}{\partial x_1} = {\bf 0} ,\\
	{\bf U}(x_1,0) = \begin{cases}
	\hat {\bf U},  &x_1 < 0,\\
	\check {\bf U}, &x_1 > 0,
	\end{cases}
	\end{cases}
	\end{equation}
	is always admissible if $\hat{\bf U},~\check{\bf U}\in {\mathcal G}$ with $\hat B_1= \check B_1$. 
	Another ``assumption'' (not mentioned but implicitly used in \cite{cheng}) is that $a_1=\| {\mathscr{R}}_1 ( {\bf U}(\cdot,0) )\|_{\infty}$ is an upper bound of the maximum wave speed in the above RP. 
	In fact, $a_1$ may not always be such a bound when the 
	fast shocks exist in the RP solution, as indicated in \cite{Guermond2016} for the gas dynamics system (with zero magnetic field). 
	Hence, the latter assumption may affect some 1D analysis in \cite{cheng}, 
	see our finding in Theorem \ref{eq:1DnotPP}. 
	It is also worth emphasizing that the 1D analysis in \cite{cheng} could work in general	if 
	$\| {\mathscr{R}}_1 ( {\bf U}(\cdot,0) )\|_{\infty}$ is replaced with a rigorous upper bound of the maximum wave speed in the RP. 
\end{remark}

\begin{remark}
	Proposition \ref{lem:fornewGLF} can also be used to
	derive generalized LF splitting properties, 
	see Appendix \ref{sec:gLFnewproof} for 
	the 1D  
	case. 
\end{remark}

We then present the multi-dimensional generalized LF splitting properties.

\begin{theorem}[2D generalized LF splitting]\label{theo:MHD:LLFsplit2D}
	If $\bar{\bf U}^i$, $\tilde{\bf U}^{i}$, $\hat{\bf U}^i$,
	$\check{\bf U}^i
	\in {\mathcal G}$ for $i=1,\cdots,{\tt Q}$ satisfy the 2D  ``discrete divergence-free'' condition
	\begin{equation}\label{eq:descrite2DDIV}
	\frac{{\sum\limits_{i=1}^{\tt Q} {{\omega _i}({\bar B_1}^i - {\tilde B_1}^i)} }}{{\Delta x}}
	+ \frac{{\sum\limits_{i=1}^{\tt Q} {{\omega _i}({\hat B_2}^i - {\check B_2}^i)} }}{{\Delta y}}
	=0,
	\end{equation}
	where $\Delta x,\Delta y >0$, and the sum of the positive numbers
	$\left\{\omega _i \right\}_{i=1}^{\tt Q}$ equals one,
	then for any $\alpha_{1}^{\tt LF}$ and $\alpha_{2}^{\tt LF}$ satisfying
	$ \alpha_{1}^{\tt LF} > \max_{1\le i\le {\tt Q}}  \alpha_1 ( \bar { \bf U }^i,  \tilde{
		\bf U}^i  ) $, $\alpha_{2}^{\tt LF} > \max_{1\le i\le {\tt Q}}
	\alpha_2 (  \hat { \bf U }^i , \check{\bf U}^i ),
	$
	it holds
	\begin{equation} \label{eq:MHD:LLFsplit2D}
	\begin{split}
	\overline{\bf U}:= 
	\frac{1}{ 2\left( \frac{\alpha_{1}^{\tt LF} }{\Delta x} + \frac{\alpha_{2}^{\tt LF}}{\Delta y} \right)}
	\sum\limits_{i=1}^{\tt Q} { \omega _i  }
	\bigg[
	&\frac{\alpha_{1}^{\tt LF}}{\Delta x} \bigg(
	\bar { \bf U }^i -  \frac{ {\bf F}_1 ( \bar { \bf U}^i)  }{ \alpha_{1}^{\tt LF} }
	+\tilde{
		\bf U}^i +   \frac{ {\bf F}_1 ( \tilde { \bf U}^i)  }{ \alpha_{1}^{\tt LF} }
	\bigg)
	\\
	+&
	\frac{\alpha_{2}^{\tt LF}}{\Delta y} \bigg(
	\hat { \bf U }^i -  \frac { {\bf F}_2 ( \hat { \bf U}^i)  } { \alpha_{2}^{\tt LF} }
	+\check{
		\bf U}^i +  \frac { {\bf F}_2 ( \check { \bf U}^i)  } { \alpha_{2}^{\tt LF} }
	\bigg)
	\bigg]
	\in {\mathcal G}.
	\end{split}
	\end{equation}
\end{theorem}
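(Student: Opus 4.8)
The plan is to verify directly the two defining inequalities of the equivalent admissible set $\mathcal{G}_*$ of Lemma~\ref{theo:eqDefG} for the state $\overline{\bf U}$; the reason for working with $\mathcal{G}_*$ rather than $\mathcal{G}$ is that both constraints in \eqref{eq:newDefG} are \emph{linear} in $\bf U$, so the affine-combination structure of \eqref{eq:MHD:LLFsplit2D} passes through termwise and can be married to the constructive inequality of Lemma~\ref{theo:MHD:LLFsplit}. Throughout write $N:=2\big(\alpha_1^{\tt LF}/\Delta x+\alpha_2^{\tt LF}/\Delta y\big)>0$.

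\emph{Density positivity.} The first component of $\overline{\bf U}$ equals
$\frac1N\sum_{i=1}^{\tt Q}\omega_i\big[\frac{\alpha_1^{\tt LF}}{\Delta x}\big(\bar\rho^i(1-\bar v_1^i/\alpha_1^{\tt LF})+\tilde\rho^i(1+\tilde v_1^i/\alpha_1^{\tt LF})\big)+\frac{\alpha_2^{\tt LF}}{\Delta y}\big(\hat\rho^i(1-\hat v_2^i/\alpha_2^{\tt LF})+\check\rho^i(1+\check v_2^i/\alpha_2^{\tt LF})\big)\big]$.
Since $\alpha_1^{\tt LF}>\alpha_1(\bar{\bf U}^i,\tilde{\bf U}^i)$ and, by \eqref{eq:alpha_i}, $\alpha_1(\bar{\bf U}^i,\tilde{\bf U}^i)\ge\max\{|\bar v_1^i|,|\tilde v_1^i|\}$ (and likewise $\alpha_2^{\tt LF}>\alpha_2(\hat{\bf U}^i,\check{\bf U}^i)\ge\max\{|\hat v_2^i|,|\check v_2^i|\}$), every factor $1\pm v/\alpha$ is positive, and all densities are positive because the input states lie in $\mathcal{G}$; hence this positively weighted sum is positive.

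\emph{The energy-type constraint.} Fix arbitrary ${\bf v}^*,{\bf B}^*\in\mathbb{R}^3$. The key algebraic step is the split $\frac{|{\bf B}^*|^2}{2}=\frac1N\sum_i\omega_i\big(\frac{\alpha_1^{\tt LF}}{\Delta x}+\frac{\alpha_2^{\tt LF}}{\Delta y}\big)|{\bf B}^*|^2$, which attaches a \emph{full} copy of $|{\bf B}^*|^2$ (not $|{\bf B}^*|^2/2$) to each Lax--Friedrichs block; with this bookkeeping
\[
\overline{\bf U}\cdot{\bf n}^*+\tfrac{|{\bf B}^*|^2}{2}=\frac1N\sum_{i=1}^{\tt Q}\omega_i\Big(\tfrac{\alpha_1^{\tt LF}}{\Delta x}\,\mathcal{A}_1^i+\tfrac{\alpha_2^{\tt LF}}{\Delta y}\,\mathcal{A}_2^i\Big),
\]
where $\mathcal{A}_1^i:=\big(\bar{\bf U}^i-\frac{{\bf F}_1(\bar{\bf U}^i)}{\alpha_1^{\tt LF}}+\tilde{\bf U}^i+\frac{{\bf F}_1(\tilde{\bf U}^i)}{\alpha_1^{\tt LF}}\big)\cdot{\bf n}^*+|{\bf B}^*|^2$ is the left-hand side of \eqref{eq:MHD:LLFsplit} (for $i=1$, with $({\bf U},\tilde{\bf U},\alpha)=(\bar{\bf U}^i,\tilde{\bf U}^i,\alpha_1^{\tt LF})$) minus its residual $\frac{\bar B_1^i-\tilde B_1^i}{\alpha_1^{\tt LF}}({\bf v}^*\cdot{\bf B}^*)$, and $\mathcal{A}_2^i$ is the analogous quantity for $i=2$ with $(\hat{\bf U}^i,\check{\bf U}^i,\alpha_2^{\tt LF})$ minus $\frac{\hat B_2^i-\check B_2^i}{\alpha_2^{\tt LF}}({\bf v}^*\cdot{\bf B}^*)$. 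Lemma~\ref{theo:MHD:LLFsplit} applies because $\alpha_1^{\tt LF}>\max_i\alpha_1(\bar{\bf U}^i,\tilde{\bf U}^i)$ and $\alpha_2^{\tt LF}>\max_i\alpha_2(\hat{\bf U}^i,\check{\bf U}^i)$, and it gives $\frac{\alpha_1^{\tt LF}}{\Delta x}\mathcal{A}_1^i>-\frac{\bar B_1^i-\tilde B_1^i}{\Delta x}({\bf v}^*\cdot{\bf B}^*)$ and $\frac{\alpha_2^{\tt LF}}{\Delta y}\mathcal{A}_2^i>-\frac{\hat B_2^i-\check B_2^i}{\Delta y}({\bf v}^*\cdot{\bf B}^*)$ (the $1/\alpha_j^{\tt LF}$ in each residual cancels the $\alpha_j^{\tt LF}$ in front). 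Summing with the positive weights $\omega_i/N$, the accumulated residual is exactly $-\frac{{\bf v}^*\cdot{\bf B}^*}{N}\big(\frac{\sum_i\omega_i(\bar B_1^i-\tilde B_1^i)}{\Delta x}+\frac{\sum_i\omega_i(\hat B_2^i-\check B_2^i)}{\Delta y}\big)$, which vanishes identically by the 2D discrete divergence-free condition \eqref{eq:descrite2DDIV}. Hence $\overline{\bf U}\cdot{\bf n}^*+|{\bf B}^*|^2/2>0$; since ${\bf v}^*,{\bf B}^*$ were arbitrary, $\overline{\bf U}\in\mathcal{G}_*=\mathcal{G}$.

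\emph{Main obstacle.} Given Lemma~\ref{theo:MHD:LLFsplit}, no genuine estimate remains — the delicate point is purely structural: one must arrange the convex combination and the split of $|{\bf B}^*|^2/2$ so that the residuals emerging from the $x_1$- and $x_2$-blocks, weighted by $\alpha_j^{\tt LF}/\Delta x_j$, reassemble into precisely $({\bf v}^*\cdot{\bf B}^*)$ times the expression in \eqref{eq:descrite2DDIV}. This is exactly the cancellation the extra term in \eqref{eq:MHD:LLFsplit} was designed to produce; everything else is bookkeeping. The 3D version is proved identically, adding $x_3$-blocks and the correspondingly augmented discrete divergence-free condition.
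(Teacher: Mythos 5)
Your proof is correct and follows essentially the same route as the paper: verify membership in the equivalent linear set $\mathcal{G}_*$, check density positivity termwise, distribute $|{\bf B}^*|^2/2$ so that each Lax--Friedrichs block carries a full $|{\bf B}^*|^2$ matching the left-hand side of Lemma~\ref{theo:MHD:LLFsplit}, and let the weighted residuals reassemble into $({\bf v}^*\cdot{\bf B}^*)$ times the discrete divergence, which vanishes by \eqref{eq:descrite2DDIV}. Your justification of density positivity via $\alpha_j(\cdot,\cdot)\ge\max\{|v_j|,|\tilde v_j|\}$ is a small but valid elaboration of a step the paper merely asserts.
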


\begin{proof}
	The first component of $\overline{\bf U}$  equals  
	\begin{align*}
		&   
		\frac1{ 2\left( \frac{\alpha_{1}^{\tt LF} }{\Delta x} + \frac{\alpha_{2}^{\tt LF}}{\Delta y} \right)}
		\sum\limits_{i=1}^{\tt Q} {{\omega _i}}
		\bigg(
		\frac{ \bar { \rho }^i ( \alpha_{1}^{\tt LF} -   { {\bar v_1}^i  } )
			+\tilde{
				\rho }^i ( \alpha_{1}^{\tt LF} +   {{\tilde v_1}^i} )   }{\Delta x}
		+
		\frac{ \hat { \rho }^i ( \alpha_{2}^{\tt LF} -   { { \hat v_2}^i  } )
			+\check{
				\rho}^i ( \alpha_{2}^{\tt LF} +  {\check v_2}^i) }{\Delta y}
		\bigg),
	\end{align*}
	which is positive. 
	For any ${\bf v}^*,{\bf B}^* \in \mathbb{R}^3$, using Lemma \ref{theo:MHD:LLFsplit} and the condition
	\eqref{eq:descrite2DDIV} gives
	{\small \begin{align*}
			&  \bigg(  \overline{\bf U} \cdot {\bf n}^* + \frac{|{\bf B}^*|^2}{2} \bigg) \times  2\left( \frac{\alpha_{1}^{\tt LF} }{\Delta x} + \frac{\alpha_{2}^{\tt LF}}{\Delta y} \right)
			\\
			& \quad =
			\sum\limits_{i=1}^{\tt Q} { \omega _i  }
			\Bigg\{
			\frac{\alpha_{1}^{\tt LF}}{\Delta x} \bigg[\bigg(
			\bar { \bf U }^i -  \frac{ {\bf F}_1 ( \bar { \bf U}^i)  }{ \alpha_{1}^{\tt LF} }
			+\tilde{
				\bf U}^i +   \frac{ {\bf F}_1 ( \tilde { \bf U}^i)  }{ \alpha_{1}^{\tt LF} }
			\bigg) \cdot {\bf n}^* + |{\bf B}^*|^2 \bigg]
			\\
			& \quad \qquad \quad \ +
			\frac{\alpha_{2}^{\tt LF}}{\Delta y} \bigg[ \bigg(
			\hat { \bf U }^i -  \frac { {\bf F}_2 ( \hat { \bf U}^i)  } { \alpha_{2}^{\tt LF} }
			+\check{
				\bf U}^i +  \frac { {\bf F}_2 ( \check { \bf U}^i)  } { \alpha_{2}^{\tt LF} }
			\bigg) \cdot {\bf n}^* + |{\bf B}^*|^2 \bigg]
			\Bigg\}
			\\[2mm]
			& \quad \overset{\eqref{eq:MHD:LLFsplit}}{>}
			\sum\limits_{i=1}^{\tt Q} { \omega _i  }
			\Bigg\{
			\frac{\alpha_{1}^{\tt LF}}{\Delta x} \bigg[  - \frac{ \bar B_1^i - \tilde B_1^i }{ \alpha_{1}^{\tt LF} } ({\bf v}^* \cdot {\bf B}^*)  \bigg]
			+
			\frac{\alpha_{2}^{\tt LF}}{\Delta y} \bigg[  - \frac{  \hat B_2^i - \check B_2^i }{ \alpha_{2}^{\tt LF} } ({\bf v}^* \cdot {\bf B}^*)  \bigg]
			\Bigg\}
			\\[2mm]
			& \quad = - ({\bf v}^* \cdot {\bf B}^*)
			\sum\limits_{i=1}^{\tt Q}   {{\omega _i}} \left( \frac{{{\bar B_1}^i - {\tilde B_1}^i} }{{\Delta x}} + \frac{ {\hat B_2}^i - {\check B_2}^i} {{\Delta y}} \right) \quad \overset{\eqref{eq:descrite2DDIV}}{=} 0. \nonumber
	\end{align*}}
	It follows that $\overline{\bf U} \cdot {\bf n}^* + \frac{|{\bf B}^*|^2}{2}>0$. Thus $\overline{\bf U} \in {\mathcal G}_* =  {\mathcal G}$.
\end{proof}




\begin{theorem}[3D generalized LF splitting]\label{theo:MHD:LLFsplit3D}
	If $\bar{\bf U}^i$, $\tilde{\bf U}^{i}$, $\hat{\bf U}^i$,
	$\check{\bf U}^i$, $\acute{\bf U}^i$, $\grave{\bf U}^i \in {\mathcal G}$
	for $i=1,\cdots, {\tt Q}$,
	and they satisfy the 3D ``discrete divergence-free'' condition
	\begin{equation*}
		\frac{{\sum\limits_{i=1}^{\tt Q} {{\omega _i}({\bar B_1}^i - {\tilde B_1}^i)} }}{{\Delta x}}
		+ \frac{{\sum\limits_{i=1}^{\tt Q} {{\omega _i}({\hat B_2}^i - {\check B_2}^i)} }}{{\Delta y}}
		+ \frac{{\sum\limits_{i=1}^{\tt Q} {{\omega _i}({\acute B_3}^i - {\grave B_3}^i)} }}{{\Delta { z}}}
		=0,
	\end{equation*}
	with $\Delta x, \Delta y, \Delta { z} >0$, and the sum of the positive numbers
	$\left\{\omega _i \right\}_{i=1}^{\tt Q}$ equals one,
	then for any $\alpha_{1}^{\tt LF}$, $\alpha_{2}^{\tt LF}$ and $\alpha_{3}^{\tt LF}$ satisfying
	$$ \alpha_{1}^{\tt LF} > \max_{1\le i\le {\tt Q}}  \alpha_1 ( \bar { \bf U }^i,  \tilde{
		\bf U}^i  )  ,\quad \alpha_{2}^{\tt LF} > \max_{1\le i\le {\tt Q}}
	\alpha_2 (  \hat { \bf U }^i , \check{\bf U}^i ),\quad \alpha_{3}^{\tt LF} > \max_{1\le i\le {\tt Q}}
	\alpha_3 (  \acute { \bf U }^i , \grave{\bf U}^i ),
	$$
	it holds $\overline{\bf U} \in {\mathcal G}$, where 
	\begin{equation*}
		\begin{split}
			\overline{\bf U} &:= \frac{1}{ 2\left( \frac{\alpha_{1}^{\tt LF} }{\Delta x} + \frac{\alpha_{2}^{\tt LF}}{\Delta y} + \frac{\alpha_{3}^{\tt LF}}{\Delta { z}} \right)}
			\sum\limits_{i=1}^{\tt Q} { \omega _i  }
			\bigg[
			\frac{\alpha_{1}^{\tt LF}}{\Delta x} \bigg(
			\bar { \bf U }^i -  \frac{ {\bf F}_1 ( \bar { \bf U}^i)  }{ \alpha_{1}^{\tt LF} }
			+\tilde{
				\bf U}^i +   \frac{ {\bf F}_1 ( \tilde { \bf U}^i)  }{ \alpha_{1}^{\tt LF} }
			\bigg)
			\\
			& +
			\frac{\alpha_{2}^{\tt LF}}{\Delta y} \bigg(
			\hat { \bf U }^i -  \frac { {\bf F}_2 ( \hat { \bf U}^i)  } { \alpha_{2}^{\tt LF} }
			+\check{
				\bf U}^i +  \frac { {\bf F}_2 ( \check { \bf U}^i)  } { \alpha_{2}^{\tt LF} }
			\bigg)
			+
			\frac{\alpha_{3}^{\tt LF}}{\Delta { z}} \bigg(
			\acute { \bf U }^i -  \frac { {\bf F}_3 ( \acute { \bf U}^i)  } { \alpha_{3}^{\tt LF} }
			+\grave{
				\bf U}^i +  \frac { {\bf F}_3 ( \grave { \bf U}^i)  } { \alpha_{3}^{\tt LF} }
			\bigg)
			\bigg].
		\end{split}
	\end{equation*}
\end{theorem}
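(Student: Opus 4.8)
The plan is to follow verbatim the strategy used in the proof of Theorem~\ref{theo:MHD:LLFsplit2D}, now accounting for three coordinate directions. First I would check the positivity of the density component of $\overline{\bf U}$. Up to the positive normalization factor, that component is a sum over $i$ of $\omega_i$ times the three expressions $\frac{\alpha_{1}^{\tt LF}}{\Delta x}\big(\bar\rho^i(\alpha_{1}^{\tt LF}-\bar v_1^i)+\tilde\rho^i(\alpha_{1}^{\tt LF}+\tilde v_1^i)\big)$, $\frac{\alpha_{2}^{\tt LF}}{\Delta y}\big(\hat\rho^i(\alpha_{2}^{\tt LF}-\hat v_2^i)+\check\rho^i(\alpha_{2}^{\tt LF}+\check v_2^i)\big)$, $\frac{\alpha_{3}^{\tt LF}}{\Delta z}\big(\acute\rho^i(\alpha_{3}^{\tt LF}-\acute v_3^i)+\grave\rho^i(\alpha_{3}^{\tt LF}+\grave v_3^i)\big)$, each of which is positive because $\alpha_{k}^{\tt LF}$ exceeds the corresponding lower bound $\alpha_k(\cdot,\cdot)$, and that bound is itself $\ge |v_k|$ by the defining formula \eqref{eq:alpha_i} (the maximum there dominates $|v_k|$ and the added terms ${\mathscr{C}}_k$, $f$ are nonnegative). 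Hence $\alpha_{k}^{\tt LF}\pm v_k>0$ for each of the six states.

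Next, by the equivalent characterization in Lemma~\ref{theo:eqDefG} it suffices to verify $\overline{\bf U}\cdot{\bf n}^*+\tfrac12|{\bf B}^*|^2>0$ for every ${\bf v}^*,{\bf B}^*\in{\mathbb R}^3$. I would multiply this quantity by $2\big(\tfrac{\alpha_{1}^{\tt LF}}{\Delta x}+\tfrac{\alpha_{2}^{\tt LF}}{\Delta y}+\tfrac{\alpha_{3}^{\tt LF}}{\Delta z}\big)$ and, using $\sum_i\omega_i=1$ to redistribute the $|{\bf B}^*|^2$ contribution, rewrite the product as $\sum_{i=1}^{{\tt Q}}\omega_i$ times a sum of three bracketed terms, the $x$-bracket being $\frac{\alpha_{1}^{\tt LF}}{\Delta x}\big[\big(\bar{\bf U}^i-\frac{{\bf F}_1(\bar{\bf U}^i)}{\alpha_{1}^{\tt LF}}+\tilde{\bf U}^i+\frac{{\bf F}_1(\tilde{\bf U}^i)}{\alpha_{1}^{\tt LF}}\big)\cdot{\bf n}^*+|{\bf B}^*|^2\big]$ and the $y$-, $z$-brackets analogous. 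To each bracket I apply Lemma~\ref{theo:MHD:LLFsplit} (with $i=1,2,3$ respectively), which is legitimate since $\alpha_{k}^{\tt LF}>\max_i\alpha_k(\cdot,\cdot)$; this replaces each bracket by the strictly larger quantity $-\frac{\bar B_1^i-\tilde B_1^i}{\alpha_{1}^{\tt LF}}({\bf v}^*\cdot{\bf B}^*)$ (times $\tfrac{\alpha_{1}^{\tt LF}}{\Delta x}$), and likewise for the other two directions.

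The decisive step is the cancellation. After multiplying by $\tfrac{\alpha_{k}^{\tt LF}}{\Delta x_k}$ the factors $\alpha_{k}^{\tt LF}$ drop out of the leftover cross-terms, so the resulting strict lower bound is exactly $-({\bf v}^*\cdot{\bf B}^*)\sum_{i=1}^{{\tt Q}}\omega_i\big(\frac{\bar B_1^i-\tilde B_1^i}{\Delta x}+\frac{\hat B_2^i-\check B_2^i}{\Delta y}+\frac{\acute B_3^i-\grave B_3^i}{\Delta z}\big)$, which vanishes by the assumed 3D discrete divergence-free condition. Therefore $\overline{\bf U}\cdot{\bf n}^*+\tfrac12|{\bf B}^*|^2>0$ for all ${\bf v}^*,{\bf B}^*$, so $\overline{\bf U}\in{\mathcal G}_*={\mathcal G}$, completing the proof. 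I do not anticipate any genuinely new obstacle relative to the 2D case; the essential difficulty — engineering the extra term $\frac{B_i-\tilde B_i}{\alpha}({\bf v}^*\cdot{\bf B}^*)$ in Lemma~\ref{theo:MHD:LLFsplit} precisely so that it assembles into a discrete divergence — has already been overcome, and the only care needed here is the bookkeeping that collects the three families of leftover terms into the stated 3D DDF expression.
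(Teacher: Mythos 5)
Your proposal is correct and is exactly the argument the paper intends: the paper omits the 3D proof as ``similar to that of Theorem~\ref{theo:MHD:LLFsplit2D},'' and what you write is precisely that 2D proof carried over to three directions, with the key cancellation of the $({\bf v}^*\cdot{\bf B}^*)$ cross-terms against the 3D DDF condition handled correctly. (Only a cosmetic slip: the density component of each bracket is $\frac{1}{\Delta x}\big(\bar\rho^i(\alpha_{1}^{\tt LF}-\bar v_1^i)+\tilde\rho^i(\alpha_{1}^{\tt LF}+\tilde v_1^i)\big)$, without the extra prefactor $\alpha_{1}^{\tt LF}$ you wrote, but this does not affect the positivity argument.)
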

\begin{proof}
	The proof is similar to that of Theorem \ref{theo:MHD:LLFsplit2D} and omitted here.
\end{proof}

\begin{remark}
	In the above
	generalized LF splitting properties,
	the convex combination $\overline{\bf U}$ depends on
	a number of strongly coupled states, making it extremely  difficult to check the admissibility of $\overline{\bf U}$. 
	Such difficulty is subtly overcame by using the inequality \eqref{eq:MHD:LLFsplit} under  the ``discrete divergence-free'' condition, which is
	an approximation to \eqref{eq:2D:BxBy0}. For example,
	the 2D ``discrete divergence-free'' condition \eqref{eq:descrite2DDIV}  can be derived
	by using some quadrature rule for the integrals at the left side of
	\begin{equation} \label{eq:div000}
	\begin{split}
	& \frac{1}{{\Delta x}} \left( \frac{1}{{\Delta y}}\int_{{\tt y}_0 }^{{\tt y}_0  + \Delta y}
	{ \big( B_1 ({\tt x}_0  + \Delta x,{\tt y}) - B_1 ({\tt x}_0 ,{\tt y}) \big) d{\tt y}} \right)  
	\\
	&
	+ \frac{1}{{\Delta y}} \left( \frac{1}{{\Delta x}}\int_{{\tt x}_0 }^{{\tt x}_0  + \Delta x}
	{  \big( B_2 ({\tt x},{\tt y}_0  + \Delta y)-B_2 ({\tt x},{\tt y}_0 )\big)   d{\tt x}} \right)   \\
	& = \frac{1}{{\Delta x\Delta y}}\int_{I} {\left( {\frac{{\partial B_1 }}{{\partial {\tt x}}} + \frac{{\partial B_2 }}{{\partial {\tt y}}}} \right)d{\tt x}d{\tt y}}=0,
	\end{split}
	\end{equation}
	where $({\tt x},{\tt y})=(x_1,x_2)$ and $I = [{\tt x}_0 ,{\tt x}_0  + \Delta x] \times [{\tt y}_0 ,{\tt y}_0  + \Delta y] $.
	It is worth emphasizing that, 
	like the necessity of the last term at the left-hand side of   
	\eqref{eq:MHD:LLFsplit}, 
	the proposed DDF condition 
	is necessary and crucial for the generalized 
	LF splitting properties. 
	Without this condition, those properties do not hold in general, even if $\alpha_i$ is replaced with $\chi \alpha_i$ or 
	$\chi a_i$ for any constant $\chi \ge 1$, see the proof of Theorem \ref{theo:counterEx}. 
\end{remark}

The above generalized LF splitting properties 
are important tools in 
analyzing PP schemes on uniform Cartesian  meshes
if the numerical flux is taken as the LF flux 
\begin{equation}\label{eq:LFflux}
\hat {\bf F}_\ell ( {\bf U}^- , {\bf U}^+ ) = \frac{1}{2} \Big( {\bf F}_\ell (  {\bf U}^- ) + {\bf F}_\ell ( {\bf U}^+ ) - 
\alpha_{\ell ,n}^{\tt LF} (  {\bf U}^+ -  {\bf U}^- ) \Big),\quad \ell=1,\cdots,d.
\end{equation}
Here $\{\alpha_{\ell,n}^{\tt LF}\}$ denote the numerical viscosity parameters  specified at the $n$-th discretized time level.
The extension of the above results on non-uniform or unstructured meshes will be presented in 
a separate paper.  

\section{One-dimensional positivity-preserving schemes}\label{sec:1Dpcp}

This section applies the above theories to 
study the provably PP schemes with the LF flux \eqref{eq:LFflux} for 
the 
system \eqref{eq:MHD} in one dimension. 
In 1D, the divergence-free
condition \eqref{eq:2D:BxBy0} and the fifth equation in \eqref{eq:MHD} yield that $B_1(x_1,t)\equiv {\rm constant}$ (denoted by ${\tt B}_{\tt const}$) for all $x_1$ and $t \ge 0$.

To avoid confusing subscripts, we will use the symbol $\tt x$ to represent the variable $x_1$ in \eqref{eq:MHD}.
Assume that the spatial domain is divided into uniform cells $\{ I_j=({\tt x}_{j-\frac{1}{2}},{\tt x}_{j+\frac{1}{2}}) \}$,
with a constant spatial step-size $\Delta x$.
And the time interval is divided into the mesh $\{t_0=0, t_{n+1}=t_n+\Delta t_{n}, n\geq 0\}$
with the time step-size $\Delta t_{n}$ determined by some CFL condition. 
Let 
$\bar {\bf U}_j^n $ denote the numerical cell-averaged approximation of the exact solution ${\bf U}({\tt x},t)$ over $I_j$ at $t=t_n$. 
Assume the discrete initial data $\bar {\bf U}_j^0\in {\mathcal G}$. 
A scheme is defined to be PP if its numerical solution $\bar {\bf U}_j^n$ always stays at ${\mathcal G}$.

\subsection{First-order scheme}

The 1D first-order LF scheme reads
\begin{equation}\label{eq:1DMHD:LFscheme}
\bar {\bf U}_j^{n+1} = \bar {\bf U}_j^n - \frac{\Delta t_n}{\Delta { x}} \Big( \hat {\bf F}_1 ( \bar {\bf U}_j^n ,\bar {\bf U}_{j+1}^n) - \hat {\bf F}_1 ( \bar {\bf U}_{j-1}^n, \bar {\bf U}_j^n )  \Big) ,
\end{equation}
where the numerical flux $\hat {\bf F}_1 (\cdot,\cdot) $ is defined by \eqref{eq:LFflux}.

A surprising discovery is that 
the LF scheme \eqref{eq:1DMHD:LFscheme} with  
a standard parameter $\alpha_{1,n}^{\tt LF}= \max_j {\mathscr{R}}_1 ( \bar {\bf U}_{j}^n ) $ (although works well in most cases)   
is not always PP regardless of how small the CFL number is. However, if the parameter $\alpha_{1,n}^{\tt LF}$ in \eqref{eq:LFflux} satisfies 
\begin{equation}\label{eq:Lxa1}
\alpha_{1,n}^{\tt LF} > \max_{j} \alpha_1 ( \bar {\bf U}_{j+1}^n, \bar {\bf U}_{j-1}^n ),
\end{equation} 
then we can rigorously prove that  
the scheme \eqref{eq:1DMHD:LFscheme} is PP when the CFL number is less than one.  
These results are shown the following two theorems. 
We remark that the lower bound given in \eqref{eq:Lxa1} is acceptable in comparison with the standard parameter $\max_{j} {\mathscr{R}}_1 ({\bf U}_{j}^n)$, because 
one can derive from Proposition \ref{prop:alpha} that 
$$
\max_{j} \alpha_1 ( \bar {\bf U}_{j+1}^n, \bar {\bf U}_{j-1}^n )
< 2 \max_{j} {\mathscr{R}}_1 ({\bf U}_{j}^n), 
$$
and for smooth problems, $
\max_{j} \alpha_1 ( \bar {\bf U}_{j+1}^n, \bar {\bf U}_{j-1}^n ) < \max_{j} {\mathscr{R}}_1 ({\bf U}_{j}^n)
+ {\mathcal O}( \Delta x ).
$

\begin{theorem}\label{eq:1DnotPP}
	Assume that $\bar {\bf U}_j^0 \in{\mathcal G}$ and $\bar B_{1,j}^0 = {\tt B}_{\tt const}$ for all $j$. 
	Let the parameter $\alpha_{1,n}^{\tt LF}=\max_j {\mathscr{R}}_1 ( \bar {\bf U}_{j}^n ) $, and 
	$$\Delta t_n ={\tt C}  \frac{ \Delta x}{ \alpha_{1,n}^{\tt LF} }, $$
	where ${\tt C} $ is the CFL number. For any constant ${\tt C} >0$,  
	the scheme \eqref{eq:1DMHD:LFscheme} is not PP. 
\end{theorem}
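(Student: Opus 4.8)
The plan is to prove this as a counterexample result: for every fixed ${\tt C}>0$ I will exhibit admissible initial data with constant $B_1$ for which a single step of \eqref{eq:1DMHD:LFscheme} drives one cell average out of ${\mathcal G}$. The first step is to rewrite the scheme in convex‑combination form; substituting \eqref{eq:LFflux} into \eqref{eq:1DMHD:LFscheme} and using ${\tt C}=\alpha_{1,n}^{\tt LF}\Delta t_n/\Delta x$ yields
\begin{equation*}
\bar{\bf U}_j^{n+1}=(1-{\tt C})\bar{\bf U}_j^n+\frac{{\tt C}}{2}\Big(\bar{\bf U}_{j+1}^n-\frac{{\bf F}_1(\bar{\bf U}_{j+1}^n)}{\alpha_{1,n}^{\tt LF}}\Big)+\frac{{\tt C}}{2}\Big(\bar{\bf U}_{j-1}^n+\frac{{\bf F}_1(\bar{\bf U}_{j-1}^n)}{\alpha_{1,n}^{\tt LF}}\Big).
\end{equation*}
The building block is the pair $\hat{\bf U},\check{\bf U}$ of \eqref{eq:contourex}: there $\hat v_1=\check v_1=0$ and $\hat B_1=\check B_1={\tt B}_{\tt const}$, and, taking $\alpha=a_1:=\max\{{\mathscr R}_1(\hat{\bf U}),{\mathscr R}_1(\check{\bf U})\}$, Remark \ref{rem:exa} shows that $\overline{\bf U}:=\frac12\big(\hat{\bf U}-{\bf F}_1(\hat{\bf U})/a_1+\check{\bf U}+{\bf F}_1(\check{\bf U})/a_1\big)$ satisfies $\rho_{\overline{\bf U}}=\frac12(\hat\rho+\check\rho)>0$ but ${\mathcal E}(\overline{\bf U})<-0.05$; moreover the $B_1$‑component of ${\bf F}_1$ is identically zero, so $B_1$ of $\overline{\bf U}$ also equals ${\tt B}_{\tt const}$.

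Next I would choose Riemann‑type initial data about the cell $I_0$: set $\bar{\bf U}_j^0=\check{\bf U}$ for $j<0$ and $\bar{\bf U}_j^0=\hat{\bf U}$ for $j>0$, so that in the combination above the two neighbours of $I_0$ reproduce exactly $\overline{\bf U}$, and in $I_0$ put
\begin{equation*}
\bar{\bf U}_0^0=\big(\rho_0,\ \rho_0{\bf v}_{\overline{\bf U}},\ {\bf B}_{\overline{\bf U}},\ E_0\big)^\top,\qquad {\bf v}_{\overline{\bf U}}:={\bf m}_{\overline{\bf U}}/\rho_{\overline{\bf U}},
\end{equation*}
where $\rho_0>0$ is a suitably large constant and $E_0$ is fixed so that ${\mathcal E}(\bar{\bf U}_0^0)=\delta$ for a small $\delta\in(0,0.05)$ to be chosen. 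Every cell value lies in ${\mathcal G}$ and has $B_1={\tt B}_{\tt const}$, as required. Since the gas pressure of $\bar{\bf U}_0^0$ is $(\gamma-1)\delta$ and $|{\bf B}_{\overline{\bf U}}|$ is fixed, the term ${\mathcal C}_1$ in ${\mathscr R}_1(\bar{\bf U}_0^0)=|v_1|+{\mathcal C}_1$ tends to $0$ as $\rho_0\to\infty$, so ${\mathscr R}_1(\bar{\bf U}_0^0)\to|v_{1,\overline{\bf U}}|<a_1$; hence for $\rho_0$ large enough one has ${\mathscr R}_1(\bar{\bf U}_0^0)<a_1$, and therefore the standard parameter is precisely $\alpha_{1,0}^{\tt LF}=\max_j{\mathscr R}_1(\bar{\bf U}_j^0)=a_1$.

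Feeding this into the combination form gives $\bar{\bf U}_0^1=(1-{\tt C})\bar{\bf U}_0^0+{\tt C}\,\overline{\bf U}$. Because $\bar{\bf U}_0^0$ and $\overline{\bf U}$ have the \emph{same} velocity ${\bf v}_{\overline{\bf U}}$ and the \emph{same} magnetic field ${\bf B}_{\overline{\bf U}}$, so does $\bar{\bf U}_0^1$, and a one‑line substitution shows that ${\mathcal E}$ is affine on the joining segment, so that whenever $\rho_*:=(1-{\tt C})\rho_0+{\tt C}\rho_{\overline{\bf U}}>0$,
\begin{equation*}
{\mathcal E}(\bar{\bf U}_0^1)=(1-{\tt C}){\mathcal E}(\bar{\bf U}_0^0)+{\tt C}\,{\mathcal E}(\overline{\bf U})=(1-{\tt C})\delta+{\tt C}\,{\mathcal E}(\overline{\bf U}),
\end{equation*}
while if $\rho_*\le0$ then $\bar{\bf U}_0^1\notin{\mathcal G}$ outright. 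For ${\tt C}\ge1$ the right‑hand side is negative for any admissible $\delta>0$; for ${\tt C}\in(0,1)$ it is negative as soon as $\delta<{\tt C}|{\mathcal E}(\overline{\bf U})|/(1-{\tt C})$, e.g.\ for $\delta\in\big(0,\,0.05\,{\tt C}/(1-{\tt C})\big)$. In every case $\bar{\bf U}_0^1\notin{\mathcal G}$, so the scheme is not PP. The main (though modest) obstacle is arranging $\bar{\bf U}_0^0$ to meet three requirements at once — matching the velocity and magnetic field of $\overline{\bf U}$, having arbitrarily small internal energy, and keeping ${\mathscr R}_1(\bar{\bf U}_0^0)<a_1$ so that the ``standard'' viscosity equals the genuinely‑too‑small value $a_1$ — which the large‑density choice reconciles; the strict inequality ${\mathcal E}(\overline{\bf U})<-0.05$ used in the last step is exactly the quantitative failure of the LF splitting at $\alpha=a_1$ recorded in Remark \ref{rem:exa}.
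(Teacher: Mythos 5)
Your proof is correct, but it reaches the conclusion by a noticeably different mechanism than the paper. The paper argues by contradiction with an explicit one-parameter family of three states \eqref{eq:exampleUUU} (essentially the two states of \eqref{eq:contourex} with gas pressure replaced by a parameter ${\tt p}$, plus a hand-picked middle state), computes $\bar{\bf U}_j^{n+1}$ in closed form, and shows $\lim_{{\tt p}\to 0^+}{\mathcal E}(\bar{\bf U}_j^{n+1})<0$; the whole argument is self-contained modulo explicit algebra. You instead fix the neighbours to be exactly the pair of \eqref{eq:contourex}, import from Remark \ref{rem:exa} the quantitative failure ${\mathcal E}(\overline{\bf U})<-0.05$ of the splitting at $\alpha=a_1$, and then design the middle cell to share the velocity and magnetic field of $\overline{\bf U}$ so that ${\mathcal E}$ becomes affine along the segment $\bar{\bf U}_0^1=(1-{\tt C})\bar{\bf U}_0^0+{\tt C}\,\overline{\bf U}$; the large-density choice simultaneously drives ${\mathscr R}_1(\bar{\bf U}_0^0)$ below $a_1$ (so the ``standard'' viscosity really is $a_1$) and leaves the small internal energy $\delta$ free to be tuned against ${\tt C}$. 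I checked the details: the convex-combination rewriting, the affinity identity ${\mathcal E}((1-\lambda){\bf U}_0+\lambda{\bf U}_1)=(1-\lambda){\mathcal E}({\bf U}_0)+\lambda{\mathcal E}({\bf U}_1)$ for states with common ${\bf v}$ and ${\bf B}$ (valid for any affine weight with positive resulting density, which you need since ${\tt C}$ may exceed $1$), the fact that $|v_{1,\overline{\bf U}}|\approx 2.9 < a_1\approx 25$ so that ${\mathscr R}_1(\bar{\bf U}_0^0)<a_1$ for large $\rho_0$, and the case split on ${\tt C}$ all hold up; I also verified numerically that ${\mathcal E}(\overline{\bf U})\approx -0.054$ for \eqref{eq:contourex} at $\alpha=a_1\approx 25.0003$, so the borrowed remark is sound. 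The one caveat is that your proof is only as self-contained as Remark \ref{rem:exa}, whose inequality the paper states without proof (``one can verify''); to make your argument stand alone you would need to carry out that finite computation, whereas the paper's version folds the analogous computation directly into the proof. What your route buys is conceptual clarity: it isolates the failure of the theorem as a direct consequence of the failure of the $\alpha=a_1$ splitting plus the affine structure of ${\mathcal E}$, and it avoids the limiting/contradiction scaffolding.
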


\begin{proof}
	We prove it by contradiction. 
	Assume that there exists a CFL number ${\tt C}>0$, such that the scheme \eqref{eq:1DMHD:LFscheme} is PP.
	We consider the ideal gases with $\gamma=1.4$, and the following (admissible) data 
	\begin{equation}\label{eq:exampleUUU}
	\bar{\bf U}_k^n = 
	\begin{cases}
	\big( \frac{8}{25},~0,~-\frac{8}{25},~0,~{\tt B}_{\tt const},~10,~0,~\frac{2504}{25} + \frac{5 {\tt p}}{2} \big)^\top, & k\le j-1,
	\\[2mm]
	\big( \frac12,~\frac32,~-2,~0,~{\tt B}_{\tt const},~8,~0,~\frac{353}{4} 
	+  \frac{5 {\tt p}}{2} \big)^\top, & k=j,
	\\[2mm]
	\big( \frac15,~0,~\frac15,~0,~{\tt B}_{\tt const},~5,~0,~\frac{313}{5} + \frac{5 {\tt p}}{2} \big)^\top, & k \ge j+1,
	\end{cases}
	\end{equation}  
	where 
	${\tt B}_{\tt const}=10$ and ${\tt p}>0$. 
	For any ${\tt p} \in \big(0,\frac1{800}\big)$, we have 
	$$
	\alpha_{1,n}^{\tt LF}  = \max_k {\mathscr{R}}_1 ( \bar {\bf U}_{k}^n ) 
	= \frac{\sqrt{5}}{4} \Big( 7{\tt p} + 10^3 + \sqrt{ 49 {\tt p}^2 + 10^6 } \Big)^{\frac12},
	$$ 
	and the state $\bar {\bf U}^{n+1}_j$ computed by \eqref{eq:1DMHD:LFscheme} depends on $\tt p$, specifically, 
	\begin{equation*}
		\begin{aligned}
			\bar {\bf U}^{n+1}_j  & =  \Bigg( 
			\frac12 - \frac{6 {\tt C}}{25},~
			\frac{ 3 ( 1 - {\tt C} ) }{2} + \frac{  75 {\tt C}    }{ 4 \alpha_{1,n}^{\tt LF}  },~
			{\tt C} \Big( \frac{97}{50} - \frac{25} { \alpha_{1,n}^{\tt LF} } \Big)-2,~0,~10,\\
			&  \qquad 
			8+{\tt C} \Big( \frac{  10 }{ \alpha_{1,n}^{\tt LF} } - \frac12 \Big),~0,~
			\frac{ 5 {\tt p} }{2} + \frac{ 353 }{4}
			- \frac{ 687 {\tt C} }{100} + 
			\frac{75 {\tt C}}{ \alpha_{1,n}^{\tt LF} }
			\Bigg)^\top =: {\bf U} ({\tt p}). 
		\end{aligned}
	\end{equation*}
	By assumption, we have 
	${\bf U} ({\tt p}) \in {\mathcal G}$. 
	This yields $0<{\tt C}<\frac{25}{12}$,  
	and ${\mathcal E} ( {\bf U} ({\tt p}) )>0$ for any ${\tt p} \in (0,\frac{1}{800})$.
	The continuity of ${\mathcal E} ( {\bf U})$ with respect to $\bf U$ on $\mathbb{R}^+\times \mathbb{R}^7$ 
	implies that
	$$
	0 \le
	\mathop {\lim }\limits_{{\tt p} \to  0^+ } {\mathcal E} ( {\bf U} ({\tt p}) ) =  {\mathcal E} \Big( \mathop {\lim }\limits_{{\tt p} \to  0^+ } {\bf U} ({\tt p}) \Big)
	= \frac{3 {\tt C} }{400} \times   \frac{  8 { \tt C }^2 + 75 { \tt C} - 200  }{ 25 - 12 {\tt C} } < 0,
	$$
	which is a contradiction. Thus the assumption is incorrect, and the proof is completed. 
\end{proof}


\begin{theorem}\label{theo:1DMHD:LFscheme}
	Assume that $\bar {\bf U}_j^0 \in{\mathcal G}$ and $\bar B_{1,j}^0 = {\tt B}_{\tt const}$ for all $j$, and
	the parameter $\alpha_{1,n}^{\tt LF}$ satisfies \eqref{eq:Lxa1}. 
	Then the state $\bar {\bf U}_j^n$, computed by the scheme \eqref{eq:1DMHD:LFscheme} under the CFL condition
	\begin{equation}\label{eq:CFL:LF}
	0< \alpha_{1,n}^{\tt LF} \Delta t_n / \Delta x \le 1,
	\end{equation}
	belongs to ${\mathcal G}$ and satisfies $\bar B_{1,j}^n = {\tt B}_{\tt const} $ for all $j$
	and $n\in {\mathbb{N}}$.
\end{theorem}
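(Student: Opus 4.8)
The plan is to argue by induction on the time level $n$, proving simultaneously that $\bar {\bf U}_j^n\in{\mathcal G}$ and $\bar B_{1,j}^n={\tt B}_{\tt const}$ for every $j$. The case $n=0$ is the hypothesis. For the inductive step, suppose the claim holds at level $n$. First I would handle the magnetic field: since the $B_1$--component of ${\bf F}_1({\bf U})$ is $v_1B_1-B_1v_1=0$, the corresponding component of the LF flux \eqref{eq:LFflux} at the interface ${\tt x}_{j+\frac12}$ reduces to $-\frac12\alpha_{1,n}^{\tt LF}\big(\bar B_{1,j+1}^n-\bar B_{1,j}^n\big)$, which vanishes because $\bar B_{1,k}^n\equiv{\tt B}_{\tt const}$ by the inductive hypothesis. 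Hence the $B_1$--component of the update \eqref{eq:1DMHD:LFscheme} is left unchanged, so $\bar B_{1,j}^{n+1}={\tt B}_{\tt const}$ for all $j$.

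It remains to prove $\bar {\bf U}_j^{n+1}\in{\mathcal G}$. Set $\alpha:=\alpha_{1,n}^{\tt LF}$ and $\lambda:=\Delta t_n/\Delta x$. Substituting the LF flux \eqref{eq:LFflux} into \eqref{eq:1DMHD:LFscheme} and regrouping gives
\begin{equation*}
\bar {\bf U}_j^{n+1}=\big(1-\lambda\alpha\big)\,\bar {\bf U}_j^n+\lambda\alpha\cdot\frac12\left[\Big(\bar {\bf U}_{j+1}^n-\frac{{\bf F}_1(\bar {\bf U}_{j+1}^n)}{\alpha}\Big)+\Big(\bar {\bf U}_{j-1}^n+\frac{{\bf F}_1(\bar {\bf U}_{j-1}^n)}{\alpha}\Big)\right].
\end{equation*}
The bracketed quantity is exactly the state $\overline{\bf U}$ of Theorem \ref{theo:MHD:LLFsplit1D} with $\hat{\bf U}=\bar {\bf U}_{j+1}^n$ and $\check{\bf U}=\bar {\bf U}_{j-1}^n$. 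Both of these belong to ${\mathcal G}$ by the inductive hypothesis, and they satisfy the 1D discrete divergence-free condition \eqref{eq:descrite1DDIV}, namely $\hat B_1=\bar B_{1,j+1}^n={\tt B}_{\tt const}=\bar B_{1,j-1}^n=\check B_1$. Moreover, assumption \eqref{eq:Lxa1} yields $\alpha>\max_k\alpha_1(\bar {\bf U}_{k+1}^n,\bar {\bf U}_{k-1}^n)\ge\alpha_1(\hat{\bf U},\check{\bf U})$. Theorem \ref{theo:MHD:LLFsplit1D} therefore implies that the bracketed term lies in ${\mathcal G}$.

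Finally, the CFL condition \eqref{eq:CFL:LF} gives $\lambda\alpha\in(0,1]$, so $\bar {\bf U}_j^{n+1}$ is of the form $\mu\,{\bf U}_1+(1-\mu)\,{\bf U}_0$ with $\mu=\lambda\alpha\in(0,1]$, ${\bf U}_1$ the bracketed splitting term in ${\mathcal G}$, and ${\bf U}_0=\bar {\bf U}_j^n\in{\mathcal G}\subset\overline{\mathcal G}_*$. The convexity Lemma \ref{theo:MHD:convex} then gives $\bar {\bf U}_j^{n+1}\in{\mathcal G}_*={\mathcal G}$, which closes the induction. Beyond the bookkeeping of cell indices, the only point requiring care is that the discrete divergence-free condition \eqref{eq:descrite1DDIV} must hold at \emph{every} level, which is precisely why the $B_1$--constancy is propagated alongside admissibility; note also that the attainable CFL bound is $\le 1$ (rather than $\le\frac12$) because the flux splitting couples the two neighbours $\bar {\bf U}_{j\pm1}^n$ and never forces $\bar {\bf U}_j^n$ itself into a splitting term. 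The substantive difficulty of the whole argument is entirely absorbed into the already-established Theorem \ref{theo:MHD:LLFsplit1D} (hence into Lemma \ref{theo:MHD:LLFsplit}), not into this reduction.
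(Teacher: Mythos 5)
Your proposal is correct and follows essentially the same route as the paper: induction on $n$, the observation that the $B_1$-component of the LF update is trivial under the inductive hypothesis, the rewriting of the scheme as the convex combination $(1-\lambda\alpha)\bar{\bf U}_j^n+\lambda\alpha\,{\bf \Xi}$ with ${\bf \Xi}$ the 1D generalized LF splitting term, and an appeal to Theorem \ref{theo:MHD:LLFsplit1D} plus convexity. Your extra care with the endpoint $\lambda\alpha=1$ via Lemma \ref{theo:MHD:convex} and $\overline{\mathcal G}_*$ is a correct refinement of what the paper states more tersely.
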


\begin{proof}
	Here the induction argument is used for the time level number $n$.
	It is obvious that the conclusion holds for $n=0$ under the hypothesis on the initial data.
	We now assume that $\bar {\bf U}_j^n\in {\mathcal G}$ with $\bar B_{1,j}^n = {\tt B}_{\tt const} $ for all $j$,
	and check whether the conclusion holds for $n+1$. 
	For the numerical flux in \eqref{eq:LFflux}, the fifth equation in \eqref{eq:1DMHD:LFscheme} gives
	\begin{equation*}
		\bar B_{1,j}^{n+1} = \bar B_{1,j}^{n} - \frac{\lambda}{2}  \big(  2 \bar B_{1,j}^{n} - \bar B_{1,j+1}^{n} - \bar B_{1,j-1}^{n} \big)
		= {\tt B}_{\tt const},
	\end{equation*}
	for all $j$, where $\lambda = \alpha_{1,n}^{\tt LF}  \Delta t_n / \Delta x \in (0,1]$ due to \eqref{eq:CFL:LF}. 
	We rewrite the scheme \eqref{eq:1DMHD:LFscheme} as 
	\begin{equation*}
		\bar {\bf U}_j^{n+1} = (1-\lambda) \bar {\bf U}_j^n  + \lambda {\bf \Xi},
	\end{equation*}
	with 
	$$
	{\bf \Xi}:=\frac{1}{2 }\bigg( \bar {\bf U}_{j+1}^n - \frac{  {\bf F}_1( \bar {\bf U}_{j+1}^n) }{ \alpha_{1,n}^{\tt LF} } +
	\bar {\bf U}_{j-1}^n + \frac{ {\bf F}_1 ( \bar {\bf U}_{j-1}^n )} {\alpha_{1,n}^{\tt LF}}\bigg).
	$$
	Under the induction hypothesis $\bar {\bf U}_{j-1}^n,\bar {\bf U}_{j+1}^n \in {\mathcal G}$ and $\bar B_{1,j-1}^{n}=\bar B_{1,j+1}^{n}$,
	we conclude that ${\bf \Xi} \in {\mathcal G}$ by the generalized LF splitting property in Theorem \ref{theo:MHD:LLFsplit1D}.
	The convexity of $\mathcal G$ further yields $\bar {\bf U}_j^{n+1} \in {\mathcal G}$.
	The proof is completed. 
\end{proof}

\begin{remark}
	If the condition \eqref{eq:CFL:LF} is enhanced to $0< \alpha_{1,n}^{\tt LF} \Delta t_n / \Delta x < 1$, then Theorem \ref{theo:1DMHD:LFscheme} holds for all  $\alpha_{1,n}^{\tt LF} \ge \max_{j} \alpha_1 ( \bar {\bf U}_{j+1}^n, \bar {\bf U}_{j-1}^n )$, by Lemma \ref{theo:MHD:convex}. 
	It is similar for the following Theorems \ref{thm:PP:1DMHD}, \ref{theo:2DMHD:LFscheme}, 
	\ref{theo:FullPP:LFscheme}, \ref{thm:PP:2DMHD}, and will not be repeated. 
\end{remark}



\subsection{High-order schemes}\label{sec:High1D}

We now study the provably PP high-order schemes for 1D MHD equations \eqref{eq:MHD}. With the provenly PP LF scheme \eqref{eq:1DMHD:LFscheme} as 
building block, any high-order finite difference schemes can be 
modified to be PP by a limiter \cite{Christlieb}. 
The following PP analysis is focused on finite volume and DG schemes. The considered 1D DG schemes are similar to those 
in \cite{cheng} but with a different viscosity parameter 
in the LF flux so that the PP property can be rigorously proved in our case. 

For the moment, we use the forward Euler method for time discretization, while high-order time discretization will be discussed later.
We consider the high-order finite volume schemes as well as the scheme satisfied by
the cell averages of a discontinuous Galerkin (DG) method, 
which have the following form
\begin{equation}\label{eq:1DMHD:cellaverage}
\bar {\bf U}_j^{n+1} = \bar {\bf U}_j^{n} - \frac{\Delta t_n}{\Delta x}
\Big(  \hat {\bf F}_1 ( {\bf U}_{j+ \frac{1}{2}}^-, {\bf U}_{j+ \frac{1}{2}}^+ )
- \hat {\bf F}_1 ( {\bf U}_{j- \frac{1}{2}}^-, {\bf U}_{j-\frac{1}{2}}^+)
\Big) ,
\end{equation}
where $\hat {\bf F}_1 ( \cdot,\cdot )$ is taken the LF flux defined in \eqref{eq:LFflux}.
The quantities ${\bf U}_{j + \frac{1}{2}}^-$ and ${\bf U}_{j + \frac{1}{2}}^+$ are the high-order approximations 
of the point values ${\bf U}\big( {\tt x}_{j + \frac{1}{2}} ,t_n \big)$ within the cells $I_j$ and $I_{j+1}$, respectively, 
computed by
\begin{equation}\label{eq:DG1Dvalues}
{\bf U}_{j + \frac{1}{2}}^- = {\bf U}_j^n \big( {\tt x}_{j + \frac{1}{2}}-0 \big), \quad {\bf U}_{j + \frac{1}{2}}^+ = {\bf U}_{j+1}^n \big( {\tt x}_{j + \frac{1}{2}}+0 \big),
\end{equation}
where the polynomial function ${\bf U}_j^n({\tt x})$ is with the cell-averaged value of $\bar {\bf U}_j^n$,
approximates ${\bf U}( {\tt x},t_n)$ within the cell $I_j$, and is either reconstructed in the finite volume methods from $\{\bar {\bf U}_j^n\}$ or directly evolved in the DG methods
with degree ${\tt K} \ge 1$.
The evolution equations for the high-order ``moments'' of ${\bf U}_j^n({\tt x})$ in the DG methods are omitted because we are only concerned with the PP property of the schemes here.

Generally the high-order scheme \eqref{eq:1DMHD:cellaverage} is not PP.
As proved in the following theorem,
the scheme \eqref{eq:1DMHD:cellaverage} becomes PP if 
${\bf U}_{j + \frac{1}{2}}^\pm$ are computed by \eqref{eq:DG1Dvalues} with ${\bf U}^n_j({\tt x})$ satisfying
\begin{align}\label{eq:1DDG:con1}
	&
	B_{1,j+\frac12}^{\pm} = {\tt B}_{\tt const}  ,\quad \forall j,  
	\\ \label{eq:1DDG:con2}
	& {\bf U}_j^n ( \hat {\tt x}_j^{(\mu)} ) \in {\mathcal G}, \quad \forall \mu \in \{1,2,\cdots,{\tt L}\}, ~\forall j,
\end{align}
and $\alpha_{1,n}^{\tt LF}$ satisfies \eqref{eq:LxaH}. 
Here $\{ \hat {\tt x}_j^{(\mu)} \}_{\mu=1}^{ {\tt L}}$ are the {\tt L}-point Gauss-Lobatto quadrature nodes in the interval $I_j$,
whose associated quadrature weights are denoted by $\{\hat \omega_\mu\}_{\mu=1} ^{\tt L}$ with $\sum_{\mu=1}^{\tt L} \hat\omega_\mu = 1$.
We require $2{\tt L}-3\ge {\tt K}$ such that the algebraic precision of corresponding quadrature  is at least $\tt K$, e.g., taking $\tt L$ as the integral part of  $\frac{{\tt K}+3}{2}$.

\begin{theorem} \label{thm:PP:1DMHD}
	If the polynomial vectors $\{{\bf U}^n_j({\tt x})\}$ satisfy \eqref{eq:1DDG:con1}--\eqref{eq:1DDG:con2}, and the parameter $\alpha_{1,n}^{\tt LF}$
	in \eqref{eq:LFflux} satisfies 
	\begin{equation}\label{eq:LxaH}
	\alpha_{1,n}^{\tt LF} > \max_{j} \alpha_1 ( {\bf U}_{j+\frac12}^\pm,  {\bf U}_{j-\frac12}^\pm ),
	\end{equation}
	then the high-order scheme \eqref{eq:1DMHD:cellaverage} is PP under the CFL condition
	\begin{equation}\label{eq:CFL:1DMHD}
	0< \alpha_{1,n}^{\tt LF}  \Delta t_n/ \Delta x \le \hat \omega_1.
	\end{equation}
\end{theorem}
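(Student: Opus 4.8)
The plan is to exploit the standard "Zhang--Shu" decomposition of the high-order cell average into a convex combination of sub-states, each of which can be shown admissible via the 1D generalized LF splitting property (Theorem~\ref{theo:MHD:LLFsplit1D}). First I would use the $\tt L$-point Gauss--Lobatto quadrature on $I_j$, whose weights $\{\hat\omega_\mu\}$ sum to one and whose endpoints are ${\tt x}_{j\mp\frac12}$, to write $\bar{\bf U}_j^n = \sum_{\mu=1}^{\tt L}\hat\omega_\mu {\bf U}_j^n(\hat{\tt x}_j^{(\mu)})$; this uses that the algebraic precision is at least ${\tt K}\ge\deg{\bf U}_j^n$. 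Substituting this into the scheme \eqref{eq:1DMHD:cellaverage} and isolating the two boundary nodes $\hat{\tt x}_j^{(1)}={\tt x}_{j-\frac12}$ and $\hat{\tt x}_j^{(\tt L)}={\tt x}_{j+\frac12}$ (at which ${\bf U}_j^n$ equals ${\bf U}_{j-\frac12}^+$ and ${\bf U}_{j+\frac12}^-$ respectively), I would regroup $\bar{\bf U}_j^{n+1}$ as
\begin{equation*}
\bar{\bf U}_j^{n+1} = \sum_{\mu=2}^{{\tt L}-1}\hat\omega_\mu\,{\bf U}_j^n(\hat{\tt x}_j^{(\mu)})
+ \frac{\lambda}{1}\,\Xi_{j} + \Big(\hat\omega_1 - \lambda\Big)\Big({\bf U}_{j-\frac12}^+ + {\bf U}_{j+\frac12}^-\Big)\cdot(\text{appropriate split}),
\end{equation*}
where $\lambda = \alpha_{1,n}^{\tt LF}\Delta t_n/\Delta x\in(0,\hat\omega_1]$ by \eqref{eq:CFL:1DMHD}; I would arrange the bookkeeping so the flux differences combine exactly into one generalized-LF block.

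The key algebraic step is to show that the flux-carrying part reorganizes into a term proportional to
\[
\Xi_j := \tfrac12\Big({\bf U}_{j+\frac12}^- - \tfrac{{\bf F}_1({\bf U}_{j+\frac12}^-)}{\alpha_{1,n}^{\tt LF}} + {\bf U}_{j-\frac12}^+ + \tfrac{{\bf F}_1({\bf U}_{j-\frac12}^+)}{\alpha_{1,n}^{\tt LF}}\Big),
\]
i.e., that the pair of LF numerical fluxes at ${\tt x}_{j\pm\frac12}$, after cancellation of the common "downwind/upwind" pieces, leaves precisely the one-cell LF splitting combination of the interior interface traces ${\bf U}_{j-\frac12}^+$ and ${\bf U}_{j+\frac12}^-$. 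Since \eqref{eq:1DDG:con1} gives $B_{1,j-\frac12}^+ = B_{1,j+\frac12}^- = {\tt B}_{\tt const}$, the 1D DDF condition \eqref{eq:descrite1DDIV} is met, and \eqref{eq:LxaH} guarantees $\alpha_{1,n}^{\tt LF} > \alpha_1({\bf U}_{j+\frac12}^-,{\bf U}_{j-\frac12}^+)$, so Theorem~\ref{theo:MHD:LLFsplit1D} yields $\Xi_j\in{\mathcal G}$. The interior nodal values ${\bf U}_j^n(\hat{\tt x}_j^{(\mu)})$, $\mu=2,\dots,{\tt L}-1$, lie in ${\mathcal G}$ by \eqref{eq:1DDG:con2}, and the leftover boundary contribution with coefficient $\hat\omega_1-\lambda\ge0$ is a nonnegative multiple of two admissible states. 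With all coefficients nonnegative and summing to one, convexity of ${\mathcal G}$ (Lemma~\ref{theo:MHD:convex}) forces $\bar{\bf U}_j^{n+1}\in{\mathcal G}$. Finally, the fifth component (the $B_1$ equation) must be checked to propagate $B_{1}={\tt B}_{\tt const}$: the LF flux for $B_1$ reduces to $\tfrac{\alpha_{1,n}^{\tt LF}}{2}(B_{1,j-\frac12}^+ - B_{1,j+\frac12}^-) = 0$ under \eqref{eq:1DDG:con1}, so $\bar B_{1,j}^{n+1}={\tt B}_{\tt const}$, closing the induction if one iterates in time (though the statement as given is for the one forward-Euler step, so this observation is mainly a consistency remark).

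The main obstacle I anticipate is the convex-decomposition bookkeeping: one must verify that after splitting $\bar{\bf U}_j^n$ via quadrature and reinserting into \eqref{eq:1DMHD:cellaverage}, the coefficients of \emph{all} the pieces --- the interior nodes, the $\Xi_j$ block, and the residual boundary terms --- are simultaneously nonnegative and sum to one, which is exactly where the CFL restriction $\lambda\le\hat\omega_1$ enters (the $\hat\omega_1-\lambda$ coefficient would otherwise go negative). A secondary subtlety is confirming that the two LF fluxes at the common interface between $I_{j-1}$ and $I_j$ versus $I_j$ and $I_{j+1}$ telescope correctly so that only the traces ${\bf U}_{j\pm\frac12}^{\mp}$ interior to $I_j$ (not the exterior ones) appear in $\Xi_j$ --- this is what makes the single-cell version of Theorem~\ref{theo:MHD:LLFsplit1D} applicable rather than requiring a multi-state version. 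Everything else is routine once this combinatorial skeleton is in place.
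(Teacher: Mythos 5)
Your overall skeleton (Gauss--Lobatto decomposition of $\bar{\bf U}_j^n$, isolation of the endpoint traces, CFL restriction $\lambda\le\hat\omega_1$ to keep a coefficient nonnegative, convexity of $\mathcal G$, and Theorem~\ref{theo:MHD:LLFsplit1D} applied with \eqref{eq:1DDG:con1} supplying the condition \eqref{eq:descrite1DDIV}) is the right one and matches the paper. However, the central algebraic claim --- that the two LF fluxes at ${\tt x}_{j\pm\frac12}$ ``telescope'' so that only the interior traces ${\bf U}_{j+\frac12}^-$ and ${\bf U}_{j-\frac12}^+$ survive in a single block
$\Xi_j=\tfrac12\bigl({\bf U}_{j+\frac12}^--\alpha^{-1}{\bf F}_1({\bf U}_{j+\frac12}^-)+{\bf U}_{j-\frac12}^++\alpha^{-1}{\bf F}_1({\bf U}_{j-\frac12}^+)\bigr)$ --- is false, and this is a genuine gap. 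The update $\bar{\bf U}_j^{n+1}$ depends irreducibly on the exterior traces ${\bf U}_{j+\frac12}^+$ (from $I_{j+1}$) and ${\bf U}_{j-\frac12}^-$ (from $I_{j-1}$) through $\hat{\bf F}_1({\bf U}_{j+\frac12}^-,{\bf U}_{j+\frac12}^+)$ and $\hat{\bf F}_1({\bf U}_{j-\frac12}^-,{\bf U}_{j-\frac12}^+)$; no cancellation removes them inside a single-cell update, so a convex decomposition in which they never appear cannot reproduce the scheme (and indeed your stated coefficients do not sum to one once the ``appropriate split'' is made precise).

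The correct bookkeeping requires \emph{two} generalized-LF blocks, each mixing a trace from $I_j$ with a trace from a neighboring cell:
\begin{equation*}
{\bf \Xi}_\pm=\frac12\left({\bf U}_{j+\frac12}^{\pm}-\frac{{\bf F}_1({\bf U}_{j+\frac12}^{\pm})}{\alpha_{1,n}^{\tt LF}}+{\bf U}_{j-\frac12}^{\pm}+\frac{{\bf F}_1({\bf U}_{j-\frac12}^{\pm})}{\alpha_{1,n}^{\tt LF}}\right),
\end{equation*}
so that
\begin{equation*}
\bar{\bf U}_j^{n+1}=\sum_{\mu=2}^{{\tt L}-1}\hat\omega_\mu\,{\bf U}_j^n(\hat{\tt x}_j^{(\mu)})+(\hat\omega_1-\lambda)\bigl({\bf U}_{j-\frac12}^++{\bf U}_{j+\frac12}^-\bigr)+\lambda{\bf \Xi}_-+\lambda{\bf \Xi}_+ ,
\end{equation*}
which one can verify reproduces \eqref{eq:1DMHD:cellaverage} exactly and has nonnegative coefficients summing to one under \eqref{eq:CFL:1DMHD}. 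Theorem~\ref{theo:MHD:LLFsplit1D} is then invoked for the matched pairs $({\bf U}_{j+\frac12}^-,{\bf U}_{j-\frac12}^-)$ and $({\bf U}_{j+\frac12}^+,{\bf U}_{j-\frac12}^+)$ --- this is precisely the pairing that \eqref{eq:LxaH} and \eqref{eq:1DDG:con1} are written for --- not for the mixed interior pair you use. Your closing remark about $\bar B_{1,j}^{n+1}={\tt B}_{\tt const}$ is fine, but the decomposition must be repaired as above before the convexity argument applies.
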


\begin{proof}
	The exactness of the $\tt L$-point Gauss-Lobatto quadrature rule for the polynomials of degree $\tt K$ yields
	$$
	\bar{\bf U}_j^n = \frac{1}{\Delta x} \int_{I_j} {\bf U}_j^n ({\tt x}) d{\tt x} = \sum \limits_{\mu=1}^{\tt L} \hat \omega_\mu {\bf U}_j^n (\hat {\tt x}_j^{(\mu)} ).
	$$
	Noting $\hat \omega_1 = \hat \omega_{\tt L}$ and $\hat{\tt x}_j^{1,{\tt L}}={\tt x}_{j\mp\frac12}$, we can then rewrite the scheme \eqref{eq:1DMHD:cellaverage} into the convex combination form
	\begin{align} \label{eq:1DMHD:convexsplit}
		\bar{\bf U}_j^{n+1} =
		\sum \limits_{\mu=2}^{{\tt L}-1} \hat \omega_\mu {\bf U}_j^n ( \hat {\tt x}_j^{(\mu)} )  
		+  (\hat \omega_1-\lambda) \left( {\bf U}_{j-\frac{1}{2}}^+ +   {\bf U}_{j+\frac{1}{2}}^- \right)
		+ \lambda {\bf \Xi}_- + \lambda  {\bf \Xi}_+,
	\end{align}
	where $\lambda = \alpha_{1,n}^{\tt LF} \Delta t_n/\Delta x \in (0,\hat \omega_1 ]$, and
	\begin{align*}
		& {\bf \Xi}_\pm = \frac{1}{2} \left( {\bf U}_{j+\frac{1}{2}}^\pm - \frac{ {\bf F}_1 ( {\bf U}_{j+\frac{1}{2}}^\pm ) } {  \alpha_{1,n}^{\tt LF} }
		+ {\bf U}_{j-\frac{1}{2}}^\pm +  \frac{ {\bf F}_1 ( {\bf U}_{j-\frac{1}{2}}^\pm ) } {  \alpha_{1,n}^{\tt LF} }    \right).
	\end{align*}
	The condition \eqref{eq:1DDG:con1} and \eqref{eq:LxaH} yield ${\bf \Xi}_\pm \in {\mathcal G}$
	by the generalized LF splitting property in Theorem \ref{theo:MHD:LLFsplit1D}.
	We therefore have $\bar{\bf U}_j^{n+1} \in {\mathcal G} $ from \eqref{eq:1DMHD:convexsplit}
	by the convexity of $\mathcal G$.
\end{proof}

\begin{remark}
	The condition \eqref{eq:1DDG:con1} is easily ensured in practice,  since the exact solution $B_1(x_1,t)\equiv {\tt B}_{\tt const}$ and
	the flux for $B_1$ is zero. While the condition \eqref{eq:1DDG:con2} can be enforced by a simple scaling limiting procedure,
	which was well designed in \cite{cheng} by extending the 
	techniques in \cite{zhang2010,zhang2010b}.
	The details of the procedure are omitted here.
\end{remark}




%
%
%

The above analysis is focused on first-order time discretization. 
Actually it is also valid for the high-order explicit time discretization 
using strong stability preserving (SSP)  methods  \cite{Gottlieb2001,Gottlieb2005,Gottlieb2009}.
This is because of the convexity of 
$\mathcal G$, as well as the fact that a SSP method is certain convex combination of the forward Euler method.

\section{Two-dimensional positivity-preserving schemes}\label{sec:2Dpcp}

This section discusses positivity-preserving (PP) schemes for the MHD system \eqref{eq:MHD} in two dimensions ($d=2$).
The extension of our analysis to 3D case ($d=3$) is straightforward and displayed in Appendix \ref{sec:3D}. 
Our analysis will reveal 
that the PP property of conservative multi-dimensional MHD schemes is strongly connected with 
a discrete divergence-free condition on the numerical magnetic field. 

For convenience, 
the symbols $({\tt x},{\tt y})$ are used to denote the variables $(x_1,x_2)$ in \eqref{eq:MHD}. 
Assume that the 2D spatial domain is divided into a uniform rectangular mesh with cells $\big\{I_{ij}=({\tt x}_{i-\frac{1}{2}},{\tt x}_{i+\frac{1}{2}})\times
({\tt y}_{j-\frac{1}{2}},{\tt y}_{j+\frac{1}{2}}) \big\}$. 
The spatial step-sizes in ${\tt x},{\tt y}$ directions are denoted by
$\Delta x,\Delta y$ respectively. The time interval is also divided into the mesh $\{t_0=0, t_{n+1}=t_n+\Delta t_{n}, n\geq 0\}$
with the time step size $\Delta t_{n}$ determined by the CFL condition. We  use $\bar {\bf U}_{ij}^n $ 
to denote the numerical approximation to the cell-averaged value of the exact solution over $I_{ij}$ 
at time $t_n$.
We aim at seeking numerical schemes whose solution  
$\bar {\bf U}_{ij}^n$ 
is preserved in $\mathcal G$.

\subsection{First-order scheme} \label{sec:2D:FirstOrder}
The 2D first-order LF scheme reads
\begin{equation} \label{eq:2DMHD:LFscheme}
\begin{split}
\bar {\bf U}_{ij}^{n+1} = \bar {\bf U}_{ij}^n - \frac{\Delta t_n}{\Delta x} \Big( \hat {\bf F}_{1,i+\frac12,j} - \hat {\bf F}_{1,i-\frac12,j}\Big) 
- \frac{\Delta t_n}{\Delta y} \Big( \hat {\bf F}_{2,i,j+\frac12} - \hat {\bf F}_{2,i,j-\frac12}  \Big),
\end{split}
\end{equation}
where 
$\hat {\bf F}_{1,i+\frac12,j}=  \hat {\bf F}_1 ( \bar {\bf U}_{ij}^n ,\bar {\bf U}_{i+1,j}^n)$, 
$\hat {\bf F}_{2,i,j+\frac12} = \hat {\bf F}_2 ( \bar {\bf U}_{ij}^n ,\bar {\bf U}_{i,j+1}^n) $, and $\hat {\bf F}_\ell (\cdot,\cdot), \ell=1,2,$ are the LF fluxes in \eqref{eq:LFflux}.

As mentioned in \cite{Christlieb}, there was still no rigorous proof that the LF
scheme \eqref{eq:2DMHD:LFscheme} or any other first-order scheme is PP in the multi-dimensional cases.
For the ideal MHD with the EOS \eqref{eq:EOS},
it seems natural to conjecture \cite{cheng} that 
\begin{equation}\label{eq:conjectureCheng}
\mbox{
	given $\bar {\bf U}_{ij}^n \in {\mathcal G}~~\forall i,j$, then $\bar {\bf U}_{ij}^{n+1}$ computed from \eqref{eq:2DMHD:LFscheme} always
	belongs to $\mathcal G$,
}
\end{equation}
under certain CFL condition (e.g., the CFL number is less than 0.5).
If \eqref{eq:conjectureCheng} holds true, it would be important and very useful for developing PP high-order schemes \cite{cheng,Christlieb2016,Christlieb} for \eqref{eq:MHD}.
Unfortunately, the following theorem shows that 
\eqref{eq:conjectureCheng} does not always 
hold, 
no matter 
how small the specified CFL number is, 
and even if 
the parameter $\alpha_{\ell,n}^{\tt LF} $ is taken as  $\chi \max_{ij}{ {\mathscr{R}}_\ell (\bar{\bf U}^n_{ij})}$   
with any given constant $\chi \ge 1$. 
(Note that 
increasing numerical viscosity can usually enhance 
the robustness of a LF scheme and increase the possibility 
of achieving PP property, and   
$\alpha_{\ell,n}^{\tt LF} = \chi \max_{ij}{ {\mathscr{R}}_\ell (\bar{\bf U}^n_{ij})}$ corresponds 
to the $\chi$ times larger numerical viscosity in comparison 
with the standard one.)



\begin{theorem}\label{theo:counterEx}
	Let $\alpha_{\ell,n}^{\tt LF} = \chi \max_{ij}{ {\mathscr{R}}_\ell (\bar{\bf U}^n_{ij})}$ with the 
	constant $\chi \ge 1$, and 
	$$\Delta t_n = \frac{ {\tt C} }{ \alpha_{1,n}^{\tt LF} /\Delta x + \alpha_{2,n}^{\tt LF}  / \Delta y }, $$
	where ${\tt C}>0$ is the CFL number. 
	For any given constants $\chi$ and $\tt C$, 
	there always exists a set of admissible states 
	$\{  \bar {\bf U}_{ij}^{n},\forall i,j\}$ 
	such that the solution $\bar {\bf U}_{ij}^{n+1}$ 
	of \eqref{eq:2DMHD:LFscheme} does not belong to $\mathcal G$. 
	In other words, for any given $\chi$ and $\tt C$, 
	the admissibility of $\{  \bar {\bf U}_{ij}^{n}, \forall i,j \}$ does not always guarantee that $\bar {\bf U}_{ij}^{n+1} \in {\mathcal G}$, $\forall i,j$. 
\end{theorem}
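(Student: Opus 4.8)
The plan is to argue by contradiction: suppose the scheme \eqref{eq:2DMHD:LFscheme} with the given $\chi$ and ${\tt C}$ is PP, and construct admissible data $\{\bar{\bf U}_{ij}^n\}$ for which $\bar{\bf U}_{ij}^{n+1}\notin\mathcal G$. First I would \emph{localize}: $\bar{\bf U}_{ij}^{n+1}$ depends only on $\bar{\bf U}_{ij}^n$ and the four neighbours $\bar{\bf U}_{i\pm1,j}^n,\bar{\bf U}_{i,j\pm1}^n$, so all remaining cell averages may be fixed to an arbitrary admissible state of arbitrarily small spectral radii (e.g.\ a near-vacuum state at rest with vanishing magnetic field), after which $\alpha_{\ell,n}^{\tt LF}=\chi\max_{ij}{\mathscr R}_\ell(\bar{\bf U}_{ij}^n)$ is attained among the five relevant cells. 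Taking $\Delta x=\Delta y$ for convenience and expanding the LF flux \eqref{eq:LFflux}, the update at $(i,j)$ rearranges into $\bar{\bf U}_{ij}^{n+1}=(1-{\tt C})\,\bar{\bf U}_{ij}^n+{\tt C}\,\overline{\bf U}$, where ${\tt C}=\Delta t_n\big(\alpha_{1,n}^{\tt LF}/\Delta x+\alpha_{2,n}^{\tt LF}/\Delta y\big)$ is exactly the CFL number and $\overline{\bf U}$ is precisely the combination of Theorem~\ref{theo:MHD:LLFsplit2D} with ${\tt Q}=1$, built from the four neighbours alone.

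Next I would isolate the mechanism. By Theorem~\ref{theo:MHD:LLFsplit2D}, satisfying the 2D discrete divergence-free condition \eqref{eq:descrite2DDIV} keeps $\overline{\bf U}\in\mathcal G$, whence by the convexity of $\mathcal G$ (Lemma~\ref{theo:MHD:convex}) $\bar{\bf U}_{ij}^{n+1}\in\mathcal G$ for ${\tt C}\le1$; hence the counterexample \emph{must} violate \eqref{eq:descrite2DDIV}, and — choosing the four neighbour densities equal so that the density of $\overline{\bf U}$ is positive — the only remaining route to failure is ${\mathcal E}(\overline{\bf U})<0$. To force this I would return to the proof of Lemma~\ref{theo:MHD:LLFsplit}: it is exactly the residual term $\frac{B_i-\tilde B_i}{\alpha}({\bf v}^*\!\cdot{\bf B}^*)$ of \eqref{eq:MHD:LLFsplit} that ceases to cancel once the discrete divergence $D:=\frac{\bar B_{1,i+1,j}^n-\bar B_{1,i-1,j}^n}{\Delta x}+\frac{\bar B_{2,i,j+1}^n-\bar B_{2,i,j-1}^n}{\Delta y}$ is nonzero, and inserting ${\bf n}^*$ with ${\bf v}^*=\overline{\bf m}/\overline\rho$, ${\bf B}^*=\overline{\bf B}$ (the velocity and magnetic field of $\overline{\bf U}$) into the constraint of \eqref{eq:newDefG} turns its left side into ${\mathcal E}(\overline{\bf U})$; so a suitable jump of $B_1$ across the $x$-neighbours together with appropriately chosen velocities makes ${\mathcal E}(\overline{\bf U})$ negative while leaving each neighbour in $\mathcal G$.

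Finally, to pass from ${\mathcal E}(\overline{\bf U})=-c<0$ to ${\mathcal E}(\bar{\bf U}_{ij}^{n+1})<0$ for \emph{every} ${\tt C}>0$, I would take the centre state to share the density, momentum and magnetic field of $\overline{\bf U}$ and have internal energy ${\tt p}>0$, i.e.\ $\bar{\bf U}_{ij}^n=(\overline\rho,\overline{\bf m},\overline{\bf B},\overline E+c+{\tt p})\in\mathcal G$. Then $\mathcal E$ is affine along the segment through $\bar{\bf U}_{ij}^n$ and $\overline{\bf U}$, so the density of $\bar{\bf U}_{ij}^{n+1}$ equals $\overline\rho>0$ and ${\mathcal E}(\bar{\bf U}_{ij}^{n+1})=(1-{\tt C})\,{\tt p}-{\tt C}\,c$, which is negative for ${\tt C}\ge1$ and, for $0<{\tt C}<1$, as soon as ${\tt p}<{\tt C}c/(1-{\tt C})$; in every case $\bar{\bf U}_{ij}^{n+1}\notin\mathcal G$, contradicting the PP assumption. (Equivalently, and closer to the style of Theorem~\ref{eq:1DnotPP}, one fixes an explicit one-parameter admissible family, computes $\bar{\bf U}_{ij}^{n+1}$ and ${\mathcal E}(\bar{\bf U}_{ij}^{n+1})$ in closed form, and lets the parameter $\to0^+$.) The main obstacle — and the bulk of the work — is self-consistency: $\overline{\bf U}$, and hence the centre state above, depends on $\alpha_{1,n}^{\tt LF},\alpha_{2,n}^{\tt LF}$, which in turn depend on the spectral radii of all five cells, so one must display a single explicit family for which, simultaneously and \emph{uniformly} over $\chi\ge1$ and ${\tt C}>0$, all inputs are admissible, $\max_{ij}{\mathscr R}_\ell$ is attained at a prescribed cell, the density of $\overline{\bf U}$ is positive, and ${\mathcal E}(\overline{\bf U})<0$.
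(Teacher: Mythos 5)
Your overall strategy is the same as the paper's: argue by contradiction, reduce the update at one cell to $\bar{\bf U}_{ij}^{n+1}=(1-{\tt C})\bar{\bf U}_{ij}^n+{\tt C}\,\overline{\bf U}$ with $\overline{\bf U}$ the combination of Theorem~\ref{theo:MHD:LLFsplit2D} built from the four neighbours, identify the violation of the DDF condition together with vanishing pressure as the mechanism, and finish with a limiting/continuity argument. Your observation that choosing the centre state to share $(\rho,{\bf m},{\bf B})$ with $\overline{\bf U}$ makes ${\mathcal E}$ affine along the update is a genuinely clean way to see why a single $\overline{\bf U}$ with ${\mathcal E}(\overline{\bf U})<0$ defeats \emph{every} ${\tt C}>0$; in fact the paper's data \eqref{eq:counterEx2D} implicitly has exactly this structure (the first seven components of $\bar{\bf U}_{ij}^{n+1}$ coincide with those of the centre state).

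The gap is the one you name yourself in the last sentence: the theorem is an existence statement, and you never exhibit the states. Two things are asserted but not established. First, that admissible neighbours with ${\mathcal E}(\overline{\bf U})<0$ exist at all: the failure of the cancellation of the residual term $\frac{B_i-\tilde B_i}{\alpha}({\bf v}^*\cdot{\bf B}^*)$ only removes the \emph{proof} of positivity, it does not by itself produce a negative internal energy, since \eqref{eq:MHD:LLFsplit} is an inequality with slack; one must push the neighbours' pressures toward zero \emph{and} quantify that the divergence defect overwhelms the remaining slack. Second, the self-consistency loop is real and not resolved: the centre state must match $(\overline\rho,\overline{\bf m},\overline{\bf B})$, but $\overline{\bf U}$ depends on $\alpha_{\ell,n}^{\tt LF}=\chi\max_{ij}{\mathscr R}_\ell$, which depends on the centre state, and all of this must work uniformly in $\chi\ge1$ and ${\tt C}>0$. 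The paper resolves both points at once by writing down the explicit two-parameter family \eqref{eq:counterEx2D} (pressure parameter ${\tt p}\to0^+$, divergence defect $\epsilon\in(0,1/\chi)$), computing $\alpha_{\ell,n}^{\tt LF}$ and $\bar{\bf U}_{ij}^{n+1}$ in closed form, and showing $\lim_{{\tt p}\to0^+}{\mathcal E}(\bar{\bf U}_{ij}^{n+1})=-\frac{\epsilon{\tt C}(2+\epsilon)(8\chi+\epsilon-4\epsilon\chi^2)}{32\chi^2(2+\epsilon+(1+\epsilon)\Delta x/\Delta y)}<0$, contradicting the nonnegativity forced by continuity. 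Without an explicit family (or an abstract existence argument replacing it), your proof does not yet establish the theorem; with one, it would essentially reproduce the paper's.
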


\begin{proof}
	We prove it by contradiction. 
	Assume that there exists 
	a constant $\chi\ge 1$ and a CFL number ${\tt C}>0$, 
	such that $\bar {\bf U}_{ij}^n \in {\mathcal G},~\forall i,j$ 
	always ensure $\bar {\bf U}_{ij}^{n+1} \in {\mathcal G},~\forall i,j$. 
	Consider the ideal gases and a special set of admissible states 
	\begin{equation}\label{eq:counterEx2D}
	{\small 
		\bar{\bf U}_{k,m}^n = 
		\begin{cases}
		\left(1,~1,~0,~0,~1,~0,~0,~
		\frac{\tt p}{\gamma-1}+1\right)^\top, \qquad (k,m)=(i-1,j), 
		\\[2mm]
		\left(1,~1,~0,~0,~1+ \epsilon,~0,~0,~\frac{\tt p}{\gamma-1}+\frac{1+(1+\epsilon)^2}2 \right)^\top, \qquad (k,m)=(i+1,j),
		\\[2mm]
		\left(1, \frac{4 \chi + \epsilon }{4 \chi},~0,~0,~1+\frac{\epsilon}{2},~0,~0,~
		\frac{\tt p}{\gamma-1} + \frac{ (4 \chi + \epsilon)^2 }{32 \chi^2} 
		+ \frac{(\epsilon+2)^2}{8} 
		\right)^\top,~{\rm otherwise},
		\end{cases}}
	\end{equation}
	where ${\tt p} >0$ and $\epsilon >0$. For  $\forall {\tt p} \in \big(0,\frac{1}{\gamma}\big)$ and $\forall \epsilon \in \big(0,\frac{1}{\chi}\big)$, we have  
	$
	\alpha_{1,n}^{\tt LF}  
	= 
	= \chi (2+\epsilon)$,  
	$\alpha_{2,n}^{\tt LF}  
	= \chi \sqrt{\gamma {\tt p} + (1+\epsilon)^2}.
	$ 
	Hence 
	$
	\lambda_1 ({\tt p},\epsilon) := \alpha_{1,n}^{\tt LF} \frac{\Delta t_n}{\Delta x} 
	= \frac{ {\tt C } \Delta y ( 2 + \epsilon)  }{ \Delta y (2 + \epsilon) + \Delta x \sqrt{\gamma {\tt p} + (1+\epsilon)^2} }.
	$
	Substituting \eqref{eq:counterEx2D} into \eqref{eq:2DMHD:LFscheme} gives 
	\begin{equation*}
		\begin{aligned}
			\bar {\bf U}_{ij}^{n+1} &= \bigg(
			1, \frac{4 \chi + \epsilon }{4 \chi},~0,~0,~1+\frac{\epsilon}{2},~0,~0,~		 
			\lambda_1 ({\tt p},\epsilon) \times \frac{3+(1+\epsilon)^2}{4}
			\\
			& \qquad 
			+ \Big(1- \lambda_1 ({\tt p},\epsilon) \Big) 
			\Big(  \frac{ (4 \chi + \epsilon)^2 }{32 \chi^2} 
			+ \frac{(\epsilon+2)^2}{8}  \Big) + \frac{\tt p}{\gamma-1} 	
			\bigg)^\top =: {\bf U} ({\tt p},\epsilon) .
		\end{aligned}
	\end{equation*}
	By assumption we have   
	${\bf U} ({\tt p},\epsilon) \in {\mathcal G}$, and ${\mathcal E} ( {\bf U} ({\tt p},\epsilon) )>0$, for any $ {\tt p} \in \big(0,\frac{1}{\gamma}\big),~ \epsilon \in \big(0,\frac{1}{\chi}\big)$.
	The continuity of ${\mathcal E} ( {\bf U})$ with respect to $\bf U$ on $\mathbb{R}^+\times \mathbb{R}^7$ further implies that
	$$
	0 \le
	\mathop {\lim }\limits_{{\tt p} \to  0^+ } {\mathcal E} ( {\bf U} ({\tt p},\epsilon) ) =  {\mathcal E} \Big( \mathop {\lim }\limits_{{\tt p} \to  0^+ } {\bf U} ({\tt p},\epsilon) \Big)
	= - \frac{  \epsilon {\tt C} (2+\epsilon) ( 8 \chi + \epsilon - 4 \epsilon \chi^2 ) }{32 \chi^2 \big( 2+ \epsilon + (1+\epsilon) {\Delta x}/{\Delta y} \big)} < 0,
	$$
	which is a contradiction. Thus the assumption is incorrect, and the proof is completed.  
\end{proof}

\begin{remark}\label{rem:1DdisBnotPCP} 
	The proof of Theorem \ref{theo:counterEx} also implies that, for any specified CFL number,
	the 1D LF scheme \eqref{eq:1DMHD:LFscheme} is not always PP when $B_1$ is piecewise
	constant. 
\end{remark}


Inspired by Theorem \ref{theo:counterEx}, we conjecture that,  
to fully ensure the admissibility of $\bar {\bf U}_{ij}^{n+1}$, 
additional condition is required for the states $\{\bar {\bf U}_{i,j}^n, \bar {\bf U}_{i\pm1,j}^n, \bar {\bf U}_{i,j\pm1}^n\}$ except for their admissibility.
Such additional  necessary condition 
should be a divergence-free condition in discrete sense for 
$\{\bar{\bf B}_{ij}^n\}$,
whose importance for robust simulations has been widely realized. 
The following analysis confirms that a discrete divergence-free (DDF) condition does 
play an important role in achieving the PP property.

If the states $\{\bar {\bf U}_{i,j}^n\}$ are all admissible and
satisfy the following DDF condition
\begin{equation}\label{eq:DisDivB}
\mbox{\rm div} _{ij} \bar {\bf B}^n := \frac{ \left( \bar  B_1\right)_{i+1,j}^n - \left( \bar  B_1 \right)_{i-1,j}^n } {2\Delta x} + \frac{ \left( \bar  B_2 \right)_{i,j+1}^n - \left( \bar B_2 \right)_{i,j-1}^n } {2\Delta y} = 0,
\end{equation}
then we can rigorously prove that the scheme \eqref{eq:2DMHD:LFscheme} preserves $\bar {\bf U}_{ij}^{n+1}\in {\mathcal G}$, by using the generalized LF splitting property in Theorem \ref{theo:MHD:LLFsplit2D}. 

\begin{theorem} \label{theo:2DMHD:LFscheme}
	If for all $i$ and $j$, $\bar {\bf U}_{ij}^n \in {\mathcal G}$ and satisfies the DDF  condition \eqref{eq:DisDivB},
	then the solution $ \bar {\bf U}_{ij}^{n+1}$ of \eqref{eq:2DMHD:LFscheme} always belongs to ${\mathcal G}$ under the CFL condition
	\begin{equation}\label{eq:CFL:LF2D}
	0< \frac{ \alpha_{1,n}^{\tt LF} \Delta t_n}{\Delta x} + \frac{ \alpha_{2,n}^{\tt LF} \Delta t_n}{\Delta y}  \le  1,
	\end{equation}
	where the parameters $\{\alpha_{\ell,n}^{\tt LF}\}$ satisfy
	\begin{equation}\label{eq:Lxa12}
	\alpha_{1,n}^{\tt LF} > \max_{i,j} \alpha_1 ( \bar {\bf U}_{i+1,j}^n, \bar {\bf U}_{i-1,j}^n ),\quad
	\alpha_{2,n}^{\tt LF} > \max_{i,j} \alpha_2 ( \bar {\bf U}_{i,j+1}^n, \bar {\bf U}_{i,j-1}^n ).
	\end{equation}
\end{theorem}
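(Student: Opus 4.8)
The plan is to mimic, in two dimensions, the convex-combination argument that proved the 1D result (Theorem~\ref{theo:1DMHD:LFscheme}), but now invoking the \emph{2D} generalized LF splitting property of Theorem~\ref{theo:MHD:LLFsplit2D} in place of the 1D one. First I would substitute the LF numerical fluxes \eqref{eq:LFflux} into \eqref{eq:2DMHD:LFscheme} and collect terms. Writing $\lambda_1 = \alpha_{1,n}^{\tt LF}\Delta t_n/\Delta x$ and $\lambda_2 = \alpha_{2,n}^{\tt LF}\Delta t_n/\Delta y$, a direct computation (the only routine calculation needed) recasts the update as
\[
\bar{\bf U}_{ij}^{n+1} = (1-\lambda_1-\lambda_2)\,\bar{\bf U}_{ij}^n + (\lambda_1+\lambda_2)\,\overline{\bf U},
\]
where $\overline{\bf U}$ is precisely the state on the right-hand side of \eqref{eq:MHD:LLFsplit2D} specialized to ${\tt Q}=1$, $\omega_1=1$, and $(\bar{\bf U}^1,\tilde{\bf U}^1,\hat{\bf U}^1,\check{\bf U}^1) = (\bar{\bf U}_{i+1,j}^n,\bar{\bf U}_{i-1,j}^n,\bar{\bf U}_{i,j+1}^n,\bar{\bf U}_{i,j-1}^n)$. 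The identity $\tfrac{\alpha_{1,n}^{\tt LF}}{\Delta x}+\tfrac{\alpha_{2,n}^{\tt LF}}{\Delta y} = (\lambda_1+\lambda_2)/\Delta t_n$ is exactly what makes the normalizing constant in \eqref{eq:MHD:LLFsplit2D} reproduce the coefficient $(\lambda_1+\lambda_2)$ above, and the diffusive cross terms $\pm\tfrac{\lambda_\ell}{2}\bar{\bf U}_{ij}^n$ are what supply the $(1-\lambda_1-\lambda_2)$ factor.

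Next I would verify the hypotheses of Theorem~\ref{theo:MHD:LLFsplit2D} for this ${\tt Q}=1$ configuration. The four neighboring states lie in $\mathcal G$ by assumption; the bounds \eqref{eq:Lxa12} give $\alpha_{1,n}^{\tt LF} > \alpha_1(\bar{\bf U}_{i+1,j}^n,\bar{\bf U}_{i-1,j}^n)$ and $\alpha_{2,n}^{\tt LF} > \alpha_2(\bar{\bf U}_{i,j+1}^n,\bar{\bf U}_{i,j-1}^n)$, which are exactly the parameter inequalities required when ${\tt Q}=1$; and the discrete divergence-free condition \eqref{eq:descrite2DDIV} with ${\tt Q}=1$ reduces to $\tfrac{(\bar B_1)_{i+1,j}^n-(\bar B_1)_{i-1,j}^n}{\Delta x} + \tfrac{(\bar B_2)_{i,j+1}^n-(\bar B_2)_{i,j-1}^n}{\Delta y} = 0$, i.e. $2\,\mbox{\rm div}_{ij}\bar{\bf B}^n = 0$, which is exactly the assumed DDF condition \eqref{eq:DisDivB}. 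Hence Theorem~\ref{theo:MHD:LLFsplit2D} yields $\overline{\bf U}\in\mathcal G$.

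To finish, under the CFL condition \eqref{eq:CFL:LF2D} we have $0<\lambda_1+\lambda_2\le 1$ (with $\lambda_1,\lambda_2>0$ since the viscosity parameters are positive), so $\bar{\bf U}_{ij}^{n+1}$ is a convex combination of $\bar{\bf U}_{ij}^n\in\mathcal G$ and $\overline{\bf U}\in\mathcal G$; the convexity of $\mathcal G$ (Lemma~\ref{theo:MHD:convex}, together with $\mathcal G=\mathcal G_*$) gives $\bar{\bf U}_{ij}^{n+1}\in\mathcal G$. I do not expect a genuine obstacle here: the hard analytic work is entirely absorbed into Theorem~\ref{theo:MHD:LLFsplit2D}. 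The only mildly delicate points are the flux-expansion bookkeeping and, more importantly, checking that the \emph{central-difference} discrete divergence operator in \eqref{eq:DisDivB}—rather than some one-sided or cell-face variant—is exactly the normalization of \eqref{eq:descrite2DDIV} that the splitting property demands; this is what fixes the particular form of the DDF condition in the statement.
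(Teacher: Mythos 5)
Your proposal is correct and follows essentially the same route as the paper: substitute the LF fluxes into \eqref{eq:2DMHD:LFscheme}, rewrite the update as $(1-\lambda)\bar{\bf U}_{ij}^n+\lambda\overline{\bf U}$ with $\lambda=\lambda_1+\lambda_2$ and $\overline{\bf U}$ the ${\tt Q}=1$ instance of Theorem~\ref{theo:MHD:LLFsplit2D} applied to the four neighbors, check that \eqref{eq:DisDivB} is the ${\tt Q}=1$ form of \eqref{eq:descrite2DDIV}, and conclude by convexity of $\mathcal G$. The bookkeeping you flag (the $\pm\lambda_\ell\bar{\bf U}_{ij}^n$ diffusive terms producing the $(1-\lambda)$ coefficient, and the normalization matching) works out exactly as you describe.
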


\begin{proof}
	Substituting  \eqref{eq:LFflux} into \eqref{eq:2DMHD:LFscheme} gives
	\begin{equation*}
		\bar {\bf U}_{ij}^{n+1} = \lambda {\bf \Xi} +  (1-\lambda) \bar {\bf U}_{ij}^n ,
	\end{equation*}
	where $\lambda := \Delta t_n \left( \frac{\alpha_{1,n}^{\tt LF}}{\Delta x} + \frac{\alpha_{2,n}^{\tt LF}}{\Delta y} \right) \in (0,1]$ by \eqref{eq:CFL:LF2D}, and 
	\begin{equation*}
		\begin{split}
			{\bf \Xi} & :=
			\frac{1}{ 2\left(   \frac{\alpha_{1,n}^{\tt LF}}{\Delta x} + \frac{\alpha_{2,n}^{\tt LF}}{\Delta y}   \right) }
			\Bigg[
			\frac{\alpha_{1,n}^{\tt LF}}{\Delta x}
			\left( \bar {\bf U}_{i+1,j}^n - \frac{ {\bf F}_1( \bar {\bf U}_{i+1,j}^n)}{ \alpha_{1,n}^{\tt LF} } +
			\bar {\bf U}_{i-1,j}^n + \frac{ {\bf F}_1( \bar {\bf U}_{i-1,j}^n) }{ \alpha_{1,n}^{\tt LF} } \right) \\
			& \quad + \frac{\alpha_{2,n}^{\tt LF}}{\Delta y}  \left( \bar {\bf U}_{i,j+1}^n - \frac{ {\bf F}_2( \bar {\bf U}_{i,j+1}^n)}{ \alpha_{2,n}^{\tt LF} } +
			\bar {\bf U}_{i,j-1}^n + \frac{ {\bf F}_2( \bar {\bf U}_{i,j-1}^n) }{ \alpha_{2,n}^{\tt LF} } \right) \Bigg].
		\end{split}
	\end{equation*}
	Using the condition \eqref{eq:DisDivB} and
	Theorem \ref{theo:MHD:LLFsplit2D} gives ${\bf \Xi} \in {\mathcal G}$.
	The convexity of $\mathcal G$ further yields $\bar {\bf U}_{ij}^{n+1} \in {\mathcal G}$. The proof is completed.
\end{proof}

\begin{remark}
	The data in \eqref{eq:counterEx2D} satisfies 
	$
	{\rm div}_{ij} \bar {\bf B}^{n}  = \frac{\epsilon}{2\Delta x} > 0, 
	$
	which can be very small when $0<\epsilon \ll 1$. 
	Therefore, from the proof of Theorem \ref{theo:counterEx}, 
	we conclude that 
	violating the condition \eqref{eq:DisDivB} slightly could lead to 
	inadmissible solution of the scheme \eqref{eq:2DMHD:LFscheme}, 
	if the pressure is sufficiently low. This demonstrates the importance 
	of \eqref{eq:DisDivB}. 
\end{remark}

We now discuss whether the LF scheme \eqref{eq:2DMHD:LFscheme} preserves the DDF condition \eqref{eq:DisDivB}.

\begin{theorem} \label{theo:2DDivB:LFscheme}
	For the LF scheme \eqref{eq:2DMHD:LFscheme},
	the divergence error 
	$$ \varepsilon_{\infty}^n := \max_{ij} \left| {\rm div}_{ij} \bar {\bf B}^{n} \right| ,$$ 
	does not grow with $n$
	under the condition \eqref{eq:CFL:LF2D}.
	Moreover, 
	$\{\bar {\bf U}_{ij}^n\}$ 
	satisfy \eqref{eq:DisDivB}
	for all $i,j$ and $n \in \mathbb{N}$, if \eqref{eq:DisDivB} holds for the discrete initial data $\{\bar {\bf U}_{ij}^0\}$.
\end{theorem}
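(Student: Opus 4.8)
The plan is to extract from the scheme \eqref{eq:2DMHD:LFscheme} the scalar update rules obeyed by the two in-plane magnetic components $(\bar B_1)_{ij}^n$ and $(\bar B_2)_{ij}^n$, to recast them in a compact operator form, and then to observe that applying the central-difference operator ${\rm div}_{ij}$ makes the nonlinear contributions cancel, leaving a convex-combination recursion for $D_{ij}^n:={\rm div}_{ij}\bar{\bf B}^n$. First I would read off the $B_1$- and $B_2$-slots of the LF flux \eqref{eq:LFflux}: the physical flux of $B_1$ in the ${\tt x}$-direction and of $B_2$ in the ${\tt y}$-direction both vanish, while the remaining physical magnetic-flux entries are $\pm(v_1B_2-v_2B_1)$. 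Setting $\phi_{ij}^n:=(v_1B_2-v_2B_1)_{ij}^n$, $\mu_1:=\frac{\alpha_{1,n}^{\tt LF}\Delta t_n}{2\Delta x}\ge0$, $\mu_2:=\frac{\alpha_{2,n}^{\tt LF}\Delta t_n}{2\Delta y}\ge0$, and introducing the central differences $\delta_x u_{ij}:=\frac{u_{i+1,j}-u_{i-1,j}}{2\Delta x}$, $\delta_y u_{ij}:=\frac{u_{i,j+1}-u_{i,j-1}}{2\Delta y}$ together with the discrete Laplacian-type operators $(L_1u)_{ij}:=u_{i+1,j}-2u_{ij}+u_{i-1,j}$, $(L_2u)_{ij}:=u_{i,j+1}-2u_{ij}+u_{i,j-1}$, a short computation collecting the five-point flux differences yields
\[ (\bar B_1)_{ij}^{n+1}=\big(I+\mu_1L_1+\mu_2L_2\big)(\bar B_1)_{ij}^n+\Delta t_n\,(\delta_y\phi^n)_{ij},\qquad (\bar B_2)_{ij}^{n+1}=\big(I+\mu_1L_1+\mu_2L_2\big)(\bar B_2)_{ij}^n-\Delta t_n\,(\delta_x\phi^n)_{ij}. \]

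Next I would apply $\delta_x$ to the first identity, $\delta_y$ to the second, and add. Since $\delta_x,\delta_y,L_1,L_2$ are all constant-coefficient difference operators they commute with one another, so $\delta_x\delta_y\phi^n=\delta_y\delta_x\phi^n$ and the two $\phi^n$-terms cancel exactly, leaving the closed recursion
\[ {\rm div}_{ij}\bar{\bf B}^{n+1}=\big(I+\mu_1L_1+\mu_2L_2\big){\rm div}_{ij}\bar{\bf B}^n=(1-2\mu_1-2\mu_2)D_{ij}^n+\mu_1D_{i+1,j}^n+\mu_1D_{i-1,j}^n+\mu_2D_{i,j+1}^n+\mu_2D_{i,j-1}^n. \]
By the CFL condition \eqref{eq:CFL:LF2D}, $2\mu_1+2\mu_2=\frac{\alpha_{1,n}^{\tt LF}\Delta t_n}{\Delta x}+\frac{\alpha_{2,n}^{\tt LF}\Delta t_n}{\Delta y}\le1$, so the five coefficients are nonnegative and sum to one; hence $D_{ij}^{n+1}$ is a convex combination of $\{D_{i',j'}^n\}$ over the five-point stencil, which gives $|D_{ij}^{n+1}|\le\varepsilon_\infty^n$ for every $i,j$ and therefore $\varepsilon_\infty^{n+1}\le\varepsilon_\infty^n$. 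If in addition $D_{ij}^0=0$ for all $i,j$, the same recursion together with an induction on $n$ gives $D_{ij}^n=0$ for all $i,j$ and all $n\in\mathbb{N}$, i.e.\ the DDF condition \eqref{eq:DisDivB} is propagated by the scheme.

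I expect the only nontrivial part to be the bookkeeping in the first step: one must check that, after expanding \eqref{eq:LFflux} in the $B_1$- and $B_2$-slots and forming the $x$- and $y$-flux differences, the numerical-viscosity pieces assemble precisely into $\mu_1L_1+\mu_2L_2$ acting on $\bar B_1$ (respectively $\bar B_2$), while the physical-flux pieces assemble into $\Delta t_n(\delta_y\phi^n)$ (respectively $-\Delta t_n(\delta_x\phi^n)$). This is exactly where two structural facts enter: that each $\alpha_{\ell,n}^{\tt LF}$ is a single constant, so the viscosity commutes with $\delta_x,\delta_y$ and the recursion closes; and that the MHD magnetic fluxes have the antisymmetric form producing a common $\phi^n$ in the two off-direction components. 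Once the compact form is in hand, the commutation, the cancellation of the $\phi^n$-terms, and the convexity estimate are all immediate; note that neither \eqref{eq:Lxa12} nor the positivity of $\bar{\bf U}_{ij}^n$ is used for this statement.
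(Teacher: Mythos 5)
Your proposal is correct and follows essentially the same route as the paper: the paper's proof simply states (by linearity of ${\rm div}_{ij}$) the recursion ${\rm div}_{ij}\bar{\bf B}^{n+1}=(1-\lambda)\,{\rm div}_{ij}\bar{\bf B}^{n}+\tfrac{\lambda_1}{2}({\rm div}_{i+1,j}\bar{\bf B}^{n}+{\rm div}_{i-1,j}\bar{\bf B}^{n})+\tfrac{\lambda_2}{2}({\rm div}_{i,j+1}\bar{\bf B}^{n}+{\rm div}_{i,j-1}\bar{\bf B}^{n})$, which is exactly your $(I+\mu_1L_1+\mu_2L_2)$ form with $2\mu_\ell=\lambda_\ell$, and then concludes by the same convex-combination bound. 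Your derivation merely makes explicit the bookkeeping (the vanishing of the $B_\ell$-flux in the $x_\ell$-direction and the cancellation of the $\pm\phi$ terms under the commuting central differences) that the paper leaves implicit.
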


\begin{proof}
	Using the linearity of the operator $\mbox{div}_{ij}$, one can deduce from \eqref{eq:2DMHD:LFscheme} that  
	\begin{equation*} 
		\begin{split}
			\mbox{div}_{ij} \bar {\bf B}^{n+1} 
			=& ( 1 - \lambda  )  \mbox{div}_{ij} \bar {\bf B}^{n}
			+ \frac{ \lambda_1 }{2} (
			\mbox{div}_{i+1,j} \bar {\bf B}^{n}  +  \mbox{div}_{i-1,j} \bar {\bf B}^{n}
			)
			\\
			&
			+ \frac{\lambda_2 }{2} (
			\mbox{div}_{i,j+1} \bar {\bf B}^{n}     +  \mbox{div}_{i,j-1} \bar {\bf B}^{n}
			),
		\end{split}
	\end{equation*}
	where $\lambda_1 = \frac{ \alpha_{1,n}^{\tt LF} \Delta t_n } { \Delta x}, \lambda_2 = \frac{ \alpha_{2,n}^{\tt LF} \Delta t_n }{\Delta y}, \lambda = \lambda_1+ \lambda_2 \in (0,1] $. 
	It follows that 
	\begin{equation}\label{eq:notincease}
	\varepsilon_{\infty}^{n+1} \le  ( 1 - \lambda  ) \varepsilon_{\infty}^{n}  + \lambda_1  \varepsilon_{\infty}^{n}  +   \lambda_2 \varepsilon_{\infty}^{n}  = \varepsilon_{\infty}^{n}.
	\end{equation}
	This means $\varepsilon_{\infty}^{n} $
	does not grow with $n$. If $\varepsilon_{\infty}^{0} =0 $ for the discrete initial data $\{\bar {\bf U}_{ij}^0\}$, then $\varepsilon_{\infty}^{n} =0$ by  \eqref{eq:notincease}, i.e., the condition \eqref{eq:DisDivB} is satisfied for all $i,j$ and $n \in \mathbb{N}$.
\end{proof}

Finally, we obtain the first provably PP scheme for the 2D MHD system \eqref{eq:MHD}, as stated in the following theorem.

\begin{theorem} \label{theo:FullPP:LFscheme}
	Assume that the discrete initial data $\{\bar {\bf U}_{ij}^0\}$ are admissible and satisfy \eqref{eq:DisDivB}, which can be met by, e.g.,  the following second-order approximation
	\begin{align*}
		&\Big( \bar \rho_{ij}^0, \bar {\bf m}_{ij}^0, \left(\bar B_3\right)_{ij}^0, \overline {(\rho e)}_{ij}^0 \Big)
		= \frac{1}{\Delta x \Delta y} \iint_{I_{ij}} \big( \rho,{\bf m }, B_3, \rho e \big) ({\tt x},{\tt y},0) d{\tt x} d{\tt y},
		\\
		& \left( \bar B_1 \right)_{ij}^0 = \frac{1}{2 \Delta y} \int_{ {\tt y}_{j-1} }^{ {\tt y}_{j+1 } }  B_1( {\tt x}_i,{\tt y},0) d{\tt y},
		~
		\left( \bar B_2 \right)_{ij}^0 = \frac{1}{2 \Delta x} \int_{ {\tt x}_{i-1} }^{ {\tt x}_{i+1 } }  B_2({\tt x},{\tt y}_j,0) d {\tt x},
		\\
		&~\bar E_{ij}^0 = \overline {(\rho e )}_{ij}^0 + \frac12 \left( \frac{ |\bar{\bf m}_{ij}^0|^2}{\bar \rho_{ij}^0} + |\bar{\bf B}_{ij}^0|^2 \right).
	\end{align*}
	If the parameters $\{\alpha_{\ell,n}^{\tt LF}\}$ satisfy \eqref{eq:Lxa12}, then under the CFL condition \eqref{eq:CFL:LF2D}, 
	the LF scheme \eqref{eq:2DMHD:LFscheme} always preserve both $\bar {\bf U}_{ij}^{n+1} \in {\mathcal G}$ and \eqref{eq:DisDivB} for all $i$, $j$ and $n \in \mathbb{N}$.
\end{theorem}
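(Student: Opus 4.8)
The plan is an induction on the time level $n$, resting on two structural results already established: Theorem~\ref{theo:2DMHD:LFscheme}, which promotes admissibility together with the DDF condition \eqref{eq:DisDivB} at level $n$ to admissibility at level $n+1$; and Theorem~\ref{theo:2DDivB:LFscheme}, whose second assertion says the scheme propagates \eqref{eq:DisDivB} exactly in time. Before the induction one must check that the exhibited second-order initial discretization indeed lands in $\mathcal G$ and satisfies \eqref{eq:DisDivB}; this is the only computational ingredient, and it is routine.

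For admissibility of the initial data, assume the exact initial state satisfies $\rho(\cdot,\cdot,0)>0$ and $p(\cdot,\cdot,0)>0$, hence $e(\cdot,\cdot,0)>0$ by \eqref{eq:assumpEOS}. Then $\bar\rho_{ij}^0$ and $\overline{(\rho e)}_{ij}^0$ are positive as cell averages of positive functions, and since $\bar E_{ij}^0$ is defined precisely so that $\mathcal E(\bar{\bf U}_{ij}^0)=\overline{(\rho e)}_{ij}^0$, we obtain $\bar{\bf U}_{ij}^0\in\mathcal G$ for all $i,j$. To verify \eqref{eq:DisDivB}, I would first rewrite the horizontal difference using the fundamental theorem of calculus,
\[
\left(\bar B_1\right)_{i+1,j}^0-\left(\bar B_1\right)_{i-1,j}^0
=\frac{1}{2\Delta y}\int_{{\tt y}_{j-1}}^{{\tt y}_{j+1}}\!\!\int_{{\tt x}_{i-1}}^{{\tt x}_{i+1}} \frac{\partial B_1}{\partial{\tt x}}({\tt x},{\tt y},0)\,d{\tt x}\,d{\tt y},
\]
and analogously $\left(\bar B_2\right)_{i,j+1}^0-\left(\bar B_2\right)_{i,j-1}^0=\frac{1}{2\Delta x}\int_{{\tt x}_{i-1}}^{{\tt x}_{i+1}}\!\int_{{\tt y}_{j-1}}^{{\tt y}_{j+1}}\frac{\partial B_2}{\partial{\tt y}}({\tt x},{\tt y},0)\,d{\tt y}\,d{\tt x}$. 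Substituting into the definition of $\mbox{\rm div}_{ij}\bar{\bf B}^0$ and using Fubini's theorem collapses it to $\frac{1}{4\Delta x\Delta y}\iint_{[{\tt x}_{i-1},{\tt x}_{i+1}]\times[{\tt y}_{j-1},{\tt y}_{j+1}]}\big(\partial_{\tt x}B_1+\partial_{\tt y}B_2\big)(\cdot,\cdot,0)\,d{\tt x}\,d{\tt y}$, which vanishes by the exact divergence-free condition \eqref{eq:2D:BxBy0}.

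With the base case $n=0$ in hand, the inductive step is immediate: if $\bar{\bf U}_{ij}^n\in\mathcal G$ for all $i,j$ and $\{\bar{\bf U}_{ij}^n\}$ satisfies \eqref{eq:DisDivB}, then since $\{\alpha_{\ell,n}^{\tt LF}\}$ satisfy \eqref{eq:Lxa12} and the CFL condition \eqref{eq:CFL:LF2D} holds, Theorem~\ref{theo:2DMHD:LFscheme} gives $\bar{\bf U}_{ij}^{n+1}\in\mathcal G$ for all $i,j$, while Theorem~\ref{theo:2DDivB:LFscheme} gives that \eqref{eq:DisDivB} continues to hold at level $n+1$. Both properties therefore persist for every $n\in\mathbb N$.

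In fact there is essentially no obstacle here: the entire analytic difficulty has been front-loaded into the generalized LF splitting property (Lemma~\ref{theo:MHD:LLFsplit} and Theorem~\ref{theo:MHD:LLFsplit2D}) and into Theorems~\ref{theo:2DMHD:LFscheme}--\ref{theo:2DDivB:LFscheme}. The only step demanding a little care is verifying that the exhibited initial discretization meets the hypotheses, and the one thing to notice there is that averaging $B_1$ over ${\tt y}$-edges and $B_2$ over ${\tt x}$-edges is exactly the choice that turns the discrete divergence $\mbox{\rm div}_{ij}\bar{\bf B}^0$ into a cell integral of the continuous divergence.
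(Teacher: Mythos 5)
Your proof is correct and takes essentially the same route as the paper, whose entire argument is to cite Theorems \ref{theo:2DMHD:LFscheme} and \ref{theo:2DDivB:LFscheme}; your induction simply makes that combination explicit. The only added content is your verification that the exhibited initial discretization is admissible and satisfies \eqref{eq:DisDivB} — a claim the paper asserts without proof — and your Fubini computation confirming it is correct, being precisely the discrete analogue of \eqref{eq:div000} on the doubled cell $[{\tt x}_{i-1},{\tt x}_{i+1}]\times[{\tt y}_{j-1},{\tt y}_{j+1}]$.
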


\begin{proof}
	This is a direct consequence of Theorems \ref{theo:2DMHD:LFscheme} and \ref{theo:2DDivB:LFscheme}.
\end{proof}


\subsection{High-order schemes}\label{sec:High2D}

This subsection discusses the provably PP high-order finite volume or DG schemes for the 2D MHD equations \eqref{eq:MHD}.
We will focus on the first-order forward Euler method for time discretization, and our analysis also works for  
high-order explicit time discretization using the SSP methods \cite{Gottlieb2001,Gottlieb2005,Gottlieb2009}.

Towards achieving high-order [$({\tt K}+1)$-th order] spatial accuracy, the approximate solution polynomials ${\bf U}_{ij}^n ({\tt x},{\tt y})$ of degree $\tt K$
are also built usually, as approximation to the exact solution ${\bf U}({\tt x},{\tt y},t_n)$ within $I_{ij}$. Such polynomial vector ${\bf U}_{ij}^n ({\tt x},{\tt y})$
is, either reconstructed in the finite volume methods
from the cell averages $\{\bar {\bf U}_{ij}^n\}$ or evolved in the DG methods. Moreover, the cell average of ${\bf U}_{ij}^n({\tt x},{\tt y})$ over $I_{ij}$ is $\bar {\bf U}_{ij}^{n}$.

Let $\{  {\tt x}_i^{(\mu)} \}_{\mu=1}^{\tt Q}$ and $\{  {\tt y}_j^{(\mu)} \}_{\mu=1}^{\tt Q}$ denote the $\tt Q$-point Gauss quadrature nodes in the intervals $[ {\tt x}_{i-\frac12}, {\tt x}_{i+\frac12} ]$ and $[ {\tt y}_{j-\frac12}, {\tt y}_{j+\frac12} ]$, respectively, and $\{\omega_\mu\}_{\mu=1}^{\tt Q}$ be the associated weights satisfying
$\sum_{\mu=1}^{\tt Q} \omega_\mu = 1$.
With this quadrature rule for approximating the integrals of numerical fluxes on cell interfaces,
a finite volume scheme or discrete equation for the cell average in the DG method (see e.g., \cite{zhang2010b}) can be written as 
\begin{equation}\label{eq:2DMHD:cellaverage}
\begin{split}
\bar {\bf U}_{ij}^{n+1} & = \bar {\bf U}_{ij}^{n}
- \frac{\Delta t_n}{\Delta x} \sum\limits_{\mu =1}^{\tt Q}  \omega_\mu \left(
\hat {\bf F}_1( {\bf U}^{-,\mu}_{i+\frac{1}{2},j}, {\bf U}^{+,\mu}_{i+\frac{1}{2},j} ) -
\hat {\bf F}_1( {\bf U}^{-,\mu}_{i-\frac{1}{2},j}, {\bf U}^{+,\mu}_{i-\frac{1}{2},j})
\right)  \\
& \quad -  \frac{ \Delta t_n }{\Delta y} \sum\limits_{\mu =1}^{\tt Q} \omega_\mu \left(
\hat {\bf F}_2( {\bf U}^{\mu,-}_{i,j+\frac{1}{2}} , {\bf U}^{\mu,+}_{i,j+\frac{1}{2}}   ) -
\hat {\bf F}_2( {\bf U}^{\mu,-}_{i,j-\frac{1}{2}} , {\bf U}^{\mu,+}_{i,j-\frac{1}{2}}  )
\right),
\end{split}
\end{equation}
where $\hat {\bf F}_1$ and $\hat {\bf F}_2$ are the LF fluxes in \eqref{eq:LFflux}, and
the limiting values 
are given by
\begin{align*}
	&{\bf U}^{-,\mu}_{i+\frac{1}{2},j} = {\bf U}_{ij}^n ({\tt x}_{i+\frac12},{\tt y}_j^{(\mu)}),\qquad
	{\bf U}^{+,\mu}_{i-\frac{1}{2},j} = {\bf U}_{ij}^n ({\tt x}_{i-\frac12},{\tt y}_j^{(\mu)}),
	\\
	&{\bf U}^{\mu,-}_{i,j+\frac{1}{2}} = {\bf U}_{ij}^n ({\tt x}_i^{(\mu)},{\tt y}_{j+\frac12}),\qquad
	{\bf U}^{\mu,+}_{i,j-\frac{1}{2}} = {\bf U}_{ij}^n ({\tt x}_i^{(\mu)},{\tt y}_{j-\frac12}).
\end{align*}
For the accuracy requirement,  $\tt Q$ should satisfy:
${\tt Q} \ge {\tt K}+1$ for a $\mathbb{P}^{\tt K}$-based DG method, 
or ${\tt Q} \ge ({\tt K}+1)/2$ for a $({\tt K}+1)$-th order finite volume scheme.


We denote 
$$
\overline{(B_1)}_{i+\frac{1}{2},j}^{\mu} := \frac12 \left( (B_1)_{i+\frac{1}{2},j}^{-,\mu} + (B_1)_{i+\frac{1}{2},j}^{+,\mu} \right), 
\ \ 
\overline{ ( B_2) }_{i,j+\frac{1}{2}}^{\mu}  := \frac12 \left( ( B_2)_{i,j+\frac{1}{2}}^{\mu,-} + ( B_2)_{i,j+\frac{1}{2}}^{\mu,+} \right),
$$
and    
define the discrete divergences of the numerical magnetic field ${\bf B}^n( {\tt x},{\tt y})$ as 
\begin{align*}
	&
	{\rm div} _{ij} {\bf B}^n := \frac{\sum\limits_{\mu=1}^{\tt Q} \omega_\mu \left(   \overline{(B_1)}_{i+\frac{1}{2},j}^{\mu} 
		- \overline{(B_1)}_{i-\frac{1}{2},j}^{\mu}  \right)}{\Delta x}   + \frac{\sum \limits_{\mu=1}^{\tt Q} \omega_\mu \left(  \overline{ ( B_2)}_{i,j+\frac{1}{2}}^{\mu} 
		-  \overline{ ( B_2)}_{i,j-\frac{1}{2}}^{\mu}    \right)}{\Delta y} , 
\end{align*}
which is an approximation to the left side of \eqref{eq:div000} with   
$({\tt x}_0,{ \tt y}_0)$ taken as $({\tt x}_{i-\frac12},{\tt y}_{j-\frac12})$. 
Let $\{ \hat {\tt x}_i^{(\nu) }\}_{\nu=1} ^ {\tt L}$ and $\{ \hat {\tt y}_j^{(\nu)} \}_{\nu=1} ^{\tt L}$ be the $\tt L$-point Gauss-Lobatto quadrature nodes in the intervals
$[{\tt x}_{i-\frac{1}{2}},{\tt x}_{i+\frac{1}{2}}]$ and $[{\tt y}_{j-\frac{1}{2}},{\tt y}_{j+\frac{1}{2}} ]$ respectively, and
$ \{\hat \omega_\nu\}_{\nu=1} ^ {\tt L}$ be  associated weights satisfying $\sum_{\nu=1}^{\tt L} \hat\omega_\nu = 1$, where  ${\tt L}\ge \frac{{\tt K}+3}2$ such that the
associated quadrature has algebraic precision of at least degree ${\tt K}$.
Then we have the following sufficient
conditions for that
the high-order scheme \eqref{eq:2DMHD:cellaverage} is PP.

\begin{theorem} \label{thm:PP:2DMHD}
	If the polynomial vectors $\{{\bf U}_{ij}^n({\tt x},{\tt y})\}$ satisfy:
	\begin{align}\label{eq:DivB:cst12}
		&
		\mbox{\rm div} _{ij} {\bf B}^n  = 0, \quad \forall~i,j,
		\\
		&{\bf U}_{ij}^n ( \hat {\tt x}_i^{(\nu)},{\tt y}_j^{(\mu)} ),~{\bf U}_{ij}^n ( {\tt x}_i^{(\mu)},  \hat {\tt y}_j^{(\nu)} ) \in {\mathcal G},\quad \forall~i,j,\mu,\nu,
		\label{eq:2Dadmissiblity}
	\end{align}
	then the scheme \eqref{eq:2DMHD:cellaverage} always preserves $\bar{\bf U}_{ij}^{n+1} \in {\mathcal G}$ under the CFL condition
	\begin{equation}\label{eq:CFL:2DMHD}
	0< \frac{\alpha_{1,n}^{\tt LF} \Delta t_n}{\Delta x} + \frac{ \alpha_{2,n}^{\tt LF} \Delta t_n}{\Delta y}  \le \hat \omega_1,
	\end{equation}
	where the parameters $\{\alpha_{\ell,n}^{\tt LF}\}$ satisfy
	\begin{equation}\label{eq:2DhighLFpara}
	\alpha_{1,n}^{\tt LF} >
	\max_{ i,j,\mu}  \alpha_1 \big(  { \bf U }_{i+\frac12,j}^{\pm,\mu} ,  { \bf U }_{i-\frac12,j}^{\pm,\mu}  \big)
	,\quad \alpha_{2,n}^{\tt LF} >  \max_{ i,j,\mu}  \alpha_2 \big(  { \bf U }_{i,j+\frac12}^{\mu,\pm} ,  { \bf U }_{i,j-\frac12}^{\mu,\pm}  \big).
	\end{equation}
	
\end{theorem}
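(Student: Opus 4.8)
The plan is to proceed exactly as in the proofs of Theorems~\ref{thm:PP:1DMHD} and~\ref{theo:2DMHD:LFscheme}: rewrite the scheme \eqref{eq:2DMHD:cellaverage} as a convex combination of states each lying in $\mathcal G$, treating the interface-flux part by the $2$D generalized LF splitting of Theorem~\ref{theo:MHD:LLFsplit2D} and the remainder by the admissibility hypothesis \eqref{eq:2Dadmissiblity}; convexity of $\mathcal G$ (Lemma~\ref{theo:MHD:convex}) then closes the argument. Throughout, write $\lambda_1 := \alpha_{1,n}^{\tt LF}\Delta t_n/\Delta x$ and $\lambda_2 := \alpha_{2,n}^{\tt LF}\Delta t_n/\Delta y$, so that \eqref{eq:CFL:2DMHD} reads $\lambda_1+\lambda_2\le\hat\omega_1$.

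First I would decompose the cell average in two ways. Since each quadrature used has algebraic precision at least $\tt K$ and ${\bf U}_{ij}^n$ has degree $\tt K$,
\[
\bar{\bf U}_{ij}^n \;=\; \sum_{\nu=1}^{\tt L}\sum_{\mu=1}^{\tt Q}\hat\omega_\nu\omega_\mu\,{\bf U}_{ij}^n\big(\hat{\tt x}_i^{(\nu)},{\tt y}_j^{(\mu)}\big)
\;=\; \sum_{\nu=1}^{\tt L}\sum_{\mu=1}^{\tt Q}\hat\omega_\nu\omega_\mu\,{\bf U}_{ij}^n\big({\tt x}_i^{(\mu)},\hat{\tt y}_j^{(\nu)}\big).
\]
Because $\hat\omega_1=\hat\omega_{\tt L}$ and $\hat{\tt x}_i^{(1)}={\tt x}_{i-\frac12}$, $\hat{\tt x}_i^{({\tt L})}={\tt x}_{i+\frac12}$ (and similarly in ${\tt y}$), the first representation isolates a boundary piece $\hat\omega_1\sum_\mu\omega_\mu\big({\bf U}^{+,\mu}_{i-\frac12,j}+{\bf U}^{-,\mu}_{i+\frac12,j}\big)$ plus an interior remainder whose nodal values lie in $\mathcal G$ by \eqref{eq:2Dadmissiblity}, and the second does the same in the ${\tt y}$-direction. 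I would then split $\bar{\bf U}_{ij}^n=\theta\,\bar{\bf U}_{ij}^n+(1-\theta)\,\bar{\bf U}_{ij}^n$, using the ${\tt x}$-decomposition on the first copy and the ${\tt y}$-decomposition on the second, with $\theta\in[0,1]$ chosen so that $\theta\hat\omega_1\ge\lambda_1$ and $(1-\theta)\hat\omega_1\ge\lambda_2$ --- possible exactly because $\lambda_1+\lambda_2\le\hat\omega_1$ (e.g. $\theta=\lambda_1/\hat\omega_1$).

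Next, substituting the LF flux \eqref{eq:LFflux} into \eqref{eq:2DMHD:cellaverage} and regrouping as in the earlier proofs, the ${\tt x}$-flux contribution equals $-\lambda_1\sum_\mu\omega_\mu\big({\bf U}^{+,\mu}_{i-\frac12,j}+{\bf U}^{-,\mu}_{i+\frac12,j}\big)+\lambda_1\sum_\mu\omega_\mu\big({\bf \Xi}^{-}_{x,\mu}+{\bf \Xi}^{+}_{x,\mu}\big)$, where ${\bf \Xi}^{\pm}_{x,\mu}:=\frac12\big({\bf U}^{\pm,\mu}_{i+\frac12,j}-{\bf F}_1({\bf U}^{\pm,\mu}_{i+\frac12,j})/\alpha_{1,n}^{\tt LF}+{\bf U}^{\pm,\mu}_{i-\frac12,j}+{\bf F}_1({\bf U}^{\pm,\mu}_{i-\frac12,j})/\alpha_{1,n}^{\tt LF}\big)$, and likewise for the ${\tt y}$-flux with ${\bf \Xi}^{\pm}_{y,\mu}$. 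Combining with the split of $\bar{\bf U}_{ij}^n$, the negative boundary terms cancel, leaving $\bar{\bf U}_{ij}^{n+1}$ written as: the two interior remainders (coefficients $\theta$, $1-\theta$ multiplying quantities in $\mathcal G$); two boundary sums with nonnegative coefficients $\theta\hat\omega_1-\lambda_1$ and $(1-\theta)\hat\omega_1-\lambda_2$, whose summands lie in $\mathcal G$ by \eqref{eq:2Dadmissiblity}; plus $\Lambda:=\lambda_1\sum_\mu\omega_\mu\big({\bf \Xi}^{-}_{x,\mu}+{\bf \Xi}^{+}_{x,\mu}\big)+\lambda_2\sum_\mu\omega_\mu\big({\bf \Xi}^{-}_{y,\mu}+{\bf \Xi}^{+}_{y,\mu}\big)$. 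A weight count confirms all these coefficients sum to $1$.

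Finally I would recognise $\Lambda=2(\lambda_1+\lambda_2)\,\overline{\bf U}$ with $\overline{\bf U}$ of the exact form \eqref{eq:MHD:LLFsplit2D}, now with $2{\tt Q}$ bundles in place of $\tt Q$: the ``$-$'' bundles $(\bar{\bf U}^\mu,\tilde{\bf U}^\mu,\hat{\bf U}^\mu,\check{\bf U}^\mu)=({\bf U}^{-,\mu}_{i+\frac12,j},{\bf U}^{-,\mu}_{i-\frac12,j},{\bf U}^{\mu,-}_{i,j+\frac12},{\bf U}^{\mu,-}_{i,j-\frac12})$ with weights $\frac12\omega_\mu$, and the analogous ``$+$'' bundles with weights $\frac12\omega_\mu$ (the $2{\tt Q}$ weights summing to $1$); the bounds required on $\alpha_{1,n}^{\tt LF},\alpha_{2,n}^{\tt LF}$ are precisely \eqref{eq:2DhighLFpara}, and all the states involved are admissible by \eqref{eq:2Dadmissiblity}. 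The decisive point is that the DDF hypothesis \eqref{eq:descrite2DDIV} for this bundle becomes, upon using $\overline{(B_1)}^{\mu}_{i\pm\frac12,j}=\frac12\big((B_1)^{-,\mu}_{i\pm\frac12,j}+(B_1)^{+,\mu}_{i\pm\frac12,j}\big)$ and the analogue for $B_2$, exactly ${\rm div}_{ij}{\bf B}^n=0$, i.e. the hypothesis \eqref{eq:DivB:cst12} --- the half-weights are arranged precisely for this cancellation. Hence Theorem~\ref{theo:MHD:LLFsplit2D} gives $\overline{\bf U}\in\mathcal G$, so $\bar{\bf U}_{ij}^{n+1}$ is exhibited as a convex combination of elements of $\mathcal G$, and Lemma~\ref{theo:MHD:convex} yields $\bar{\bf U}_{ij}^{n+1}\in\mathcal G$. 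I expect the main obstacle to be purely organisational: assembling the $2{\tt Q}$ coupled bundles with the correct half-weights so that \eqref{eq:descrite2DDIV} collapses onto \eqref{eq:DivB:cst12}, while simultaneously keeping every coefficient in the decomposition nonnegative under \eqref{eq:CFL:2DMHD}. There is no single hard estimate here, all the analytic difficulty having already been absorbed into Theorem~\ref{theo:MHD:LLFsplit2D}.
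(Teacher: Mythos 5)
Your proposal is correct and follows essentially the same route as the paper's proof: decompose $\bar{\bf U}_{ij}^n$ via the mixed Gauss--Lobatto/Gauss quadratures split between the ${\tt x}$- and ${\tt y}$-directions, regroup with the LF flux into a convex combination in which the coupled interface term is exactly the $2{\tt Q}$-bundle average $\Xi_1=\frac12(\Xi_-+\Xi_+)$ of Theorem~\ref{theo:MHD:LLFsplit2D}, whose DDF hypothesis collapses (via the half-weights) onto \eqref{eq:DivB:cst12}. The only cosmetic difference is your choice of splitting parameter $\theta=\lambda_1/\hat\omega_1$ versus the paper's $\lambda_1/\lambda$, both of which keep all coefficients nonnegative under \eqref{eq:CFL:2DMHD}.
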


\begin{proof}
	Using the exactness of the Gauss-Lobatto quadrature rule with $\tt L$ nodes and the Gauss quadrature rule with $\tt Q$ nodes for the polynomials of degree $\tt K$,
	one can derive (cf. \cite{zhang2010b} for more details) that
	\begin{equation} \label{eq:U2Dsplit}
	\begin{split}
	\bar{\bf U}_{ij}^n
	&= \frac{\lambda_1}{\lambda}  \sum \limits_{\nu = 2}^{{\tt L}-1} \sum \limits_{\mu = 1}^{\tt Q}  \hat \omega_\nu \omega_\mu  {\bf U}_{ij}^n\big(\hat {\tt x}_i^{(\nu)},{\tt y}_j^{(\mu)}\big) + \frac{\lambda_2}{\lambda} \sum \limits_{\nu = 2}^{{\tt L}-1} \sum \limits_{\mu = 1}^{\tt Q}  \hat \omega_\nu \omega_\mu  {\bf U}_{ij}^n\big( {\tt x}_i^{(\mu)},\hat {\tt y}_j^{(\nu)} \big) \\
	&\quad + \frac{\lambda_1 \hat \omega_1}{\lambda} \sum \limits_{\mu = 1}^{\tt Q}  \omega_\mu \left( {\bf U}_{i-\frac{1}{2},j}^{+,\mu} +
	{\bf U}_{i+\frac{1}{2},j}^{-,\mu} \right)
	+ \frac{\lambda_2 \hat \omega_1}{\lambda} \sum \limits_{\mu = 1}^{\tt Q}  \omega_\mu \left(  {\bf U}_{i,j-\frac{1}{2}}^{\mu,+} +
	{\bf U}_{i,j+\frac{1}{2}}^{\mu,-} \right) ,
	\end{split}
	\end{equation}
	where $\hat \omega_1 = \hat \omega_{\tt L}$ is used, and $\lambda_1 = \frac{ \alpha_{1,n}^{\tt LF} \Delta t_n } { \Delta x}, \lambda_2 = \frac{ \alpha_{2,n}^{\tt LF} \Delta t_n }{\Delta y}, \lambda = \lambda_1+ \lambda_2 \in (0,\hat \omega_1] $ by \eqref{eq:CFL:2DMHD}. 
	After substituting \eqref{eq:LFflux} and \eqref{eq:U2Dsplit} into \eqref{eq:2DMHD:cellaverage}, we rewrite the scheme  \eqref{eq:2DMHD:cellaverage} by technical arrangement into the following convex combination form
	\begin{align}	\label{eq:2DMHD:split:proof}
		&\bar{\bf U}_{ij}^{n+1}
		= 
		\sum \limits_{\nu = 2}^{{\tt L}-1} \hat \omega_\nu {\bf \Xi}_\nu
		+ 
		2( \hat \omega_1 - \lambda )  {\bf \Xi}_{\tt L}  
		+  2	\lambda {\bf \Xi}_1  ,
	\end{align}
	where ${\bf \Xi}_1 = \frac12 \left( {\bf \Xi}_- +  {\bf \Xi}_+ \right)$, and 
	\begin{align*}
		&{\bf \Xi}_\nu
		=  
		\frac{\lambda_1}{\lambda}  \sum \limits_{\mu = 1}^{\tt Q} \omega_\mu  {\bf U}_{ij}^n \big(\hat {\tt x}_i^{(\nu)},{\tt y}_j^\mu\big) + \frac{\lambda_2}{\lambda}
		\sum \limits_{\mu = 1}^{\tt Q} \omega_\mu  {\bf U}_{ij}^n \big( {\tt x}_i^{(\mu)},\hat {\tt y}_j^{(\nu)} \big),\quad 2 \le \nu \le {\tt L}-1,
		\\ \nonumber
		\begin{split}
			&{\bf \Xi}_{\tt L} = 
			\frac{1}{ 2\lambda }
			\sum\limits_{\mu=1}^{\tt Q} { \omega _\mu  }
			\bigg(
			\lambda_1 \left(
			{ \bf U }_{i+\frac12,j}^{-,\mu} 
			+ { \bf U }_{i-\frac12,j}^{+,\mu}  
			\right)
			+ \lambda_2 \left(
			{ \bf U }_{i,j+\frac12}^{\mu,-} 
			+ { \bf U }_{i,j-\frac12}^{\mu,+}  
			\right)
			\bigg),
		\end{split}
		\\ \nonumber
		\begin{split}
			&
			{\bf \Xi}_\pm = 
			\frac{1}{ 2\left( \frac{\alpha_{1,n}^{\tt LF} }{\Delta x} + \frac{\alpha_{2,n}^{\tt LF}}{\Delta y} \right)}
			\sum\limits_{\mu=1}^{\tt Q} { \omega _\mu  }
			\left[
			\frac{\alpha_{1,n}^{\tt LF}}{\Delta x} \left(
			{ \bf U }_{i+\frac12,j}^{\pm,\mu} -  \frac{ {\bf F}_1 ( { \bf U }_{i+\frac12,j}^{\pm,\mu}  )  }{ \alpha_{1,n}^{\tt LF} }
			+ { \bf U }_{i-\frac12,j}^{\pm,\mu}  +   \frac{ {\bf F}_1 ( { \bf U }_{i-\frac12,j}^{\pm,\mu}  )  }{ \alpha_{1,n}^{\tt LF} }
			\right)\right.
			\\
			&\qquad + \left.
			\frac{\alpha_{2,n}^{\tt LF}}{\Delta y} \left(
			{ \bf U }_{i,j+\frac12}^{\mu,\pm} -  \frac { {\bf F}_2 (  { \bf U }_{i,j+\frac12}^{\mu,\pm}  )  } { \alpha_{2,n}^{\tt LF} }
			+ { \bf U }_{i,j-\frac12}^{\mu,\pm}  +  \frac { {\bf F}_2 (  { \bf U }_{i,j-\frac12}^{\mu,\pm} )  } { \alpha_{2,n}^{\tt LF} }
			\right)
			\right].
		\end{split}
	\end{align*}
	The condition \eqref{eq:2Dadmissiblity} implies ${\bf \Xi}_\nu \in {\mathcal G},~2 \le \nu \le {\tt L}$, because $\mathcal G$ is convex. 
	In order to show the admissibility of ${\bf \Xi}_1$ by using 
	Theorem \ref{theo:MHD:LLFsplit2D}, 
	one has to verify the corresponding discrete divergence-free condition, 
	which is found to be \eqref{eq:DivB:cst12}. 
	Hence ${\bf \Xi}_1 \in{\mathcal G}$. 
	This means the form \eqref{eq:2DMHD:split:proof} is a convex combination of the admissible states $\{{\bf \Xi}_k,1\le k \le {\tt L}\}$.
	It follows from the convexity of $\mathcal G$ that $ \bar{\bf U}_{ij}^{n+1} \in {\mathcal G}$.
	The proof is completed. 
\end{proof}

\begin{remark} \label{rem:wu_aaa}
	For some other hyperbolic systems such as 
	the Euler \cite{zhang2010b} and shallow water \cite{Xing2010} 
	equations, the condition \eqref{eq:2Dadmissiblity}  
	is sufficient to ensure the positivity of 2D high-order   
	schemes. 
	However, contrary to 
	the usual 
	expectation (e.g., \cite{cheng}),   
	the condition \eqref{eq:2Dadmissiblity}  is 
	not sufficient in the ideal MHD case, even if ${\bf B}^n_{ij}({\tt x},{\tt y})$ is locally divergence-free. 
	This is indicated by Theorem \ref{theo:counterEx} and confirmed by the numerical experiments in the Section \ref{sec:examples}, 
	and demonstrates the necessity 
	of \eqref{eq:DivB:cst12} to some extent. 
\end{remark}

\begin{remark} \label{rem:notDDF}
	In practice, the 
	condition \eqref{eq:2Dadmissiblity} 
	can be easily met via a simple scaling limiting procedure 
	\cite{cheng}. 
	It is not easy to meet \eqref{eq:DivB:cst12} because it 
	depends on the limiting values of the magnetic field calculated from the four neighboring cells 
	of  $I_{ij}$.
	If ${\bf B}^n({\tt x},{\tt y})$ is globally 
	divergence-free,
	i.e., locally divergence-free in each cell with
	normal magnetic component continuous across the cell interfaces,
	then by Green's theorem, \eqref{eq:DivB:cst12} is naturally satisfied. 
	However, the PP limiting technique with local scaling 
	may 
	destroy the globally divergence-free property of ${\bf B}^n({\tt x},{\tt y})$. 
	Hence, it is nontrivial and still open to design a limiting procedure 
	for the polynomials  
	$\{{\bf U}_{ij}^n({\tt x},{\tt y})\}$ which can enforce the conditions \eqref{eq:2Dadmissiblity} and \eqref{eq:DivB:cst12} at the same time. 
	As a continuation of this work, Ref. \cite{WuShu2018} reports 
	our achievement in developing   
	multi-dimensional probably PP high-order schemes via the discretization of symmetrizable  ideal MHD equations. 	
\end{remark}

We now derive a lower bound of the internal energy when the 
proposed DDF condition \eqref{eq:DivB:cst12} is not satisfied, to show that 
negative internal energy may be more easily computed in the cases with large $|{\bf v}\cdot {\bf B}|$ and large discrete divergence error.

\begin{theorem} \label{thm:PP:2DMHD:further}
	Assume that the polynomial vectors $\{{\bf U}_{ij}^n({\tt x},{\tt y})\}$ satisfy 
	\eqref{eq:2Dadmissiblity}, and the parameters $\{\alpha_{\ell,n}^{\tt LF}\}$ satisfy 
	\eqref{eq:2DhighLFpara}. 
	Then under the CFL condition \eqref{eq:CFL:2DMHD}, 
	the solution $\bar{\bf U}_{ij}^{n+1}$ 
	of the scheme \eqref{eq:2DMHD:cellaverage} satisfies  
	that $\bar \rho_{ij}^{n+1} > 0$, 
	and 
	\begin{equation}\label{eq:2222}
	{\mathcal E} ( \bar{\bf U}_{ij}^{n+1} ) > -\Delta t_n \big(\bar{\bf v}_{ij}^{n+1} \cdot 
	\bar{\bf B}_{ij}^{n+1} \big) {\rm div}_{ij}{\bf B}^n, 
	\end{equation}
	where the lower bound dominates the negativity of ${\mathcal E} ( \bar{\bf U}_{ij}^{n+1} )$, and  
	$\bar{\bf v}_{ij}^{n+1} :=\bar{\bf m}_{ij}^{n+1}/\bar{\rho}^{n+1}_{ij}$. 
\end{theorem}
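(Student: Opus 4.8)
The plan is to reuse, essentially verbatim, the convex decomposition built in the proof of Theorem~\ref{thm:PP:2DMHD}, but to keep track of the residual term of the constructive inequality \eqref{eq:MHD:LLFsplit} instead of cancelling it with a discrete divergence-free identity. Concretely, I would first rewrite the scheme \eqref{eq:2DMHD:cellaverage} in the form \eqref{eq:2DMHD:split:proof},
\[
\bar{\bf U}_{ij}^{n+1} = \sum_{\nu=2}^{{\tt L}-1}\hat\omega_\nu\,{\bf \Xi}_\nu + 2(\hat\omega_1-\lambda)\,{\bf \Xi}_{\tt L} + 2\lambda\,{\bf \Xi}_1,
\]
with $\lambda=\lambda_1+\lambda_2\in(0,\hat\omega_1]$ by \eqref{eq:CFL:2DMHD}, where the coefficients are nonnegative and sum to one (using $\hat\omega_1=\hat\omega_{\tt L}$), the states ${\bf \Xi}_\nu$ for $2\le\nu\le{\tt L}$ are convex combinations of the nodal values in \eqref{eq:2Dadmissiblity} and hence lie in ${\mathcal G}$, and ${\bf \Xi}_1=\frac12({\bf \Xi}_-+{\bf \Xi}_+)$ collects the Lax--Friedrichs splitting terms. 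Positivity of $\bar\rho_{ij}^{n+1}$ is then immediate exactly as in Theorem~\ref{thm:PP:2DMHD}: condition \eqref{eq:2DhighLFpara} forces $\alpha_{\ell,n}^{\tt LF}$ to strictly exceed every relevant $|v_\ell|$, so each LF building block has positive first component and $\bar\rho_{ij}^{n+1}$ is a convex combination of positive densities.

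The heart of the proof is the lower bound on ${\mathcal E}(\bar{\bf U}_{ij}^{n+1})$. I would write ${\mathcal E}(\bar{\bf U}_{ij}^{n+1})=\bar{\bf U}_{ij}^{n+1}\cdot{\bf n}^*+\frac12|{\bf B}^*|^2$ for the particular choice ${\bf v}^*=\bar{\bf v}_{ij}^{n+1}$, ${\bf B}^*=\bar{\bf B}_{ij}^{n+1}$, i.e. the choice that realises the internal energy in Lemma~\ref{theo:eqDefG}. Distributing $\frac12|{\bf B}^*|^2$ over the decomposition (the weights sum to one), the terms with ${\bf \Xi}_\nu$, $2\le\nu\le{\tt L}$, equal ${\bf \Xi}_\nu\cdot{\bf n}^*+\frac12|{\bf B}^*|^2\ge{\mathcal E}({\bf \Xi}_\nu)>0$ since ${\bf \Xi}_\nu\in{\mathcal G}$ (by the identity in the proof of Lemma~\ref{theo:eqDefG}), and since their coefficients are nonnegative these terms may be dropped. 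For the ${\bf \Xi}_1$-term I would apply Lemma~\ref{theo:MHD:LLFsplit} to every pair of one-sided traces appearing in ${\bf \Xi}_\pm$ — the $x$-pairs $({\bf U}_{i+\frac12,j}^{\pm,\mu},{\bf U}_{i-\frac12,j}^{\pm,\mu})$ with $\alpha=\alpha_{1,n}^{\tt LF}$ and the $y$-pairs $({\bf U}_{i,j+\frac12}^{\mu,\pm},{\bf U}_{i,j-\frac12}^{\mu,\pm})$ with $\alpha=\alpha_{2,n}^{\tt LF}$, legitimate applications by \eqref{eq:2DhighLFpara} — and this time \emph{keep} the residual $\frac{B_\ell-\tilde B_\ell}{\alpha_{\ell,n}^{\tt LF}}({\bf v}^*\cdot{\bf B}^*)$. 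Summing over the Gauss weights $\omega_\mu$ and the direction weights proportional to $\alpha_{1,n}^{\tt LF}/\Delta x$ and $\alpha_{2,n}^{\tt LF}/\Delta y$, the factors $\alpha_{\ell,n}^{\tt LF}$ cancel, the average of the $+$ and $-$ traces reproduces exactly the quantities $\overline{(B_1)}_{i\pm\frac12,j}^{\mu}$ and $\overline{(B_2)}_{i,j\pm\frac12}^{\mu}$, and the accumulated residual collapses to $-({\bf v}^*\cdot{\bf B}^*)\,{\rm div}_{ij}{\bf B}^n$ divided by $2(\alpha_{1,n}^{\tt LF}/\Delta x+\alpha_{2,n}^{\tt LF}/\Delta y)$, yielding ${\bf \Xi}_1\cdot{\bf n}^*+\frac12|{\bf B}^*|^2>-\frac{({\bf v}^*\cdot{\bf B}^*)}{2(\alpha_{1,n}^{\tt LF}/\Delta x+\alpha_{2,n}^{\tt LF}/\Delta y)}\,{\rm div}_{ij}{\bf B}^n$.

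Collecting the pieces gives ${\mathcal E}(\bar{\bf U}_{ij}^{n+1})>2\lambda\cdot\Big(-\frac{(\bar{\bf v}_{ij}^{n+1}\cdot\bar{\bf B}_{ij}^{n+1})}{2(\alpha_{1,n}^{\tt LF}/\Delta x+\alpha_{2,n}^{\tt LF}/\Delta y)}\,{\rm div}_{ij}{\bf B}^n\Big)$, and since $\lambda=\Delta t_n(\alpha_{1,n}^{\tt LF}/\Delta x+\alpha_{2,n}^{\tt LF}/\Delta y)$ the prefactor $2\lambda\big/\big(2(\alpha_{1,n}^{\tt LF}/\Delta x+\alpha_{2,n}^{\tt LF}/\Delta y)\big)$ equals $\Delta t_n$, which is exactly \eqref{eq:2222}. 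The assertion that this bound "dominates the negativity" is then read off contrapositively: ${\mathcal E}(\bar{\bf U}_{ij}^{n+1})$ can be negative only when $(\bar{\bf v}_{ij}^{n+1}\cdot\bar{\bf B}_{ij}^{n+1})\,{\rm div}_{ij}{\bf B}^n>0$, and then $|{\mathcal E}(\bar{\bf U}_{ij}^{n+1})|<\Delta t_n\,|\bar{\bf v}_{ij}^{n+1}\cdot\bar{\bf B}_{ij}^{n+1}|\,|{\rm div}_{ij}{\bf B}^n|$, so the possible negativity is governed by $|{\bf v}\cdot{\bf B}|$ and the discrete divergence error. The main obstacle is bookkeeping: one must carry the residual terms of Lemma~\ref{theo:MHD:LLFsplit} intact through the entire convex combination and verify that the $+/-$ trace averaging inside ${\bf \Xi}_1$ reproduces \emph{precisely} the combination of $\overline{(B_1)}$, $\overline{(B_2)}$ defining ${\rm div}_{ij}{\bf B}^n$ — any mismatch would leave a spurious term and break \eqref{eq:2222}. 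A minor extra point is the degenerate regime where the CFL bound is attained ($2(\hat\omega_1-\lambda)=0$) or ${\tt K}=1$ (so ${\tt L}=2$ and the $\nu$-sum is empty): there one relies on the strictness already present in Lemma~\ref{theo:MHD:LLFsplit} to keep the inequality strict.
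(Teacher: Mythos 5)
Your proposal is correct and follows essentially the same route as the paper: it reuses the convex decomposition \eqref{eq:2DMHD:split:proof}, drops the nonnegative ${\bf \Xi}_\nu$- and ${\bf \Xi}_{\tt L}$-terms, applies Lemma~\ref{theo:MHD:LLFsplit} to each trace pair inside ${\bf \Xi}_1$ while retaining the residual $({\bf B}^*\cdot{\bf v}^*)$-term, identifies the accumulated residual with $-({\bf v}^*\cdot{\bf B}^*)\,{\rm div}_{ij}{\bf B}^n$, and concludes with the choice ${\bf v}^*=\bar{\bf v}_{ij}^{n+1}$, ${\bf B}^*=\bar{\bf B}_{ij}^{n+1}$ and the cancellation $2\lambda/\bigl(2(\alpha_{1,n}^{\tt LF}/\Delta x+\alpha_{2,n}^{\tt LF}/\Delta y)\bigr)=\Delta t_n$. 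The bookkeeping you flag (the $+/-$ averaging reproducing $\overline{(B_1)}$, $\overline{(B_2)}$) indeed checks out, so no gap remains.
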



\begin{proof}
	It is seen from \eqref{eq:2DMHD:split:proof} that 	
	$\bar \rho_{ij}^{n+1}$ is a convex combination of 
	the first components of ${\bf \Xi}_\nu, 1 \le \nu \le {\tt L}$, which are 
	all positive. Thus $\bar \rho_{ij}^{n+1} > 0$. For any ${\bf v}^*,{\bf B}^* \in \mathbb{R}^3$, 
	$$
	\bigg(  {\bf \Xi}_1 \cdot {\bf n}^* + \frac{|{\bf B}^*|^2}{2} \bigg) \times  2\left( \frac{\alpha_{1,n}^{\tt LF} }{\Delta x} + \frac{\alpha_{2,n}^{\tt LF}}{\Delta y} \right) 
	>  - ({\bf v}^* \cdot {\bf B}^*) {\rm div}_{ij}{\bf B}^n,
	$$ 
	whose derivation is similar to that of Theorem \ref{theo:MHD:LLFsplit2D}. 
	Because ${\bf \Xi}_\nu \in {\mathcal G},~2 \le \nu \le {\tt L}$, we deduce  
	from \eqref{eq:2DMHD:split:proof} that 
	{\small	
		\begin{align*}  
			&\bar{\bf U}_{ij}^{n+1} \cdot {\bf n}^* + \frac{|{\bf B}^*|^2}{2}
			=  
			\sum \limits_{\nu = 2}^{{\tt L}-1} \hat \omega_\nu 
			\bigg( {\bf \Xi}_\nu \cdot {\bf n}^* + \frac{|{\bf B}^*|^2}{2}  \bigg)
			\\
			&	\quad + 
			2( \hat \omega_1 - \lambda ) \bigg( {\bf \Xi}_{\tt L}  \cdot {\bf n}^* + \frac{|{\bf B}^*|^2}{2} \bigg)
			+  2	\lambda \bigg( {\bf \Xi}_1 \cdot {\bf n}^* + \frac{|{\bf B}^*|^2}{2} \bigg)
			\\
			& > 2	\lambda \bigg( {\bf \Xi}_1 \cdot {\bf n}^* + \frac{|{\bf B}^*|^2}{2} \bigg)
			> -  \Delta t_n ({\bf v}^* \cdot {\bf B}^*) {\rm div}_{ij}{\bf B}^n.
	\end{align*}}
	Taking ${\bf v}^*= \bar{\bf v}^{n+1}_{ij} $ 
	and ${\bf B}^* = \bar{\bf B}^{n+1}_{ij}$ gives \eqref{eq:2222}.
\end{proof}

\section{Numerical experiments}\label{sec:examples}
Several numerical examples are provided in this section  
to further confirm the above PP analysis.

\subsection{1D case} We first give several 1D numerical examples. 


\subsubsection{Simple example}
This is a simple example that one can verify by hand with a calculator. 
It is used to numerically 
confirm the conclusion in Theorem \ref{eq:1DnotPP}, 
and show that the 1D Lax-Friedrichs (LF) scheme 
with the standard numerical viscosity parameter is not PP in general. 
We consider the 1D data in \eqref{eq:exampleUUU} with 
$\gamma =1.4$ and ${\tt p}=10^{-5}$, and then verify the 
pressure of $\bar {\bf U}_{j}^{n+1}$ computed by 
the LF scheme \eqref{eq:1DMHD:LFscheme} with the standard parameter $\alpha_{1,n}^{\tt LF}=\max_j {\mathscr{R}}_1 ( \bar {\bf U}_{j}^n ) $. 
The pressure $\bar p_{j}^{n+1}$ obtained by using different CFL numbers ${\tt C} 
\in \{ 0.001,0.002,\cdots, 1 \}$ are displayed in the left figure of Fig. \ref{fig:valcex1}. 
It is seen that the LF scheme \eqref{eq:1DMHD:LFscheme} with the standard parameter 
fails to guarantee the positivity of pressure, even though very small CFL number is used. However, when $\alpha_{1,n}^{\tt LF}$ 
satisfies the proposed condition \eqref{eq:Lxa1}, as expected by Theorem \ref{theo:1DMHD:LFscheme}, the positivity is always preserved for any CFL number less than one, see the right figure of Fig. \ref{fig:valcex1}. 

\begin{figure}[htbp]
	\centering
	\includegraphics[width=0.43\textwidth]{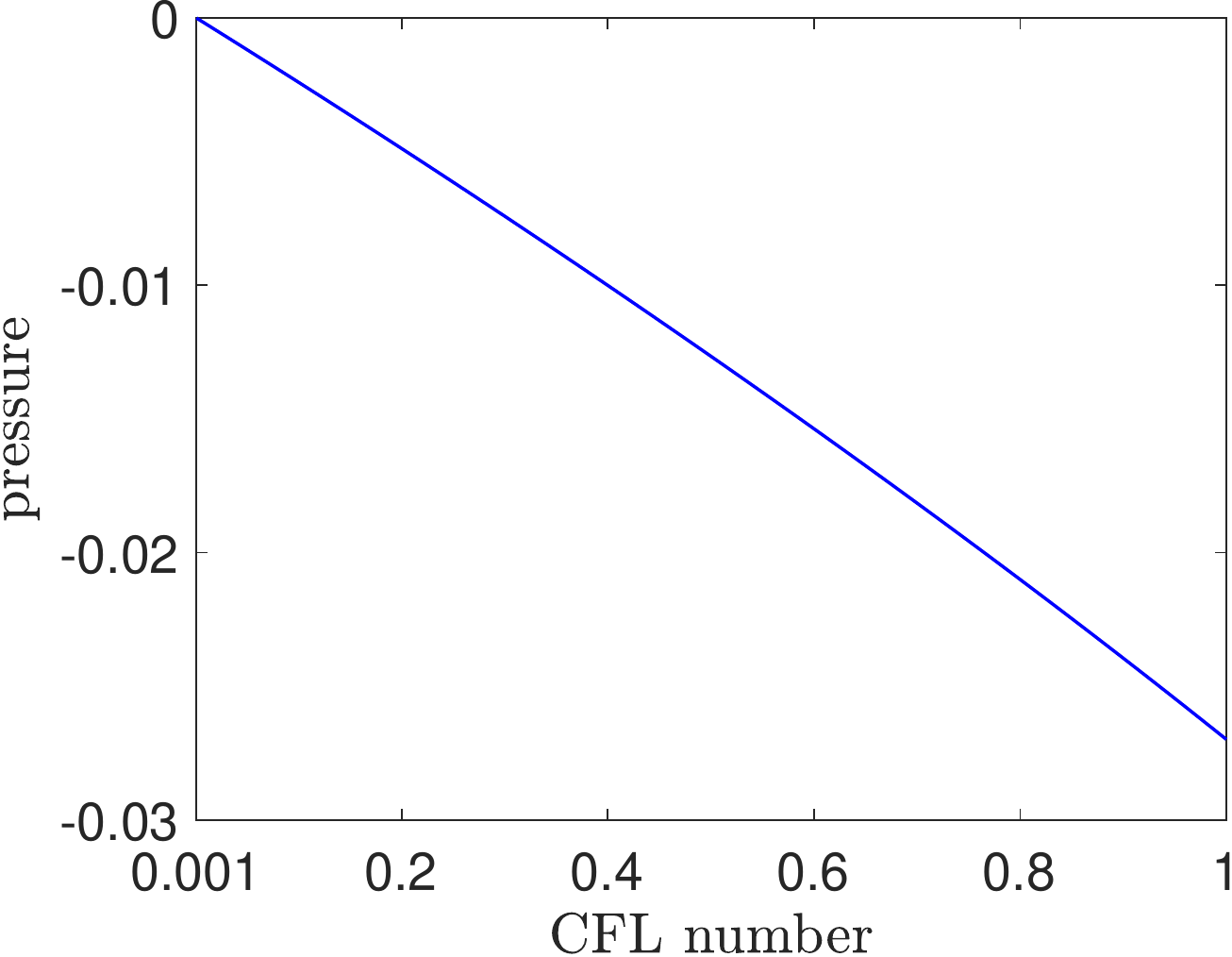}~~~~~~
	\includegraphics[width=0.43\textwidth]{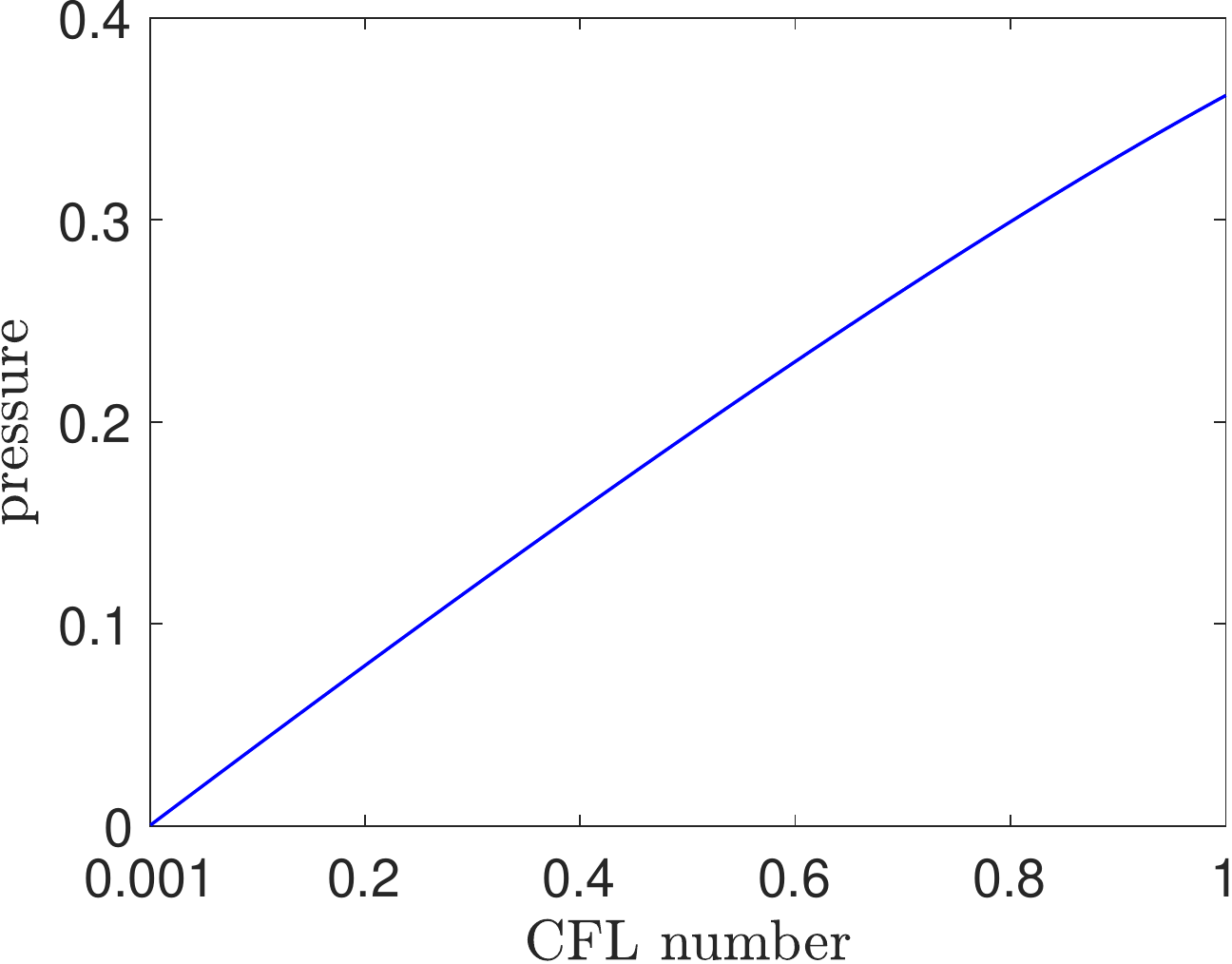}
	\caption{\small
		The pressure $\bar p_{j}^{n+1}$ obtained by the LF scheme \eqref{eq:1DMHD:LFscheme} with different parameter $\alpha_{1,n}^{\tt LF}$ and using different CFL numbers. Left: 
		$\alpha_{1,n}^{\tt LF}=\max_j {\mathscr{R}}_1 ( \bar {\bf U}_{j}^n ) $; right: 
		$\alpha_{1,n}^{\tt LF}=\max_{j} \alpha_1 \big( \bar {\bf U}_{j+1}^n, \bar {\bf U}_{j-1}^n;  
		\sqrt{ \bar{ \rho }_{j+1}^n } /{( \sqrt{ \bar{ \rho }_{j+1}^n } + 
			\sqrt{ \bar{ \rho }_{j-1}^n } )} \big)$.
	}\label{fig:valcex1}
\end{figure}

In the following, we conduct numerical experiments on several 1D MHD problems 
with low density, low pressure, strong discontinuity, and/or low plasma-beta 
$\beta := 2p/|{\bf B}|^2 $, 
to demonstrate the accuracy and robustness of the 1D provenly PP high-order methods. 
Without loss of generality, we take the third-order (${\mathbb P}^2$-based), 
discontinuous Galerkin (DG) method, together with the third-order explicit 
strong stability preserving (SSP) Runge-Kutta time discretization \cite{Gottlieb2009}, as our base scheme. 
The LF flux \eqref{eq:LFflux} is used with the 
numerical viscosity parameters satisfying the condition \eqref{eq:LxaH}. 
The PP limiter in \cite{cheng} is employed to enforce the condition 
\eqref{eq:1DDG:con2}. According to our analysis in Theorem \ref{thm:PP:1DMHD}, 
the resulting DG scheme is PP. 
Unless otherwise stated, all the computations are restricted to the ideal equation 
of state \eqref{eq:EOS}, 
and the CFL number is taken as 0.15.

\subsubsection{Accuracy test}

A smooth problem is tested to verify the accuracy of the third-order DG method. 
It is similar to the one simulated in \cite{zhang2010b} for testing the PP DG scheme for the Euler equations. The exact solution is given by 
\begin{equation*}
	(\rho,{\bf v},p,{\bf B}) ({\tt x},t)= 
	(1+0.99\sin({\tt x}-t),~1,~0,~0,~1,~0.1,~0,~0),\quad {\tt x}\in[0,2\pi],~t\ge 0,
\end{equation*}
which describes a MHD sine wave propagating with low density and $\gamma=1.4$. 
Table \ref{tab:1Dacc1} lists the numerical errors at $t=0.1$ 
in the numerical density and the corresponding convergence rates 
for the PP third-order DG method at different grid resolutions. 
The results show that the expected convergence order is achieved.

\begin{table}[htbp]\label{tab:1Dacc1}
	\centering
	\caption{\small 
		Numerical errors at $t=0.1$ in the density and corresponding convergence rates for
		the 1D PP third-order DG method at
		different grid resolutions.
	}
	\label{tab:order}
	\begin{tabular}{c|c|c|c|c|c|c}
		\hline
		~Mesh~&~$l^1$-error~& ~order~ &~$l^2$-error~&~order~&~$l^\infty$-error~&~order~ \\
		\hline
		$40$ &  2.1268e-4 & --          & 9.5354e-5 & -- & 5.9715e-5 & -- \\
		$80$ &  3.7004e-5 & 2.52   & 1.6502e-5&  2.53 & 1.0401e-5 & 2.52 \\
		$160$ &  5.1857e-6 & 2.84    & 2.3121e-6& 2.84 & 1.4582e-6 & 2.83\\
		$320$ & 6.6087e-7 & 2.97    & 2.9467e-7& 2.97 & 1.8587e-7 & 2.97 \\
		$640$ & 8.2817e-8 & 3.00 &  3.6926e-8& 3.00 & 2.3292e-8 & 3.00 \\
		$1280$ &  1.0358e-8 & 3.00  &4.6185e-9& 3.00 & 2.9133e-9 & 3.00\\
		\hline
	\end{tabular}
\end{table}

\subsubsection{Positivity-preserving tests}

Two extreme 1D Riemann problems are solved to verify the robustness and  
PP property of the PP third-order DG scheme.

The first is a 1D vacuum shock tube
problem \cite{Christlieb} with $\gamma=\frac53$ and the initial data given by 
\begin{equation*}
	(\rho,{\bf v},p,{\bf B}) ({\tt x},0)
	=\begin{cases}
		(10^{-12},~0,~0,~0,~10^{-12},~0,~0,~0), \quad & x<0,
		\\
		(1,~0,~0,~0,~0.5,~0,~1,~0), \quad & x>0.
	\end{cases}
\end{equation*}
We use this example to demonstrate that the PP DG scheme can handle extremely low density and pressure. 
The computational domain 
is taken as $[-0.5,0.5]$. 
Fig. \ref{fig:1DRP1} displays the density and pressure of the numerical solution on the mesh of $200$ cells as well as 
the highly resolved solution with $2000$ cells at $t=0.1$. 
In comparison with the results in \cite{Christlieb}, 
the low pressure and the low density are both captured correctly and well. 
The solutions of low
resolution and high resolution are in good agreement. 
The PP third-order DG method works very robustly during the whole simulation. 
If the PP limiter is not employed to enforce the condition 
\eqref{eq:1DDG:con2}, the method breaks down within a few time steps.

\begin{figure}[htbp]
	\centering
	\includegraphics[width=0.49\textwidth]{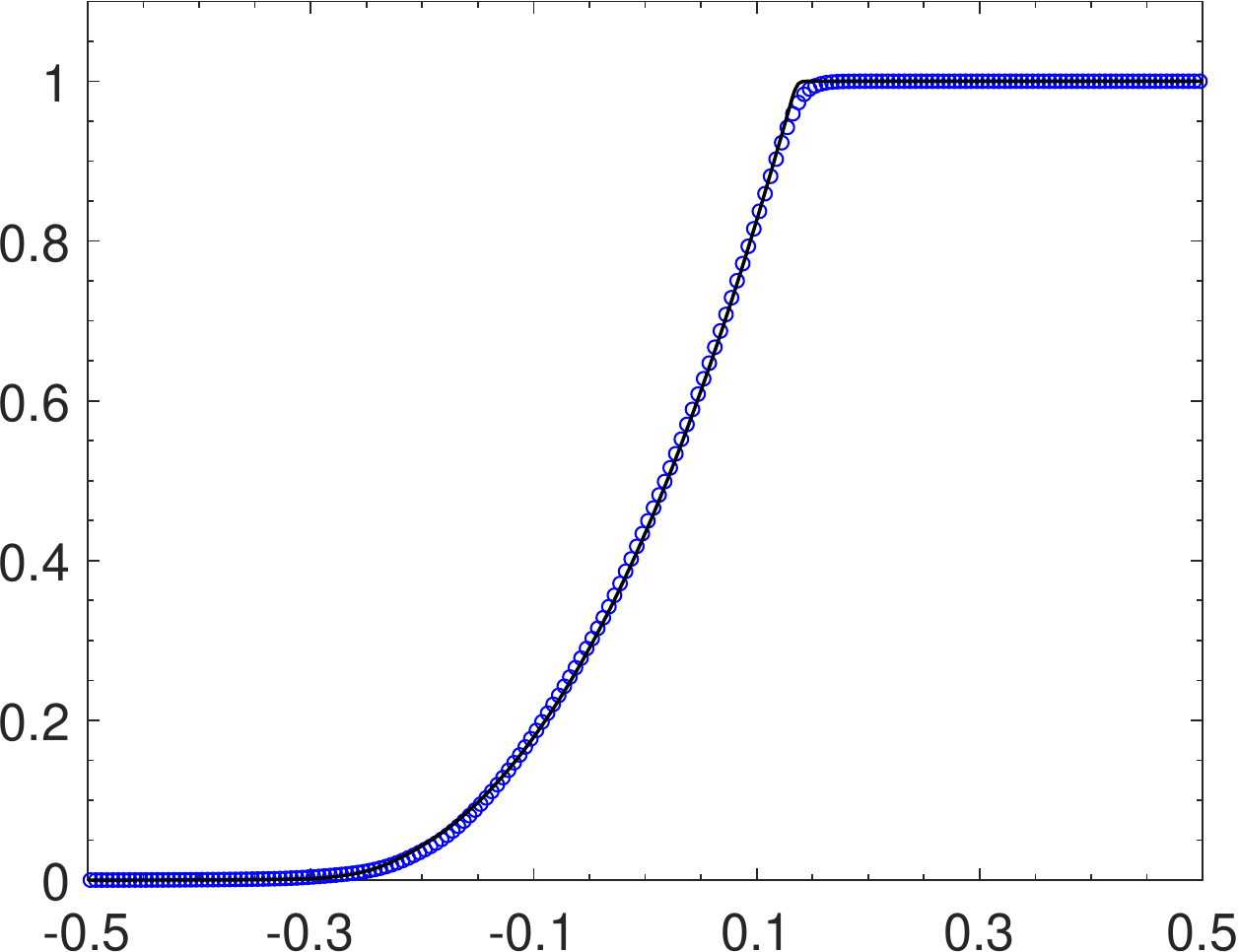}
	\includegraphics[width=0.49\textwidth]{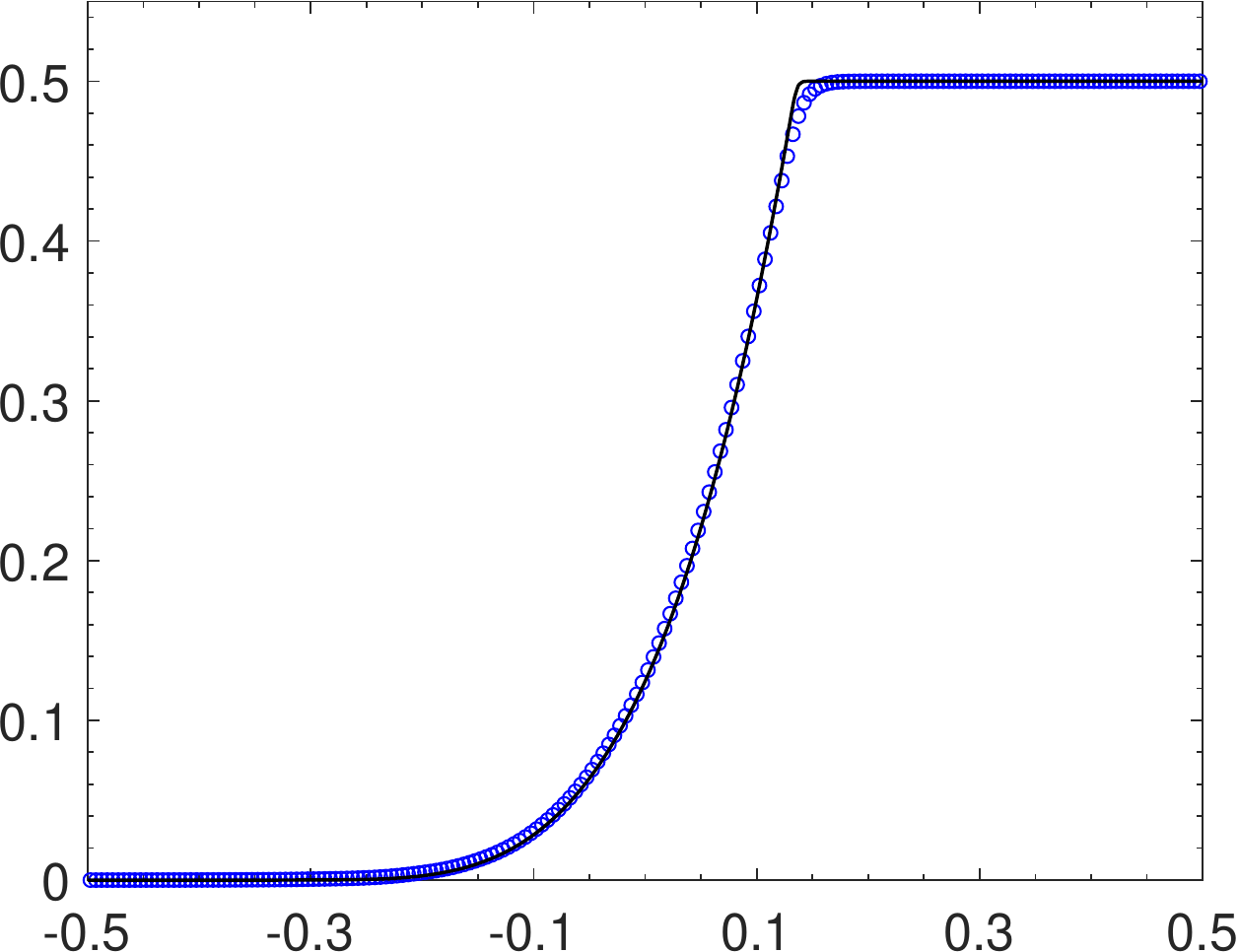}
	\caption{\small
		The density (left) and pressure (right) obtained by the PP third-order DG method 
		on the meshes of $200$ cells (symbols ``$\circ$'') and $2000$ cells (solid lines), respectively. 
	}\label{fig:1DRP1}
\end{figure}

The second Riemann problem is extended from the Leblanc problem \cite{zhang2010b} of gas dynamics by adding a strong magnetic field. The initial condition is  
\begin{equation*}
	(\rho,{\bf v},p,{\bf B}) ({\tt x},0)
	=\begin{cases}
		(2,~0,~0,~0,~10^{9},~0,~5000,~5000), \quad & x<0,
		\\
		(0.001,~0,~0,~0,~1,~0,~5000,~5000), \quad & x>0.
	\end{cases}
\end{equation*}
The adiabatic index $\gamma=1.4$, and the computational domain 
is $[-10,10]$. There exists a very large jump in the initial pressure, and the plasma-beta at the right state is extremely low 
($\beta = 4 \times 10^{-8}$). 
Successfully simulating this problem is a challenge.  
As the exact solution contains strong discontinuities, 
the WENO limiter \cite{Qiu2005} is implemented right before the PP limiting procedure 
with the aid of the local characteristic decomposition within the ``trouble'' cells adaptively detected by the indicator in \cite{Krivodonova}. 
To fully resolve the wave structure, a fine mesh is required for such test, see e.g., \cite{zhang2010b}. 
Fig. \ref{fig:1DRP2} gives the numerical results  
at $t=0.00003$ obtained by the PP third-order DG method using 
$3200$ cells and $10000$ cells, respectively. We observe 
that the strong discontinuities are well captured, and 
the low
resolution and high resolution are in good agreement. 
In this extreme test, it is also necessary to use the PP limiter to meet the 
condition \eqref{eq:1DDG:con2}, otherwise the DG method will break down quickly 
due to negative numerical pressure.

\begin{figure}[htbp]
	\centering
	\includegraphics[width=0.96\textwidth]{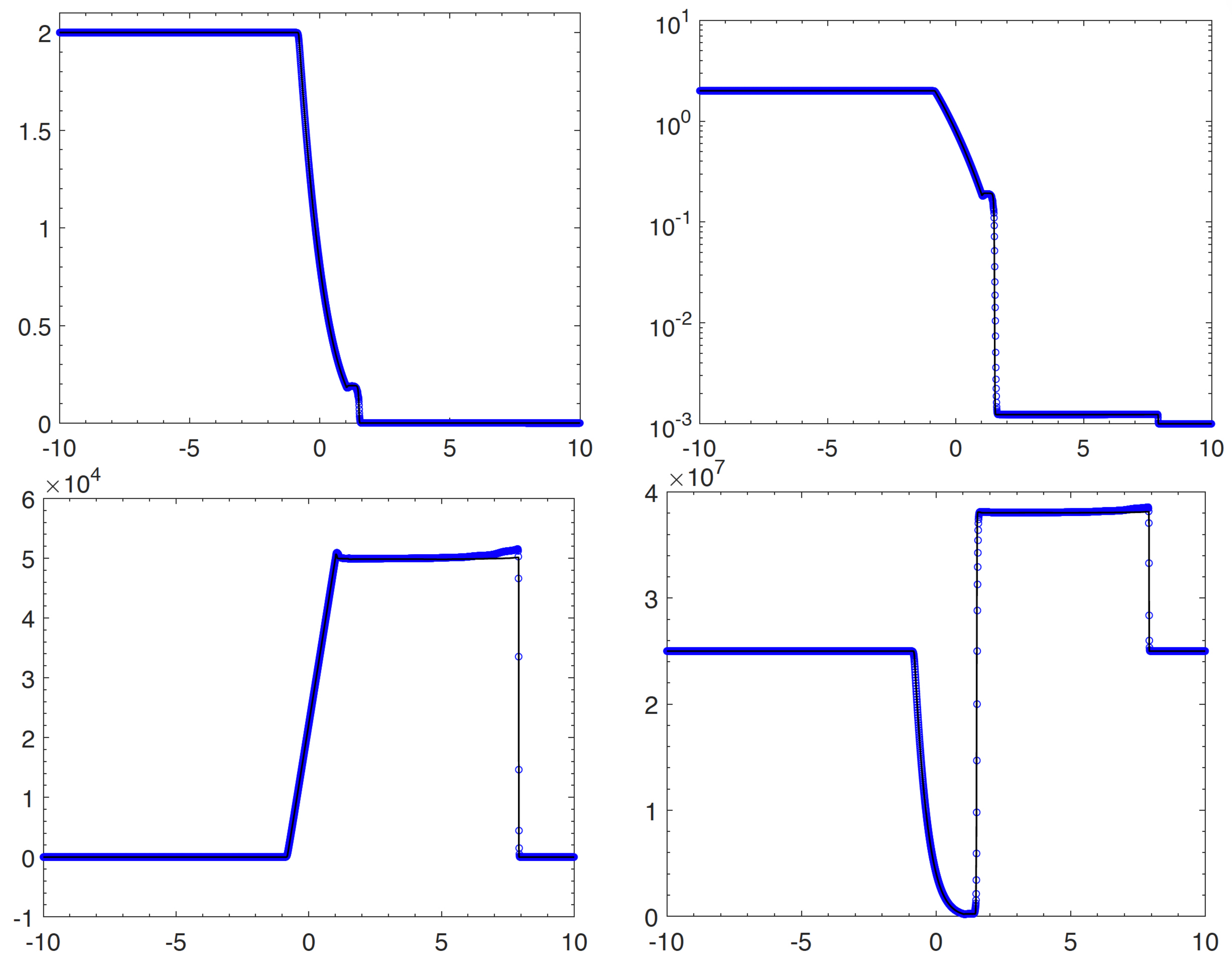}
	\caption{\small
		Numerical results at $t=0.00003$ obtained by the PP third-order DG method using $3200$ cells (symbols ``$\circ$'') and $10000$ cells (solid lines). 
		Top left: density; top right: log plot of density; 
		bottom left: velocity $v_1$; bottom right: magnetic pressure. 
	}\label{fig:1DRP2}
\end{figure}

\subsection{2D case}

We now present several 2D numerical examples to further 
confirm our theoretical analysis and the importance of the proposed 
discrete divergence-free (DDF) condition.

\subsubsection{Simple example}

This is a simple test which can be repeated easily by interested readers. 
We consider the 2D discrete data in \eqref{eq:counterEx2D} 
with $\gamma=1.4$, $\epsilon = 10^{-3}$ and ${\tt p}=10^{-8}$. The states in this data 
are admissible, and are slight perturbations of the constant state 
$(1,1,0,0,1,0,0,1+ 2.5 {\tt p} )^\top$ 
so that the proposed DDF condition \eqref{eq:DisDivB} is not satisfied. 
We then check the 
pressure $\bar p_{ij}^{n+1}$ computed by 
the 2D LF scheme \eqref{eq:2DMHD:LFscheme} with $\Delta x=\Delta y$ 
and different CFL numbers ${\tt C} \in \{0.01,0.02,\cdots,1\}$. 
The results are shown in Fig. \ref{fig:valcex2}, 
where two sets of parameters $\{\alpha_{\ell,n}^{\tt LF}\}$ are considered. 
It can be observed that, though the 
parameters $\{\alpha_{\ell,n}^{\tt LF}\}$ satisfy the condition \eqref{eq:Lxa12} 
or are even much larger, 
the admissibility of all the discrete states at the time level $n$ 
cannot ensure the positivity of numerical pressure at the next level. This  
confirms Theorem \ref{theo:counterEx} and the importance of DDF condition \eqref{eq:DisDivB}.  

\begin{figure}[htbp]
	\centering
	\includegraphics[width=0.43\textwidth]{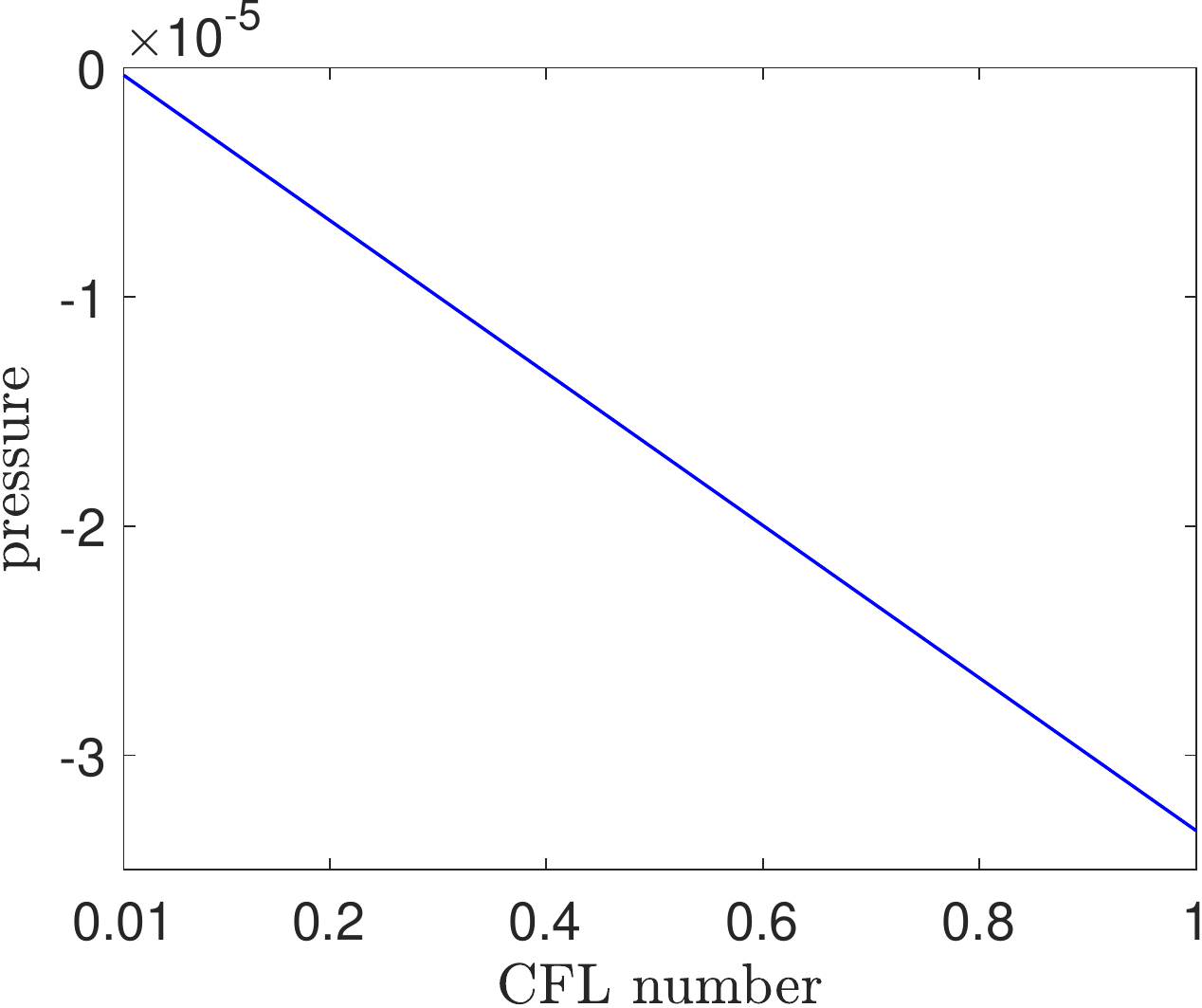}~~~~~
	\includegraphics[width=0.44\textwidth]{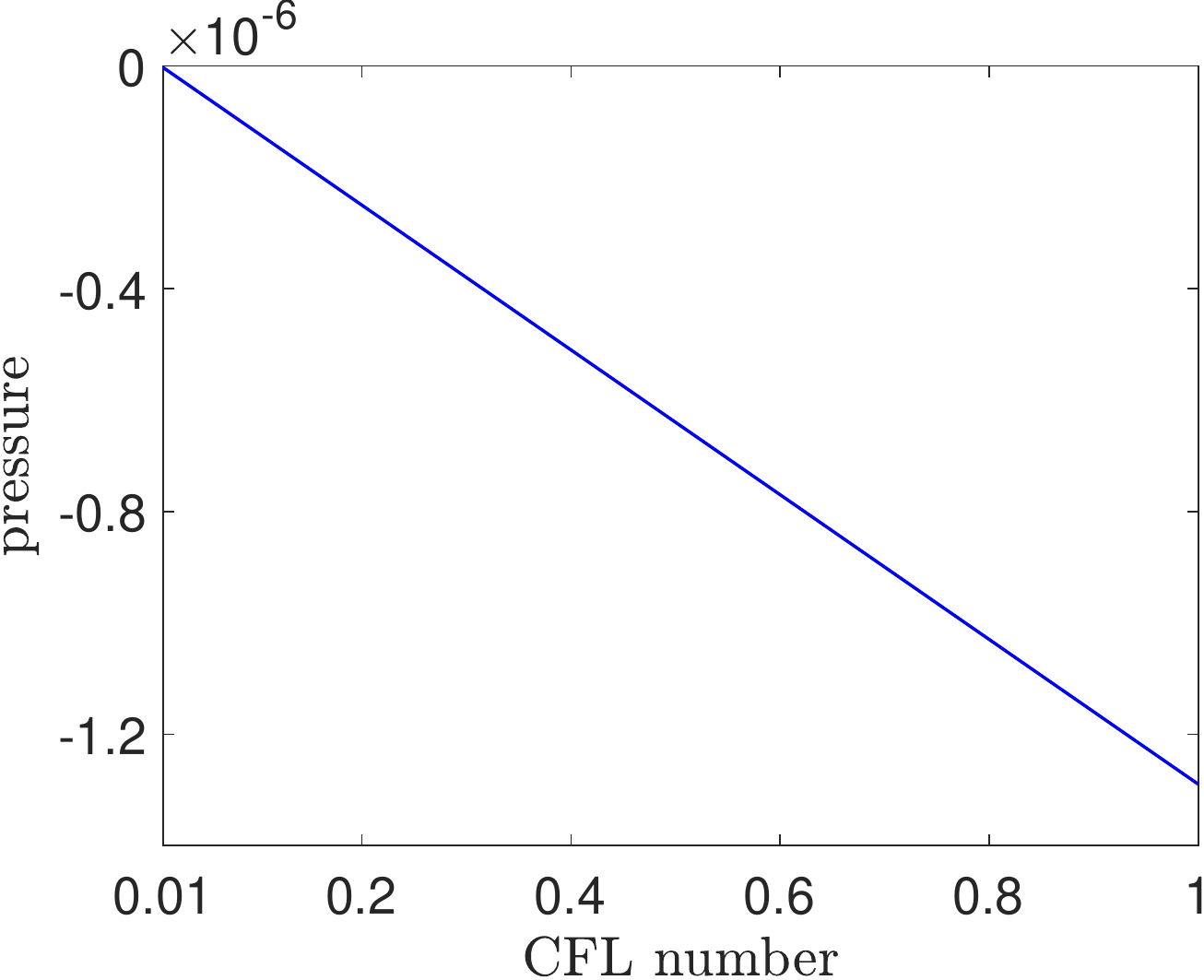}
	\caption{\small
		The pressure $\bar p_{ij}^{n+1}$ obtained by the 2D LF scheme 
		\eqref{eq:2DMHD:LFscheme} with larger numerical viscosity parameters and using different CFL numbers. Left: 
		$\alpha_{\ell,n}^{\tt LF} = 2 \max_{ij}{ {\mathscr{R}}_\ell (\bar{\bf U}^n_{ij})}$; right: 
		$\alpha_{\ell,n}^{\tt LF} = 50 \max_{ij}{ {\mathscr{R}}_\ell (\bar{\bf U}^n_{ij})}$.
	}\label{fig:valcex2}
\end{figure}

In the following, we consider several more practical examples to further verify  
our theoretical findings for 2D PP high-order schemes, and to
seek the numerical evidences for 
that only enforcing the condition \eqref{eq:2Dadmissiblity} 
is not sufficient to achieve PP high-order conservative scheme. 
To this end, we take 
the locally divergence-free DG methods \cite{Li2005}, together with the third-order SSP 
Runge-Kutta time discretization \cite{Gottlieb2009}, as the base schemes. 
We use the PP limiter in \cite{cheng} to enforce the condition \eqref{eq:2Dadmissiblity}. 
As we have discussed in Section \ref{sec:High2D}, 
the resulting high-order DG schemes do  
not always preserve the positivity of pressure under all circumstances, 
because the proposed DDF condition \eqref{eq:DivB:cst12} is not always satisfied 
(although the numerical magnetic field is locally divergence-free 
within each cell). 
It is worth mentioning that the locally divergence-free property and 
the PP limiter can enhance, to a certain extent, the robustness of high-order DG methods.  

Without loss of generality, the third-order (${\mathbb P}^2$-based) DG method is considered. Unless otherwise stated, all the computations are restricted to the ideal 
equation of state  \eqref{eq:EOS} with the adiabatic index $\gamma=\frac53$, 
and the CFL number is taken as 0.15.
For the problems involving discontinuity, before using the PP limiter, the WENO limiter \cite{Qiu2005} with locally divergence-free reconstruction (cf. \cite{ZhaoTang2017}) is also implemented with the aid of the local characteristic decomposition,  
to enhance the numerical stability of high-oder DG methods in resolving the strong discontinuities and their interactions. The WENO limiter is only used 
in the ``trouble'' cells adaptively detected by the indicator in \cite{Krivodonova}.

\subsubsection{Accuracy tests}

Two smooth problems are solved to test the accuracy of the ${\mathbb P}^2$-based 
DG method with the PP limiter. 
The first problem, similar to the one simulated in \cite{zhang2010b}, 
describes a MHD sine wave periodicly propagating within the domain $[0,2\pi]^2$ and $\gamma=1.4$. 
The exact solution is given by 
\begin{equation*}
	(\rho,{\bf v},p,{\bf B})({\tt x},{\tt y},t) 
	= \big( 1+0.99\sin({\tt x}+{\tt y}-2t),~1,~1,~0,~1,~0.1,~0.1,~0\big). 
\end{equation*}
The second problem is the vortex problem \cite{Christlieb}. 
The initial condition is a mean flow 
\begin{equation*}
	(\rho,{\bf v},p,{\bf B})({\tt x},{\tt y},0) 
	=(1,~1,~1,~0,~1,~0,~0,~0),
\end{equation*}
with vortex perturbations on $v_1, v_2, B_1, B_2,$ and $p$: 
\begin{gather*}
	(\delta v_1, \delta v_2)=\frac{\mu}{ \sqrt{2} \pi} {\rm e}^{0.5(1-r^2)} (-{\tt y},{\tt x}),
	\quad 
	(\delta B_1,\delta B_2) = \frac{\mu}{2\pi} {\rm e}^{0.5(1-r^2)} (-{\tt y},{\tt x}),
	\\
	\delta p = -\frac{ \mu^2 (1+r^2)  }{8 \pi^2} {\rm e}^{1-r^2},
\end{gather*}
where $r=\sqrt{{\tt x}^2+{\tt y}^2}$, and 
the vortex strength 
$\mu=5.389489439$ 
such that the lowest pressure in the center of the vortex is about $5.3 \times 10^{-12}$. 
The computational domain is $[-10,10]^2$ with periodic boundary conditions.  
Fig. \ref{fig:2Dacc} displays the 
numerical errors obtained by the third-order DG method with the PP limiter at different grid resolutions. 
The results show that the expected convergence order is achieved, and 
the PP limiter does not destroy the accuracy.

\begin{figure}[htbp]
	\centering
	\includegraphics[width=0.49\textwidth]{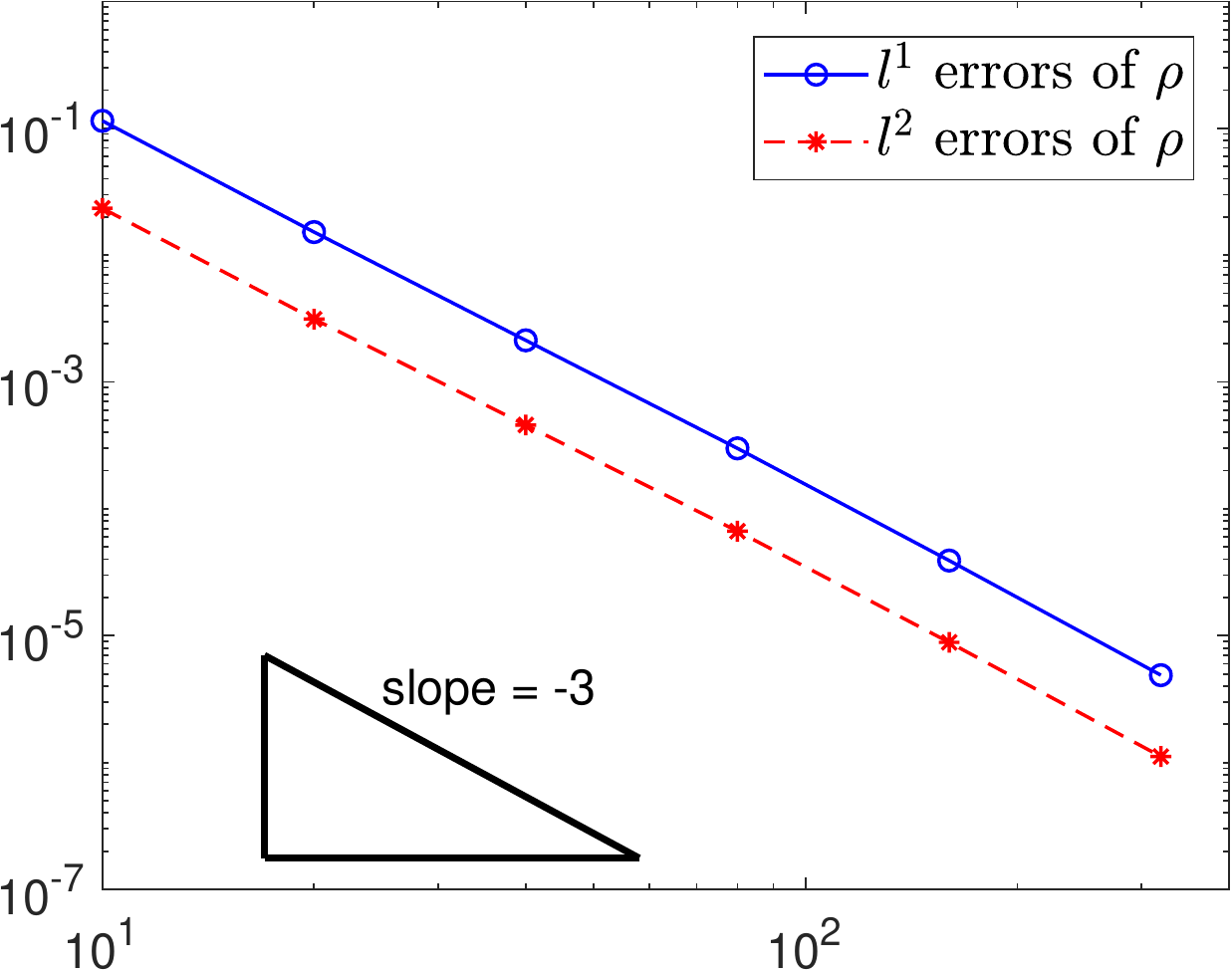}
	\includegraphics[width=0.49\textwidth]{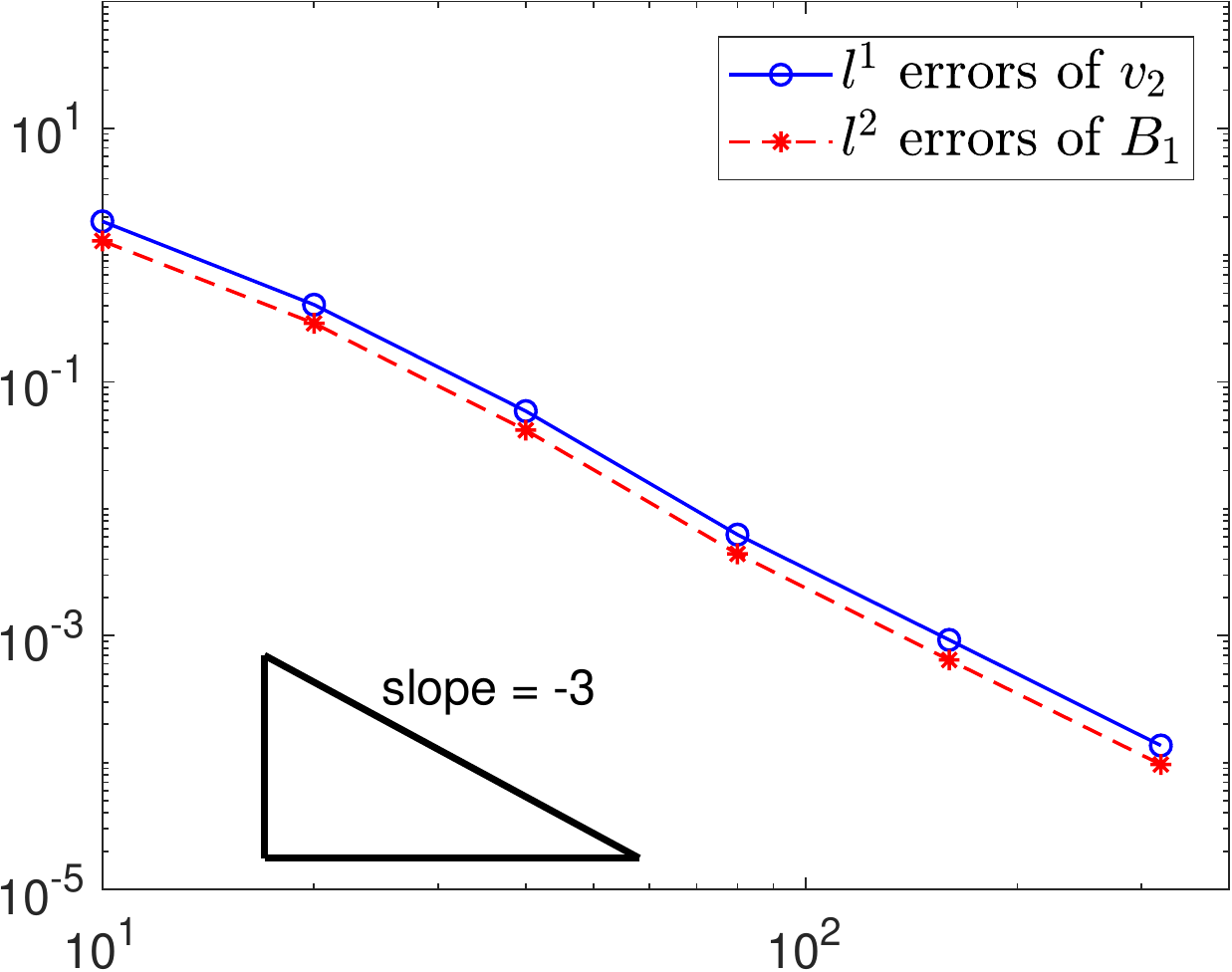}
	\caption{\small
		Numerical errors obtained by the third-order DG method at different grid resolutions with $N\times N$ cells. Left: the first 2D smooth problem at $t=0.1$; right: 
		the second 2D smooth problem at $t=0.05$. The horizontal axis denotes the value of $N$. 
	}\label{fig:2Dacc}
\end{figure}

\subsubsection{Benchmark tests}

The Orszag-Tang problem (see e.g., \cite{Li2005}) and the rotor problem \cite{BalsaraSpicer1999} are benchmark tests widely performed in the literature. Although not extreme, they are simulated by our third-order DG code  
to verify the high-resolution of the DG method as well as the correctness of our code. 
The contour plots of the density are shown 
in Fig. \ref{fig:standardtests} and agree well with those computed in \cite{BalsaraSpicer1999,Li2005}. We observe that the PP limiter does not get turned on, as the condition \eqref{eq:2Dadmissiblity} 
is automatically satisfied in these simulations.

\begin{figure}[htbp]
	\centering
	{\includegraphics[width=0.98\textwidth]{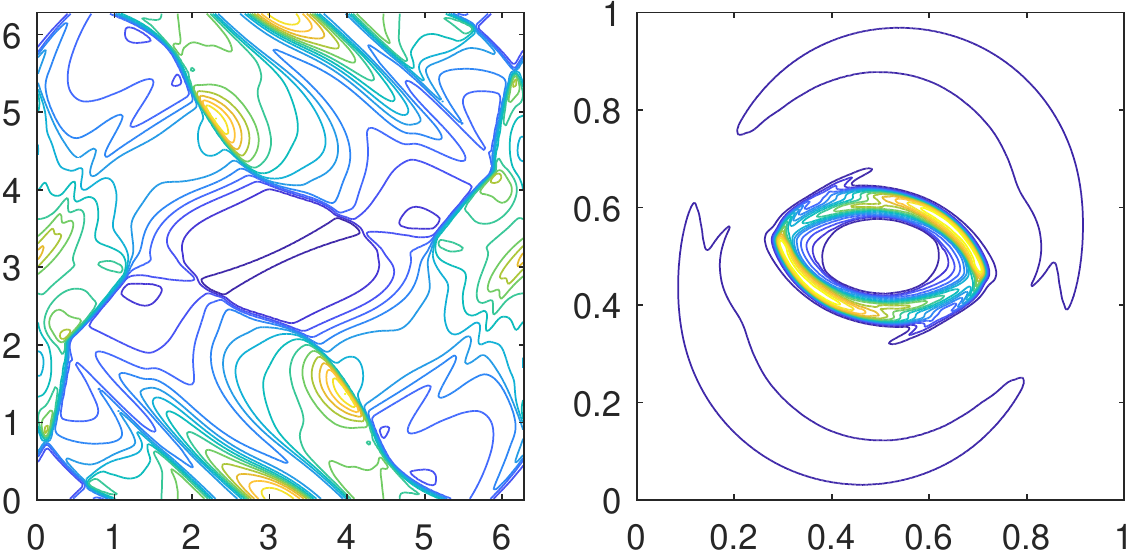}}
	\caption{\small Numerical solutions of the
		Orszag-Tang problem with $200\times 200$ cells at $t=2$ (left) and the rotor problem with $400\times 400$ at $t=0.295$ (right) computed by the third-order DG method.} 
	\label{fig:standardtests}
\end{figure}

\subsubsection{Positivity-preserving tests}

Two extreme problems are solved to demonstrate 
the importance of the proposed conditions \eqref{eq:DivB:cst12}--\eqref{eq:2Dadmissiblity} 
in Theorem \ref{thm:PP:2DMHD} and validate the estimate in Theorem \ref{thm:PP:2DMHD:further}. 

The first one is the blast problem \cite{BalsaraSpicer1999} to verify 
the importance of enforcing the condition \eqref{eq:2Dadmissiblity} 
for achieving PP high-order DG methods. 
This problem describes the propagation of a
circular strong fast magneto-sonic shock formulates and propagates into 
the  ambient
plasma with low plasma-beta. 
Initially, the computational domain $[-0.5,0.5]^2$ 
is filled with plasma at rest with unit density and adiabatic index $\gamma=1.4$. The explosion zone 
$(r<0.1)$ has a pressure of $10^3$, while the ambient medium $(r>0.1)$ has a lower pressure of $0.1$, where $r=\sqrt{{\tt x}^2+{\tt y}^2}$. The magnetic field is initialized in the $\tt x$-direction as $100/\sqrt{4\pi}$. 
Figure \ref{fig:BL1} shows the numerical results at $t=0.01$ computed by the third-order DG method with the PP limiter on the mesh of $320\times320$ 
uniform cells. 
We see that 
the results are highly in agreement with those displayed  
in \cite{BalsaraSpicer1999,Li2011,Christlieb}, 
and the density profile is well captured with much less oscillations than those shown in \cite{BalsaraSpicer1999,Christlieb}. 
It is noticed that 
the third-order DG method fails to preserve the positivity of pressure at time $t \approx 2.845\times 10^{-4}$ 
if the PP limiting procedure is not employed to enforce the condition \eqref{eq:2Dadmissiblity}.

\begin{figure}[htbp]
	\centering
	{\includegraphics[width=0.49\textwidth]{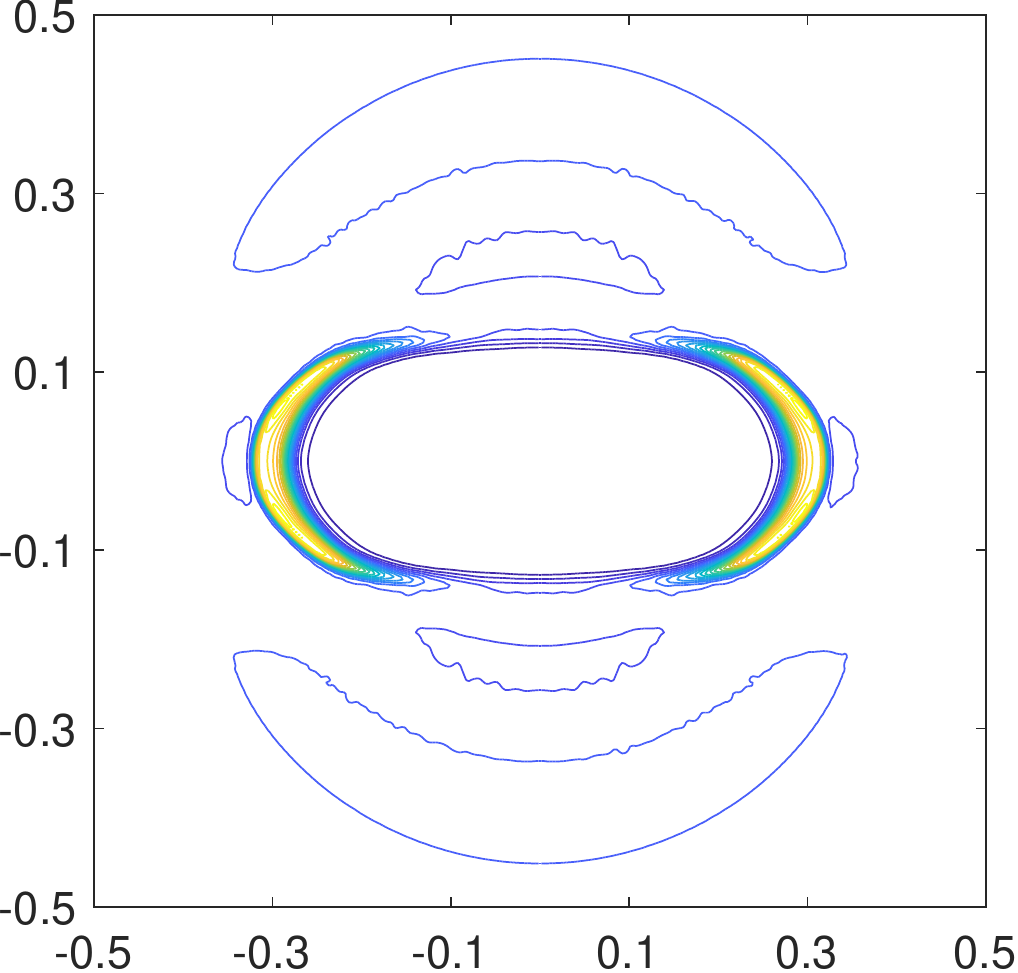}}
	{\includegraphics[width=0.49\textwidth]{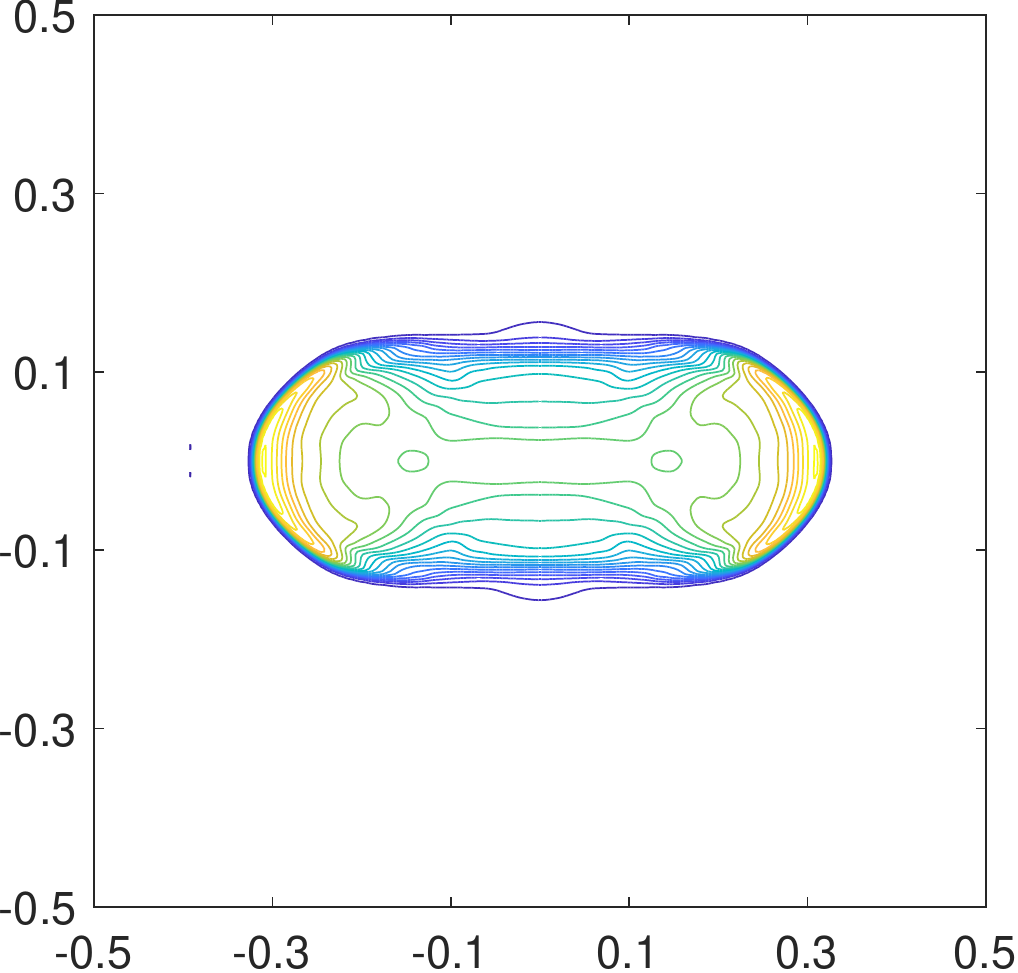}}
	{\includegraphics[width=0.49\textwidth]{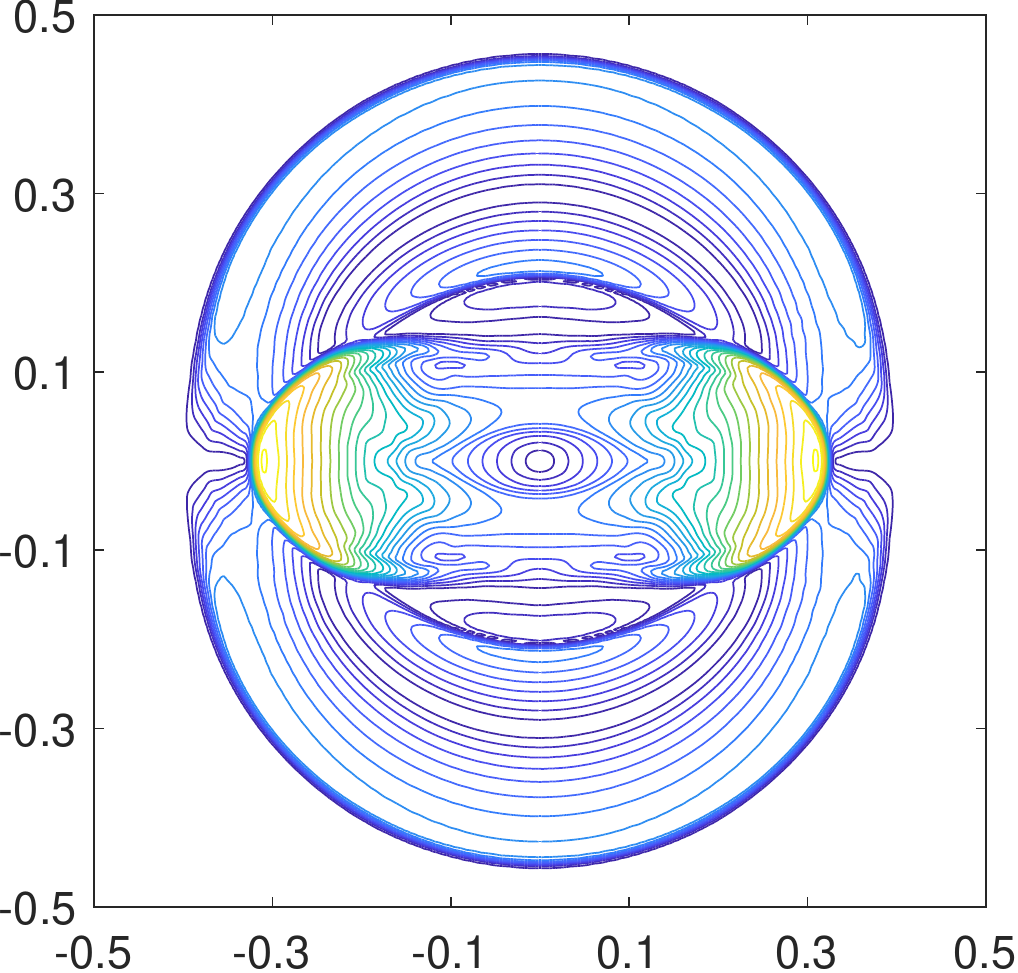}}
	{\includegraphics[width=0.49\textwidth]{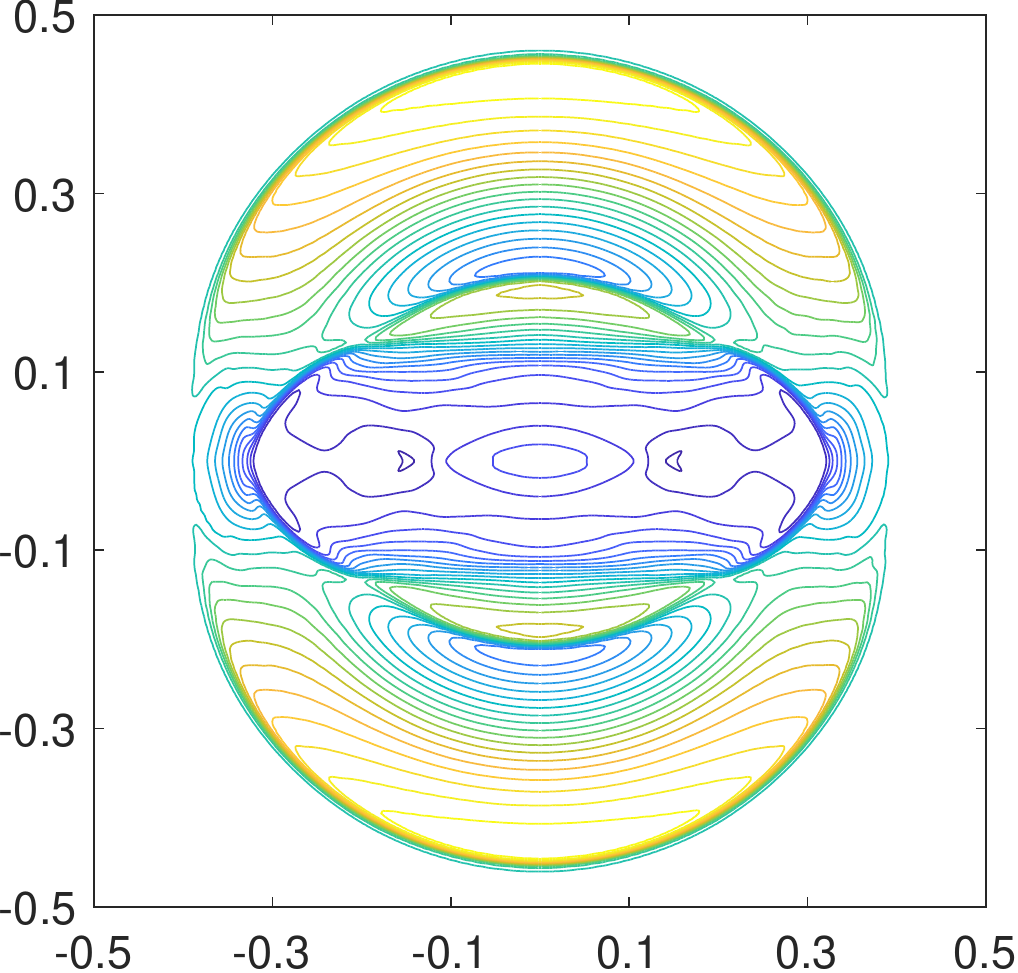}}
	\caption{\small Blast problem: 
		the contour plots of density $\rho$ (top left), 
		pressure $p$ (top right), velocity $|{\bf v}|$ (bottom left) 
		and magnetic pressure $p_m$ (bottom right) at $t=0.01$.} 
	\label{fig:BL1}
\end{figure}

To examine the PP property of the third-order DG scheme with the PP limiter, it is necessary to try more challenging test (rather than the standard tests). 
In a high Mach number jet with strong magnetic field, 
the internal energy is very
small compared to the huge kinetic and magnetic energy, 
negative pressure may easily appear in the numerical simulation. 
We consider the Mach 800 dense jet in \cite{Balsara2012}, and add a magnetic field so as to simulate the MHD jet flows. 
Initially, the computational domain $[-0.5,0.5]\times[0,1.5]$ is 
filled with a static uniform medium with density of $0.1\gamma$ and unit pressure, 
where the adiabatic index $\gamma=1.4$. 
A dense jet is injected in the $\tt y$-direction through the inlet part ($\left|{\tt x}\right|<0.05$) 
on the bottom boundary (${\tt y}=0$) with density of $\gamma$, unit pressure and speed of $800$.
The fixed inflow condition
is specified on the nozzle $\{{\tt y}=0,\left|{\tt x}\right|<0.05\}$, and the
other boundary conditions are outflow. 
A magnetic field with a magnitude
of $B_a$ is initialized along the $\tt y$-direction.  
The presence of magnetic field makes this test more extreme. 
A larger $B_a$ implies a larger value of $|{\bf v} \cdot {\bf B}|=800 B_a$, 
which more easily leads to 
negative numerical pressure when the DDF condition \eqref{eq:DivB:cst12} 
is violated seriously, as indicated by Theorem \ref{thm:PP:2DMHD:further}.  
Therefore, we have a strong motivation to 
examine the PP property by using this kind of problems.

We first consider a relatively mild setup with a weak magnetic field $B_a=\sqrt{20}$. 
The corresponding plasma-beta ($\beta=0.1$) is not very small. 
The locally divergence-free DG method with the PP limiter 
works well for this weak magnetized case, see Fig. \ref{fig:jet1}, which shows   
the results at $t=0.002$ on the mesh of $400 \times 600$ cells.
It is seen that the density, gas pressure and velocity profiles 
are very close to those displayed in \cite{Balsara2012} for the same jet but without 
magnetic field. 
This is not surprising, because the magnetic field is weak in our case. 
In this simulation, it is also necessary to enforce the condition 
\eqref{eq:2Dadmissiblity} by the PP limiter, otherwise the 
DG code will break down at $t \approx 5.58 \times 10^{-5}$.

\begin{figure}[htbp]
	\centering
	\includegraphics[width=0.98\textwidth]{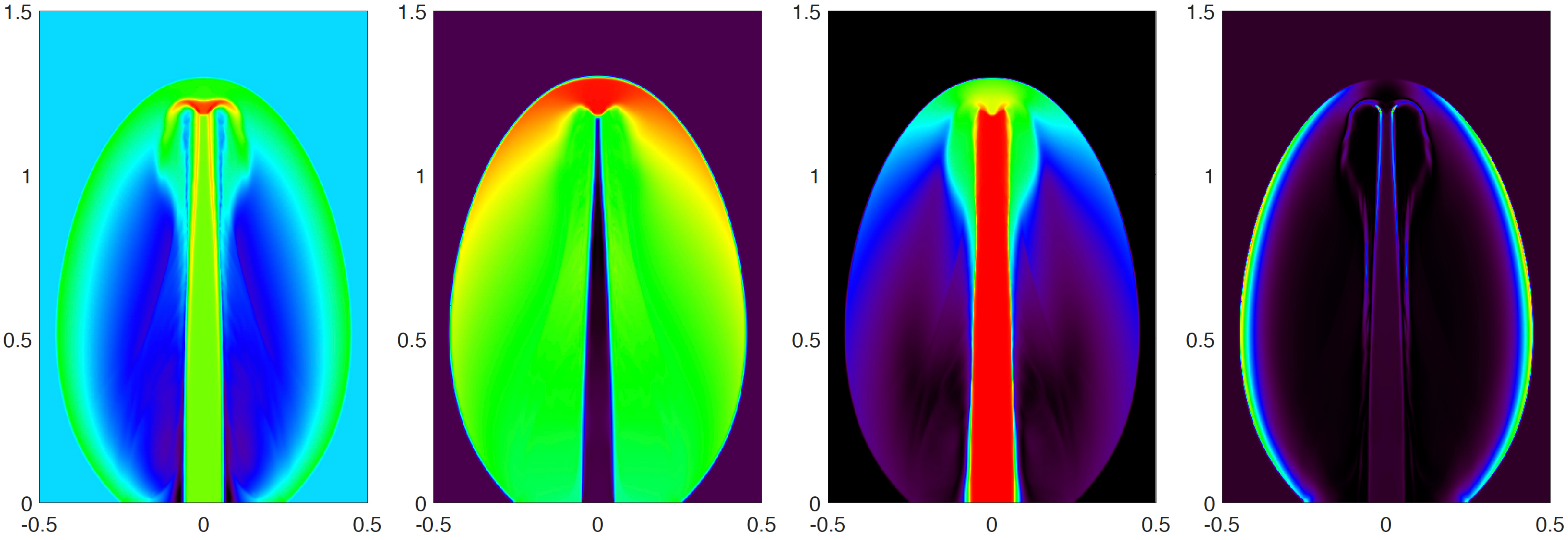}
	\caption{\small
		High Mach number jet with a weak magnetic field $B_a=\sqrt{20}$. 
		The schlieren images of density logarithm, gas pressure logarithm, 
		velocity $|{\bf v}|$ and magnetic pressure (from left to right) at 
		$t=0.002$.    	
	}\label{fig:jet1}
\end{figure}

\begin{figure}[htbp]
	\centering
	\includegraphics[width=0.80\textwidth]{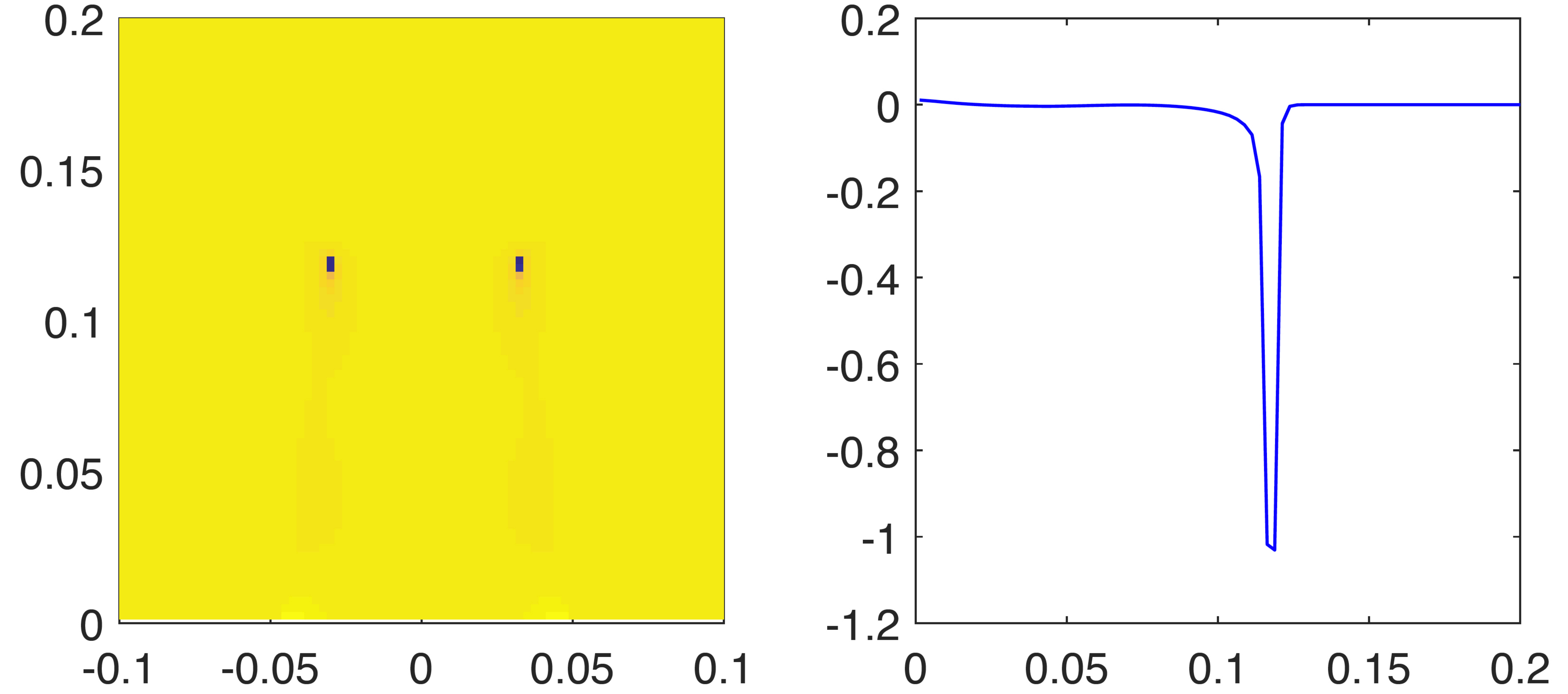}
	\caption{\small
		High Mach number jet with a strong magnetic field $B_a=\sqrt{200}$.
		the schlieren image of $\delta_{ij}$ (left)
		and its slice along ${\tt x}=0.03125$ (right). 
	}\label{fig:jet1b}
\end{figure}

To investigate the importance of the proposed DDF condition \eqref{eq:DivB:cst12} in 
Theorem \ref{thm:PP:2DMHD}, we now try to simulate the jet in a  
moderately magnetized case with $B_a=\sqrt{200}$ (the corresponding 
plasma-beta $\beta = 0.01$) on the mesh of $400 \times 600$ cells. 
In this case, the locally divergence-free third-order DG method with the PP limiter 
breaks down at $t\approx 0.00024$. 
This failure results from the computed inadmissible 
cell averages of conservative variables, detected in 
the four cells 
centered at points 
$(-0.03125, 0.11625)$, $(-0.03125, 0.11875)$, $(0.03125,0.11625)$, 
and $(0.03125, 0.11875)$, respectively. 
As expected from Theorem \ref{thm:PP:2DMHD:further}, 
these inadmissible 
cell averages correspond to negative numerical pressure (internal energy) 
due to the violation of DDF condition \eqref{eq:DivB:cst12}. 
We recall that Theorem \ref{thm:PP:2DMHD:further} implies for 
the inadmissible 
cell averages that  
$$
0 \ge	{\mathcal E} ( \bar{\bf U}_{ij}^{n+1} ) > -\Delta t_n \big(\bar{\bf v}_{ij}^{n+1} \cdot 
\bar{\bf B}_{ij}^{n+1} \big) {\rm div}_{ij}{\bf B}^n.
$$
If one defines 
\begin{align*}
	\begin{split}
		&{\mathcal P}_{ij} := 
		{\mathcal E} ( \bar{\bf U}_{ij}^{n+1} ) 
		+ \Delta t_n \big(\bar{\bf v}_{ij}^{n+1} \cdot 
		\bar{\bf B}_{ij}^{n+1} \big) {\rm div}_{ij}{\bf B}^n > 0,
		\\
		&{\mathcal N}_{ij} :=
		-\Delta t_n \big(\bar{\bf v}_{ij}^{n+1} \cdot 
		\bar{\bf B}_{ij}^{n+1} \big) {\rm div}_{ij}{\bf B}^n,
	\end{split}
\end{align*}
and $\delta_{ij} := {\mathcal N}_{ij} / {\mathcal P}_{ij} $, 
then $\delta_{ij}\le-1$ for inadmissible cell averages, 
and $\delta_{ij} > -1$ for admissible cell averages.  
As we see from the proof of Theorem \ref{thm:PP:2DMHD:further}, ${\mathcal N}_{ij}$ can be considered as the dominate negative part of 
${\mathcal E} ( \bar{\bf U}_{ij}^{n+1} )$ 
affected by the discrete divergence-error ${\rm div}_{ij}{\bf B}^n$, while 
${\mathcal P}_{ij}$ is the main positive part contributed by the condition 
\eqref{eq:2Dadmissiblity} enforced by the PP limiter. 
As evidences of these, Fig. \ref{fig:jet1b} gives the close-up of 
the schlieren image of $\delta_{ij}$ 
and its slice along ${\tt x}=0.03125$. 
It clearly shows the two subregions with small values $\delta_{ij}<-1$, 
and the four detected cells with inadmissible 
cell averages are exactly located in those two subregions. 
This further demonstrates our analysis in 
Theorem \ref{thm:PP:2DMHD:further} and that  
the DDF condition \eqref{eq:DivB:cst12} is really crucial in achieving 
completely PP schemes in 2D. 
It is observed that the code fails also on a refined mesh, and also for 
more strongly magnetized case.

More numerical results further supporting our analysis can be found in \cite{WuShu2018}, 
where the proposed theoretical techniques are applied  
to design multi-dimensional provably PP DG schemes via the discretization of symmetrizable ideal MHD equations.

\section{Conclusions}\label{sec:con}

We presented the rigorous PP analysis of 
conservative schemes with the LF flux for one- and multi-dimensional  ideal 
MHD equations. 
It was based on several important properties of admissible state set, 
including a novel equivalent form, convexity, orthogonal invariance and 
the generalized LF splitting properties. 
The analysis was focused on the finite volume or discontinuous Galerkin schemes on uniform Cartesian meshes. 
In the 1D case, we proved that the LF scheme 
with proper numerical viscosity is PP, and 
the high-order schemes are PP under accessible conditions. 
In the 2D case, 
our analysis revealed for the first time that a discrete divergence-free (DDF) condition is 
crucial for achieving 
the PP property of schemes for ideal MHD. 
We proved that 
the 2D LF scheme with proper numerical viscosity preserves the 
positivity and the DDF condition. 
We derived sufficient conditions for achieving 2D PP high-order schemes. Lower bound of the internal energy was derived when the 
proposed DDF condition was not satisfied, yielding that 
negative internal energy may be more easily computed in the cases with large $|{\bf v}\cdot {\bf B}|$ and large discrete divergence error. Our analyses were further confirmed by the numerical examples, and extended to the 3D case.

In addition, several usually-expected properties 
were disproved in this paper. Specifically, we rigorously showed that:  
(i) the LF splitting property does not always hold; 
(ii) the 1D LF scheme with standard numerical viscosity or 
piecewise constant $B_1$ is not PP in general, no matter how small the CFL number is;
(iii) the 2D LF scheme is not always PP under any CFL condition, unless  additional condition like the DDF condition is satisfied.  
As a result, some existing techniques for PP analysis become 
inapplicable in the MHD case. 
These, together with the technical challenges arising from the solenoidal magnetic field and
the intrinsic complexity of the MHD system, make the proposed analysis very nontrivial.

From the viewpoint of preserving positivity, our analyses provided  
a new understanding of the importance of divergence-free 
condition in robust MHD simulations. 
Our analyses and novel techniques as well as the provenly PP schemes can also be useful for investigating 
or designing 
other 
PP schemes for ideal MHD.  
In \cite{WuShu2018}, 
we applied the proposed analysis approach  
to develop multi-dimensional probably PP high-order 
methods for the symmetrizable version of the ideal
MHD equations. 
The extension of the PP analysis to less dissipative
numerical fluxes and on more general/unstructured meshes will be studied in a coming paper.

%
%

\appendix

\section{Additional Proofs}

\subsection[Proof of Proposition 2.5]{{Proof of Proposition \ref{lemma:2.8}}}
\label{sec:proof}
\begin{proof}
	We prove it by contradiction. 
	Assume that \eqref{eq:LFproperty} always holds. 
	Let $\alpha = \chi {\mathscr{R}}_i $ with the constant $\chi \ge 1$. 
	For any ${\tt p}\in \big(0,\frac1{\gamma}\big)$, 
	consider the admissible state 
	${\bf U}=(1,0,0,0,1,0,0, \frac{\tt p}{\gamma-1} + \frac{1}{2} )^\top$ for an ideal gas. Then \eqref{eq:LFproperty} implies that
	\begin{equation*}
		{\bf U} \pm \frac{ {\bf F}_1 ({\bf U}) }{\alpha} =
		\bigg(
		1,\pm   \frac{ {\tt p} - \frac{1}{2} } {\chi},0,0,1,0,0,\frac{\tt p}{\gamma-1} + \frac{1}{2}
		\bigg)^{\top}\in {\mathcal G}.
	\end{equation*}
	This means, for $\forall {\tt p}\in \big(0,\frac1{\gamma}\big)$, 
	$$
	0< {\mathcal E} ( {\bf U} \pm \alpha^{-1} {\bf F}_1 ({\bf U}) )  = \frac{\tt p}{\gamma-1} - { \frac{1}{2\chi^2} }  \bigg( {\tt p} - \frac{1}2 \bigg)^2=:{\mathcal E}_f({\tt p}) ,
	$$
	The continuity of ${\mathcal E}_f ({\tt p})$ further implies 
	$$
	0 \le \mathop {\lim }\limits_{ {\tt p} \to 0^+ } {\mathcal E}_f ( {\tt p} ) = - \frac{1}{ 8\chi^2} < 0,
	$$
	which is a contradiction. Hence \eqref{eq:LFproperty} does not always hold. 
	The proof is completed. 
\end{proof}

\subsection{Proof of Proposition \ref{lem:fornewGLF}}
\label{sec:proofwkl111}

\begin{proof}
	The proof is similar to that of Lemma \ref{theo:MHD:LLFsplit}. 
	Without loss of generality, we only 
	show \eqref{eq:MHD:LLFsplit:one-state} for $i=1$, 
	while the cases of $i=2$ and $i=3$ can be then proved by using 
	the
	orthogonal invariance in Lemma \ref{lem:MHD:zhengjiao} and similarly 
	to part (ii) of the proof of Lemma \ref{theo:MHD:LLFsplit}.

	For $i=1$, let define
	\begin{align*}
		& \hat \Pi_u =  {\bf U} \cdot {\bf n}^* + \frac{|{\bf B}^*|^2}{2}, \quad  
		\hat \Pi_f=   {\bf F}_1({\bf U}) \cdot {\bf n}^* +  v_i^* \frac{|{\bf B}^*|^2}{2} -   B_i({\bf v}^* \cdot {\bf B}^*) .
	\end{align*}
	Then it only needs to show
	\begin{equation}\label{eq:needproof1A}
	\frac{|\hat \Pi_f|}{\hat \Pi_u} \le \widehat \alpha_1 ({\bf U},{v}_1^*),
	\end{equation}
	by noting that
	\begin{equation}\label{eq:PiuA}
	\hat \Pi_u =  | \hat {\bm \theta}|^2 > 0 ,
	\end{equation}
	where the nonzero vector $\hat {\bm \theta} \in {\mathbb{R}}^{7}$ is defined as
	$$
	\hat {\bm \theta}= \frac1{\sqrt{2}} \Big(
	\sqrt{\rho} ( {\bf v} - {\bf v}^* ),~
	{\bf B}-{\bf B}^*,~
	\sqrt{2 \rho e}
	\Big)^\top.
	$$
	The proof of \eqref{eq:needproof1A} is divided into the following two steps.

	{\tt Step 1}. Reformulate $\hat \Pi_f$ into a quadratic form in the variables $\hat \theta_j, 1\le j \le 7$. 
	Unlike what is required in Lemma \ref{theo:MHD:LLFsplit}, one cannot 
	require that the coefficients of this quadratic form is independent on ${\bf v}^*$ and ${\bf B}^*$, 
	but can require those coefficients do not depend on 
	${ v}_2^*$, $v_3^*$ and ${\bf B}^*$.   
	Similar to the proof of Lemma \ref{theo:MHD:LLFsplit}, we technically arrange $\hat \Pi_f$  as 
	\begin{equation}\label{eq:3partsA}
	\hat \Pi_f = \frac12 v_1^* |{\bf B}-{\bf B}^*|^2 + 
	+ \frac{\rho v_1 }2 |{\bf v}-{\bf v}^*|^2    +  v_1 \rho e  +  p ( v_1 - v_1^* )
	+ \sum_{j=2}^3 ( B_j (v_1-v_1^*)
	- B_1( v_j -v_j^* ) ) ( B_j - B_j^* ).
	\end{equation}
	Then we immediately have  
	\begin{equation}\label{eq:B}
	\hat \Pi_f= v_1^* \sum_{j=4}^6 \hat \theta_j^2 + 
	v_1 \Big( \sum_{j=1}^3 \hat \theta_j^2 + \hat \theta_7^2 \Big)
	+ 2{\mathscr{C}}_s 
	\hat \theta_1 \hat \theta_7 +  \frac{ 2 B_2} { \sqrt{\rho} } \hat \theta_1 \hat \theta_5 +  \frac{ 2 B_3} { \sqrt{\rho} } \hat \theta_1 \hat \theta_6
	-  \frac{2B_1}{\sqrt{\rho}} (\hat \theta_2 \hat \theta_5 + \hat \theta_3 \hat \theta_6).
	\end{equation}
	
	{\tt Step 2}. Estimate the upper bound of $\frac{|\hat \Pi_f|}{\hat \Pi_u}$. 
	Note that
	\begin{align*}
		\hat \Pi_f  & = v_1^* \sum_{j=4}^6 \hat \theta_j^2 + v_1 \Big( \sum_{j=1}^3 \hat \theta_j^2 +  \hat \theta_7^2 \Big)
		+ \hat {\bm \theta}^\top {\bf A}_7 \hat {\bm \theta} ,
	\end{align*}
	where 
	$$
	{\bf A}_7 =  \begin{pmatrix}
	0 & 0 & 0 & 0 & B_2 \rho^{-\frac12} & B_3 \rho^{-\frac12} & {\mathscr{C}}_s  \\
	0 & 0 & 0 & 0 & -B_1 \rho^{-\frac12} & 0 & 0 \\
	0 & 0 & 0 & 0 & 0 & -B_1 \rho^{-\frac12} & 0 \\
	0 & 0 & 0 & 0 & 0 & 0 & 0 \\
	B_2 \rho^{-\frac12} & -B_1 \rho^{-\frac12} & 0 & 0 & 0 & 0 & 0 \\
	B_3 \rho^{-\frac12} & 0 & -B_1 \rho^{-\frac12} & 0 & 0 & 0 & 0 \\
	{\mathscr{C}}_s  & 0 & 0 & 0 & 0 & 0 & 0
	\end{pmatrix}.
	$$
	The spectral radius 
	of ${\bf A}_7$ is $ {\mathscr{C}}_1$. This gives the following estimate 
	\begin{equation} \label{eq:PI4A}
	\begin{aligned} 
	|\hat \Pi_f|  & 
	\le |v_1^*| \sum_{j=4}^6 \hat \theta_j^2
	+
	|v_1| \bigg( \sum_{j=1}^3
	\hat \theta_{j}^2  +  \hat \theta_7^2 \bigg)
	+ | \hat{\bm \theta}^\top {\bf A}_7 \hat {\bm \theta} |
	\\
	&\le \max\{|v_1|,| v_1^*|\}  |\hat {\bm \theta} |^2 
	+  {\mathscr{C}}_1 |\hat {\bm \theta} |^2
	= \widehat \alpha_1 ({\bf U},v_1^*)  |\hat{\bm \theta} |^2 
	=  \widehat \alpha_1 ({\bf U},v_1^*)  \hat \Pi_u. 
	\end{aligned}
	\end{equation}
	Therefore, the inequality \eqref{eq:needproof1A} holds. The proof is completed.
\end{proof}

\subsection{Deriving generalized LF splitting property by 
	Proposition \ref{lem:fornewGLF}}\label{sec:gLFnewproof}

\begin{proposition}\label{prop:n1DgLF}
	The 1D generalized LF splitting property in Theorem \ref{theo:MHD:LLFsplit1D} holds for any  $\alpha  $ 
	satisfying 
	$$
	\alpha > \widetilde\alpha_1 (\hat{\bf U},\check{\bf U}), 
	$$
	where $\widetilde\alpha_1$ is a different lower bound given by    
	$$
	\widetilde\alpha_1 (\hat{\bf U},\check{\bf U}) 
	:=
	\max \{ \hat{\mathscr{C}}_1, \check{\mathscr{C}}_1
	\} + 
	\max \{ |\hat v_1| + \hat{\mathscr{D}}_1, |\check v_1| + \check{\mathscr{D}}_1
	\}
	$$
	with 
	$$\hat{\mathscr{D}}_1 := \frac{|\hat p_{ tot}- \hat B_1^2|}{
		\hat \rho (A_1-\hat v_1 )},
	\qquad 
	\check{\mathscr{D}}_1 := \frac{|\check p_{ tot}- \check B_1^2|}{
		\check \rho (A_1+\check v_1 )},
	\qquad 
	A_1 := \max\{ |\hat v_1|, |\check v_1| \}+ \max\{  \hat{\mathscr{C}}_1, \check{\mathscr{C}}_1 \}.
	$$
	For the ideal EOS \eqref{eq:EOS}, the property also holds for 
	any  $\alpha > \widetilde {\widetilde  \alpha}_1 (\hat{\bf U},\check{\bf U}) $, where  
	$$
	\alpha > \widetilde {\widetilde  \alpha}_1 (\hat{\bf U},\check{\bf U}) 
	:=
	\max \{ \hat{\mathcal{C}}_1, \check{\mathcal{C}}_1
	\} + 
	\max \{ |\hat v_1| + \hat{\mathcal{D}}_1, |\check v_1| + \check{\mathcal{D}}_1
	\}
	$$
	with 
	$$\hat{\mathcal{D}}_1 := \frac{|\hat p_{ tot}- \hat B_1^2|}{
		\hat \rho ({\mathcal A}_1-\hat v_1 )},
	\qquad 
	\check{\mathcal{D}}_1 := \frac{|\check p_{ tot}- \check B_1^2|}{
		\check \rho ({\mathcal A}_1+\check v_1 )},
	\qquad 
	{\mathcal A}_1 := \max\{ |\hat v_1|, |\check v_1| \}+ \max\{  \hat{\mathcal{C}}_1, \check{\mathcal{C}}_1 \}.
	$$
\end{proposition}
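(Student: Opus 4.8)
The plan is to derive this sharpened bound from the \emph{one-state} inequality of Proposition~\ref{lem:fornewGLF}, rather than from the two-state inequality of Lemma~\ref{theo:MHD:LLFsplit} that was used to prove Theorem~\ref{theo:MHD:LLFsplit1D}. By Lemma~\ref{theo:eqDefG} it suffices to show that $\overline{\bf U}$ in \eqref{eq:MHD:LLFsplit1D} has positive density and satisfies $\overline{\bf U}\cdot{\bf n}^*+\frac{|{\bf B}^*|^2}{2}>0$ for all ${\bf v}^*,{\bf B}^*\in{\mathbb R}^3$. The density part is immediate, since $\widetilde\alpha_1(\hat{\bf U},\check{\bf U})\ge A_1\ge\max\{|\hat v_1|,|\check v_1|\}$ gives $\alpha>\max\{|\hat v_1|,|\check v_1|\}$, whence the first component $\frac12\big(\hat\rho(1-\hat v_1/\alpha)+\check\rho(1+\check v_1/\alpha)\big)>0$. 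For the linear constraint I would split $2\big(\overline{\bf U}\cdot{\bf n}^*+\frac{|{\bf B}^*|^2}{2}\big)$ into $\big[(\hat{\bf U}-\alpha^{-1}{\bf F}_1(\hat{\bf U}))\cdot{\bf n}^*+\frac{|{\bf B}^*|^2}{2}\big]+\big[(\check{\bf U}+\alpha^{-1}{\bf F}_1(\check{\bf U}))\cdot{\bf n}^*+\frac{|{\bf B}^*|^2}{2}\big]$ and apply \eqref{eq:MHD:LLFsplit:one-state} to $\hat{\bf U}$ with parameter $-\alpha$ and to $\check{\bf U}$ with parameter $\alpha$. Adding the two inequalities, the ``magnetic cubic'' correction terms $\mp\alpha^{-1}\big(v_1^*\frac{|{\bf B}^*|^2}{2}-\hat B_1({\bf v}^*\cdot{\bf B}^*)\big)$ and $\pm\alpha^{-1}\big(v_1^*\frac{|{\bf B}^*|^2}{2}-\check B_1({\bf v}^*\cdot{\bf B}^*)\big)$ cancel \emph{exactly}, precisely because the 1D discrete divergence-free condition \eqref{eq:descrite1DDIV} forces $\hat B_1=\check B_1$; this yields $\overline{\bf U}\cdot{\bf n}^*+\frac{|{\bf B}^*|^2}{2}>0$ whenever $\alpha>\max\{\widehat\alpha_1(\hat{\bf U},v_1^*),\widehat\alpha_1(\check{\bf U},v_1^*)\}$, where $\widehat\alpha_1({\bf U},v_1^*)=\max\{|v_1|,|v_1^*|\}+{\mathscr{C}}_1$.

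The remaining, and only genuine, difficulty is that this threshold still depends on $v_1^*$ whereas $\widetilde\alpha_1$ does not; this is exactly the obstruction emphasized in the paper, that non-relativistic velocities are unbounded. I would remove it by splitting on $|v_1^*|$ against $M:=\max\{|\hat v_1|+\hat{\mathscr{D}}_1,\,|\check v_1|+\check{\mathscr{D}}_1\}$, noting $\widetilde\alpha_1=\max\{\hat{\mathscr{C}}_1,\check{\mathscr{C}}_1\}+M$. If $|v_1^*|\le M$, then $\widehat\alpha_1(\hat{\bf U},v_1^*)=\max\{|\hat v_1|,|v_1^*|\}+\hat{\mathscr{C}}_1\le M+\hat{\mathscr{C}}_1\le\widetilde\alpha_1<\alpha$, and similarly for $\check{\bf U}$, so the argument above applies unchanged. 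If $|v_1^*|>M$, I would estimate directly: set $\Pi_u:=(\hat{\bf U}+\check{\bf U})\cdot{\bf n}^*+|{\bf B}^*|^2>0$ and $\Pi_f:=\big({\bf F}_1(\hat{\bf U})-{\bf F}_1(\check{\bf U})\big)\cdot{\bf n}^*$, so that $\overline{\bf U}\cdot{\bf n}^*+\frac{|{\bf B}^*|^2}{2}=\frac12(\Pi_u-\alpha^{-1}\Pi_f)$, and prove $|\Pi_f|\le\widetilde\alpha_1\Pi_u$. The key algebraic step is the identity ${\bf F}_1({\bf U})\cdot{\bf n}^*=v_1\big({\bf U}\cdot{\bf n}^*+\frac{|{\bf B}^*|^2}{2}\big)+\Pi^{\rm corr}({\bf U})$, valid for admissible ${\bf U}$, in which the correction $\Pi^{\rm corr}({\bf U})$ has no $(v_1^*)^2$ term and is affine in $v_1^*$ with slope $-(p_{tot}-B_1^2)$. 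Applying it to $\hat{\bf U},\check{\bf U}$ and using $\hat B_1=\check B_1$, the leading parts combine into $\hat v_1\big(\hat{\bf U}\cdot{\bf n}^*+\frac{|{\bf B}^*|^2}{2}\big)-\check v_1\big(\check{\bf U}\cdot{\bf n}^*+\frac{|{\bf B}^*|^2}{2}\big)$, bounded in modulus by $\max\{|\hat v_1|,|\check v_1|\}\Pi_u$; the ${\bf B}^*$-quadratic and cross terms inside $\Pi^{\rm corr}$ are controlled by $\max\{\hat{\mathscr{C}}_1,\check{\mathscr{C}}_1\}$ through the spectral-radius bound on the matrix ${\bf A}_7$ from the proof of Proposition~\ref{lem:fornewGLF}; and the piece linear in $v_1^*$, with coefficient $-(\hat p_{tot}-\hat B_1^2)$ (resp.\ $+(\check p_{tot}-\check B_1^2)$), is absorbed into the kinetic terms $\hat\rho|\hat{\bf v}-{\bf v}^*|^2$ and $\check\rho|\check{\bf v}-{\bf v}^*|^2$ of $\Pi_u$ --- these are large enough because $|v_1^*|>M$, and the precise Young-type balance (chosen so the residual constant vanishes) is what produces exactly $\hat{\mathscr{D}}_1=|\hat p_{tot}-\hat B_1^2|/\big(\hat\rho(A_1-\hat v_1)\big)$, with the denominator $A_1-\hat v_1\ge\hat{\mathscr{C}}_1>0$ being the way $A_1$ enters. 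Collecting these three contributions gives $|\Pi_f|<\widetilde\alpha_1\Pi_u$, hence $\Pi_u-\alpha^{-1}\Pi_f>0$ for $\alpha>\widetilde\alpha_1$.

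For the $\gamma$-law EOS \eqref{eq:EOS} and the bound $\widetilde{\widetilde\alpha}_1$, I would repeat the same two-case argument but treat the pressure and energy flux terms using the physical sound speed ${\mathcal C}_s=\sqrt{\gamma p/\rho}$ in place of ${\mathscr{C}}_s=p/(\rho\sqrt{2e})$, so that the relevant spectral-radius bound becomes the fast-magnetosonic speed ${\mathcal C}_i$ and the threshold and correction are built from ${\mathcal A}_1$ and the ${\mathcal D}$-terms. Since ${\mathscr{C}}_s<{\mathcal C}_s$ (as recorded in the proof of Proposition~\ref{prop:alpha}), this trades a larger ``acoustic'' part for a smaller pressure correction, so neither of $\widetilde\alpha_1,\widetilde{\widetilde\alpha}_1$ dominates the other and both are legitimate alternatives. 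I expect the hard case $|v_1^*|>M$ to be the main obstacle: one must pin down the decomposition of $\Pi_f$ so that \emph{every} term surviving the limits $v_1^*,{\bf B}^*\to\pm\infty$ is absorbed into $\Pi_u$ with the sharp constant $\widetilde\alpha_1$, and check that the Young-type balancing reproduces exactly the ${\mathscr{D}}$-terms and not something weaker; by contrast, the cancellation in the easy case is immediate once the 1D DDF condition $\hat B_1=\check B_1$ is used.
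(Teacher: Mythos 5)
Your first paragraph reproduces the paper's mechanism correctly: write $2\bigl(\overline{\bf U}\cdot{\bf n}^*+\tfrac{|{\bf B}^*|^2}{2}\bigr)$ as a sum of two one-state expressions, insert the cubic correction terms of \eqref{eq:MHD:LLFsplit:one-state} (which cancel exactly because $\hat B_1=\check B_1$), and invoke Proposition \ref{lem:fornewGLF}; your verification that $\widehat\alpha_1(\cdot,v_1^*)\le\widetilde\alpha_1$ whenever $|v_1^*|\le M$ is also exactly the computation in the paper. The gap is your ``hard case'' $|v_1^*|>M$. There you assert a direct two-state estimate $|\Pi_f|\le\widetilde\alpha_1\Pi_u$ via a ``Young-type balance (chosen so the residual constant vanishes)'' that is claimed to ``produce exactly'' $\hat{\mathscr D}_1$ and $\check{\mathscr D}_1$, but no such computation is carried out, and it is not at all clear that it can be: the signed denominators $A_1-\hat v_1$ and $A_1+\check v_1$ (note: signed $\hat v_1$, $\check v_1$, not absolute values) do not arise naturally from absorbing a term linear in $v_1^*$ into $\tfrac{\hat\rho}{2}(v_1^*-\hat v_1)^2$ and $\tfrac{\check\rho}{2}(v_1^*-\check v_1)^2$. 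As written, the hard case is an unproved claim, so the proposal is incomplete.

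The paper shows that the hard case is unnecessary, and this is the idea you are missing. Since $\overline{\bf U}\cdot{\bf n}^*+\tfrac{|{\bf B}^*|^2}{2}=\tfrac{\overline\rho}{2}|\overline{\bf v}-{\bf v}^*|^2+\tfrac12|\overline{\bf B}-{\bf B}^*|^2+{\mathcal E}(\overline{\bf U})$, positivity for \emph{all} $({\bf v}^*,{\bf B}^*)$ is equivalent to positivity at the single minimizing point $({\bf v}^*,{\bf B}^*)=(\overline{\bf v},\overline{\bf B})$, where the expression equals ${\mathcal E}(\overline{\bf U})$. So one only ever needs your easy case, applied with $v_1^*=\overline v_1$; the entire content of the proof is then the elementary bound $|\overline v_1|\le M$, which follows from the explicit convex-combination form of $\overline\rho=\tfrac1{2\alpha}\bigl(\hat\rho(\alpha-\hat v_1)+\check\rho(\alpha+\check v_1)\bigr)$ and of $\overline m_1$ (whose flux part contributes $\mp(p_{tot}-B_1^2)/(2\alpha)$), together with $\alpha>\widetilde\alpha_1\ge A_1$. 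That computation is precisely where the denominators $A_1-\hat v_1$ and $A_1+\check v_1$ in $\hat{\mathscr D}_1$, $\check{\mathscr D}_1$ come from --- they are the weights in $\overline\rho$, not constants produced by a Young inequality. The same reduction, with ${\mathcal C}_1>{\mathscr C}_1$, handles the ideal-EOS bound $\widetilde{\widetilde\alpha}_1$. If you replace your hard case by this observation, your argument becomes the paper's proof.
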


\begin{proof}
	Let $\overline \rho$, $\overline {\bf v}$ 
	and $\overline {\bf B}$ respectively denote the density, velocity and magnetic field corresponding to 
	$\overline{\bf U}$. 
	Note that 
	$$\widetilde \alpha_1(\hat{\bf U},\check{\bf U}) > 
	\max\{ |\hat v_1|, |\check v_1| \},$$ 
	which implies 
	$$
	\overline \rho =
	\frac12 \bigg( \hat \rho \Big(1-\frac{\hat v_1}{\alpha}\Big) + \check\rho \Big(1+\frac{\check v_1}{\alpha}\Big) \bigg) > 0,$$
	for any $\alpha > \widetilde\alpha_1 (\hat{\bf U},\check{\bf U}) $. 
	Then it only needs to show ${\mathcal E} (\overline {\bf U}) >0$. 
	Define 
	$\overline{\bf n} = \big( \frac{|\overline{\bf v}|^2}2,~- \overline{\bf v},~-\overline{\bf B},~1 \big)^\top,$  
	then we can reformulate $2{\mathcal E} (\overline {\bf U}) $ as 
	\begin{align} \nonumber
		2{\mathcal E} (\overline {\bf U}) 
		& = 2\overline{\bf U} \cdot \overline{\bf n} 
		+ {|\overline{\bf B} |^2 }
		=  \bigg( \hat{\bf U} - \frac{ {\bf F}_1(\hat{\bf U})}{\alpha}
		+
		\check{\bf U} + \frac{ {\bf F}_1(\check{\bf U})}{\alpha} \bigg) 
		\cdot \overline{\bf n} 
		+ {|\overline{\bf B} |^2 } 
		\\ \nonumber
		&= \bigg[\bigg( \hat{\bf U} - \frac{ {\bf F}_1(\hat{\bf U})}{\alpha}
		\bigg) \cdot \overline{\bf n} + \frac{|\overline{\bf B}|^2}{2}
		-  \frac{  1 }{\alpha} \bigg( \overline v_1 \frac{|\overline{\bf B}|^2}{2} - \hat B_1(\overline{\bf v} \cdot \overline{\bf B}) \bigg)	\bigg]
		\\ \nonumber
		&\quad + \bigg[ \bigg( \check{\bf U} + \frac{ {\bf F}_1(\check{\bf U})}{\alpha}
		\bigg) \cdot \overline{\bf n} + \frac{|\overline{\bf B}|^2}{2}
		+  \frac{  1 }{\alpha} \bigg( \overline v_1 \frac{|\overline{\bf B}|^2}{2} - \check B_1(\overline{\bf v} \cdot \overline{\bf B}) \bigg) \bigg]
		\\ \label{eq:aaawu11}
		&=: \Pi_1 + \Pi_2,	
	\end{align}
	where the DDF condition \eqref{eq:descrite1DDIV} has been used. 
	We then use Proposition \ref{lem:fornewGLF} to prove $\Pi_i>0,i=1,2,$ by 
	verifying that
	$$
	\widetilde\alpha_1 (\hat{\bf U},\check{\bf U}) \ge  
	\widehat \alpha_1 (\hat{ \bf U }, \overline v_1),\qquad 
	\widetilde\alpha_1 (\hat{\bf U},\check{\bf U}) \ge  
	\widehat \alpha_1 (\check{ \bf U }, \overline v_1).
	$$
	It is sufficient to show that
	$$
	\widetilde\alpha_1 (\hat{\bf U},\check{\bf U}) \ge 
	\max\{ |\hat v_1|, |\overline v_1|, |\check v_1| \} 
	+ \max \{ \hat{\mathscr{C}}_1, \check{\mathscr{C}}_1
	\},
	$$
	or equivalently 
	$$
	\max \{ |\hat v_1| + \hat{\mathscr{D}}_1, |\check v_1| + \check{\mathscr{D}}_1
	\} \ge \max\{ |\hat v_1|, |\overline v_1|, |\check v_1| \},
	$$
	which can be verified easily by noting that 
	$$
	A_1 \le \widetilde\alpha_1 (\hat{\bf U},\check{\bf U})  < \alpha.
	$$	
	Therefore, $\Pi_i>0,i=1,2,$ by Proposition \ref{lem:fornewGLF}. 
	It follows from \eqref{eq:aaawu11} that ${\mathcal E} (\overline {\bf U})>0$. Hence $\overline {\bf U} \in {\mathcal G}$ for any $\alpha > {\widetilde\alpha}_1 (\hat{\bf U},\check{\bf U})$. 
	
	In the ideal EOS case, by noting that ${\mathcal C}_1 > {\mathscr{C}}_1$, 
	similar arguments   
	imply $\overline {\bf U} \in {\mathcal G}$ for any $\alpha > \widetilde{\widetilde\alpha}_1 (\hat{\bf U},\check{\bf U})$.  
	The proof is completed. 
\end{proof}

\begin{remark}
	It is worth mentioning that 
	the estimated lower bounds $\widetilde \alpha_1$ and $\widetilde{ \widetilde \alpha}_1$ are not as sharp as the bound  
	$\alpha_1$ in Theorem \ref{theo:MHD:LLFsplit1D}. 
	Note that here the lower bound of $\alpha$ is said to be {\em sharper} if it is {\em smaller}, indicating that the resulting generalized 
	LF splitting properties hold for a {\em larger} 
	range of $\alpha$. 
	The {\em sharper} (i.e., {\em smaller}) lower bound is {\em more desirable}, because it corresponds to a {\em less dissipative} LF flux (allowing smaller numerical viscosity) in our provably PP schemes.
\end{remark}

\begin{remark}
	Let define 
	$$ {\mathscr H }_i({\bf U}) :=| v_i| + \frac{| p_{ tot}-  B_i^2|}{
		\rho {\mathscr{C}}_i},\quad 
	{\mathcal H }_i({\bf U}) :=| v_i| + \frac{| p_{ tot}- \ B_i^2|}{
		\rho {\mathcal{C}}_i}.
	$$
	Since $\widetilde\alpha_1$ and $\widetilde{\widetilde\alpha}_1$ in Proposition \ref{prop:n1DgLF} only play the role of lower range bounds, 
	they can be replaced with some simpler but larger (not sharp) ones, 
	e.g., 
	\begin{align*} 
		&\widetilde\alpha_1 (\hat{\bf U},\check{\bf U}) 
		\qquad 
		\longleftrightarrow
		\qquad 
		\max \{ \hat{\mathscr{C}}_1, \check{\mathscr{C}}_1
		\} + 
		\max \big\{ {\mathscr H }_1(\hat{\bf U}), {\mathscr H }_1(\check{\bf U})
		\big\},
		\\
		&
		\widetilde{ \widetilde \alpha}_1 (\hat{\bf U},\check{\bf U}) 
		\qquad \longleftrightarrow \qquad 
		\max \{ \hat{\mathcal{C}}_1, \check{\mathcal{C}}_1
		\} + 
		\max \big\{ {\mathcal H }_1(\hat{\bf U}), {\mathcal H }_1(\check{\bf U})
		\big\}.
	\end{align*}
\end{remark}

The 2D and 3D generalized LF splitting properties 
can also be similarly derived by Proposition \ref{lem:fornewGLF}.

\section{3D Positivity-Preserving Analysis}\label{sec:3D}The extension of our PP analysis to 3D case 
is straightforward, and for completeness, also given as follows. 
We only present the main theorems, and omit the proofs, 
which are very similar to the 2D case except for using the 3D generalized LF splitting property in Theorem  \ref{theo:MHD:LLFsplit3D}.

To avoid confusing subscripts, 
the symbols $({\tt x},{\tt y},{\tt z})$ are used to denote the variables $(x_1,x_2,x_3)$ in \eqref{eq:MHD}. 
Assume that the 3D spatial domain is divided into a uniform cuboid mesh 
with cells $\big\{I_{ijk}=({\tt x}_{i-\frac{1}{2}},{\tt x}_{i+\frac{1}{2}})\times
({\tt y}_{j-\frac{1}{2}},{\tt y}_{j+\frac{1}{2}}) 
\times 
({\tt z}_{k-\frac{1}{2}},{\tt z}_{k+\frac{1}{2}}) \big\}$. 
The spatial step-sizes in ${\tt x},{\tt y},{\tt z}$ directions are denoted by
$\Delta x,\Delta y,\Delta z$ respectively. The time interval is also divided into the mesh $\{t_0=0, t_{n+1}=t_n+\Delta t_{n}, n\geq 0\}$
with the time step size $\Delta t_{n}$ determined by the CFL condition. We  use $\bar {\bf U}_{ijk}^n $
to denote the numerical approximation to the cell average of the exact solution over $I_{ijk}$ at time $t_n$.

\subsection{First-order scheme}
We consider the 3D first-order LF scheme
\begin{equation} \label{eq:3DMHD:LFscheme}
\begin{split}
\bar {\bf U}_{ijk}^{n+1} = \bar {\bf U}_{ijk}^n &- \frac{\Delta t_n}{\Delta x} \Big( 
\hat {\bf F}_1 ( \bar {\bf U}_{ijk}^n ,\bar {\bf U}_{i+1,j,k}^n)
- \hat {\bf F}_1 ( \bar {\bf U}_{i-1,j,k}^n ,\bar {\bf U}_{ijk}^n)  \Big) 
\\
&- \frac{\Delta t_n}{\Delta y} \Big( 
\hat {\bf F}_2 ( \bar {\bf U}_{ijk}^n ,\bar {\bf U}_{i,j+1,k}^n)
-  \hat {\bf F}_2 ( \bar {\bf U}_{i,j-1,k}^n ,\bar {\bf U}_{ijk}^n)
\Big)
\\
& 
- \frac{\Delta t_n}{\Delta z} \Big( 
\hat {\bf F}_3 ( \bar {\bf U}_{ijk}^n ,\bar {\bf U}_{i,j,k+1}^n)
-  \hat {\bf F}_3 ( \bar {\bf U}_{i,j,k-1}^n ,\bar {\bf U}_{ijk}^n)
\Big) ,
\end{split}
\end{equation}
where $\hat {\bf F}_\ell (\cdot,\cdot), \ell=1,2,3,$ are the LF fluxes in \eqref{eq:LFflux}. 
We have the following conclusions.

\begin{theorem}\label{theo:3DcounterEx}
	Let $\alpha_{\ell,n}^{\tt LF} = \chi \max_{ijk}{ {\mathscr{R}}_\ell (\bar{\bf U}^n_{ij})}$ with the 
	constant $\chi \ge 1$, and 
	$$\Delta t_n = \frac{ {\tt C} }{ \alpha_{1,n}^{\tt LF} /\Delta x + \alpha_{2,n}^{\tt LF}  / \Delta y 
		+ \alpha_{3,n}^{\tt LF}  / \Delta z}, $$
	where ${\tt C}>0$ is the CFL number. 
	For any given constants $\chi$ and $\tt C$, 
	there always exists a set of admissible states 
	$\{  \bar {\bf U}_{ijk}^{n},\forall i,j,k\}$ 
	such that the solution $\bar {\bf U}_{ijk}^{n+1}$ 
	of \eqref{eq:3DMHD:LFscheme} does not belong to $\mathcal G$. 
	In other words, for any given $\chi$ and $\tt C$, 
	the admissibility of $\{  \bar {\bf U}_{ijk}^{n}, \forall i,j,k \}$ does not always guarantee that $\bar {\bf U}_{ijk}^{n+1} \in {\mathcal G}$, $\forall i,j,k$. 
\end{theorem}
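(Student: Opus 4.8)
The plan is to follow the proof of Theorem~\ref{theo:counterEx} almost verbatim, since the 2D construction there uses only neighbours of a single cell in the $x_1$-direction and keeps all transverse quantities trivial. I would argue by contradiction: assume there exist a constant $\chi\ge1$ and a CFL number ${\tt C}>0$ such that $\bar{\bf U}^n_{ijk}\in{\mathcal G}$ for all $i,j,k$ always forces $\bar{\bf U}^{n+1}_{ijk}\in{\mathcal G}$ for all $i,j,k$. Then I would exhibit a configuration of admissible states on which this fails, namely (for an ideal gas) the direct 3D transplant of the 2D data \eqref{eq:counterEx2D}: set $\bar{\bf U}^n$ equal to the first state of \eqref{eq:counterEx2D} on $I_{i-1,j,k}$, equal to the second state on $I_{i+1,j,k}$, and equal to the third ("otherwise") state on every other cell, in particular on $I_{ijk}$, $I_{i,j\pm1,k}$ and $I_{i,j,k\pm1}$, with $0<{\tt p}<1/\gamma$ and $0<\epsilon<1/\chi$. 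These states are slight perturbations of a constant state, are manifestly in ${\mathcal G}$, have $\rho\equiv1$, $v_2=v_3=0$, $B_2=B_3=0$, and a nonzero discrete divergence of the numerical magnetic field of order $\epsilon/\Delta x$ — precisely the situation in which Theorem~\ref{thm:PP:2DMHD:further} anticipates loss of positivity.

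Second, I would exploit that, since $I_{ijk}$, $I_{i,j\pm1,k}$ and $I_{i,j,k\pm1}$ all carry the same constant state and $v_2=v_3=B_2=B_3=0$, the LF fluxes $\hat{\bf F}_2$ and $\hat{\bf F}_3$ on opposite faces of $I_{ijk}$ coincide; hence only the $x_1$-flux difference contributes to the update \eqref{eq:3DMHD:LFscheme}, and the corresponding mass flux difference also vanishes, so $\bar\rho^{n+1}_{ijk}=1>0$ automatically. The gamma-law spectral radii $\alpha^{\tt LF}_{1,n}$, $\alpha^{\tt LF}_{2,n}$, $\alpha^{\tt LF}_{3,n}$ are then elementary because the transverse magnetic components vanish, and with $\Delta t_n={\tt C}/(\alpha^{\tt LF}_{1,n}/\Delta x+\alpha^{\tt LF}_{2,n}/\Delta y+\alpha^{\tt LF}_{3,n}/\Delta z)$ one sets $\lambda_1:=\alpha^{\tt LF}_{1,n}\Delta t_n/\Delta x$. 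This $\lambda_1$ is now a function of ${\tt p}$, $\epsilon$ and the three mesh ratios, but, just as in the 2D proof, it enters only through a positive, bounded factor and does not affect the sign analysis. Substituting into \eqref{eq:3DMHD:LFscheme} yields a one-parameter family $\bar{\bf U}^{n+1}_{ijk}={\bf U}({\tt p},\epsilon)\in{\mathcal G}$ (with $\epsilon$ fixed), hence ${\mathcal E}({\bf U}({\tt p},\epsilon))>0$ for all admissible ${\tt p}$.

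Finally, by the continuity of ${\mathcal E}(\cdot)$ on ${\mathbb R}^+\times{\mathbb R}^7$, letting ${\tt p}\to0^+$ gives $0\le{\mathcal E}(\lim_{{\tt p}\to0^+}{\bf U}({\tt p},\epsilon))$; a direct evaluation of this limit — whose negative part is governed, exactly as in Theorem~\ref{thm:PP:2DMHD:further}, by $-\Delta t_n(\bar{\bf v}^{n+1}_{ijk}\cdot\bar{\bf B}^{n+1}_{ijk})\,{\rm div}_{ijk}\bar{\bf B}^n$ and is of order $-\epsilon{\tt C}$ — is strictly negative for every fixed $\chi$ and every sufficiently small $\epsilon>0$. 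This contradiction completes the argument. The only genuine obstacle is the same bookkeeping as in Theorem~\ref{theo:counterEx}: choosing the ${\mathcal O}(\epsilon)$ perturbations in $B_1$, $v_1$ and $E$ so that simultaneously every state stays admissible over an open range of $({\tt p},\epsilon)$, the transverse structure stays trivial enough for closed-form $\alpha^{\tt LF}_{\ell,n}$, and the ${\tt p}\to0^+$ limit of ${\mathcal E}(\bar{\bf U}^{n+1}_{ijk})$ is genuinely negative rather than zero. I expect the 2D data to transplant without change, so the extra work over the 2D case reduces to checking that the three-direction normalization of $\Delta t_n$ does not spoil the sign of the limiting internal energy, which it does not because $\Delta t_n>0$ and the mesh ratios enter only a positive denominator.
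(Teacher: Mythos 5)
Your proposal is correct and is essentially the proof the paper intends: the appendix explicitly omits the 3D proofs as "very similar to the 2D case," and transplanting the data \eqref{eq:counterEx2D} with the third state on all cells other than $I_{i\pm1,j,k}$ makes the $x_2$- and $x_3$-flux differences vanish identically, so the update reduces to the 2D computation with $\lambda_1$ merely rescaled by the positive three-direction denominator in $\Delta t_n$. The limiting internal energy as ${\tt p}\to 0^+$ then remains strictly negative for $\epsilon\in(0,1/\chi)$, exactly as in Theorem \ref{theo:counterEx}, yielding the contradiction.
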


\begin{theorem} \label{theo:3DMHD:LFscheme}
	If for all $i,j,k$, $\bar {\bf U}_{ijk}^n \in {\mathcal G}$ and satisfies the following DDF condition
	\begin{equation}\label{eq:3DDisDivB}
	\begin{split}
	& \mbox{\rm div} _{ijk} \bar {\bf B}^n :=  \frac{ \left( \bar  B_1\right)_{i+1,j,k}^n - \left( \bar  B_1 \right)_{i-1,j,k}^n } {2\Delta x}
	\\
	& 
	+ \frac{ \left( \bar  B_2 \right)_{i,j+1,k}^n - \left( \bar B_2 \right)_{i,j-1,k}^n } {2\Delta y}
	+ \frac{ \left( \bar  B_3 \right)_{i,j,k+1}^n - \left( \bar B_3 \right)_{i,j,k-1}^n } {2\Delta z}  = 0,
	\end{split}
	\end{equation}	
	then the solution $ \bar {\bf U}_{ijk}^{n+1}$ of \eqref{eq:3DMHD:LFscheme} always belongs to ${\mathcal G}$ under the CFL condition
	\begin{equation}\label{eq:CFL:LF3D}
	0< \frac{ \alpha_{1,n}^{\tt LF} \Delta t_n}{\Delta x} + \frac{ \alpha_{2,n}^{\tt LF} \Delta t_n}{\Delta y}  
	+ \frac{ \alpha_{3,n}^{\tt LF} \Delta t_n}{\Delta z} \le  1,
	\end{equation}
	where the parameters $\{\alpha_{\ell,n}^{\tt LF}\}$ satisfy
	\begin{equation}\label{eq:Lxa123}
	\begin{gathered}
	\alpha_{1,n}^{\tt LF} > \max_{i,j,k} \alpha_1 ( \bar {\bf U}_{i+1,j,k}^n, \bar {\bf U}_{i-1,j,k}^n ),\quad
	\alpha_{2,n}^{\tt LF} > \max_{i,j,k} \alpha_2 ( \bar {\bf U}_{i,j+1,k}^n, \bar {\bf U}_{i,j-1,k}^n ),
	\\
	\alpha_{3,n}^{\tt LF} > \max_{i,j,k} \alpha_3 ( \bar {\bf U}_{i,j,k+1}^n, \bar {\bf U}_{i,j,k-1}^n ).
	\end{gathered}
	\end{equation}
\end{theorem}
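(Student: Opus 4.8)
The plan is to mirror the two-dimensional argument in the proof of Theorem~\ref{theo:2DMHD:LFscheme}, replacing the two-dimensional generalized LF splitting property by its three-dimensional counterpart, Theorem~\ref{theo:MHD:LLFsplit3D}. First I would substitute the LF flux~\eqref{eq:LFflux} into the scheme~\eqref{eq:3DMHD:LFscheme} and collect terms direction by direction. Writing $\lambda_1=\alpha_{1,n}^{\tt LF}\Delta t_n/\Delta x$, $\lambda_2=\alpha_{2,n}^{\tt LF}\Delta t_n/\Delta y$, $\lambda_3=\alpha_{3,n}^{\tt LF}\Delta t_n/\Delta z$ and $\lambda=\lambda_1+\lambda_2+\lambda_3$, the CFL condition~\eqref{eq:CFL:LF3D} guarantees $\lambda\in(0,1]$, and a routine rearrangement recasts the update as the convex combination
\begin{equation*}
\bar{\bf U}_{ijk}^{n+1}=(1-\lambda)\,\bar{\bf U}_{ijk}^{n}+\lambda\,{\bf \Xi},
\end{equation*}
where ${\bf \Xi}$ is the weighted average, with weights $\lambda_\ell/\lambda$, of the three one-directional LF splitting combinations: the $x$-term
\[
\frac12\left(\bar{\bf U}_{i+1,j,k}^{n}-\frac{{\bf F}_1(\bar{\bf U}_{i+1,j,k}^{n})}{\alpha_{1,n}^{\tt LF}}+\bar{\bf U}_{i-1,j,k}^{n}+\frac{{\bf F}_1(\bar{\bf U}_{i-1,j,k}^{n})}{\alpha_{1,n}^{\tt LF}}\right)
\]
and its analogues in the $y$- and $z$-directions.

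Next I would recognize ${\bf \Xi}$ as precisely the state $\overline{\bf U}$ appearing in Theorem~\ref{theo:MHD:LLFsplit3D} with ${\tt Q}=1$, $\omega_1=1$, and the six neighbour states $\bar{\bf U}^1=\bar{\bf U}_{i+1,j,k}^{n}$, $\tilde{\bf U}^1=\bar{\bf U}_{i-1,j,k}^{n}$, $\hat{\bf U}^1=\bar{\bf U}_{i,j+1,k}^{n}$, $\check{\bf U}^1=\bar{\bf U}_{i,j-1,k}^{n}$, $\acute{\bf U}^1=\bar{\bf U}_{i,j,k+1}^{n}$, $\grave{\bf U}^1=\bar{\bf U}_{i,j,k-1}^{n}$, all of which lie in $\mathcal G$ by hypothesis. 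The parameter bounds~\eqref{eq:Lxa123} are exactly those demanded in Theorem~\ref{theo:MHD:LLFsplit3D} (the global maxima over $i,j,k$ dominate each local six-tuple), and the DDF hypothesis~\eqref{eq:3DDisDivB}, after cancelling the common factor $\frac12$, is exactly the three-dimensional discrete divergence-free condition required there. Hence Theorem~\ref{theo:MHD:LLFsplit3D} yields ${\bf \Xi}\in\mathcal G$; in particular the positivity of its density component is automatic, since $|v_i|<\alpha_{i,n}^{\tt LF}$ on the relevant states. Finally, combining $\bar{\bf U}_{ijk}^{n}\in\mathcal G$, $\lambda\in(0,1]$ and the convexity of $\mathcal G$ (Lemma~\ref{theo:MHD:convex}) gives $\bar{\bf U}_{ijk}^{n+1}\in\mathcal G$, completing the argument.

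All of this is bookkeeping: the genuine analytic content has already been established in Theorem~\ref{theo:MHD:LLFsplit3D} and, upstream, in the constructive inequality of Lemma~\ref{theo:MHD:LLFsplit}. The only steps that demand a little care are checking that the one-directional splitting terms assemble into exactly the convex-combination shape required by Theorem~\ref{theo:MHD:LLFsplit3D} with the stated weights, and verifying that~\eqref{eq:3DDisDivB} coincides with the required discrete divergence-free condition up to the harmless factor of two. I do not expect any obstacle specific to three dimensions beyond the heavier index juggling; consequently the proof can legitimately be stated as ``similar to that of Theorem~\ref{theo:2DMHD:LFscheme}'' with the three-dimensional splitting property in place of the two-dimensional one.
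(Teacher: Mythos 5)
Your proposal is correct and follows exactly the route the paper intends: the paper omits this proof, stating only that it parallels Theorem \ref{theo:2DMHD:LFscheme} with the 3D generalized LF splitting property (Theorem \ref{theo:MHD:LLFsplit3D}) in place of the 2D one, and your convex-combination decomposition $\bar{\bf U}_{ijk}^{n+1}=(1-\lambda)\bar{\bf U}_{ijk}^{n}+\lambda{\bf \Xi}$ with ${\bf \Xi}$ identified as the ${\tt Q}=1$ instance of that theorem is precisely that argument. The bookkeeping you flag (the weights $\lambda_\ell/\lambda$ and the harmless factor of two in the DDF condition) is handled correctly.
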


\begin{theorem} \label{theo:3DDivB:LFscheme}
	For the LF scheme \eqref{eq:3DMHD:LFscheme},
	the divergence error 
	$$ \varepsilon_{\infty}^n := \max_{ijk} \left| {\rm div}_{ijk} \bar {\bf B}^{n} \right| ,$$ 
	does not grow with $n$
	under the condition \eqref{eq:CFL:LF3D}.
	Furthermore, the numerical solutions 
	$\{\bar {\bf U}_{ijk}^n\}$ 
	satisfy \eqref{eq:3DDisDivB}
	for all $i,j,k$ and $n \in \mathbb{N}$, if \eqref{eq:3DDisDivB} holds for the discrete initial data $\{\bar {\bf U}_{ijk}^0\}$.
\end{theorem}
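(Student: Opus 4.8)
The plan is to follow the proof of Theorem~\ref{theo:2DDivB:LFscheme} closely, simply adding the third spatial direction. First I would insert the LF flux \eqref{eq:LFflux} into the scheme \eqref{eq:3DMHD:LFscheme} and read off the update formulas for the three magnetic components $(\bar B_\ell)_{ijk}^{n+1}$, $\ell=1,2,3$. The key structural facts are that the flux of $B_\ell$ in the $x_\ell$-direction vanishes identically, while for $i\neq j$ the off-diagonal magnetic fluxes are antisymmetric, i.e.\ the $B_j$-component of ${\bf F}_i$ equals minus the $B_i$-component of ${\bf F}_j$ (up to sign these are the three components $v_iB_j-v_jB_i$ of the electromotive force). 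Since the two-point central averages appearing in the LF flux telescope between neighbouring cells, this antisymmetry causes all physical-flux contributions to cancel in pairs once one assembles the discrete divergence ${\rm div}_{ijk}\bar{\bf B}^{n+1}$; only the artificial-viscosity terms remain. This is a discrete version of $\nabla\cdot(\nabla\times\,\cdot\,)=0$.

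Each surviving viscosity term is a scaled discrete second difference of a single magnetic component in one coordinate direction, and ${\rm div}_{ijk}$ commutes with constant-coefficient index shifts. Hence, writing $\lambda_\ell:=\alpha_{\ell,n}^{\tt LF}\Delta t_n/\Delta x_\ell$ and $\lambda:=\lambda_1+\lambda_2+\lambda_3\in(0,1]$ (by \eqref{eq:CFL:LF3D}), I expect to obtain the identity
\begin{align*}
{\rm div}_{ijk}\bar{\bf B}^{n+1}&=(1-\lambda)\,{\rm div}_{ijk}\bar{\bf B}^{n}
+\frac{\lambda_1}{2}\big({\rm div}_{i+1,j,k}\bar{\bf B}^n+{\rm div}_{i-1,j,k}\bar{\bf B}^n\big)\\
&\quad
+\frac{\lambda_2}{2}\big({\rm div}_{i,j+1,k}\bar{\bf B}^n+{\rm div}_{i,j-1,k}\bar{\bf B}^n\big)
+\frac{\lambda_3}{2}\big({\rm div}_{i,j,k+1}\bar{\bf B}^n+{\rm div}_{i,j,k-1}\bar{\bf B}^n\big).
\end{align*}
Taking $\max_{ijk}|\cdot|$ and using the triangle inequality together with $(1-\lambda)+\lambda_1+\lambda_2+\lambda_3=1$ then yields $\varepsilon_\infty^{n+1}\le\varepsilon_\infty^n$, so the divergence error does not grow with $n$. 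A trivial induction on $n$ finishes the second assertion: if $\varepsilon_\infty^0=0$ then $\varepsilon_\infty^n=0$ for all $n$, i.e.\ \eqref{eq:3DDisDivB} holds at every time level.

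The main obstacle is the bookkeeping in the cancellation step: for each of the shifted cells $(i\pm1,j,k)$, $(i,j\pm1,k)$, $(i,j,k\pm1)$ one must verify that the EMF contributions coming from the three ``rows'' of ${\rm div}_{ijk}$ pair up with the correct opposite sign, exactly as in the $d=2$ case where the single component $v_1B_2-v_2B_1$ dropped out; here there are three such components $v_iB_j-v_jB_i$, each appearing in precisely the two rows not associated with its ``own'' coordinate direction. This is routine but slightly tedious; everything else is immediate from the triangle inequality and the induction above.
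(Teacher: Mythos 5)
Your proposal is correct and is essentially the proof the paper intends: the paper omits the 3D argument, referring back to the 2D case, and its proof of Theorem \ref{theo:2DDivB:LFscheme} is exactly your argument — linearity of ${\rm div}_{ijk}$, cancellation of the antisymmetric EMF contributions, the resulting convex-combination identity for ${\rm div}_{ijk}\bar{\bf B}^{n+1}$ with coefficients $1-\lambda,\ \lambda_\ell/2$ summing to one, and the triangle inequality plus induction. No gaps.
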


\begin{theorem} \label{theo:3DFullPP:LFscheme}
	Assume that the discrete initial data $\{\bar {\bf U}_{ijk}^0\}$ are admissible and satisfy \eqref{eq:3DDisDivB}, which can be met by, e.g.,  the following second-order approximation
	\begin{align*}
		&\Big( \bar \rho_{ijk}^0, \bar {\bf m}_{ijk}^0, \overline {(\rho e)}_{ijk}^0 \Big)
		= \frac{1}{\Delta x \Delta y \Delta z} \iint_{I_{ijk}} \big( \rho,{\bf m }, \rho e \big) ({\tt x},{\tt y},{\tt z},0) d{\tt x} d{\tt y} d{\tt z},
		\\
		& \left( \bar B_1 \right)_{ijk}^0 = \frac{1}{4 \Delta y\Delta z} \int_{ {\tt y}_{j-1} }^{ {\tt y}_{j+1 } } 
		\int_{ {\tt z}_{k-1} }^{ {\tt z}_{k+1 } } B_1( {\tt x}_i,{\tt y},{\tt z},0) d{\tt y} d{\tt z},
		\\
		&
		\left( \bar B_2 \right)_{ijk}^0 = \frac{1}{4 \Delta x  \Delta z} \int_{ {\tt x}_{i-1} }^{ {\tt x}_{i+1 } } 
		\int_{ {\tt z}_{k-1} }^{ {\tt z}_{k+1 } } B_2({\tt x},{\tt y}_j,{\tt z},0) d {\tt x} d{\tt z},
		\\
		&
		\left( \bar B_3 \right)_{ijk}^0 = \frac{1}{4 \Delta x  \Delta y} \int_{ {\tt x}_{i-1} }^{ {\tt x}_{i+1 } } 
		\int_{ {\tt y}_{j-1} }^{ {\tt y}_{j+1 } } B_3({\tt x},{\tt y},{\tt z}_k,0) d {\tt x} d{\tt y},
		\\		
		&~\bar E_{ijk}^0 = \overline {(\rho e )}_{ijk}^0 + \frac12 \left( \frac{ |\bar{\bf m}_{ijk}^0|^2}{\bar \rho_{ijk}^0} + |\bar{\bf B}_{ijk}^0|^2 \right).
	\end{align*}
	If the parameters $\{\alpha_{\ell,n}^{\tt LF}\}$ satisfy \eqref{eq:Lxa123}, then under the CFL condition \eqref{eq:CFL:LF3D}, 
	the LF scheme \eqref{eq:3DMHD:LFscheme} preserve both $\bar {\bf U}_{ijk}^{n+1} \in {\mathcal G}$ and the DDF condition \eqref{eq:3DDisDivB} for all $i$, $j$, $k$, and $n \in \mathbb{N}$.
\end{theorem}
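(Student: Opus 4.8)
The plan is to prove Theorem \ref{theo:3DFullPP:LFscheme} exactly as the 2D statement (Theorem \ref{theo:FullPP:LFscheme}) was proved: by combining the conditional positivity result of Theorem \ref{theo:3DMHD:LFscheme} with the DDF-preservation result of Theorem \ref{theo:3DDivB:LFscheme}, after first checking that the prescribed second-order initial data satisfy the 3D DDF condition \eqref{eq:3DDisDivB} and are admissible. The whole argument is then an induction on $n$.

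First I would verify the claim about the initial data. Writing $\left(\bar B_1\right)_{i\pm1,j,k}^0$ as the prescribed double integrals over the face $[{\tt y}_{j-1},{\tt y}_{j+1}]\times[{\tt z}_{k-1},{\tt z}_{k+1}]$ and using the fundamental theorem of calculus in the ${\tt x}$-variable together with ${\tt x}_{i+1}-{\tt x}_{i-1}=2\Delta x$, the first difference quotient in \eqref{eq:3DDisDivB} equals $\frac{1}{8\Delta x\Delta y\Delta z}\iint_{J}\partial_{\tt x}B_1\,d{\tt x}\,d{\tt y}\,d{\tt z}$ with $J=[{\tt x}_{i-1},{\tt x}_{i+1}]\times[{\tt y}_{j-1},{\tt y}_{j+1}]\times[{\tt z}_{k-1},{\tt z}_{k+1}]$, and the same manipulation applied to the $B_2$- and $B_3$-terms (using their respective averaging stencils) gives $\frac{1}{8\Delta x\Delta y\Delta z}\iint_{J}\partial_{\tt y}B_2$ and $\frac{1}{8\Delta x\Delta y\Delta z}\iint_{J}\partial_{\tt z}B_3$. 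Summing, $\mbox{\rm div}_{ijk}\bar{\bf B}^0 = \frac{1}{8\Delta x\Delta y\Delta z}\iint_{J}(\partial_{\tt x}B_1+\partial_{\tt y}B_2+\partial_{\tt z}B_3)\,d{\tt x}\,d{\tt y}\,d{\tt z}=0$ by the exact divergence-free condition \eqref{eq:2D:BxBy0}. Admissibility of $\bar{\bf U}_{ijk}^0$ is immediate from the definition of $\bar E_{ijk}^0$, which forces ${\mathcal E}(\bar{\bf U}_{ijk}^0)=\overline{(\rho e)}_{ijk}^0>0$ while $\bar\rho_{ijk}^0>0$.

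Then I would run the induction. For $n=0$ the hypotheses hold by the previous paragraph. Assuming $\bar{\bf U}_{ijk}^n\in{\mathcal G}$ and \eqref{eq:3DDisDivB} for all $i,j,k$, the viscosity bounds \eqref{eq:Lxa123} and the CFL condition \eqref{eq:CFL:LF3D} let me apply Theorem \ref{theo:3DMHD:LFscheme} to conclude $\bar{\bf U}_{ijk}^{n+1}\in{\mathcal G}$, and Theorem \ref{theo:3DDivB:LFscheme} (whose proof rests on the linearity of $\mbox{\rm div}_{ijk}$ and the convex-combination structure of the LF update, in complete analogy with Theorem \ref{theo:2DDivB:LFscheme}) gives $\mbox{\rm div}_{ijk}\bar{\bf B}^{n+1}=0$ for all $i,j,k$. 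This closes the induction and yields both $\bar{\bf U}_{ijk}^n\in{\mathcal G}$ and \eqref{eq:3DDisDivB} for every $n\in\mathbb N$.

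There is no real obstacle: the theorem is a corollary of the two preceding 3D theorems, which in turn descend from the 3D generalized LF splitting property (Theorem \ref{theo:MHD:LLFsplit3D}) and ultimately from the constructive inequality in Lemma \ref{theo:MHD:LLFsplit}. The only mildly delicate point is bookkeeping in the initial-data verification, namely checking that the three staggered (width $2\Delta x$, $2\Delta y$, $2\Delta z$) face-averages of $B_1,B_2,B_3$ telescope consistently against the divergence integral over the box $J$; once the stencils are written out, the cancellation is exactly the discrete Gauss/Green identity and the rest is routine.
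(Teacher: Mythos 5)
Your proposal is correct and follows exactly the route the paper intends: the theorem is stated as a direct consequence of Theorems \ref{theo:3DMHD:LFscheme} and \ref{theo:3DDivB:LFscheme} (mirroring the 2D proof of Theorem \ref{theo:FullPP:LFscheme}), combined with the verification that the prescribed staggered face-averaged initial data satisfy \eqref{eq:3DDisDivB} via the telescoping/Gauss identity and are admissible since ${\mathcal E}(\bar{\bf U}_{ijk}^0)=\overline{(\rho e)}_{ijk}^0>0$ by construction. No gaps.
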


\subsection{High-order schemes} 

We focus on the forward Euler method for time discretization, and our analysis also works for  
high-order explicit time discretization using the SSP methods \cite{Gottlieb2009}. To achieve high-order accuracy,  
the approximate solution polynomials ${\bf U}_{ijk}^n ({\tt x},{\tt y},{\tt z})$ of degree $\tt K$
are also built, as approximation to the exact solution ${\bf U}({\tt x},{\tt y},{\tt z},t_n)$ within $I_{ijk}$. Such polynomial vector ${\bf U}_{ijk}^n ({\tt x},{\tt y},{\tt z})$
is, either reconstructed in the finite volume methods
from the cell averages $\{\bar {\bf U}_{ijk}^n\}$ or evolved in the DG methods. Moreover, the cell average of ${\bf U}_{ijk}^n({\tt x},{\tt y},{\tt z})$ over $I_{ijk}$ is $\bar {\bf U}_{ijk}^{n}$.

Let $\{  {\tt x}_i^{(\mu)} \}_{\mu=1}^{\tt Q}$, $\{  {\tt y}_j^{(\mu)} \}_{\mu=1}^{\tt Q}$ 
and $\{  {\tt z}_j^{(\mu)} \}_{\mu=1}^{\tt Q}$
denote the $\tt Q$-point Gauss quadrature nodes in the intervals $[ {\tt x}_{i-\frac12}, {\tt x}_{i+\frac12} ]$, $[ {\tt y}_{j-\frac12}, {\tt y}_{j+\frac12} ]$
and $[ {\tt z}_{k-\frac12}, {\tt z}_{k+\frac12} ]$, respectively. Let  $\{\omega_\mu\}_{\mu=1}^{\tt Q}$ be the associated weights satisfying
$\sum_{\mu=1}^{\tt Q} \omega_\mu = 1$.
With the 2D tensorized quadrature rule for approximating the integrals of numerical fluxes on cell interfaces,
a finite volume scheme or discrete equation for the cell average in the DG method can be written as 
\begin{equation}\label{eq:3DMHD:cellaverage}
\begin{split}
\bar {\bf U}_{ijk}^{n+1} & = \bar {\bf U}_{ijk}^{n}
- \frac{\Delta t_n}{\Delta x} \sum\limits_{\mu,\nu}  \omega_\mu 
\omega_\nu \left(
\hat {\bf F}_1( {\bf U}^{-,\mu,\nu}_{i+\frac{1}{2},j,k}, {\bf U}^{+,\mu,\nu}_{i+\frac{1}{2},j,k} ) -
\hat {\bf F}_1( {\bf U}^{-,\mu,\nu}_{i-\frac{1}{2},j,k}, {\bf U}^{+,\mu,\nu}_{i-\frac{1}{2},j,k})
\right)  \\
& \quad -  \frac{ \Delta t_n }{\Delta y} \sum\limits_{\mu,\nu} \omega_\mu 
\omega_\nu \left(
\hat {\bf F}_2( {\bf U}^{\mu,-,\nu}_{i,j+\frac{1}{2},k} , {\bf U}^{\mu,+,\nu}_{i,j+\frac{1}{2},k}   ) -
\hat {\bf F}_2( {\bf U}^{\mu,-,\nu}_{i,j-\frac{1}{2},k} , {\bf U}^{\mu,+,\nu}_{i,j-\frac{1}{2},k}  )
\right)
\\
& \quad -  \frac{ \Delta t_n }{\Delta z} \sum\limits_{\mu,\nu} \omega_\mu 
\omega_\nu \left(
\hat {\bf F}_3( {\bf U}^{\mu,\nu,-}_{i,j,k+\frac{1}{2}} , {\bf U}^{\mu,\nu,+}_{i,j,k+\frac{1}{2}}   ) -
\hat {\bf F}_3( {\bf U}^{\mu,\nu,-}_{i,j,k-\frac{1}{2}} , {\bf U}^{\mu,\nu,+}_{i,j,k-\frac{1}{2}}  )
\right),
\end{split}
\end{equation}
where $\hat {\bf F}_\ell,\ell=1,2,3$ are the LF fluxes in \eqref{eq:LFflux}, and the limiting values are given by
\begin{align*}
	&{\bf U}^{-,\mu,\nu}_{i+\frac{1}{2},j,k} = {\bf U}_{ijk}^n ({\tt x}_{i+\frac12},{\tt y}_j^{(\mu)},{\tt z}_k^{(\nu)}),\qquad
	{\bf U}^{+,\mu,\nu}_{i-\frac{1}{2},j,k} = {\bf U}_{ijk}^n ({\tt x}_{i-\frac12},{\tt y}_j^{(\mu)},{\tt z}_k^{(\nu)}),
	\\
	&{\bf U}^{\mu,-,\nu}_{i,j+\frac{1}{2},k} = {\bf U}_{ijk}^n ({\tt x}_i^{(\mu)},{\tt y}_{j+\frac12},{\tt z}_k^{(\nu)}),\qquad
	{\bf U}^{\mu,+,\nu}_{i,j-\frac{1}{2},k} = {\bf U}_{ijk}^n ({\tt x}_i^{(\mu)},{\tt y}_{j-\frac12},{\tt z}_k^{(\nu)}),
	\\
	&{\bf U}^{\mu,\nu,-}_{i,j,k+\frac{1}{2}} = {\bf U}_{ijk}^n ({\tt x}_i^{(\mu)},{\tt y}_j^{(\nu)},{\tt z}_{k+\frac12}),\qquad
	{\bf U}^{\mu,\nu,+}_{i,j,k-\frac{1}{2}} = {\bf U}_{ijk}^n ({\tt x}_i^{(\mu)},{\tt y}_j^{(\nu)},{\tt z}_{k-\frac12}).
\end{align*}
For the accuracy requirement,  $\tt Q$ should satisfy:
${\tt Q} \ge {\tt K}+1$ for a $\mathbb{P}^{\tt K}$-based DG method, 
or ${\tt Q} \ge ({\tt K}+1)/2$ for a $({\tt K}+1)$-th order finite volume scheme.

We denote 
\begin{align*}
	&	\overline{(B_1)}_{i+\frac{1}{2},j,k}^{\mu,\nu} := \frac12 \left( (B_1)_{i+\frac{1}{2},j,k}^{-,\mu,\nu} + (B_1)_{i+\frac{1}{2},j,k}^{+,\mu,\nu} \right), 
	\\
	& \overline{ ( B_2) }_{i,j+\frac{1}{2},k}^{\mu,\nu}  := \frac12 \left( ( B_2)_{i,j+\frac{1}{2},k}^{\mu,-,\nu} + ( B_2)_{i,j+\frac{1}{2},k}^{\mu,+,\nu} \right),
	\\
	& \overline{ ( B_3) }_{i,j,k+\frac{1}{2}}^{\mu,\nu}  := \frac12 \left( ( B_3)_{i,j,k+\frac{1}{2}}^{\mu,\nu,-} + ( B_3)_{i,j,k+\frac{1}{2}}^{\mu,\nu,+} \right),
\end{align*}
and  
define the discrete divergences of the numerical magnetic field ${\bf B}^n( {\tt x},{\tt y},{\tt z})$ as 
\begin{align*}
	&	{\rm div} _{ijk} {\bf B}^n :=  \frac{\sum\limits_{\mu,\nu} \omega_\mu 
		\omega_\nu \left(   \overline{(B_1)}_{i+\frac{1}{2},j,k}^{\mu,\nu} 
		- \overline{(B_1)}_{i-\frac{1}{2},j,k}^{\mu,\nu}  \right)}{\Delta x}   
	\\
	&		
	+ \frac{\sum \limits_{\mu,\nu} \omega_\mu \omega_\nu \left(  \overline{ ( B_2)}_{i,j+\frac{1}{2},k}^{\mu,\nu} 
		-  \overline{ ( B_2)}_{i,j-\frac{1}{2},k}^{\mu,\nu}    \right)}{\Delta y}
	+ \frac{\sum \limits_{\mu,\nu} \omega_\mu \omega_\nu \left(  \overline{ ( B_3)}_{i,j,k+\frac{1}{2}}^{\mu,\nu} 
		-  \overline{ ( B_3)}_{i,j,k-\frac{1}{2}}^{\mu,\nu}    \right)}{\Delta z}. 
\end{align*} 
Let $\{ \hat {\tt x}_i^{(\delta) }\}_{\delta=1} ^ {\tt L}$, $\{ \hat {\tt y}_j^{(\nu)} \}_{\delta=1} ^{\tt L}$ 
and $\{ \hat {\tt z}_k^{(\delta)} \}_{\delta=1} ^{\tt L}$ 
be the $\tt L$-point Gauss-Lobatto quadrature nodes in the intervals
$[{\tt x}_{i-\frac{1}{2}},{\tt x}_{i+\frac{1}{2}}]$, $[{\tt y}_{j-\frac{1}{2}},{\tt y}_{j+\frac{1}{2}} ]$ 
and $[{\tt z}_{k-\frac{1}{2}},{\tt z}_{k+\frac{1}{2}} ]$, respectively, and
$ \{\hat \omega_\delta\}_{\delta=1} ^ {\tt L}$ be  associated weights satisfying $\sum_{\delta=1}^{\tt L} \hat\omega_\delta = 1$, where  ${\tt L}\ge \frac{{\tt K}+3}2$ such that the
associated quadrature has algebraic precision of at least degree ${\tt K}$. 
Then we have the following sufficient
conditions for that the high-order scheme \eqref{eq:3DMHD:cellaverage} is PP.

\begin{theorem} \label{thm:PP:3DMHD}
	If the polynomial vectors $\{{\bf U}_{ijk}^n({\tt x},{\tt y},{\tt z})\}$ satisfy:
	\begin{align}\label{eq:DivB:cst123}
		&
		\mbox{\rm div} _{ijk} {\bf B}^n  = 0, \quad \forall i,j,k,
		\\
		&{\bf U}_{ijk}^n ({\bf x}) \in {\mathcal G},\quad \forall{\bf x} \in {\Theta}_{ijk}, \forall i,j,k,
		\label{eq:3Dadmissiblity}
	\end{align}
	with 
	${\Theta}_{ijk}:=\big\{( \hat {\tt x}_i^{(\delta)},{\tt y}_j^{(\mu)},{\tt z}_k^{(\nu)} ),~( {\tt x}_i^{(\mu)},  \hat {\tt y}_j^{(\delta)},{\tt z}_k^{(\nu)} ),~( {\tt x}_i^{(\mu)}, 
	{\tt y}_j^{(\nu)}, \hat{\tt z}_k^{(\delta)} ), \forall \mu,\nu,\delta \big\},$ 
	then the scheme \eqref{eq:3DMHD:cellaverage} always preserves $\bar{\bf U}_{ijk}^{n+1} \in {\mathcal G}$ under the CFL condition
	\begin{equation}\label{eq:CFL:3DMHD}
	0< \frac{\alpha_{1,n}^{\tt LF} \Delta t_n}{\Delta x} + \frac{ \alpha_{2,n}^{\tt LF} \Delta t_n}{\Delta y} 
	+ \frac{ \alpha_{3,n}^{\tt LF} \Delta t_n}{\Delta z} \le \hat \omega_1,
	\end{equation}
	where the parameters $\{\alpha_{\ell,n}^{\tt LF}\}$ satisfy
	\begin{equation}\label{eq:3DhighLFpara}
	\begin{split}
	&\alpha_{1,n}^{\tt LF} >
	\max_{ i,j,k,\mu,\nu}  \alpha_1 \big(  { \bf U }_{i+\frac12,j,k}^{\pm,\mu,\nu} ,  { \bf U }_{i-\frac12,j,k}^{\pm,\mu,\nu}  \big)
	,\\
	&\alpha_{2,n}^{\tt LF} >  \max_{ i,j,k,\mu,\nu}  \alpha_2 \big(  { \bf U }_{i,j+\frac12,k}^{\mu,\pm,\nu} ,  { \bf U }_{i,j-\frac12,k}^{\mu,\pm,\nu}  \big),
	\\
	&
	\alpha_{3,n}^{\tt LF} >  \max_{ i,j,k,\mu,\nu}  \alpha_3 \big(  { \bf U }_{i,j,k+\frac12}^{\mu,\nu,\pm} ,  { \bf U }_{i,j,k-\frac12}^{\mu,\nu,\pm}  \big).
	\end{split}
	\end{equation}
	
\end{theorem}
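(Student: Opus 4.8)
The plan is to adapt, essentially line by line, the proof of Theorem~\ref{thm:PP:2DMHD}, replacing the 2D generalized LF splitting property by its 3D version, Theorem~\ref{theo:MHD:LLFsplit3D}; the one genuinely new point is to confirm that the discrete divergence-free condition demanded by that theorem coincides with \eqref{eq:DivB:cst123}.

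First I would invoke the exactness, for polynomials of degree ${\tt K}$, of the ${\tt L}$-point Gauss--Lobatto rule in one coordinate tensored with the ${\tt Q}$-point Gauss rule in the two transverse coordinates (recall ${\tt L}\ge\frac{{\tt K}+3}{2}$ and ${\tt Q}\ge{\tt K}+1$), together with $\hat\omega_1=\hat\omega_{\tt L}$ and the coincidence of the extreme Gauss--Lobatto nodes with the cell faces, to decompose the cell average as
\[
\bar{\bf U}_{ijk}^n = \frac{\lambda_1}{\lambda}\sum_{\delta=2}^{{\tt L}-1}\sum_{\mu,\nu}\hat\omega_\delta\omega_\mu\omega_\nu\,{\bf U}_{ijk}^n\big(\hat{\tt x}_i^{(\delta)},{\tt y}_j^{(\mu)},{\tt z}_k^{(\nu)}\big) + \cdots + \frac{\lambda_1\hat\omega_1}{\lambda}\sum_{\mu,\nu}\omega_\mu\omega_\nu\big({\bf U}_{i-\frac12,j,k}^{+,\mu,\nu}+{\bf U}_{i+\frac12,j,k}^{-,\mu,\nu}\big) + \cdots,
\]
where $\lambda_1=\alpha_{1,n}^{\tt LF}\Delta t_n/\Delta x$, $\lambda_2=\alpha_{2,n}^{\tt LF}\Delta t_n/\Delta y$, $\lambda_3=\alpha_{3,n}^{\tt LF}\Delta t_n/\Delta z$, $\lambda=\lambda_1+\lambda_2+\lambda_3\in(0,\hat\omega_1]$ by \eqref{eq:CFL:3DMHD}, and the dots denote the analogous interior and face contributions from the ${\tt y}$- and ${\tt z}$-directions. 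Substituting this identity and the LF flux \eqref{eq:LFflux} into \eqref{eq:3DMHD:cellaverage}, a direct though lengthy regrouping puts the update in the convex-combination form
\[
\bar{\bf U}_{ijk}^{n+1} = \sum_{\delta=2}^{{\tt L}-1}\hat\omega_\delta\,{\bf \Xi}_\delta + 2(\hat\omega_1-\lambda)\,{\bf \Xi}_{\tt L} + 2\lambda\,{\bf \Xi}_1,
\]
in which ${\bf \Xi}_\delta$ for $2\le\delta\le{\tt L}-1$ and ${\bf \Xi}_{\tt L}$ are convex combinations of the point values appearing in \eqref{eq:3Dadmissiblity}, hence belong to ${\mathcal G}$ by its convexity (Lemma~\ref{theo:MHD:convex}), whereas ${\bf \Xi}_1=\frac12({\bf \Xi}_-+{\bf \Xi}_+)$ with ${\bf \Xi}_\pm$ a $\{\omega_\mu\omega_\nu\}$-weighted sum over the three directions of LF-split terms $\bar{\bf U}\mp{\bf F}_\ell/\alpha_{\ell,n}^{\tt LF}$ built from the interface traces; the strict inequalities \eqref{eq:3DhighLFpara} ensure each $\alpha_{\ell,n}^{\tt LF}$ dominates all the pertinent $\alpha_\ell$.

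It then remains to show ${\bf \Xi}_\pm\in{\mathcal G}$ using Theorem~\ref{theo:MHD:LLFsplit3D}. Applying inequality \eqref{eq:MHD:LLFsplit} direction by direction and summing with the weights $\omega_\mu\omega_\nu$, the only uncontrolled contribution to ${\bf \Xi}_\pm\cdot{\bf n}^*+\frac12|{\bf B}^*|^2$ is $-({\bf v}^*\cdot{\bf B}^*)$ multiplied by the linear combination of interface $B_\ell$-jumps that is exactly ${\rm div}_{ijk}{\bf B}^n$; condition \eqref{eq:DivB:cst123} annihilates it, so ${\bf \Xi}_\pm\cdot{\bf n}^*+\frac12|{\bf B}^*|^2>0$ for every ${\bf v}^*,{\bf B}^*\in\mathbb{R}^3$, i.e.\ ${\bf \Xi}_\pm\in{\mathcal G}_*={\mathcal G}$. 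Consequently $\bar{\bf U}_{ijk}^{n+1}$ is a convex combination of admissible states and lies in ${\mathcal G}$; the extension to high-order SSP time discretization is automatic because ${\mathcal G}$ is convex and each SSP stage is a convex combination of forward-Euler updates.

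The part I expect to be most delicate is the bookkeeping in the regrouping step: one must verify that the $B$-flux cross terms generated by the ``$+$'' and ``$-$'' interface traces in all three coordinate directions assemble \emph{precisely} into the divergence expression of \eqref{eq:DivB:cst123}. This rests on the vanishing of the normal flux for $B_\ell$ inside ${\bf F}_\ell$ and on the fact that $v_\ell^*\frac12|{\bf B}^*|^2-B_\ell({\bf v}^*\cdot{\bf B}^*)$ is the unique term of ${\bf F}_\ell\cdot{\bf n}^*$ not absorbed by the quadratic form of Lemma~\ref{theo:MHD:LLFsplit}; since every directional term is linear, this is the faithful 3D counterpart of the 2D computation and adds no new conceptual difficulty.
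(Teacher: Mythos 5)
Your overall strategy is exactly the one the paper intends (the appendix explicitly omits this proof as ``very similar to the 2D case except for using the 3D generalized LF splitting property''), and the quadrature decomposition, the convex-combination form with coefficients $\hat\omega_\delta$, $2(\hat\omega_1-\lambda)$, $2\lambda$, and the appeal to Lemma \ref{theo:MHD:convex} are all correct. But there is one genuine misstep in your final step: you claim that ${\bf \Xi}_-$ and ${\bf \Xi}_+$ each lie in $\mathcal G$, asserting that the uncontrolled term for each is $-({\bf v}^*\cdot{\bf B}^*)$ times exactly ${\rm div}_{ijk}{\bf B}^n$. That is not what Lemma \ref{theo:MHD:LLFsplit} produces: for ${\bf \Xi}_-$ alone the residual is the discrete divergence built from the one-sided traces $(B_\ell)^{-,\cdot,\cdot}$ only, and for ${\bf \Xi}_+$ alone from $(B_\ell)^{+,\cdot,\cdot}$ only, whereas \eqref{eq:DivB:cst123} constrains only the averages $\overline{(B_\ell)}=\tfrac12\big((B_\ell)^-+(B_\ell)^+\big)$. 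The two one-sided discrete divergences need not vanish individually (only their sum does), so ${\bf \Xi}_\pm\in\mathcal G$ does not follow from the hypotheses and, in light of Theorem \ref{theo:counterEx}, is false in general.

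The repair is immediate and is what the paper's 2D proof of Theorem \ref{thm:PP:2DMHD} actually does: apply Theorem \ref{theo:MHD:LLFsplit3D} to ${\bf \Xi}_1=\tfrac12({\bf \Xi}_-+{\bf \Xi}_+)$ viewed as a single convex combination over the doubled family of trace states (weights $\tfrac12\omega_\mu\omega_\nu$ on both the $+$ and the $-$ traces in each direction); the discrete divergence-free condition demanded by that theorem for this combined family is then precisely \eqref{eq:DivB:cst123}, giving ${\bf \Xi}_1\in\mathcal G$, which is all that the convex combination for $\bar{\bf U}_{ijk}^{n+1}$ requires. With that one change your argument is complete and coincides with the intended proof.
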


Lower bound of the internal energy can also be estimated when the 
DDF condition \eqref{eq:DivB:cst123} is not satisfied. 

\begin{theorem} \label{thm:PP:3DMHD:further}
	Assume the polynomial vectors $\{{\bf U}_{ijk}^n({\tt x},{\tt y},{\tt z})\}$ satisfy 
	\eqref{eq:3Dadmissiblity}, and the parameters $\{\alpha_{\ell,n}^{\tt LF}\}$ satisfy 
	\eqref{eq:3DhighLFpara}. 
	Then under the CFL condition \eqref{eq:CFL:3DMHD}, 
	the solution $\bar{\bf U}_{ijk}^{n+1}$ 
	of the scheme \eqref{eq:3DMHD:cellaverage} satisfies  
	that $\bar \rho_{ijk}^{n+1} > 0$, 
	and  
	\begin{equation*}
		{\mathcal E} ( \bar{\bf U}_{ijk}^{n+1} ) > -\Delta t_n \big(\bar{\bf v}_{ijk}^{n+1} \cdot 
		\bar{\bf B}_{ijk}^{n+1} \big) {\rm div}_{ijk}{\bf B}^n, 
	\end{equation*}
	where 
	$\bar{\bf v}_{ijk}^{n+1} :=\bar{\bf m}_{ijk}^{n+1}/\bar{\rho}^{n+1}_{ijk}$. 
\end{theorem}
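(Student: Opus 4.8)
The plan is to mirror the proof of Theorem~\ref{thm:PP:2DMHD:further} line by line, carrying the extra $\tt z$-directional contributions and keeping, rather than cancelling, the cross term from the constructive inequality~\eqref{eq:MHD:LLFsplit}. First I would rewrite the cell average $\bar{\bf U}_{ijk}^n$ by applying the exact $\tt L$-point Gauss--Lobatto rule in the flux-normal direction and the $\tt Q$-point Gauss rule in the two tangential directions, separately for each of the three coordinate directions, and weighting these three splittings by $\lambda_\ell/\lambda$ with $\lambda_1=\alpha_{1,n}^{\tt LF}\Delta t_n/\Delta x$, $\lambda_2=\alpha_{2,n}^{\tt LF}\Delta t_n/\Delta y$, $\lambda_3=\alpha_{3,n}^{\tt LF}\Delta t_n/\Delta z$, and $\lambda=\lambda_1+\lambda_2+\lambda_3$. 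Substituting this together with the LF flux~\eqref{eq:LFflux} into the scheme~\eqref{eq:3DMHD:cellaverage}, I expect to obtain, in direct analogy with~\eqref{eq:2DMHD:split:proof}, a convex decomposition
\begin{equation*}
	\bar{\bf U}_{ijk}^{n+1} = \sum_{\nu=2}^{{\tt L}-1}\hat\omega_\nu{\bf \Xi}_\nu + 2(\hat\omega_1-\lambda){\bf \Xi}_{\tt L} + 2\lambda{\bf \Xi}_1,
\end{equation*}
where ${\bf \Xi}_\nu$ for $2\le\nu\le{\tt L}-1$ and ${\bf \Xi}_{\tt L}$ are weighted averages of the interior and interface node values, hence admissible by~\eqref{eq:3Dadmissiblity} and the convexity of $\mathcal G$, and ${\bf \Xi}_1=\tfrac12({\bf \Xi}_-+{\bf \Xi}_+)$ collects the three-directional LF-splitting terms. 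The weights are nonnegative precisely because the CFL condition~\eqref{eq:CFL:3DMHD} gives $\lambda\in(0,\hat\omega_1]$.

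The density claim $\bar\rho_{ijk}^{n+1}>0$ follows immediately, since $\bar\rho_{ijk}^{n+1}$ is a convex combination of the first components of the ${\bf \Xi}_\nu$, each of which is positive under the parameter choice~\eqref{eq:3DhighLFpara}. The crux is the energy estimate for the coupling term ${\bf \Xi}_1$. I would apply the constructive inequality~\eqref{eq:MHD:LLFsplit} to each interface pair in each of the three directions, but \emph{without} imposing the discrete divergence-free condition~\eqref{eq:DivB:cst123}: this leaves on the right-hand side the cross terms $-\tfrac{B_i-\tilde B_i}{\alpha_{i,n}^{\tt LF}}({\bf v}^*\cdot{\bf B}^*)$. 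Summing these over the Gauss nodes and over the three directions, and averaging the $\pm$ interface values, the surviving contribution assembles exactly into the centered discrete divergence, yielding for every ${\bf v}^*,{\bf B}^*\in\mathbb R^3$
\begin{equation*}
	\Big({\bf \Xi}_1\cdot{\bf n}^*+\tfrac{|{\bf B}^*|^2}{2}\Big)\times 2\Big(\tfrac{\alpha_{1,n}^{\tt LF}}{\Delta x}+\tfrac{\alpha_{2,n}^{\tt LF}}{\Delta y}+\tfrac{\alpha_{3,n}^{\tt LF}}{\Delta z}\Big) > -({\bf v}^*\cdot{\bf B}^*)\,{\rm div}_{ijk}{\bf B}^n.
\end{equation*}

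Next I would combine the pieces. Because ${\bf \Xi}_\nu\in{\mathcal G}={\mathcal G}_*$ for $2\le\nu\le{\tt L}$, their contributions $\bigl({\bf \Xi}_\nu\cdot{\bf n}^*+|{\bf B}^*|^2/2\bigr)$ are strictly positive, so testing the decomposition against ${\bf n}^*$ and using the displayed bound for ${\bf \Xi}_1$ gives $\bar{\bf U}_{ijk}^{n+1}\cdot{\bf n}^*+|{\bf B}^*|^2/2 > -\Delta t_n({\bf v}^*\cdot{\bf B}^*){\rm div}_{ijk}{\bf B}^n$, where the factor $2\lambda$ divided by $2\bigl(\tfrac{\alpha_{1,n}^{\tt LF}}{\Delta x}+\tfrac{\alpha_{2,n}^{\tt LF}}{\Delta y}+\tfrac{\alpha_{3,n}^{\tt LF}}{\Delta z}\bigr)$ equals $\Delta t_n$ and produces the stated prefactor. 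Finally, choosing ${\bf v}^*=\bar{\bf v}_{ijk}^{n+1}$ and ${\bf B}^*=\bar{\bf B}_{ijk}^{n+1}$ collapses the left-hand side to ${\mathcal E}(\bar{\bf U}_{ijk}^{n+1})$ by the identity used in Lemma~\ref{theo:eqDefG}, which is the asserted inequality.

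The main obstacle I anticipate is the purely combinatorial bookkeeping of the three-directional decomposition: verifying that the cell-average splitting in \emph{each} direction, once merged with the LF flux differences in \emph{all three} directions, regroups into the single convex form above with nonnegative weights, and---most delicately---that the retained cross terms, after summation over the $\tt Q$ tangential Gauss nodes in two directions and over the $\pm$ traces, reassemble precisely into ${\rm div}_{ijk}{\bf B}^n$ as defined above, and not some one-sided or mismatched discrete divergence. Once this accounting is confirmed, every analytic ingredient---the constructive inequality~\eqref{eq:MHD:LLFsplit}, the convexity and ${\mathcal G}_*$ characterization of $\mathcal G$, and the choice of test vectors---transfers verbatim from the 2D argument.
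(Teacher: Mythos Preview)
Your proposal is correct and follows exactly the approach the paper intends: the paper omits the proof of this 3D theorem and states that it is ``very similar to the 2D case except for using the 3D generalized LF splitting property in Theorem~\ref{theo:MHD:LLFsplit3D},'' and your outline is precisely the 3D transcription of the proof of Theorem~\ref{thm:PP:2DMHD:further}, including the retention of the cross terms from~\eqref{eq:MHD:LLFsplit} that reassemble into ${\rm div}_{ijk}{\bf B}^n$ and the final specialization ${\bf v}^*=\bar{\bf v}_{ijk}^{n+1}$, ${\bf B}^*=\bar{\bf B}_{ijk}^{n+1}$.
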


\bibliographystyle{siamplain}
\bibliography{references}

\end{document}